\newcommand{\mres}{\mathbin{\vrule height 1.6ex depth 0pt width
0.13ex\vrule height 0.13ex depth 0pt width 1.3ex}}
\newtheorem{thm}{Theorem}[section]
\newtheorem{cor}[thm]{Corollary}
\newtheorem{lemma}[thm]{Lemma}
\newtheorem{conjecture}{Conjecture}
\newtheorem{rmk}[thm]{Remark}
\newtheorem{defn}[thm]{Definition}
\newtheorem{prop}[thm]{Proposition}
\numberwithin{equation}{section}
\begin{document}
\title{Flipping Heegaard splittings and  minimal surfaces }
\author{Daniel Ketover}\address{Rutgers University\\  Busch Campus - Hill Center \\ 110 Freylinghausen Road, Piscataway NJ 08854 USA}
\thanks{The author was partially supported by NSF-PRF DMS-1401996.}
 \email{dk927@math.rutgers.edu}
\maketitle

%A Heegaard surface in a three-manifold gives rise to sweepouts which using min-max methods produces a minimal surface of index at most $1$ with controlled genus.   We show that by stabilizing such a surface sufficiently many times and ``flipping" the resulting Heegaard sweepout,  we can produce distinct minimal surface of index at most $2$.    In round lens spaces $L(p,q)$ for $q\notin\{1,p-1\}$ such methods give rise to genus $2$ minimal surfaces, that when lifted to $\mathbb{S}^3$, for large $p$, resemble doublings of the Clifford torus. As a consequence we show that the number of genus $g$ embedded minimal surfaces in $\mathbb{S}^3$ (up to isometry) tends to infinity as $g\rightarrow\infty$. Previously, it was known by Lawson that there are at least $2$ examples of each genus greater than two. 
\begin{abstract}
We show that the number of genus $g$ embedded minimal surfaces in $\mathbb{S}^3$ tends to infinity as $g\rightarrow\infty$.   The surfaces we construct resemble doublings of the Clifford torus with curvature blowing up along torus knots as $g\rightarrow\infty$,  and arise from a two-parameter min-max scheme in lens spaces.   More generally,  by stabilizing and flipping Heegaard foliations we produce index at most $2$ minimal surfaces with controlled topological type in arbitrary Riemannian three-manifolds.

%We show this by constructing from a two-parameter min-max scheme a genus $2$ minimal surface in each lens space $L(p,q)$ where $q\notin\{1,p-1\}$.  Lifting to $\mathbb{S}^3$ we obtain surfaces that for large $p$ resemble doublings of the Clifford torus along parallel torus knots. More generally, we prove the existence of index at most two minimal surfaces with controlled topological type in arbitrary $3$-manifolds by flipping optimal Heegaard foliations using Reidemeister-Singer's 1933 theorem.  
\end{abstract}

\section{Introduction}
Heegaard splittings give natural one-parameter sweepouts of a three-manifold,  and allow one to produce index $1$ minimal surfaces from a min-max process.  The theory was developed by Simon-Smith \cite{SS} (cf. \cite{CD}, \cite{DP}) in 1983,  building on work of J.  Pitts \cite{P} and F.  Almgren \cite{Al}.    Optimal genus bounds for such constructions were obtained in \cite{K}.  In manifolds with positive Ricci curvature,  it was shown in \cite{KMN} that any lowest genus Heegaard surface may be isotoped in this way to a minimal index $1$ minimal surface.  More generally,  the author together with Y. Liokumovich and A. Song \cite{KLS} confirmed the conjecture of Pitts-Rubinstein asserting,  roughly speaking, that any strongly irreducible Heegaard surface can be isotoped to minimality (through an iterated min-max procedure) with no curvature assumption.  

%\begin{thm}[Pitts-Rubinstein Conjecture %\cite{KLS}]
%Let $M$ be an orientable Riemannian three-manifold.   A strongly irreducible Heegaard surface $\Sigma$ is either isotopic to a minimal surface of index at most $1$ or else after a single neck-pinch on $\Sigma$ one obtains a surface $\Sigma'$ bounding the boundary of an $I$-neighborhood about a non-orientable minimal surface $\Gamma$ so that $M\setminus\Gamma$ is a handlebody.  
%\end{thm} 

A natural question is given an arbitrary three-manifold, to what extent higher parameter sweepouts of controlled topological type exist.  For instance,  it follows from Hatcher's \cite{H} proof of the Smale Conjecture that in the three-sphere there is an $\mathbb{RP}^3$ family of embedded two-spheres and an $\mathbb{RP}^2\times\mathbb{RP}^2$ family of embedded unknotted tori.  Johnson-McCullough (\cite{JM}  \cite{M}) have computed many other examples in elliptic manifolds.  One might then try to use such non-trivial higher parameter families to produce new minimal surfaces with controlled topological type.  Such an idea was exploited in \cite{HK} to construct at least two minimal two-spheres in arbitrary Riemannian three-spheres.   In another direction,  Marques-Neves \cite{MN2} introduced the Almgren-Pitts ``$p$-widths," which come from higher parameter sweepouts and give (using \cite{Z}, \cite{MN4}) minimal hypersurfaces with large Morse index but their topological type is not controlled.

We show that there are indeed natural two-parameter families of surfaces with controlled topological type in any three-manifold.    The starting point is the following foundational theorem: 

\begin{thm}[Reidemeister-Singer \cite{R}, \cite{S} (1933)]\label{rs}
If $\Sigma_1$ and $\Sigma_2$ are non-isotopic Heegaard surfaces,  then after stabilizing $\Sigma_1$ sufficiently many times the resulting surface is isotopic to a stabilization of $\Sigma_2$.  
\end{thm}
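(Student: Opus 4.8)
The plan is to pass from Heegaard splittings to handle decompositions and then argue by parametrized Morse theory. Recall that a genus-$g$ splitting $M = V \cup_\Sigma W$ is precisely the data of a handle decomposition of $M$ with handles attached in order of index, having one $0$-handle, one $3$-handle, $g$ $1$-handles and $g$ $2$-handles: $V$ is the $0$-handle together with the $1$-handles, $W$ is the $3$-handle together with the dualized $2$-handles, $\Sigma = \partial(H^0\cup H^1)$, and a stabilization of $\Sigma$ is exactly the insertion of a canceling $1$--$2$ handle pair. Such a decomposition comes from a self-indexing Morse function, and conversely any ordered handle decomposition of $M$ can be reduced to this ``Heegaard form'' by cancelling all but one of the $0$-handles against $1$-handles and all but one of the $3$-handles against $2$-handles --- here one uses connectedness of $M$, and that $\chi(M)=0$ forces the surviving numbers of $1$- and $2$-handles to coincide. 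Different cancellation schemes differ by handle slides, so the Heegaard surface $\Sigma(f)$ associated to an ordered Morse function $f$ is well defined up to isotopy.

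Now take self-indexing Morse functions $f_0, f_1$ realizing $\Sigma_1, \Sigma_2$ and join them by a generic path $(f_t)_{t\in[0,1]}$. By Cerf theory $f_t$ is Morse away from finitely many $t$, where a single canceling pair of critical points of adjacent indices is born or dies, while between these times the critical values may be slid freely and, at isolated moments, two of them cross. I would track $\Sigma(f_t)$ across each event. Birth or death of a $1$--$2$ pair is by definition a stabilization or destabilization of $\Sigma(f_t)$. Birth or death of a $0$--$1$ pair or a $2$--$3$ pair is undone upon re-reducing to Heegaard form, hence changes $\Sigma(f_t)$ only by an isotopy. A crossing of two critical values of the same index does not change $\Sigma(f_t)$. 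The substantial case is a crossing of a $1$-value past a $2$-value: the decomposition momentarily leaves Heegaard form, and because the attaching curve of the $2$-handle and the belt circle of the $1$-handle cannot in general be isotoped apart inside a surface, restoring the index ordering demands a handle trade, which I claim descends to a bounded number of stabilizations of $\Sigma(f_t)$ and an isotopy. Summing over all events shows that $\Sigma_1 = \Sigma(f_0)$ and $\Sigma_2 = \Sigma(f_1)$ are joined by a finite chain of stabilizations, destabilizations, and isotopies.

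Finally I would upgrade ``joined by stabilizations and destabilizations'' to ``admitting a common stabilization.'' For this, invoke that the stabilization of a Heegaard splitting is \emph{unique} up to isotopy, the added Heegaard tube being unknotted. Then in any zig-zag of stabilizations and destabilizations a subpattern ``destabilize, then stabilize'' returns the original surface and may be deleted; after deleting all of them, the zig-zag first raises the genus monotonically and then lowers it, so its top term is at once an iterated stabilization of $\Sigma_1$ and of $\Sigma_2$. This gives the theorem.

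I expect the crux to be the final case of the second paragraph: showing that a handle trade between a $1$- and a $2$-handle --- equivalently, reordering out-of-order handles --- descends to a controlled number of stabilizations of the induced Heegaard surface. This is where the real content of Reidemeister--Singer lies. A classical combinatorial route arrives at the same point: one realizes $\Sigma_i$, after stabilization, as the Heegaard splitting of a triangulation $\mathcal T_i$ of $M$ --- a regular neighborhood of $\mathcal T_i^{(1)}$ against a regular neighborhood of the dual $1$-skeleton --- uses that subdividing a triangulation only stabilizes this splitting (the new edges are supported in single simplices), and invokes the fact that any two triangulations of $M$ share a common subdivision; the nontrivial step there, that every splitting becomes a triangulation splitting after stabilization, mirrors the handle-trading difficulty above.
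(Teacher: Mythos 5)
The paper does not actually prove this statement: it is quoted as a classical theorem with references to Reidemeister and Singer, so your sketch has to be measured against the standard proofs in the literature (the original combinatorial one via common subdivisions of triangulations, or the Cerf-theoretic one of Siebenmann and Laudenbach). Your architecture follows the Cerf-theoretic route, and the outer layers are essentially right: the translation between genus-$g$ splittings and ordered handle decompositions, the bookkeeping of births/deaths of canceling pairs and of same-index crossings, and the final reduction of a zig-zag of stabilizations and destabilizations to a common stabilization via uniqueness of the unknotted stabilizing tube (deleting every ``destabilize, then stabilize'' subpattern) are all correct and standard.

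There is, however, a genuine gap exactly where you say you expect the crux to be: the crossing of an index-$1$ critical value past an index-$2$ critical value. You only \emph{claim} that restoring the index ordering ``descends to a bounded number of stabilizations and an isotopy''; that claim is the entire content of Reidemeister--Singer. At such a crossing the attaching circle of the $2$-handle and the belt circle of the $1$-handle meet the intermediate level surface in finitely many points which in general cannot be isotoped away, and one must show that each such intersection can be traded for a stabilization of the induced splitting (roughly, tubing the surface along a gradient trajectory joining the two critical points), after which the two critical values can be permuted and Heegaard form restored. Making this precise also forces you to justify the well-definedness you assert earlier, namely that the splitting associated to an ordered Morse function is independent of the gradient-like vector field and of the cancellation scheme. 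Without the crossing analysis the scheme produces no chain of stabilizations at all, so as written the proposal is a correct strategy and an honest outline, but not a proof; and, as you note yourself, the same missing step reappears in the triangulation route as the unproved assertion that every splitting becomes, after stabilization, the splitting associated to a triangulation.
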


Recall that \emph{stabilizing} a genus $g$ Heegaard surface means adding a trivial unknotted $1$-handle to it,  after which it becomes a Heegaard suface of genus $g+1$.

Given Theorem \ref{rs},  if one has two non-isotopic Heegaard surfaces one can form a square of surfaces realizing the common stabilization and try to produce an index $2$ minimal surface by pulling all the surfaces tight in this square,  relative to the fixed boundary.   In fact,  D. Bachman \cite{Ba} made the following conjecture in 2002: 

\begin{conjecture}[Index $2$ minimal surfaces] \label{bach}
Let $M$ be a non-Haken $3$-manifold.  If $\Sigma_1$ and $\Sigma_2$ are strongly irreducible Heegaard surfaces in $M$ that are not isotopic to each other,  then there exists an index $2$ minimal surface with genus equal to the lowest genus stabilization of $\Sigma_1$ and $\Sigma_2$.   
\end{conjecture}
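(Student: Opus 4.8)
The plan is to prove the conjecture by a two-parameter min-max scheme applied to a family of Heegaard sweepouts obtained by stabilizing and flipping, with the topology controlled by Theorem \ref{rs} and the genus bounds of \cite{K}. First I would build the two-parameter family. By Theorem \ref{rs} there is an integer $k$ such that the $k$-fold stabilizations $\widetilde{\Sigma}_1$ and $\widetilde{\Sigma}_2$ are isotopic; write $\Sigma$ for this common Heegaard surface and $g = \mathrm{genus}(\Sigma)$ for the lowest common stabilization genus. Each $\Sigma_i$ determines a Heegaard sweepout $\{\Sigma_i^t\}_{t \in [0,1]}$ running from one spine to the other, and stabilizing $k$ times near a regular point of this foliation produces a sweepout $\{\widetilde{\Sigma}_i^t\}$ by genus $g$ surfaces whose endpoints are still one-dimensional spines, hence area-negligible. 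The key move is the \emph{flip}: a stabilized Heegaard sweepout is canonically homotopic, through genus $g$ sweepouts with negligible endpoints, to the sweepout that runs the two spines in the opposite order. Splicing the isotopy $\widetilde{\Sigma}_1 \simeq \widetilde{\Sigma}_2$ together with the two flips, one assembles a continuous map $\Phi \colon [0,1]^2 \to \mathcal{Z}_2(M;\mathbb{Z}_2)$ (or into the Simon--Smith space of embedded surfaces) whose restriction to two opposite edges of the square consists of area-negligible sets and whose restriction to the other two opposite edges is the stabilized $\Sigma_1$- and $\Sigma_2$-sweepouts. After discretizing $\Phi$ in the Almgren--Pitts framework so that the topology-controlled interpolation results apply, this is the object to be pulled tight.

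Second I would set up and run the min-max. Let $\omega = \inf_{\Psi} \sup_{x \in [0,1]^2} \mathbf{M}(\Psi(x))$, the infimum over all families $\Psi$ homotopic to $\Phi$ relative to $\partial [0,1]^2$. One must check that $\Phi$ is not homotopic rel boundary into the subcomplex of surfaces with small area together with the one-parameter boundary sweepouts, i.e.\ that the square detects a genuinely two-dimensional relative class. This is where the hypotheses enter: a contraction of the square would yield an ambient isotopy forcing $\Sigma_1$ and $\Sigma_2$ to be isotopic, or forcing the common stabilization genus below $g$, contradicting the assumptions --- this is the analytic shadow of Bachman's notion that the ``topological index'' of the pair is $2$ (\cite{Ba}). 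Granting non-triviality, the Simon--Smith / Colding--De Lellis min-max theorem \cite{SS}, \cite{CD} produces a smooth closed embedded minimal surface $\Gamma$, with positive integer multiplicities, whose total area equals $\omega$.

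Third comes the topological and index control, which I expect to be the main obstacle. The bound $\mathrm{index}(\Gamma) \le 2$ should follow from the two-parameter nature of the family: combining the Marques--Neves index estimates \cite{MN2}, \cite{MN4} with Zhou's multiplicity-one theorem \cite{Z} (so that for a bumpy metric $\Gamma$ is two-sided and appears with multiplicity one, the general case following by a limiting argument), a min-max limit of a two-parameter family has index at most two. The upper genus bound $\mathrm{genus}(\Gamma) \le g$ follows from the genus bounds of \cite{K} transplanted to the two-parameter setting, since every surface in $\Phi$ has genus $g$; here strong irreducibility of $\Sigma_1, \Sigma_2$ and the non-Haken hypothesis rule out the appearance of an incompressible component, or of the ``non-orientable with multiplicity two'' degeneration, that could drop the genus. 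The reverse inequality $\mathrm{genus}(\Gamma) \ge g$ is the delicate point: if $\Gamma$ had genus strictly less than $g$, then sweeping a tubular neighborhood of $\Gamma$ by parallel copies and splicing the result into the square should exhibit a common stabilization of $\Sigma_1$ and $\Sigma_2$ of genus $< g$, contradicting the choice of $g$ via Theorem \ref{rs}. Finally, to upgrade $\mathrm{index}(\Gamma) \le 2$ to $\mathrm{index}(\Gamma) = 2$, one notes that if $\mathrm{index}(\Gamma) \le 1$ then $\Gamma$ would already be realizable by a one-parameter sweepout, forcing $\omega$ to equal the one-parameter width and contradicting the non-triviality established above.

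The two places I am least confident about, and which I expect to consume most of the work, are verifying that the assembled square is genuinely non-contractible relative to its boundary --- a careful translation of Bachman's topological index into the sweepout category --- and the two-sided genus estimate for $\Gamma$, where one must simultaneously prevent degeneration to a lower-genus surface (through multiplicity, essential subsurfaces, or the non-orientable collapse) and rule out that $\Gamma$'s genus is too small to reconstitute the common stabilization of $\Sigma_1$ and $\Sigma_2$.
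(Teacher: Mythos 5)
The statement you set out to prove is a \emph{conjecture} of Bachman, not a theorem of this paper: the paper never proves it, only proves a trichotomy (Theorem \ref{flip2}) whose conclusions include alternatives (a drop of the stabilized width, or infinitely many minimal surfaces of the same area) precisely because the steps you treat as checkable are not known, and it confirms only the existence part of the specialized Conjecture \ref{2} in round lens spaces, producing surfaces of index \emph{at most} $2$ and even there needing the resolutions of the Willmore and Lawson conjectures. Your outline follows the same general program as Section 3 (stabilize, flip, assemble a square of sweepouts, pull tight), but each of the points you flag as ``expected to consume most of the work'' is a genuine gap, not a technicality. First, the non-triviality of the square rel boundary: you assert that a contraction would force $\Sigma_1\simeq\Sigma_2$ or a lower-genus common stabilization, but no such implication is available --- a homotopy through sweepouts with controlled areas does not produce an ambient isotopy of Heegaard surfaces, and in fact the two-parameter width can equal the one-parameter width $\omega(M,\Sigma_0)$; the paper handles exactly this degenerate case not by excluding it but by a Lusternik--Schnirelman argument (Lemma \ref{lus}) yielding infinitely many minimal surfaces, which is case (2) of Theorem \ref{flip2} and is \emph{not} the conclusion of Conjecture \ref{bach} (and Ambrozio--Marques--Neves show metrics where this case really occurs).

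Second, your appeal to Zhou's multiplicity-one theorem is out of place: that result lives in the Almgren--Pitts/Caccioppoli-set setting, while your family is built in the Simon--Smith category of embedded surfaces of fixed genus, where multiplicity one is precisely the open Conjecture \ref{mult1} stated in the paper; without it your genus and index accounting collapses (the genus bound \eqref{genusbound} is an inequality with multiplicities, in the wrong direction for your purposes). Third, the lower genus bound $\mbox{genus}(\Gamma)\geq g$: genus can be lost in the min-max limit, the limit need not be a Heegaard surface at all, and ``sweeping a tubular neighborhood of $\Gamma$ and splicing'' does not reconstruct a common stabilization of $\Sigma_1$ and $\Sigma_2$ of smaller genus --- even in the paper's lens-space application the genus identification requires the full classification of minimal tori and Klein bottles (Proposition \ref{lens}) plus sharp area bounds, not a soft topological argument. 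Finally, upgrading $\mbox{index}(\Gamma)\leq 2$ to equality does not follow from ``index $\leq 1$ surfaces are realizable by one-parameter sweepouts''; an index $0$ or $1$ surface can perfectly well have area equal to a two-parameter width, and the paper itself never claims index exactly $2$, only index at most $2$ (Theorem \ref{mininlens}). So what you have is a reasonable restatement of the known strategy, but the conjecture remains open and your proposal does not close any of the gaps that make it so.
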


Bachman introduced a notion of ``topological" Morse index for surfaces and showed that in non-Haken manifolds, lowest genus stabilizations have topological index $2$, which he called \emph{critical surfaces}.  More generally, Bachman proposed that any critical surface could be isotoped to be geometrically minimal.

Of course not every manifold admits two distinct strongly irreducible Heegaard surfaces (for instance,  if $M=\mathbb{S}^3$) but if we take $\Sigma_2$ to be equal to $\Sigma_1$ with the opposite orientation,  we can always produce nontrivial families and obtain new min-max minimal surfaces.  

\subsection{Flipping Heegaard surfaces} 
Let $\Sigma$ be a genus $g$ Heegaard surface in an oriented $3$-manifold.   Denote $S_g(\Sigma)$ the surface obtained from stabilizing $\Sigma$ successively $g$ times.     We have the following

\begin{defn}
A Heegaard surface $\Sigma\subset M$ is \emph{flippable} if there exists an orientation-preserving isotopy of $M$ that takes $\Sigma$ to itself but with opposite orientation (equivalently,  if there exists an orientation-preserving isotopy of $M$ swapping the two handlebodies of the Heegaaard splitting determined by $\Sigma$ and fixing $\Sigma$).  
\end{defn}

Specializing Theorem \ref{rs} to this setting we have

\begin{thm}[Reidemeister-Singer (1933)]
If $\Sigma$ is a Heegaard surface,  then for $g$ large enough,  $S_g(\Sigma)$ is flippable.  
\end{thm}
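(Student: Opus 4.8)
The plan is to deduce the flippability of $S_g(\Sigma)$ directly from the Reidemeister--Singer theorem (Theorem \ref{rs}) by comparing $\Sigma$ with its orientation reversal. Let $\bar\Sigma$ denote the Heegaard surface $\Sigma$ equipped with the opposite orientation, i.e.\ the Heegaard splitting obtained by swapping the roles of the two handlebodies. Both $\Sigma$ and $\bar\Sigma$ are Heegaard surfaces of the same genus $g$ in $M$. Applying Theorem \ref{rs} to this pair yields an integer $k$ so that $S_k(\Sigma)$ is isotopic to $S_k(\bar\Sigma)$. The first step is therefore to observe that $S_k(\bar\Sigma)$ is, as an \emph{unoriented} Heegaard surface, the same as $S_k(\Sigma)$ (stabilization adds an unknotted handle symmetrically with respect to the two sides), but with the orientation reversed; stabilizing an orientation-reversed splitting $k$ times is the same as reversing the orientation of the $k$-times stabilized splitting. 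Hence the isotopy produced by Reidemeister--Singer carries $S_k(\Sigma)$ to itself with the opposite orientation.

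The second step is to upgrade this isotopy of surfaces to an \emph{ambient} orientation-preserving isotopy of $M$ and to arrange that it exactly reverses the orientation of $S_k(\Sigma)$, which is precisely what the definition of flippable requires. Any isotopy between two embedded Heegaard surfaces extends to an ambient isotopy of $M$ (this is standard isotopy extension for codimension-one submanifolds, once one knows the two complementary pieces match up as handlebodies); the subtlety is orientation. A priori the ambient isotopy could be orientation-reversing on $M$. To handle this, the plan is to note that the mapping class of $M$ fixing the (unoriented) surface $S_k(\Sigma)$ setwise and swapping its two sides contains, after possibly composing with a further stabilization, an orientation-preserving representative; alternatively, one stabilizes once more if necessary so that the extra handle provides enough room to push the isotopy to be orientation-preserving. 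In fact the cleanest route is: the composition of the Reidemeister--Singer isotopy with the ``obvious" orientation-reversing involution of $M$ that one might have to correct for is itself realized by an isotopy on the stabilized surface, because the added trivial handles are interchangeable.

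I would then set $g_0 = k$ (or $k+1$ after the orientation adjustment) and conclude: for all $g \geq g_0$, since $S_g(\Sigma) = S_{g-g_0}(S_{g_0}(\Sigma))$ and $S_{g_0}(\Sigma)$ is flippable, a further stabilization of a flippable surface is again flippable — the flipping isotopy for $S_{g_0}(\Sigma)$ can be isotoped to fix a small ball where the additional handle lives, after first sliding that handle to a standard position. This monotonicity step is elementary but should be stated, since the claim is ``for $g$ large enough" rather than merely ``for some $g$."

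The main obstacle I expect is the orientation bookkeeping in the second step: Reidemeister--Singer as stated (Theorem \ref{rs}) gives an isotopy of surfaces but says nothing about whether the ambient extension preserves the orientation of $M$, and the definition of \emph{flippable} insists on an orientation-\emph{preserving} ambient isotopy of $M$. Resolving this cleanly — showing that one can always choose the ambient isotopy to be orientation-preserving, at the cost of at most one extra stabilization — is the technical heart of the argument; everything else is formal manipulation of stabilizations and isotopy extension.
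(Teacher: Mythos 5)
Your proposal is correct and follows essentially the same route as the paper, which gives no separate proof but obtains the statement exactly as you do: apply Theorem \ref{rs} to the pair consisting of $\Sigma$ and the same splitting with the handlebodies swapped, use that stabilization commutes (up to isotopy) with reversing the splitting, and note that further stabilizations of a flippable splitting remain flippable. The only remark is that the orientation issue you describe as the technical heart is vacuous: an ambient isotopy of $M$ starting at the identity automatically consists of orientation-preserving diffeomorphisms, so the isotopy-extension step already produces an orientation-preserving isotopy and no extra stabilization or correction is needed.
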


Let us denote by $Flip(M,\Sigma)$ the minimal genus of a flippable surface obtained from $\Sigma$ from successive stabilizations.    There is a well-known upper bound on $Flip(M,\Sigma)$

\begin{prop}\label{maxx}
If the genus of a Heegaard surface  $\Sigma$ is $g$, then \begin{equation}Flip(M,\Sigma)\leq 2g.\end{equation}
\end{prop}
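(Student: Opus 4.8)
The plan is to reduce, via the Reidemeister--Singer theorem quoted above, to the single assertion that the $g$-fold stabilization $S_g(\Sigma)$ --- a Heegaard surface of genus $2g$ --- is flippable. Since $S_g(\Sigma)$ is by definition obtained from $\Sigma$ by stabilizations and has genus $2g$, granting its flippability gives $Flip(M,\Sigma)\le 2g$ immediately.

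I would first fix a model for $S_g(\Sigma)$. Write $M=H_1\cup_\Sigma H_2$, pick spines $\Gamma_1\subset\operatorname{int}H_1$ and $\Gamma_2\subset\operatorname{int}H_2$ (graphs with first Betti number $g$), and an embedded arc $\beta$ from $\Gamma_1$ to $\Gamma_2$ meeting $\Sigma$ transversally in one point. Set $Y=\Gamma_1\cup\beta\cup\Gamma_2$ and $F=\partial N(Y)$. Three things need checking: (i) $N(Y)\cong H_1\natural H_2$ is a genus-$2g$ handlebody; (ii) its complement $V:=\overline{M\setminus N(Y)}$ is also a genus-$2g$ handlebody --- indeed $\overline{M\setminus N(\Gamma_1)}=H_2$, and deleting a neighbourhood of $\Gamma_2\cup(\beta\cap H_2)$ (a spine of $H_2$ together with an arc to $\partial H_2$) turns $H_2$ into a copy of $(\Sigma\setminus D^2)\times I$; and (iii) $F$ is isotopic to $S_g(\Sigma)$. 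For (iii) I would destabilize: for each loop $\ell$ of $\Gamma_2$ the cocore disk of the associated handle of $N(Y)$ meets, in exactly one point, a compressing disk of $V$ of the form $a\times I$, where $a$ is an arc of $\Sigma\setminus D^2$ running parallel to $\ell$; peeling the loops of $\Gamma_2$ off one at a time destabilizes $F$ down to $\partial N(\Gamma_1)=\Sigma$ in $g$ steps, so by uniqueness of the $g$-fold stabilization $F\simeq S_g(\Sigma)$. Along the way the first handlebody shrinks from $N(Y)$ to $N(\Gamma_1)=H_1$, so $(N(Y),V)$ is precisely the $g$-fold stabilization of the \emph{ordered} splitting $(H_1,H_2)$.

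The flip is then essentially formal. Because the graph $Y=\Gamma_1\cup\beta\cup\Gamma_2$ is symmetric in $\Gamma_1$ and $\Gamma_2$, the same destabilization performed on the loops of $\Gamma_1$ exhibits the \emph{same} ordered splitting $(N(Y),V)$ as the $g$-fold stabilization of $(H_2,H_1)$. On the other hand, stabilization commutes with reversing the orientation of a Heegaard surface, so the $g$-fold stabilization of $(H_2,H_1)$ is the orientation-reverse of that of $(H_1,H_2)$, i.e.\ the ordered splitting $(V,N(Y))$. Combining the two identifications, $(N(Y),V)$ and $(V,N(Y))$ are the same ordered splitting up to an orientation-preserving isotopy of $M$ isotopic to the identity; applied to $F$, such an isotopy carries $F=S_g(\Sigma)$ to itself with the opposite orientation. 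Hence $S_g(\Sigma)$ is flippable, and $Flip(M,\Sigma)\le 2g$.

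The step I expect to be delicate, and to which I would devote most of the write-up, is making (iii) rigorous: that $F=\partial N(Y)$ really is the $g$-fold stabilization of $\Sigma$ with $N(Y)$ playing the role of the ``$H_1$-side'' handlebody. One must verify that every intermediate surface obtained by restoring only a sub-collection of the loops of $\Gamma_2$ to $\partial N(\Gamma_1\cup\beta)$ is again a Heegaard surface (its complement a handlebody --- which is no longer literally $H_2$ with tubes drilled out, but a reorganization of $(\Sigma\setminus D^2)\times I$), and that the disks described above genuinely form a destabilizing pair at each stage. The remaining ingredients --- the boundary-connected-sum and collar identifications in (i) and (ii), and the orientation-naturality of stabilization --- are routine.
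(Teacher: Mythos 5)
Your proposal is correct, and the surface you build is literally the one the paper uses: $\partial N(\Gamma_1\cup\beta\cup\Gamma_2)$ is isotopic to two parallel copies of $\Sigma$ joined by a neck (its complement is $(\Sigma\setminus D)\times I$), which is all the paper's one-sentence proof invokes, treating both the identification with $S_g(\Sigma)$ and the flippability as classical. Where you genuinely differ is in how the flip is obtained: the paper implicitly appeals to the symmetry of the doubled picture (the spine $\Gamma_1\cup\beta\cup\Gamma_2$ of one side can be isotoped to a spine of the product side), whereas you derive it formally --- the symmetric graph destabilizes along the $\Gamma_2$-loops to $(H_1,H_2)$ and along the $\Gamma_1$-loops to $(H_2,H_1)$, so by uniqueness of stabilization it is a $g$-fold stabilization of both ordered splittings, and since stabilization commutes with orientation reversal it is isotopic to its own reverse. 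This is sound, and reassuringly consistent with Hass--Thompson--Thurston, since the symmetric model only exists at genus $2g$; your route buys a flip without ever writing down the swapping isotopy, at the price of leaning on well-definedness of stabilization. Two caveats. The step you call routine --- that the $g$-fold stabilization of $(H_2,H_1)$ is the orientation-reverse of that of $(H_1,H_2)$ --- is exactly side-independence of stabilization (attaching the trivial handle to either handlebody yields isotopic ordered splittings); this is classical (it amounts to the flippability of the genus-one splitting of $\mathbb{S}^3$ and is part of the well-definedness needed even to state Reidemeister--Singer), but it is where the geometric content of your flip is absorbed, so cite it or prove it rather than dismissing it. And in your step (iii), the destabilizing disk in $V$ is most cleanly the parallelism rectangle swept out when a loop $\ell\subset\Gamma_2$ is pushed onto $\Sigma$ (a vertical $a\times I$ with $a$ the push-off of $\ell$ cut open at the neck): with that choice its boundary meets the meridian of $N(\ell)$ exactly once and the intermediate complements are visibly handlebodies, so the induction you outline goes through.
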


The bound from Proposition \ref{maxx} comes from considering two parallel copies of $\Sigma$ joined by a neck,  which is itself a Heegaard surface obtained from $\Sigma$ after $g$ stabilizations.   

It had been expected \cite{Ki} that in general at most one stabilization would suffice to give a flippable splitting,  but examples saturating the upper bound in Proposition \ref{maxx} were obtained by Hass-Thompson-Thurston \cite{HTT}.   

Let us now discuss the application of these topological ideas to the geometric problem of finding minimal surfaces.  Given a Heegaard surface $\Sigma\subset M$,  we can consider the set $\Pi_\Sigma$ of all Heegaard sweepouts $\{\Sigma_t\}_{t\in [-1,1]}$ which degenerate as $t\rightarrow 1$ and $t\rightarrow -1$ to the spines in the respective handlebodies (see Section \ref{prelim} for the precise statement).   We then have the following \emph{width} associated to $\Sigma$
\begin{equation}
\omega(M,\Sigma) = \inf_{\Lambda_t\in\Pi_\Sigma} \sup_{t\in [-1,1]} \mathcal{H}^2(\Lambda_t).
\end{equation}
It follows from the Isoperimetric Inequality that $\omega(M,\Sigma)>0$ and the Min-max theorem guarantees a minimal surface of index at most $1$ with total area $\omega(M,\Sigma)$.  When $M$ has positive Ricci curvature and $\Sigma$ realizes the Heegaard genus of $M$,  it follows from \cite{KMN} (Theorem 1.1) that there is an index $1$ minimal surface with area equal to $\omega(M,\Sigma)$. 

%\begin{defn}
%For $\Sigma$ a Heegaard surface in $M$,  let $\omega(M, \Sigma)$ denote the min-max width associated to one-parameter Heegaard sweepouts by surfaces isotopic to $\Sigma$.   
%\end{defn}

Let us a call a stationary integral varifold \begin{equation} V= n_1\Gamma_1+n_2\Gamma_2+...+n_k\Gamma_k,\end{equation} which arises from some (possibly higher-parameter) min-max procedure a  \emph{min-max minimal surface}.  The $\{n_i\}_{i=1}^k$ are positive integers and the $\{\Gamma_i\}_{i=1}^k$ are pairwise disjoint embedded minimal surfaces.  
The \emph{mass} $||V||$ of $V$ is taken to be the sum of the areas of the surfaces $\{\Gamma_i\}_{i=1}^n$ weighted according to their multiplicities.  The genus is taken to be sum of the genera of each $\{\Gamma_i\}_{i=1}^n$ weighted according to their multiplicities (see the left hand side of \eqref{genusbound}) and the \emph{Morse index} of $V$ is taken to be equal to $\sum_{i=1}^k \mbox{index}(\Gamma_k)$.  

Finally,  we need the notion of optimal foliations of a three-manifold (\cite{HK}).  

\begin{defn} An  \emph{optimal foliation by genus $g$ surfaces} is a one-parameter Heegaard sweepout $\{\Sigma_t\}_{t\in [-1,1]}$ with the further properties that
\begin{enumerate}
\item $\Sigma_0$ is an index $1$ minimal surface of genus $g$ and area $\omega(M, \Sigma_0)$,  
\item $\mathcal{H}^2(\Sigma_t) \leq \mathcal{H}^2(\Sigma_0)-Ct^2$ for all $t$ and some $C>0$
\item The family $\{\Sigma_t\}_{t\in (0,1)}$ foliates $M\setminus (\Sigma_0\cup\Sigma_1)$.  
\end{enumerate}
\end{defn}

The following is our general existence result. 

\begin{thm}[Flipping optimal  foliations]\label{flip2}
Let $(M^3,g)$ be a Riemannian three-manifold and let $\{\Sigma_t\}_{t\in [-1,1]}$ be an optimal foliation of $M$ by genus $g$ Heegaard surfaces with $\Sigma_0$ realizing $\omega(M, \Sigma_0)$.   Let 
\begin{equation}
n : = Flip(M,\Sigma_0) \mbox{ or } 2g. 
\end{equation}
Then at least one of the following holds
\begin{enumerate}
\item   $\omega(M,  S_{n-g}(\Sigma_0))< \omega(M, \Sigma_0)$,  in which case $M$ admits a min-max minimal surface of index at most $1$ with area equal to $\omega(M,  S_{n-g}(\Sigma_0))$ and genus at most $n$.
\item  $\omega(M,  S_{n-g}(\Sigma_0))= \omega(M, \Sigma_0)$ and $M$ admits infinitely many min-max minimal surfaces of genus at most $n$ and area equal to $\omega(M, \Sigma_0)$.  
\item  $\omega(M,  S_{n-g}(\Sigma_0))= \omega(M, \Sigma_0)$ and $M$ admits a min-max minimal surface $\Gamma$ so that $\Gamma\neq n\Sigma_0$ for any $n$,  $\mbox{index}(\Gamma)\leq 2$, $\mbox{genus}(\Gamma)\leq n$, $||\Gamma||>|\Sigma_0|$ and if $n=2g$ also  \begin{equation}  ||\Gamma|| < 2|\Sigma_0|.\end{equation}  
\end{enumerate}
If in addition $M$ has positive Ricci curvature,  then in case (2) there holds $n=g=Flip(M,\Sigma_0)$ and the purported infinitely many min-max minimal surfaces are connected and have genus $g$.  
\end{thm}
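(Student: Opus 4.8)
\medskip
\noindent\textbf{Proof proposal.} The plan is to split according to whether stabilizing the optimal foliation strictly lowers the width, and, when it does not, to convert a flipping isotopy of $\Sigma_0' := S_{n-g}(\Sigma_0)$ into a two-parameter family and run a two-parameter min-max. The inequality $\omega(M,S_{n-g}(\Sigma_0))\le\omega(M,\Sigma_0)$ is elementary: given $\varepsilon>0$, attach $n-g$ unknotted $1$-handles of area $<\varepsilon$ to every leaf of the optimal foliation $\{\Sigma_t\}$, producing a sweepout in $\Pi_{S_{n-g}(\Sigma_0)}$ of width $\le\mathcal{H}^2(\Sigma_0)+\varepsilon$, and let $\varepsilon\to0$. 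If this inequality is \emph{strict}, we are in case~(1): applying the Simon--Smith min-max theorem (\cite{SS}, \cite{CD}, \cite{DP}) to $\Pi_{S_{n-g}(\Sigma_0)}$, with the standard Morse-index bound (following \cite{MN2}) and the genus bound (following \cite{K}) for sweepouts by genus-$n$ surfaces, produces a min-max minimal surface of index $\le 1$, mass $\omega(M,S_{n-g}(\Sigma_0))$ and genus $\le n$. So from now on we may assume $\omega(M,S_{n-g}(\Sigma_0))=\omega(M,\Sigma_0)=:W$.

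Since $\Sigma_0'$ is flippable --- by the definition of $Flip(M,\Sigma_0)$ together with Theorem~\ref{rs} when $n=Flip(M,\Sigma_0)$, and by the construction of two parallel copies joined by a thin neck underlying Proposition~\ref{maxx} when $n=2g$ --- there is an orientation-preserving ambient isotopy $\{\varphi_s\}_{s\in[0,1]}$ with $\varphi_0=\mathrm{id}$ and $\varphi_1(\Sigma_0')=\Sigma_0'$ with reversed orientation. Using $\{\varphi_s\}$ I would assemble a two-parameter sweepout $\{\Sigma_{s,t}\}_{(s,t)\in[0,1]\times[-1,1]}$ whose $t=\pm1$ slices are the two spines (held fixed in $s$), whose $s=0$ slice is the stabilized optimal foliation $\{S_{n-g}(\Sigma_t)\}_t$, and whose $s=1$ slice is its $\varphi_1$-image, i.e.\ the ``flipped'' sweepout, with $\{\varphi_s\}$ interpolating in $s$. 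Every slice has genus $\le n$; every $s$-slice is an admissible sweepout in $\Pi_{\Sigma_0'}$, so the associated two-parameter width $W_2$ obeys $W_2\ge W$; and --- the crucial topological point --- this is a genuine two-parameter sweepout (its sweepout map detects the square of the fundamental cohomology class of the space of cycles) precisely because $\varphi_1$ exchanges the two handlebodies, which is where the topology of Heegaard splittings enters (cf.\ \cite{HTT}, \cite{Ba}). When $n=2g$ I would additionally exploit property~(2) of the optimal foliation: arrange the two parallel copies to lie on disjoint leaves $\Sigma_a$, $\Sigma_b$ with $a<0<b$, so that the worst slice has area at most $\mathcal{H}^2(\Sigma_a)+\mathcal{H}^2(\Sigma_b)+o(1)\le 2\mathcal{H}^2(\Sigma_0)-C(a^2+b^2)+o(1)$; carried out carefully this yields $W_2<2\mathcal{H}^2(\Sigma_0)$ in that case.

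The alternative between cases~(2) and~(3) is then the Lusternik--Schnirelmann mechanism of Marques--Neves. If $W_2>W$, the two-parameter min-max --- with index bound $\le2$ and genus bound $\le n$ inherited from the genus-$n$ slices (as in \cite{K}, \cite{MN2}) --- produces a min-max minimal surface $\Gamma$ with $\mathrm{index}(\Gamma)\le2$, $\mathrm{genus}(\Gamma)\le n$ and $\|\Gamma\|=W_2>W=\mathcal{H}^2(\Sigma_0)$; since $\|\Gamma\|>\mathcal{H}^2(\Sigma_0)$ and, when $n=2g$, also $\|\Gamma\|<2\mathcal{H}^2(\Sigma_0)$, the varifold $\Gamma$ cannot be an integer multiple of $\Sigma_0$ --- for general $n$ one excludes $\Gamma=m\Sigma_0$ by the standard catenoid-neck area improvement at a maximizing slice, which would contradict $W_2>W$ --- and this is case~(3). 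If instead $W_2=W$, then because the family is an essential two-parameter sweepout that nonetheless already realizes the one-parameter width, the Lusternik--Schnirelmann argument forces infinitely many stationary integral varifolds of mass $W$, each of genus $\le n$ by the genus estimate; this is case~(2).

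Finally, suppose $\mathrm{Ric}>0$ and case~(2) holds. By Frankel's property any two closed minimal surfaces in $M$ intersect, so each of the infinitely many varifolds is $m\Gamma$ with $\Gamma$ connected; multiplicity-one for min-max in positive Ricci curvature (via a catenoid estimate, cf.\ \cite{KMN}) forces $m=1$, and \cite{KMN} together with the genus estimate identifies each as a connected minimal surface of genus $g$ and area $\mathcal{H}^2(\Sigma_0)=\omega(M,\Sigma_0)$. Since an accumulation of infinitely many such surfaces of a single fixed area and genus is incompatible with the standard compactness of minimal surfaces in a positively curved manifold unless no stabilization was used, one concludes $Flip(M,\Sigma_0)=g$, hence $n=g$ for the choice $n=Flip(M,\Sigma_0)$. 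I expect the crux to be the construction in the second paragraph --- realizing the flip as a two-parameter family that is simultaneously of genus $\le n$, obeys the sharp ceiling $W_2<2\mathcal{H}^2(\Sigma_0)$ when $n=2g$, and is topologically essential so that the Lusternik--Schnirelmann alternative is genuine --- together with running the two-parameter min-max with simultaneous index and genus control; the positive-Ricci refinement, in particular pinning $Flip(M,\Sigma_0)=g$, is the other delicate point.
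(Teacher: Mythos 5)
Your overall architecture matches the paper's (dichotomy on $\omega(M,S_{n-g}(\Sigma_0))$ versus $\omega(M,\Sigma_0)$; a two-parameter family built from the flip; two-parameter min-max with index $\leq 2$ and genus $\leq n$ for case (3); an area ceiling $<2|\Sigma_0|$ when $n=2g$), but the two places you yourself flag as the crux are exactly where your proposal has genuine gaps. First, the width-collapse case: when $W_2=W$ the min-max theorem cannot even be invoked (condition \eqref{isbigger} fails), and in the smooth Simon--Smith setting with controlled genus there is no off-the-shelf Lusternik--Schnirelmann theorem to quote. The paper has to manufacture it: (i) using the equality $\omega(M,S_{n-g}(\Sigma_0))=\omega(M,\Sigma_0)$ together with Pitts' combinatorial deformation (cf.\ \cite{CD}) it shows that any tight two-parameter family contains a path of slices, crossing the parameter square from the side containing $\Sigma_0$ to the side containing its reverse, all of whose surfaces lie $\varepsilon$-close to the set $\mathcal{S}$ of stationary integral varifolds of mass $W$ and genus $\leq n$ (otherwise a vertical slice would be a minimizing genus-$n$ sweepout that is not almost minimizing in annuli, forcing the stabilized width to drop); and (ii) Lemma \ref{lus}, an isoperimetric bisection argument showing a path of Heegaard surfaces joining $\Sigma_0$ to itself with the handlebodies exchanged cannot remain near finitely many surfaces. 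Your phrase ``the Lusternik--Schnirelmann argument forces infinitely many stationary varifolds'' supplies neither ingredient, and without (i) you cannot even assert that the mass-$W$ varifolds you get have genus $\leq n$. Second, your $n=2g$ area bound is obtained by keeping the two parallel copies on disjoint leaves $\Sigma_a,\Sigma_b$ with $a<0<b$; but an essential family realizing the flip must slide one copy past the other, so the two copies necessarily coincide along a diagonal of the parameter domain, where the configuration is $\Sigma_t$ with multiplicity $2$ and area exactly $2|\Sigma_0|$ at $t=0$. The paper opens this degenerate locus with the Catenoid Estimate of \cite{KMN} to push the area strictly below $2|\Sigma_0|$ (and then blows up a corner of the triangle so every slice is a genus-$2g$ Heegaard sweepout); your sketch omits this entirely.

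Two further points. For $n=Flip(M,\Sigma_0)<2g$ you exclude $\Gamma=m\Sigma_0$ by a ``catenoid-neck area improvement,'' which is not justified as stated; the paper's exclusion is the genus bound with multiplicity \eqref{genusbound}: $m\geq 2$ would force genus-with-multiplicity $mg\geq 2g>n$, while $m=1$ contradicts $\|\Gamma\|>|\Sigma_0|$. And the positive-Ricci refinement in your last paragraph does not work: case (2) is not ruled out by compactness (infinitely many surfaces of fixed area and bounded genus are exactly what case (2) asserts), and the genus estimate alone gives genus $\leq n$, not $=g$. The paper instead uses Choi--Schoen compactness \cite{CS2} to get a $\delta_0$ such that elements of $\mathcal{S}$ within $\delta_0$ in the $\mathcal{F}$-metric are normal graphs over one another, extracts from the near-$\mathcal{S}$ path a finite chain of minimal surfaces starting at $\Sigma_0$ and ending at $\Sigma_0$ with reversed orientation in which consecutive surfaces are graphs, and interpolates smoothly along the chain; this exhibits a flip of $\Sigma_0$ itself, giving $Flip(M,\Sigma_0)=g$, connectedness, and genus exactly $g$. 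You would need to supply arguments of this kind (or substitutes) at each of these points for the proposal to close.
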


Theorem \eqref{flip2} implies that a manifold with an optimal Heegaard foliation always admits a second minimal surface of controlled topological type.

Note that if $M$ is endowed with a bumpy metric,  then item (2) cannot occur.   Recently Ambrozio-Marques-Neves \cite{AMN}  have found Zoll-type metrics near the round sphere for which,  in our setting,  item (2) occurs.   

Often we will apply Theorem \ref{flip2} to the case where $g$ is the Heegaard genus of $M$ and there are no minimal surfaces with area smaller than that of $\Sigma_0$.  In this situation,  item (1) also does not occur. 

The cases $n=Flip(M,\Sigma_0)$ and $n=2g$ could produce distinct minimal surfaces.  In the former case, one has better topological control but no area control, while in the latter case, one has area control but worse topological control.

In the case when $M$ is diffeomorphic to a three-sphere,  a second minimal two-sphere was obtained in \cite{HK}.  In fact, in the spherical case,  the two cases of Theorem \ref{flip2} - when $n=Flip(M, \mathbb{S}^2)$ and $n=2g =0$ \emph{coincide} as one can find a three-parameter family interpolating between the two two-parameter families considered here.   

The requirement in Theorem \ref{flip2} that one have an optimal foliation can likely be removed in the absence of stable minimal surfaces (when for instance,  $M$ has positive Ricci curvature).  Namely,  one should be able to use the mean curvature flow with surgeries to obtain an optimal foliation in any such manifold (see for instance \cite{HK} and \cite{LM})).  However,  in our applications the optimal foliation is readily available.

%\begin{cor}\label{simple}
%Suppose $M^3$ has positive Ricci curvature and admits an index $1$,  genus $g$ minimal Heegaard surface $\Sigma_0$ in an optimal foliation and that $M^3$ admits no minimal surface of smaller area than $|\Sigma_0|$.   

%Then either 
%\begin{enumerate} \item There are infinitely many minimal surfaces of area $\Sigma_0$ and genus $g$ and $\Sigma_0$ is flippable
%\item $M$ admits a second minimal surface %$\Gamma$ (distinct from $\Sigma_0$) with index at most $2$ and genus 
%\begin{equation}
%g \leq g(\Gamma) \leq 2g, 
%\end{equation}
%and areas 
%\begin{equation}
%|\Sigma_0|<|\Gamma| < 2|\Sigma_0|.
%\end{equation}
%\end{enumerate}
%\end{cor}

Considering Bachman's Conjecture \ref{bach} in the case that the surface $\Sigma_2$ is equal to $\Sigma_1$ but with the opposite orientation,  we have the following:

\begin{conjecture}\label{2}
Let $M$ be a non-Haken $3$-manifold endowed with a bumpy\footnote{Bumpiness is necessary even as one can see from the example of the round sphere.} metric.  If $\Sigma_1$ is a strongly irreducible Heegaard surface in $M$ then there exists an index $2$ minimal surface with genus equal to $Flip(M,\Sigma)$.  
\end{conjecture}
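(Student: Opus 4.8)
\emph{Proof proposal.} The strategy is to obtain the conjectured surface as the min-max surface produced in case (3) of Theorem \ref{flip2}, and then to upgrade the inequalities $\mathrm{genus}(\Gamma)\le Flip(M,\Sigma)$ and $\mathrm{index}(\Gamma)\le 2$ to equalities using the hypotheses that $M$ is non-Haken, $\Sigma_1$ is strongly irreducible, and the metric is bumpy.

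First I would produce the optimal foliation needed as input. By \cite{KLS} a strongly irreducible Heegaard surface $\Sigma_1$ may be isotoped to an embedded minimal surface $\Sigma_0$ of index at most $1$ with area $\omega(M,\Sigma_0)$; in the bumpy non-Haken setting (no stable minimal surfaces of the relevant type) the index is exactly $1$, and one constructs a genuine optimal foliation $\{\Sigma_t\}$ with the quadratic decay $\mathcal H^2(\Sigma_t)\le\mathcal H^2(\Sigma_0)-Ct^2$ by running mean curvature flow with surgery out of surfaces near $\Sigma_0$, as indicated in the remarks following Theorem \ref{flip2} (cf. \cite{HK}, \cite{LM}). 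I would also normalize so that $\Sigma_0$ is of least area among Heegaard surfaces in the stabilization classes appearing, which rules out case (1) of Theorem \ref{flip2}.

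Next I would apply Theorem \ref{flip2} with $n=Flip(M,\Sigma_0)$. Bumpiness excludes case (2): the purported infinitely many min-max minimal surfaces all have genus at most $n$, index at most $2$, and the same area, which is impossible for a bumpy metric since such surfaces are isolated and their areas discrete. Hence case (3) holds, producing a min-max minimal surface $\Gamma$ with $\Gamma\ne n\Sigma_0$, $\mathrm{index}(\Gamma)\le 2$, $\mathrm{genus}(\Gamma)\le Flip(M,\Sigma_0)$, and $\|\Gamma\|>|\Sigma_0|$. The main work is then the two lower bounds. For the genus, I would argue that the two-parameter family realizing $\omega(M,S_{n-g}(\Sigma_0))$ sweeps out $M$ by surfaces isotopic to the flippable Heegaard surface $S_{n-g}(\Sigma_0)$ of genus exactly $n$, and that $\omega$ strictly increases under destabilization below the flip genus; combined with the genus bound in the min-max theorem this forces $\Gamma$ to be connected, embedded as a Heegaard surface, and of genus precisely $Flip(M,\Sigma_0)$. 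For the index, I would invoke Bachman's theory: $S_{n-g}(\Sigma_0)$ is a critical surface (topological index $2$) in the non-Haken manifold $M$, and the expectation is that $\omega(M,S_{n-g}(\Sigma_0))$ is a genuinely two-parameter critical value, not attained by any one-parameter sweepout in its isotopy class; a Lusternik--Schnirelmann argument as in \cite{HK}, together with $\Gamma\ne n\Sigma_0$ and the bumpiness of the metric, then forces $\mathrm{index}(\Gamma)=2$.

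I expect the principal obstacle to be exactly this last point: rigorously matching Bachman's topological index $2$ (critical surfaces) with geometric Morse index $2$, and in particular ruling out that the min-max limit $\Gamma$ carries multiplicity or splits off extra unstable components — either of which would corrupt both the exact genus count and the index count. A secondary, more technical, obstacle is constructing the optimal foliation in full generality with no curvature hypothesis, which is why the bumpy non-Haken hypothesis (forcing finiteness/isolation of low-index minimal surfaces) is used throughout.
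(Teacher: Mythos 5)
The statement you are trying to prove is stated in the paper as Conjecture \ref{2}, not as a theorem: the paper does not prove it, and only confirms its \emph{existence} part for round lens spaces (Theorem \ref{mininlens}), where the argument leans on very special inputs --- the classification of minimal tori and Klein bottles in Proposition \ref{lens}, which in turn uses the resolutions of the Willmore conjecture \cite{MN} and the Lawson conjecture \cite{B}. The author explicitly flags this as ``perhaps indicating the delicacy of Conjecture \ref{2}.'' Your proposal is therefore not a proof but a programme, and several of its steps are exactly the open points. Concretely: (i) you cannot simply ``normalize'' away case (1) of Theorem \ref{flip2}; if $\omega(M,S_{n-g}(\Sigma_0))<\omega(M,\Sigma_0)$ the min-max surface you get has genus at most $n$ and index at most $1$, and nothing forces it to have genus $Flip(M,\Sigma)$ or index $2$, nor is there an obvious iteration that terminates with a least-area representative in every stabilization class. (ii) Your genus lower bound rests on the claim that $\omega$ ``strictly increases under destabilization below the flip genus''; the paper only proves the weak monotonicity $\omega(M,S_k(\Sigma))\le\omega(M,\Sigma)$ (Corollary \ref{onlyless}), and no strict monotonicity or relation to $Flip(M,\Sigma)$ is known. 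Even granting case (3), the min-max theorem gives only $\mbox{genus}(\Gamma)\le n$ with possible genus loss and multiplicity; promoting this to equality is tied to the open Multiplicity One Conjecture \ref{mult1}, and in the lens space case the paper sidesteps it only because the classification plus the area window $2\pi^2/p<\|\Gamma\|<4\pi^2/p$ excludes everything of genus $\le 1$. (iii) The index statement $\mbox{index}(\Gamma)=2$ is likewise not established: \eqref{indexbound} gives only the upper bound, and your appeal to Bachman's topological index to force the lower bound is circular, since the identification of topological index with geometric Morse index is precisely Bachman's conjecture. Indeed even in the lens space case the paper only asserts $\mbox{index}(M_{p,q})\le 2$ and says it \emph{expects} equality.

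A secondary but real gap is the input foliation: in an arbitrary non-Haken bumpy manifold there may be stable minimal surfaces, so the remark after Theorem \ref{flip2} about producing an optimal foliation via mean curvature flow with surgeries (which the paper itself only conjectures ``can likely'' be done in the \emph{absence} of stable surfaces, e.g.\ positive Ricci) does not apply, and \cite{KLS} gives you a minimal surface isotopic to $\Sigma_1$ but not the optimal foliation with the quadratic area decay that Theorem \ref{flip2} requires. The part of your argument that does match the paper is the use of bumpiness to exclude case (2); the rest identifies the right difficulties (you name them yourself in your last paragraph) but does not resolve them, so what you have is a plausible strategy consistent with the paper's framework, not a proof of the conjecture.
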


We then specialize Theorem \ref{flip2} to round lens spaces $L(p,q)$ and confirm the existence part of Conjecture \ref{2} for such manifolds.   The arguments require both the resolution of the Willmore conjecture by Marques-Neves \cite{MN} and also the resolution of the Lawson conjecture by Brendle \cite{B}  (perhaps indicating the delicacy of Conjecture \ref{2}).

The spaces $L(p,q)$ have Heegaard genus one and are flippable only in certain situations:

\begin{thm}[Bonahon-Otal \cite{BO} (1983)\footnote{Our work gives a new geometric proof of this theorem (see Remark \ref{geomnew}).}]\label{notflippable}
A genus $1$ Heegaard splitting of the lens space $L(p,q)$ is flippable if and only if $q\in\{1,p-1\}$.  
\end{thm}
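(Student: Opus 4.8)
The plan is to prove both implications geometrically, working with the round metric on $L(p,q)$ (the quotient of round $S^3$ by $\mathbb{Z}_p = \langle \mathrm{diag}(\zeta,\zeta^q)\rangle$, $\zeta = e^{2\pi i/p}$), the image $\bar T$ of the Clifford torus $\{|z_1|=|z_2|\}$ as the genus-one Heegaard surface, Theorem \ref{flip2}, and Brendle's resolution of the Lawson conjecture \cite{B}.

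For the \emph{if} direction: when $q=1$ the group $\mathbb{Z}_p$ acts by scalars, so the coordinate swap $\tau\colon (z_1,z_2)\mapsto (z_2,z_1)$ --- an orientation-preserving element of $U(2)$ --- commutes with the $\mathbb{Z}_p$-action and descends to a diffeomorphism $\bar\tau$ of $L(p,1)$ fixing $\bar T$ and exchanging the two solid tori. Since $U(2)$ is connected and every element of $U(2)$ commutes with the scalar $\mathbb{Z}_p$, any path in $U(2)$ from $I$ to $\tau$ descends to an ambient isotopy of $L(p,1)$ from the identity to $\bar\tau$; hence $L(p,1)$ is flippable. When $q=p-1$ I would use that $L(p,p-1)$ is orientation-reversing diffeomorphic to $L(p,1)$, together with the fact that flippability of the genus-one splitting is preserved under orientation-reversing diffeomorphisms of lens spaces: conjugating the flipping isotopy of $L(p,1)$ by such a diffeomorphism $h$ gives an orientation-preserving isotopy (the two orientation reversals cancel), and $h$ carries the genus-one Heegaard torus of $L(p,1)$ to a genus-one Heegaard torus of $L(p,p-1)$, which by uniqueness of genus-one Heegaard splittings of lens spaces \cite{BO} is ambient isotopic to the standard one.

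For the \emph{only if} direction --- the substantive part --- suppose $q\notin\{1,p-1\}$ (so $p\ge 5$) and, for contradiction, that the genus-one splitting of $L(p,q)$ is flippable; then $Flip(L(p,q),\bar T)=g=1$. In the round metric, $\bar T$ is a minimal torus, and a short Laplace eigenvalue computation on the flat quotient torus shows that none of the unstable (nor the null) modes of the Clifford torus in $S^3$ are $\mathbb{Z}_p$-invariant when $q\not\equiv\pm 1 \pmod p$, so $\bar T$ is a nondegenerate index-one minimal surface; moreover the standard foliation $\{|z_1|^2=\mathrm{const}\}/\mathbb{Z}_p$ has area $\tfrac{2\pi^2}{p}\sqrt{1-t^2}$, which realizes the width and gives an optimal foliation by genus-one surfaces (here the identification of $\omega(L(p,q),\bar T)$ with $\tfrac{2\pi^2}{p}$ uses the Simon--Smith genus bound \cite{K} together with the classification below). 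Applying Theorem \ref{flip2} with $n=1$, so that $S_{n-g}(\bar T)=\bar T$, rules out alternative (1) immediately. Alternatives (2) and (3) both produce a min-max minimal surface of genus at most $1$, built from embedded minimal tori and spheres. But round $L(p,q)$ with $p\ge 3$ contains no minimal two-sphere (it would lift to a great two-sphere in $S^3$, and $\mathbb{Z}_p$ preserves no great two-sphere, nor can $S^3$ contain $p$ disjoint ones), while by Brendle's theorem \cite{B} the only embedded minimal torus in $S^3$ is the Clifford torus, whose only $\mathbb{Z}_p$-invariant representative for $q\not\equiv\pm 1$ descends to $\bar T$. Hence $\bar T$ is the unique minimal surface of genus $\le 1$ in $L(p,q)$, with area $\tfrac{2\pi^2}{p}$, which contradicts the infinitude asserted in (2) and the mass and non-multiplicity requirements ($\|\Gamma\|>|\bar T|$, genus $\le 1$, $\Gamma\neq n\bar T$) in (3). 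Therefore the splitting is not flippable.

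The main obstacle is the passage from flippability to the applicability of Theorem \ref{flip2}: one must verify carefully that $(L(p,q),\mathrm{round})$ with $\bar T$ meets all three conditions of an optimal foliation --- in particular the index-one property (the eigenvalue count, where the hypothesis $q\not\equiv\pm 1$ enters) and that $\bar T$ realizes $\omega(L(p,q),\bar T)$ --- and that the classification of low-genus minimal surfaces in round $L(p,q)$ is complete, which rests on Brendle's theorem (and, for the area comparisons needed elsewhere in the paper, on the Willmore bound of Marques--Neves \cite{MN}) together with ruling out minimal spheres, $\mathbb{RP}^2$'s and Klein bottles via the homology of lens spaces. The exceptional small cases $p\le 3$, where every admissible $q$ already lies in $\{1,p-1\}$, are covered by the \emph{if} direction.
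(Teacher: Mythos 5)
Your route coincides with the paper's: the theorem is quoted from Bonahon--Otal \cite{BO}, and the ``new geometric proof'' the paper actually offers (Remark \ref{geomnew}) is exactly your scheme --- exhibit the flip of $L(p,1)$ explicitly, and for $q\notin\{1,p-1\}$ apply Theorem \ref{flip2} with $n=Flip=g=1$ against the classification in Proposition \ref{lens} (Brendle plus the Willmore bound). Your \emph{if} direction is correct (for $q=p-1$ you could avoid re-quoting \cite{BO} for uniqueness of genus-one splittings by conjugating with the explicit orientation-reversing isometry $(z,w)\mapsto(\bar z,w)$, which intertwines $\mathbb{Z}_p^{1}$ and $\mathbb{Z}_p^{p-1}$ and preserves the standard Clifford torus), and your index/nondegeneracy computation for $\bar T$ and the identification $\omega(L(p,q),\bar T)=2\pi^2/p$ are fine.

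The gap is in the \emph{only if} direction, at the step ``alternatives (2) and (3) produce a min-max minimal surface of genus at most $1$, built from embedded minimal tori and spheres.'' The genus bound \eqref{genusbound} also allows non-orientable components: a Klein bottle (genus $2$ in the crosscap count) contributes $n_i/2$, so it may occur with multiplicity up to $2$ in a genus-one min-max limit. Projective planes are indeed absent for $p>2$, but by Proposition \ref{lens}(4) the lens spaces $L(4m,2m\pm 1)$ --- which satisfy $q\notin\{1,p-1\}$ for every $m\geq 2$ --- contain an $\mathbb{S}^1$-family of embedded minimal Klein bottles, each of area exactly $2\pi^2/p$. Hence your assertion that $\bar T$ is the unique minimal surface of (weighted) genus $\le 1$ is false there, and the contradiction collapses in two places: (i) case (2) is not contradicted, since the Klein bottle family already supplies infinitely many minimal surfaces of area $\omega$ and weighted genus $\le 1$; to fix this you must invoke the positive-Ricci addendum of Theorem \ref{flip2}, which forces the case-(2) surfaces to be connected of genus $g=1$, i.e.\ tori, after which Proposition \ref{lens}(1),(5) applies; (ii) in case (3) the varifold could a priori be a Klein bottle with multiplicity $2$ (mass $4\pi^2/p>|\bar T|$, weighted genus $1$, not a multiple of $\bar T$, and with no evident index obstruction), and because $n=Flip=1\neq 2g$ the theorem provides no upper bound $\|\Gamma\|<2|\Sigma_0|$ to exclude it, so an additional argument (e.g.\ a catenoid-estimate/multiplicity argument of the type the paper uses for Klein bottles in $L(4p,2p\pm1)$) is needed. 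To be fair, Remark \ref{geomnew} is only a sketch and elides the same point by speaking of ``a second minimal torus''; but as a complete proof your step fails precisely on the infinite family $L(4m,2m\pm1)$.
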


We prove the following 

\begin{thm}[Genus $2$ minimal surfaces in lens spaces]\label{mininlens}
Any round lens space $L(p,q)$ with $q\notin\{1,  p-1\}$ admits a genus $2$ minimal surface $M_{p,q}$ with index at most $2$ and area satisfying \begin{equation}\label{how}
\frac{2\pi^2}{p} < |M_{p,q}| < \frac{4\pi^2}{p}.
\end{equation}
\end{thm}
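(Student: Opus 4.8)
The plan is to obtain Theorem \ref{mininlens} by applying the general machinery of Theorem \ref{flip2} to the round lens space $M = L(p,q)$, with $\{\Sigma_t\}$ the natural Heegaard foliation by Clifford-type tori. First I would exhibit an optimal foliation of $L(p,q)$ by genus $1$ Heegaard surfaces: since $L(p,q)$ with its round metric is covered by the round $\mathbb{S}^3$, the image of the Clifford torus descends to a flat minimal torus $\Sigma_0 \subset L(p,q)$ which (by symmetry and the foliation of $\mathbb{S}^3$ by the standard torus sweepout, passing to the quotient) sits at the top of a genus $1$ Heegaard sweepout whose maximal area is $\omega(M,\Sigma_0) = 2\pi^2/p$. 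The fact that $\Sigma_0$ has index $1$ and realizes $\omega(M,\Sigma_0)$ and that the family satisfies the $-Ct^2$ area decay should follow from the corresponding facts on $\mathbb{S}^3$ together with the resolution of the Willmore conjecture \cite{MN}, which identifies the Clifford torus as the area-minimizing genus $1$ surface and pins down $\omega$ for the torus sweepout.

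Next I would invoke Theorem \ref{notflippable} (Bonahon--Otal): since $q\notin\{1,p-1\}$, the genus $1$ Heegaard splitting of $L(p,q)$ is \emph{not} flippable, so $Flip(M,\Sigma_0)\geq 2$; combined with Proposition \ref{maxx}, which gives $Flip(M,\Sigma_0)\leq 2g = 2$, we conclude $Flip(M,\Sigma_0) = 2$. Hence in Theorem \ref{flip2} both choices $n = Flip(M,\Sigma_0)$ and $n = 2g$ equal $2$, and the theorem produces a min-max minimal surface of genus at most $2$ and index at most $2$. It remains to rule out the degenerate alternatives and to get the genus and area bounds. Case (1) of Theorem \ref{flip2} would give a minimal surface of area $< 2\pi^2/p$; but by the resolution of the Willmore conjecture (applied downstairs, or via the lift) there is no closed minimal surface in $L(p,q)$ of area below $2\pi^2/p$, so case (1) is excluded, which also shows $\omega(M, S_1(\Sigma_0)) = \omega(M,\Sigma_0)$. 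Case (2) would give infinitely many minimal surfaces of genus at most $2$ and area exactly $2\pi^2/p$; here I would use Brendle's resolution of the Lawson conjecture \cite{B} — the only embedded minimal torus in a round lens space is (the descent of) the Clifford torus — together with a lower bound on the area of any genus $2$ minimal surface, to contradict the existence of infinitely many such surfaces (a bumpiness-type or finiteness argument in the compact area range $[2\pi^2/p, 4\pi^2/p]$). This forces case (3): a min-max minimal surface $\Gamma$ with $\Gamma \neq n\Sigma_0$, $\mathrm{index}(\Gamma)\leq 2$, $\mathrm{genus}(\Gamma)\leq 2$, and $2\pi^2/p = |\Sigma_0| < ||\Gamma|| < 2|\Sigma_0| = 4\pi^2/p$.

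Finally I would upgrade ``$\mathrm{genus}(\Gamma)\leq 2$'' to ``$\mathrm{genus}(\Gamma) = 2$'' and confirm $\Gamma$ is connected with the stated multiplicity one, giving the surface $M_{p,q}$. The genus cannot be $0$: a minimal sphere would have area $\geq 4\pi/p$ or, more to the point, the only candidates in the area window would force a contradiction with the area bounds and with the structure of the sweepout (the sweepout surfaces are genus $1$, and the Almgren--Pitts/Simon--Smith genus bounds of \cite{K} applied to this two-parameter setting limit how genus can drop or concentrate). The genus cannot be $1$ by Brendle's theorem: the unique embedded minimal torus is $\Sigma_0$ itself (area $2\pi^2/p$), but $||\Gamma|| > |\Sigma_0|$ strictly, and a multiplicity $\geq 2$ copy of $\Sigma_0$ is excluded since $\Gamma\neq n\Sigma_0$; so the only remaining possibility in genus $\leq 2$ is a connected genus $2$ surface, which moreover must appear with multiplicity one (else the area would be $\geq 2|\Sigma_0|$ for a genus $2$ component, or the varifold would reduce to copies of lower genus pieces already excluded). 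I expect the main obstacle to be the careful exclusion of case (2) and the genus identification in case (3): one must show that in the narrow area band $(2\pi^2/p, 4\pi^2/p)$ there are only finitely many minimal surfaces of index $\leq 2$ and genus $\leq 2$ — this is where Brendle's Lawson-type rigidity and a delicate analysis of possible varifold limits (ruling out, e.g., a genus $1$ piece plus a small sphere, or degenerations along torus knots analogous to the $\mathbb{S}^3$ doublings described in the abstract) do the real work. Verifying that the two-parameter min-max value genuinely lies strictly between $|\Sigma_0|$ and $2|\Sigma_0|$ — the strict upper bound coming from a careful competitor built out of the flip construction underlying Proposition \ref{maxx} — is the other delicate point.
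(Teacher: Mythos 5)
Your skeleton is the paper's: build the optimal foliation from the unique Clifford torus in $L(p,q)$, run Theorem \ref{flip2} with $n=2$, kill case (1) by the Willmore bound (no minimal surface of area below $2\pi^2/p$, Proposition \ref{lens}(6)), land in case (3), and read off genus $2$ and multiplicity one from the strict area window together with the classification of minimal tori and Klein bottles. The genuine gap is in your exclusion of case (2). You propose to show there are only finitely many minimal surfaces of genus at most $2$ with area exactly $2\pi^2/p$, via Brendle plus ``a bumpiness-type or finiteness argument in the compact area range.'' Bumpiness is precisely what you do not have: the round lens space is maximally symmetric, and case (2) of Theorem \ref{flip2} exists exactly to accommodate such non-bumpy metrics. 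Worse, the finiteness statement itself fails as stated for the exceptional lens spaces $L(4k,2k\pm 1)$ with $k\geq 2$, which satisfy $q\notin\{1,p-1\}$ and so are covered by the theorem: by Proposition \ref{lens}(4)--(5) these admit an $\mathbb{S}^1$-family of embedded minimal Klein bottles, each of area exactly $2\pi^2/p$, and with multiplicity one such a component is compatible with the genus bound \eqref{genusbound}. To rescue your route you would need an additional argument (for instance a mod $2$ homology/parity argument showing a one-sided surface cannot occur with odd multiplicity as a limit of a sweepout by separating genus $2$ surfaces), which you do not supply.

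The paper excludes case (2) differently, and this is where Bonahon--Otal actually does its work: since the round lens space has positive Ricci curvature, the final clause of Theorem \ref{flip2} says that in case (2) one would have $n=g=Flip(M,\Sigma_0)$, i.e.\ the purported infinitely many surfaces would be connected genus $1$ graphs over one another and the genus $1$ splitting would be flippable, contradicting Theorem \ref{notflippable} when $q\notin\{1,p-1\}$. In your write-up, Bonahon--Otal is invoked only to compute $Flip(M,\Sigma_0)=2$, which is not what is needed (taking $n=2g=2$ already suffices, and is in any case required for the strict upper area bound in case (3)); what is needed is non-flippability itself, fed through the positive-Ricci addendum. With that substitution, the remainder of your argument --- case (1) via Willmore, the strict window $2\pi^2/p<\|\Gamma\|<4\pi^2/p$, Brendle and Proposition \ref{lens} to eliminate tori, Klein bottles with multiplicity, and lower-genus pieces --- agrees with the paper's proof.
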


It would be natural to ask whether $M_{p,q}$ has the lowest area in $L(p,q)$ above that of the Clifford torus when $q\notin\{1, p-1\}$ and also realizes the $2$-width in the volume spectrum $\{\omega_p\}_{p=1}^\infty$ of $L(p,q)$ introduced by Marques-Neves \cite{MN2} in such manifolds.  

We expect the Morse index of $M_{p,q}$ to be equal to $2$.  Since the areas of the surfaces of Theorem \ref{mininlens} are greater than that of the Clifford torus, these minimal surfaces are clearly never isoperimetric. \footnote{Viana \cite{V} solved the isoperimetric problem in lens spaces $L(p,q)$ for large $p$: the solutions are tubes around geodesics or balls.} 

We also prove a partial converse to Theorem \ref{mininlens} for the exceptional lens spaces $L(p,1)$.  
\begin{thm}
For $p$ large there exists no genus $2$ minimal surface in $L(p,1)$ with area less than $4\pi^2/p$.
\end{thm}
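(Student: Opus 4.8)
The plan is to argue by contradiction: suppose $L(p,1)$ contains a genus $2$ minimal surface $\Gamma$ with $|\Gamma| < 4\pi^2/p$ for arbitrarily large $p$. The strategy is to combine three ingredients — a lower area bound forcing $\Gamma$ to be quantitatively close to a doubled Clifford torus, the geometry of $L(p,1)$ as a quotient of the round $\mathbb{S}^3$, and the classification of low-area minimal surfaces — to derive a contradiction with the fact (Theorem \ref{notflippable}) that the genus $1$ splitting of $L(p,1)$ \emph{is} flippable, so that the flipping construction of Theorem \ref{flip2} in this case does not actually produce an independent genus $2$ surface but collapses (via the three-parameter interpolation mentioned in the spherical discussion) to the Clifford torus itself with multiplicity. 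Concretely, I would first lift $\Gamma$ to its preimage $\tilde\Gamma$ in $\mathbb{S}^3$ under the $p$-fold covering $\pi \colon \mathbb{S}^3 \to L(p,1)$; then $\tilde\Gamma$ is a (possibly disconnected) closed embedded minimal surface in the round $\mathbb{S}^3$ with area $|\tilde\Gamma| = p\,|\Gamma| < 4\pi^2$.

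The key step is then to invoke the resolution of the Willmore conjecture by Marques-Neves \cite{MN}: any closed embedded minimal surface in $\mathbb{S}^3$ that is not a great sphere has area at least $2\pi^2$ (the Clifford torus), and in fact the only embedded minimal surface of area in $[2\pi^2, 4\pi^2)$ of positive genus — by the Lawson conjecture (Brendle \cite{B}), the gap above $2\pi^2$ for the next genus-one example, and the area bounds for higher-genus Lawson surfaces — is the Clifford torus itself (area exactly $2\pi^2$). So $\tilde\Gamma$ must be a union of great spheres (total area a multiple of $4\pi$, impossible to match a connected genus $2$ lift for large $p$, since great spheres are $2$-spheres and the deck group $\mathbb{Z}/p$ would have to act freely permuting them, forcing $|\tilde\Gamma| \geq 4\pi p \gg 4\pi^2$) or must contain the Clifford torus as a component. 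In the latter case, since $|\tilde\Gamma| < 4\pi^2 = 2 \cdot 2\pi^2$, $\tilde\Gamma$ can consist of \emph{only one} Clifford torus, possibly together with components of total area $< 2\pi^2$ — but there are none of positive genus and a great sphere already has area $4\pi < 2\pi^2$ only for, wait, $4\pi > 2\pi^2$ is false since $\pi \approx 3.14$, so $4\pi \approx 12.57$ and $2\pi^2 \approx 19.74$: a single great sphere \emph{does} fit. I would rule out great-sphere components by the free-action/disjointness analysis, concluding $\tilde\Gamma$ is a single embedded Clifford torus.

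The main obstacle — and the heart of the argument — is the final step: showing that a single Clifford torus in $\mathbb{S}^3$ cannot be the preimage of a genus $2$ surface in $L(p,1)$ under the covering, for $p$ large. The Euler characteristic bookkeeping is the clean way in: $\chi(\tilde\Gamma) = p\cdot\chi(\Gamma)$, so $0 = \chi(T^2) = p\cdot\chi(\Gamma) = p\cdot(-2)$, which is already absurd for $p \geq 1$. So in fact the contradiction is purely topological once one has pinned down $\tilde\Gamma$ up to the Willmore/Lawson classification — the analytic content is entirely front-loaded into the area estimate and the classification of low-area minimal surfaces in $\mathbb{S}^3$. I would therefore organize the proof as: (i) lift and bound area; (ii) apply \cite{MN} and \cite{B} to classify $\tilde\Gamma$ into ``unions of great spheres and at most one Clifford torus''; (iii) dispose of configurations containing great spheres using that the $\mathbb{Z}/p$-action is free and the components are disjoint and minimal (an isolated great sphere is fixed setwise only if $p \le 2$, and a $\mathbb{Z}/p$-orbit of disjoint great spheres has area $\geq 4\pi p$); (iv) conclude $\tilde\Gamma$ is one Clifford torus and reach the Euler-characteristic contradiction $-2p = 0$. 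The delicate point requiring care is step (ii) — making precise that no embedded minimal surface in $\mathbb{S}^3$ other than great spheres and the Clifford torus has area strictly below $4\pi^2$; this is where one leans on the Willmore lower bound $2\pi^2$ for tori \emph{and} on area estimates for higher-genus Lawson-type surfaces (all of which exceed $4\pi^2$), so the argument genuinely needs both \cite{MN} and the broader min-max/area-gap picture.
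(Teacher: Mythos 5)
There is a genuine gap, and it sits exactly where you flagged the ``delicate point'': step (ii). No classification of embedded minimal surfaces in $\mathbb{S}^3$ with area below $4\pi^2$ into ``great spheres and the Clifford torus'' exists, and in fact the statement is false. The resolution of the Willmore conjecture \cite{MN} only gives the lower bound $2\pi^2$ for any embedded minimal surface other than the equator; it says nothing about which surfaces occupy $[2\pi^2,4\pi^2)$. Brendle \cite{B} only classifies embedded minimal \emph{tori}, and your own Euler-characteristic bookkeeping shows the lift $\tilde\Gamma$ is a closed surface with $\chi(\tilde\Gamma)=-2p$, i.e.\ genus $p+1$ (it is connected by Frankel's theorem), so the torus classification never applies to it. Meanwhile there are plenty of high-genus embedded minimal surfaces with area strictly below $4\pi^2$: the Lawson surfaces $\xi_{g,1}$ have area below $8\pi<4\pi^2$, the Kapouleas--Yang doublings of the Clifford torus have area just under $4\pi^2$, and indeed the surfaces $\tilde M_{p,q}$ constructed in this very paper have genus $p+1$ and area in $(2\pi^2,4\pi^2)$. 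So steps (ii)--(iv) collapse: the lift is precisely the kind of large-genus, sub-$4\pi^2$-area surface that cannot be excluded by any known area gap, and the ``$-2p=0$'' contradiction is never reached.

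The paper's proof has to work harder for exactly this reason, and it does so asymptotically rather than for a single $p$: assuming such surfaces $\Sigma_p$ exist for $p\to\infty$, the lifts $\tilde\Sigma_p$ (genus $p+1$, area $\le 4\pi^2$) converge as varifolds to a union of Hopf fibers; projecting by the Hopf map gives a stationary integral $1$-varifold on $\mathbb{S}^2$ of mass $\le 4\pi$ which, by the analysis of Theorem \ref{findlimit}, can have at most one singular point, hence must be the equator with multiplicity $2$. Thus $\Sigma_p$ looks, for large $p$, like a doubling of the Clifford torus degenerating along a \emph{single} closed $(1,1)$-geodesic, and this configuration is then ruled out either by Ros's two-piece property (a well-chosen great sphere would cut $\Sigma_p$ into at least three pieces) or by a Jacobi-field argument: the normalized difference of the two sheets produces a positive solution of $(\Delta_C+4)J=0$ on $C$ minus one geodesic, depending only on the transverse variable, but every such solution $A\cos(y-x)+B\sin(y-x)$ must vanish on an interval of length $2\pi$. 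If you want to salvage your outline, the lifting and the area bound $p\,|\Gamma|<4\pi^2$ are fine as a first step, but the classification you invoke must be replaced by this limit-varifold analysis plus a rigidity argument near $2C$; there is no way to conclude for a fixed large $p$ by appealing to \cite{MN} and \cite{B} alone.
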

Let $\tilde{M}_{p,q}$ denote the lift of $M_{p,q}$ to $\mathbb{S}^3$.  We prove
\begin{thm}[Distinct lifts]\label{distinct}
For each $p$ large enough, if $L(p,q_1)$ is not isometric to $L(p,q_2)$  then $\tilde{M}_{p,q_1}\neq \tilde{M}_{p,q_2}$ (up to isometries of $\mathbb{S}^3$).
\end{thm}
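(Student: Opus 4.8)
We argue by contradiction. If the theorem fails, there is a sequence $p_j\to\infty$ together with residues $q_1^j,q_2^j$ such that $L(p_j,q_1^j)$ is not isometric to $L(p_j,q_2^j)$, yet there is an isometry $\phi_j$ of $\mathbb{S}^3$ carrying $\tilde M_{p_j,q_1^j}$ onto $\tilde M_{p_j,q_2^j}$. The plan is to extract from the geometry of the surfaces $\tilde M_{p,q}$, for $p$ large, enough rigidity to show that $\phi_j$ must conjugate the two deck groups, which forces $L(p_j,q_1^j)\cong L(p_j,q_2^j)$ and gives the contradiction.

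First I would record the structure of $\tilde M_{p,q}$ coming from the construction behind Theorem \ref{mininlens}. By \eqref{how} its area lies in $(2\pi^2,4\pi^2)$, i.e.\ strictly below twice the area of the Clifford torus $T$; together with the fact that $\tilde M_{p,q}$ is produced by a min-max/degeneration process near $T$ and is invariant under the standard free $\mathbb{Z}_p$-action $\Gamma_{p,q}$ generating $L(p,q)$ (and has genus $p+1$ by Riemann--Hurwitz), this should force, for $p$ large: (i) $\tilde M_{p,q}$ lies in an arbitrarily thin tube around a single Clifford torus $T_{p,q}$ and is a two-sheeted smooth normal graph over $T_{p,q}$ away from exactly $p$ neck regions; (ii) the $p$ neck regions are permuted freely and transitively by $\Gamma_{p,q}$, so their centers form a single $\Gamma_{p,q}$-orbit $O_{p,q}$ on (the nearest-point projection to) $T_{p,q}$; (iii) in the standard coordinates $\mathbb{S}^3=\{|z_1|^2+|z_2|^2=1\}$ in which $\Gamma_{p,q}$ is generated by $(z_1,z_2)\mapsto(\zeta z_1,\zeta^{q}z_2)$ with $\zeta=e^{2\pi i/p}$, the torus $T_{p,q}$ is the standard Clifford torus, and $\Gamma_{p,q}$ acts on it by the translation $(2\pi/p,\,2\pi q/p)$, so $O_{p,q}$ is a coset of the cyclic subgroup $C_{p,q}:=\langle(2\pi/p,2\pi q/p)\rangle$ of the flat square torus $T_{p,q}$. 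Points (i)--(iii) should be read off from the proof of Theorem \ref{mininlens}; the one genuinely analytic input is that curvature concentrates precisely along the $p$ necks, which is what makes both $T_{p,q}$ and $O_{p,q}$ canonically attached to the surface.

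Second I would use the isometry. Since $\phi_j$ maps $\tilde M_{p_j,q_1^j}$ isometrically onto $\tilde M_{p_j,q_2^j}$ it preserves the second fundamental form, hence maps the canonically defined neck regions of the first surface to those of the second and the associated Clifford torus $T_{p_j,q_1^j}$ to $T_{p_j,q_2^j}$; in particular it carries $O_{p_j,q_1^j}$ onto $O_{p_j,q_2^j}$ and restricts to an isometry between the two flat square tori. Passing to the coordinates of (iii) on each side, $\phi_j$ restricted to the torus becomes an isometry $x\mapsto L_j x+v_j$ of the square flat torus, with $v_j$ a translation and $L_j$ in the dihedral group $D_4\subset GL_2(\mathbb{Z})$ of linear symmetries of the square lattice. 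Then $\phi_j(O_{p_j,q_1^j})$ is a coset of $\langle L_j(1,q_1^j)/p_j\rangle$, and equating cosets forces the equality of cyclic subgroups $\langle L_j(1,q_1^j)\rangle=\langle(1,q_2^j)\rangle$ in $(\mathbb{Z}/p_j)^2$. Running through the eight elements of $D_4$ and using $\gcd(q_1^j,p_j)=1$, the left-hand subgroup is one of $\langle(1,\pm q_1^j)\rangle$ or $\langle(1,\pm (q_1^j)^{-1})\rangle$, so $q_2^j\equiv \pm (q_1^j)^{\pm1}\pmod{p_j}$. By the classification of round lens spaces up to isometry this means $L(p_j,q_1^j)$ is isometric to $L(p_j,q_2^j)$, contradicting the choice of the sequence and finishing the proof.

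The main obstacle is the first step: one must show that for large $p$ the lift $\tilde M_{p,q}$ is genuinely a thickened, $p$-times necked copy of a single Clifford torus, with necks sitting at a $\Gamma_{p,q}$-orbit, so that both the torus and the neck configuration are canonical and therefore respected by any ambient isometry between two such surfaces. This is exactly where the analysis underlying Theorem \ref{mininlens} — the area bound $|\tilde M_{p,q}|<4\pi^2$, the degeneration picture near the Clifford torus, and Brendle's solution of the Lawson conjecture identifying the limiting minimal torus — does the real work; once that local structure is established, the rest is the elementary computation in $(\mathbb{Z}/p)^2$ with $D_4$ carried out above. One could replace the contradiction/subsequence formulation by a direct argument valid for $p$ above an explicit threshold if the construction is quantitative enough, but the asymptotic version is cleaner.
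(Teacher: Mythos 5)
Your endgame (restricting an isometry that preserves the Clifford torus to a lattice symmetry in $D_4$ plus a translation and deducing $q_2\equiv\pm q_1^{\pm 1}\bmod p$) is sound and is essentially the same arithmetic the paper carries out case-by-case with its eight elements of $D_8$. The genuine gap is your first step, the structural claims (i)--(ii): that for every large $p$ and every admissible $q$ the surface $\tilde M_{p,q}$ is a two-sheeted normal graph over a single, canonically determined Clifford torus away from exactly $p$ discrete neck regions whose centers form one free $\mathbb{Z}_p^q$-orbit, so that any ambient isometry between two such surfaces must carry torus to torus and neck-orbit to neck-orbit \emph{exactly}. Nothing of this strength is proved in the paper, and it does not follow from Theorem \ref{mininlens} (which only gives existence, genus $2$, index $\le 2$, and the area window) nor from the limit theorems of Section 5, which give varifold convergence along subsequences with smooth multiplicity-two convergence away from one-dimensional blow-up sets (torus knots), not a neck-by-neck description of each fixed surface with positions known to the $o(1/p)$ accuracy your coset comparison needs. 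Worse, the claim is actually false for some of the families covered by the theorem: $\tilde M_{p,2}$ converges to the immersed Lawson Klein bottle $\tau_{1,2}$ (Theorem \ref{other}(2)), and $\tilde M_{2p_i,p_i+1}$ may converge to a union of two distinct Clifford tori (Theorem \ref{findlimit}(2)), so those surfaces are not doublings of a single Clifford torus at all; they must be split off and treated separately, as the paper does by an area/limit comparison.

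The paper avoids needing any fine structure of the surfaces. From varifold convergence to $2C$ it only extracts soft information (the surfaces lie in the slab $\frac18\le|z|^2\le\frac38$, hence are disjoint from $\{w=0\}$, and any purported isometry moves the core geodesic $\{z=0\}$ only slightly), and the delicate point you assume away --- that the isometry preserves the Clifford torus \emph{exactly} rather than approximately --- is proved by a tangency argument between cmc tori (Lemma \ref{isdisjoint}) combined with the fact that the $\mathbb{Z}_p^q$-orbit of a point cannot lie on a geodesic or a two-point set. Once $I$ preserves $C$, the paper does not track necks at all: it shows the group generated by $\mathbb{Z}_p^{q_1}$ and $I\,\mathbb{Z}_p^{q_2}I^{-1}$ contains a nontrivial rotation of the second factor, which acts freely on the surface and descends to a free orientation-preserving action on the genus-$2$ quotient, impossible by the Euler characteristic. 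If you want to salvage your route, you would either have to prove the quantitative doubling/uniqueness structure theorem you invoke (a substantial piece of analysis not in this paper), or replace it by an argument of the paper's type that upgrades ``$I(C)$ close to $C$'' to ``$I(C)=C$'' using only the coarse information available.
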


Since the double cover of $L(2p,q)$ is $L(p,q)$, it follows that whenever $q$ is odd and not equal to $1$,  the lens space $L(p,q)$ contains a genus $3$ minimal surface.  Iterating we obtain
\begin{cor}
Suppose $q$ odd and not equal to one.  Then for each positive integer $n$, $L(p,q)$ admits an embedded minimal surface $A_{p,q,n}$ of genus $2^{n}+1$.  Moreover,
\begin{equation}
|A_{p,q,n}|\leq \frac{4\pi^2}{p}.
\end{equation}
\end{cor}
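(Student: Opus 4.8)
The plan is to realise $A_{p,q,n}$ as the full preimage of the genus-$2$ surface from Theorem \ref{mininlens} under a cyclic Riemannian cover. Since $q$ is odd and $\gcd(p,q)=1$, also $\gcd(2^n p,q)=1$, and inside $\mathbb{Z}/2^n p = \pi_1(L(2^n p,q))$ the image of multiplication by $2^n$ is the index-$2^n$ subgroup $\mathbb{Z}/p$; quotienting the round $\mathbb{S}^3$ by this tower of groups produces a $2^n$-fold Riemannian covering
\begin{equation}
\pi_n : L(p,q) \longrightarrow L(2^n p, q),
\end{equation}
both manifolds carrying their round metrics (because $S^3/\mathbb Z_p \to S^3/\mathbb Z_{2^n p}$ is a local isometry). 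Normalising $q$ so that $1<q<p$, for $p\ge 3$ one has $q < p < 2^n p - 1$, so the exclusion $q\notin\{1,\,2^n p - 1\}$ needed to apply Theorem \ref{mininlens} holds for $L(2^n p,q)$. We thus obtain an embedded, connected, genus-$2$ minimal surface $M_{2^n p,q}\subset L(2^n p,q)$ of index at most $2$ with
\begin{equation}
\frac{2\pi^2}{2^n p} < |M_{2^n p,q}| < \frac{4\pi^2}{2^n p}.
\end{equation}

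Next I would set $A_{p,q,n} := \pi_n^{-1}(M_{2^n p,q})$. Since $\pi_n$ is a covering map and a local isometry, $A_{p,q,n}$ is an embedded closed minimal surface in $L(p,q)$, it is a $2^n$-sheeted (unbranched) cover of the genus-$2$ surface $M_{2^n p,q}$, and its area is $|A_{p,q,n}| = 2^n\,|M_{2^n p,q}| < 2^n\cdot\frac{4\pi^2}{2^n p} = \frac{4\pi^2}{p}$, giving the asserted bound. The remaining point — and the heart of the argument — is that $A_{p,q,n}$ is \emph{connected}. In the construction underlying Theorem \ref{mininlens} the min-max surface is produced from sweepouts by genus-$2$ Heegaard surfaces (the stabilised Heegaard foliation, as in Proposition \ref{maxx}), and since its genus is exactly $2$ no degeneration of topology occurs, so $M_{2^n p,q}$ is isotopic to the genus-$2$ Heegaard surface of $L(2^n p,q)$ (the "doubled Clifford torus"). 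For any Heegaard surface $\Sigma\subset M$ the inclusion induces a surjection $\pi_1(\Sigma)\twoheadrightarrow\pi_1(M)$, so here $\pi_1(M_{2^n p,q})\twoheadrightarrow \mathbb{Z}/2^n p$, and composing with the quotient onto $\mathbb{Z}/2^n$ that classifies $\pi_n$ we see the monodromy of $\pi_n$ along $M_{2^n p,q}$ is transitive; hence $\pi_n^{-1}(M_{2^n p,q})$ is connected. Being a connected unbranched $2^n$-fold cover of a closed orientable surface of genus $2$, Riemann--Hurwitz gives $\chi(A_{p,q,n}) = 2^n\cdot(-2) = -2^{n+1}$, i.e. $\mathrm{genus}(A_{p,q,n}) = 1 + 2^n = 2^n+1$, which finishes the proof.

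The main obstacle is precisely the connectedness step: if the image of $\pi_1(M_{2^n p,q})$ in $\mathbb{Z}/2^n$ were a proper subgroup of index $2^{n-j}$, the lift would split as $2^{n-j}$ disjoint copies of a genus-$(2^j+1)$ surface and the statement would fail. This is where one genuinely uses that the surface furnished by Theorem \ref{mininlens} has genus exactly $2$ and is, up to isotopy, a stabilised Heegaard surface, so that it carries all of $\pi_1$ of the lens space; everything else is formal (covering space theory and Riemann--Hurwitz). The special case $n=1$ recovers, via the double cover $L(p,q)\to L(2p,q)$, the genus-$3$ assertion already remarked on in the text, and iterating the construction (equivalently, replacing $2p$ by $2^n p$ directly) yields the full family.
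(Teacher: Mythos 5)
Your construction is the intended one — the paper derives this corollary in one line by lifting $M_{2^np,q}\subset L(2^np,q)$ through the $2^n$-fold round Riemannian cover $L(p,q)\to L(2^np,q)$, with the area bound and the genus $2^n+1$ coming exactly from multiplicativity of area and Euler characteristic — so the covering-space skeleton, the verification that $q\notin\{1,2^np-1\}$, and the Riemann--Hurwitz count all match. Where you diverge is the connectedness of the lift. You argue topologically: the min-max surface is isotopic to a genus-$2$ Heegaard surface, hence $\pi_1$-surjective, hence has transitive monodromy. That can be made rigorous, but as written it leans on an unproved assertion ("since its genus is exactly $2$ no degeneration of topology occurs, so $M_{2^np,q}$ is isotopic to the genus-$2$ Heegaard surface"); to justify it you should invoke the structure clause of Theorem \ref{highparamminmax} — after finitely many compressions each component of the min-max sequence is isotopic to some $\Gamma_i$ — and observe that any essential compression of a connected genus-$2$ surface produces only components of genus $\le 1$, so a genus-$2$, multiplicity-$1$ limit forces all compressions to be inessential and the limit to be isotopic to the sweepout surface. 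The paper's own (implicit) mechanism is geometric and bypasses this entirely: the round metrics have positive Ricci curvature, so by Frankel's theorem \cite{F} (used in exactly this way in Proposition \ref{lens}(5)) two disjoint closed embedded minimal surfaces cannot coexist, hence the preimage of a connected minimal surface under a Riemannian covering is automatically connected, with no need to know that $M_{2^np,q}$ is Heegaard. Your route buys a purely topological statement that would survive in metrics without positive Ricci curvature provided the Heegaard structure of the min-max limit is established; the Frankel route is shorter and avoids the isotopy claim altogether. With that one step either patched as above or replaced by Frankel's theorem, your proof is correct.
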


Some lens spaces are double covers of prism manifolds.  By Theorem 1.1 in \cite{KMN} a prism manifold admits a genus $2$ minimal surface, and thus one obtains genus $3$ minimal surfaces in some lens spaces this way.

We then study the limits of $\tilde{M}_{p,q}$ for suitable sequences $p_i\rightarrow\infty$ and $q_i < p_i$.  Expressing the three-sphere by
\begin{equation}
\mathbb{S}^3=\{(z,w)\in\mathbb{C}^2\; |\;  |z|^2+|w|^2=1\}, 
\end{equation}
we let $C$ denote the particular Clifford torus in $\mathbb{S}^3$ given by 
\begin{equation}
C= \{(z,w)\in\mathbb{S}^3\; |\; |z|^2 = \frac{1}{2}\}.
\end{equation}

\begin{thm}[Doubling of Clifford torus]\label{maincor}
For triples of positive integers $(n,m,k)$ with $n\leq m$,  $\gcd(n,m)=1$ and \begin{equation}(n,m,k)\notin\{(1,1,1), (1,2,1), (1,1,2)\}\end{equation} there exist sequences $\{p_i\}_{i=1}^\infty$ and $\{q_i\}_{i=1}^\infty$ with $\gcd(p_i,q_i)=1$, $p_i\rightarrow\infty$ and $q_i< p_i$ so that 
\begin{equation}\label{wherehappens}
\lim_{p_i\rightarrow\infty} \tilde{M}_{p_i,q_i}= 2C.
\end{equation} 
The convergence in \eqref{wherehappens} is smooth with multiplicity $2$ away from $k$ equally spaced parallel $(n,m)$-torus knots on $C$.
\end{thm}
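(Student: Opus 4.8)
The plan is to realize the prescribed data $(n,m,k)$ through a number-theoretic choice of lens spaces, and then to identify the limit of the lifts by a compactness argument that crucially uses the Willmore theorem of Marques--Neves \cite{MN} and the Lawson theorem of Brendle \cite{B}. There are three steps: choosing $(p_i,q_i)$; extracting a subsequential varifold limit of $\tilde M_{p_i,q_i}$; and showing that limit is $2C$ with the stated singular set.

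\emph{Choice of the sequences.} By Theorem \ref{mininlens} (obtained by applying Theorem \ref{flip2} with $g=1$ and $n=2g=2$ to the optimal foliation of $L(p_i,q_i)$ whose central leaf is the Clifford torus $T$ of area $2\pi^2/p_i$, landing in case (3)), the surface $M_{p_i,q_i}$ is a genus $2$ min-max minimal surface modelled on the once-stabilized, then flipped, sweepout of $T$; its single handle runs along an essential simple closed curve $\gamma_i\subset T$ whose isotopy class is dictated by the flipping combinatorics of $L(p_i,q_i)$ (equivalently, by the continued fraction expansion of $p_i/q_i$). Writing $T=C/(\mathbb Z/p_i)$ for the action $(z,w)\mapsto(\zeta_i z,\zeta_i^{q_i}w)$ with $\zeta_i=e^{2\pi i/p_i}$, the preimage of $\gamma_i$ in $C$ is a disjoint union of $k_i$ parallel copies of a $(n_i,m_i)$-torus knot, where $k_i$ is the index of a subgroup determined by the slope of $\gamma_i$ and the covering $C\to T$, and $(n_i,m_i)$ is the reduced slope of the induced $p_i/k_i$-fold cover of $\gamma_i$. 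For a given target $(n,m,k)$ with $n\le m$ and $\gcd(n,m)=1$, I would choose $p_i\to\infty$ divisible by $k$ and $q_i$ solving the corresponding congruence (of the shape $n q_i\equiv m\ \mathrm{mod}\ (p_i/k)$) with $\gcd(p_i,q_i)=1$ and $q_i\notin\{1,p_i-1\}$, so that $(n_i,m_i,k_i)=(n,m,k)$ for all large $i$. The only obstruction is when this forces $q_i\in\{1,p_i-1\}$ (the flippable lens spaces, excluded by Theorems \ref{notflippable}--\ref{mininlens}); a direct check identifies the resulting exceptional set as exactly $\{(1,1,1),(1,2,1),(1,1,2)\}$.

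\emph{Compactness and the limit.} Since $p_i\to\infty$, each $\tilde M_{p_i,q_i}$ is a closed embedded minimal surface in $\mathbb S^3$ with $2\pi^2<|\tilde M_{p_i,q_i}|=p_i\,|M_{p_i,q_i}|<4\pi^2$, with $(\mathbb Z/p_i)$-equivariant index $=\operatorname{index}(M_{p_i,q_i})\le2$, and with genus $p_i+1$ (the lift is connected because $\pi_1(M_{p_i,q_i})$ surjects onto $\mathbb Z/p_i$). Off the one-dimensional set along which the genus concentrates, Schoen-type curvature estimates and the monotonicity formula apply, so a subsequence converges to a stationary integral varifold $V$ with $\|V\|\le4\pi^2$, smoothly and with locally constant integer multiplicity on the complement of a closed set $\Sigma$. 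As $i\to\infty$ the rotations $\{\zeta_i^j\}_{j}$ equidistribute in a circle subgroup of $SO(4)$, so $V$ is invariant under that circle and each component of its support is a Clifford-type torus or a great sphere. A multiplicity-one Clifford component is impossible, since then $\tilde M_{p_i,q_i}$ would be a graph over $C$, hence an embedded minimal torus, hence equal to $C$ by Brendle's theorem, contradicting $\operatorname{genus}(M_{p_i,q_i})=2$; great-sphere components are excluded using that $M_{p_i,q_i}$ is a doubling of the Heegaard torus $T$, so its lift concentrates over $C$ with multiplicity two. By the Willmore inequality any torus component has area $\ge2\pi^2$; together with $\|V\|\le4\pi^2$ this forces $V=2C$, and in particular $p_i|M_{p_i,q_i}|\to4\pi^2$.

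\emph{The singular set, and the main difficulty.} The singular set $\Sigma$ is the limit of the $(\mathbb Z/p_i)$-orbits of the degenerating necks, i.e. the preimage in $C$ of the limiting position of $\gamma_i$; by the first step this is $k$ equally spaced parallel $(n,m)$-torus knots, away from which the convergence is smooth with multiplicity $2$, while the $\sim p_i$ necks accumulating along $\Sigma$ make the curvature blow up there. The main obstacle is the compactness in the last two steps: because $\operatorname{genus}(\tilde M_{p_i,q_i})\to\infty$, the standard bounded-index compactness theorems do not directly produce a controlled limit, and one must use the equivariant min-max origin of the lifts (almost-minimizing and replacement properties) together with the growing symmetry group to rule out loss of area, to prove that $\Sigma$ is a finite union of smooth curves, and to show those curves are torus knots rather than a more general circle-invariant configuration. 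A secondary difficulty is the arithmetic bookkeeping of the first step: checking that the slopes $\gamma_i$ arising from the flip combinatorics, combined with the covering data of $C\to T$, realize every admissible $(n,m,k)$ and fail precisely for the three listed triples.
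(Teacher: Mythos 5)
Your outline gets two things right that the paper also uses: the choice of $(p_i,q_i)$ is a number-theoretic matter, and multiplicity one in the limit is excluded by Allard's theorem plus the growing genus. But the two load-bearing steps are either unjustified or incorrect. First, the data $(n,m,k)$ is not encoded in ``the handle curve'' of the min-max surface: the min-max construction of $M_{p,q}$ gives no control whatsoever on the isotopy class of a curve along which the genus sits, and no such slope/continued-fraction dictionary is available. In the paper the triple $(n,m,k)$ is a property of the group action alone: by Viana's dichotomy the orbits of $\mathbb{Z}_p^q$ on the cmc tori $C_r$ either equidistribute (class 1) or lie on $k$ parallel $(n,m)$-curves (class 2), and the Realizing Configurations lemma produces explicit lens spaces $L(knp_i+B,p_i+D)$ (and $L(p_i,m)$ when $n=k=1$) in class 2 with the prescribed data; the blow-up locus is then identified \emph{a posteriori} from the limit varifold, not from any a priori knowledge of where the handle is. Relatedly, your explanation of the exceptional set is wrong: only $(1,1,1)$ is tied to flippability of $L(p,1)$; the triple $(1,2,1)$ comes from $L(p,2)$, which is non-flippable and does carry $M_{p,2}$, but the limit there is the Lawson Klein bottle $\tau_{1,2}$ rather than $2C$ (ruled in by a Jacobi-field argument, Theorem \ref{seccor}), and $(1,1,2)$ is excluded because the limit may instead be two distinct Clifford tori.

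Second, the classification of the limit is where your argument genuinely fails. Invariance under the limiting $(n,m)$-circle action does \emph{not} force each component to be ``a Clifford-type torus or a great sphere'': the invariant candidates include the Lawson surfaces $\tau_{n,m}$ and the countable Hsiang--Lawson family $\tilde{A}_{n,m,a}$ (lifts of closed geodesics in the orbifold $\mathbb{S}^3/\mathbb{S}^1_{n,m}$), and a priori the limit is only a stationary integral varifold whose projection may be a singular union of geodesic arcs with multiplicities. Excluding these is the core of the paper's proof: explicit area computations for $\tau_{n,m}$ (elliptic integrals showing $|\tau_{n,m}|>4\pi^2$ except for $\tau_{1,2}$), the Appendix's monotonicity of the period function showing every $A_{n,m,a}$ has at least two self-intersections and is never $\mathbb{Z}_k$-invariant, and a tangent-cone analysis at singular points of the projected varifold. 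Moreover, your appeal to ``Schoen-type curvature estimates off the one-dimensional set where genus concentrates'' is circular and unavailable: only the \emph{equivariant} index of $\tilde{M}_{p_i,q_i}$ is at most $2$, its actual index and genus are unbounded, so bounded-index or stable compactness theorems do not apply, and one does not know in advance that genus concentrates on a curve. The paper's substitute --- which you flag as ``the main obstacle'' without supplying a mechanism --- is the topological bookkeeping in Theorem \ref{findlimit}: counting homologically nontrivial boundary circles $W_1(x,i)$ in small solid tori, using multiplicativity of the Euler characteristic under the $p_i$-fold covers to show that any non-smooth point absorbs genus at least $p_i-1$, hence there can be at most $k$ such points, and then Ilmanen's integrated Gauss--Bonnet argument to upgrade bounded genus to smooth convergence elsewhere and to pin down the tangent cones. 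Without that (or an equivalent) ingredient, the passage from the area bound $\|V\|\le 4\pi^2$ to $V=2C$ with the stated singular set does not go through.
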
 

The case $(1,1,k)$ corresponds to curvature blowing up along $k$ parallel closed geodesics as $p_i\rightarrow\infty$.  We show using a Jacobi field argument that $(1,1,1)$ and $(1,2,1)$ cannot occur as blowup sets for minimal surfaces resembling a doubling of the Clifford torus.  The case $(1,1,2)$ is indeterminate with respect to the Jacobi field point of view but Kapouleas' \cite{Ka} gluing heuristics suggests it cannot arise.  \footnote{The family of lens spaces that could potentially give rise to a blowup set of $(1,1,2)$ are the exceptional lens spaces $L(4k, 2k\pm 1)$,  which are the only ones to admit embedded Klein bottles (cf.  Proposition \ref{lens}).}

Roughly speaking,  as $p\rightarrow\infty$, the surfaces $\tilde{M}_{p,q}$ are invariant under larger and larger groups,  and the limit is the lift of a stationary integral $1$-varifold on the two-manifold arising as a quotient from these limiting actions (possibly an orbifold).  The key to proving Theorem \ref{maincor} is to show that the stationary varifold one obtains has tangent cones of a simple type at each singular point,  which together with the area bound allow for a classification. The main ingredient is an integrated Gauss-Bonnet argument due to Ilmanen \cite{I} to study ``how much" genus may degenerate into the singular points.   

We also show for another sequence of lens spaces:
\begin{thm}[Desingularization of Lawson's Klein bottle]\label{seccor}
For any sequence of odd integers $p_i\rightarrow\infty$ 
\begin{equation}
\lim_{p_i\rightarrow\infty} \tilde{M}_{p_i,2}= \tau_{1,2}, 
\end{equation} 
where $\tau_{1,2}$ denotes the immersed Lawson Klein bottle.\footnote{This surface was introduced in \cite{L}. We describe it in detail in Section 5.1.}
\end{thm}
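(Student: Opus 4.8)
The plan is to run the same equivariant–compactness and reduction scheme used for Theorem \ref{maincor}, specialized to the circle action underlying $L(p,2)$, and then to identify the limit with $\tau_{1,2}$ by analyzing the quotient $2$-orbifold. First I would record the inputs: by Theorem \ref{mininlens} the surfaces $M_{p_i,2}$ are genus $2$, have index at most $2$, and satisfy $2\pi^2/p_i < |M_{p_i,2}| < 4\pi^2/p_i$. Passing to the lifts $\tilde M_{p_i,2}\subset\mathbb S^3$, these are embedded minimal surfaces of genus $p_i+1$ with $|\tilde M_{p_i,2}|=p_i|M_{p_i,2}|\in(2\pi^2,4\pi^2)$, invariant under the deck group $\mathbb Z_{p_i}\subset SO(4)$. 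The key point is that $\mathbb Z_{p_i}$ sits inside the fixed circle subgroup $\mathbb T:=S^1_{1,2}$ acting by $(z,w)\mapsto(e^{i\alpha}z,e^{2i\alpha}w)$, and that $\mathbb Z_{p_i}$ equidistributes in $\mathbb T$ as $p_i\to\infty$ (this is where $q=2$ being \emph{fixed} matters: the closure stays one–dimensional, unlike the sequences of Theorem \ref{maincor}). Using compactness for minimal surfaces of bounded area and bounded index, I would pass to a subsequence with $\tilde M_{p_i,2}\to V$ as varifolds, $V$ stationary integral of mass in $[2\pi^2,4\pi^2]$, the convergence smooth and locally graphical with finite multiplicity away from a finite set $\mathcal S$, and $V$ (together with $\mathcal S$) invariant under $\mathbb T$.

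Next I would descend to the quotient. The orbit space $O:=\mathbb S^3/\mathbb T$ is the teardrop orbifold: topologically $S^2$, smooth except for a single cone point $c$ of order $2$ (the image of the exceptional orbit $\{z=0\}$). Equipping $O$ with the metric $h=L^2 g_Q$, where $g_Q$ is the horizontal quotient metric and $L$ the orbit–length function, $\mathbb T$–invariant minimal surfaces $\pi^{-1}(\gamma)$ correspond exactly to $h$–stationary geodesic nets $\gamma$, with $\mathrm{Area}(\pi^{-1}(\gamma))=\mathrm{length}_h(\gamma)$. Thus $\bar V:=\pi_* V$ is an $h$–stationary geodesic net of length in $[2\pi^2,4\pi^2]$, with singular/curvature–blowup locus over a finite set $\bar{\mathcal S}\subset O$. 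The geometric fact I would invoke from Section 5.1 is that the Lawson Klein bottle $\tau_{1,2}$ is $\mathbb T$–invariant and equals $\pi^{-1}(\gamma_0)$ for a distinguished closed $h$–geodesic net $\gamma_0$ meeting the cone point $c$; it is precisely the order–$2$ cone point that renders $\pi^{-1}(\gamma_0)$ non-orientable (a Klein bottle), immersed but not embedded, with the exceptional fiber as its double curve.

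Then comes the classification of $\bar V$, which is the heart of the argument. Here I would apply the tangent-cone analysis together with Ilmanen's integrated Gauss–Bonnet argument exactly as in the proof of Theorem \ref{maincor}: the uniform area bound forces $V$ to have finitely many singular points with tangent cones of the simplest type, and, crucially, bounds how much of the diverging genus $p_i+1\to\infty$ can degenerate into $\bar{\mathcal S}$. This rules out $\bar V$ being a multiple of a closed geodesic disjoint from $c$ (whose preimage would be an orientable torus that cannot absorb the diverging genus consistently with Allard regularity in small balls), and narrows $\bar V$ to one of two cases: the equator $\gamma_{eq}=\pi(C)$ taken with multiplicity two — corresponding to a doubling of the Clifford torus with curvature concentrating along one $(1,2)$–torus knot, i.e. the triple $(1,2,1)$ — or $\gamma_0$. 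The configuration $(1,2,1)$ is excluded by the Jacobi-field obstruction already established for Theorem \ref{maincor}. Hence $\bar V=\gamma_0$ with multiplicity one, $V=|\tau_{1,2}|$, and the subsequential limit is $\tau_{1,2}$. Since all copies of $\tau_{1,2}$ are congruent and the $\mathbb Z_{p_i}$–symmetry pins the position, the limit is independent of the subsequence, so the full sequence converges, with the convergence smooth away from the double curve of $\tau_{1,2}$ and the finitely many points where the genus concentrates.

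The main obstacle is this classification step. Two issues need care. First, the Gauss–Bonnet estimate must be arranged so that it genuinely prevents the unbounded genus from being absorbed by a smooth torus limit; this is delicate precisely because a multiplicity-two sheeted limit with catenoidal neck-pinching can a priori absorb arbitrarily much genus, which is exactly why the exclusion of the $(1,2,1)$ configuration (and not just an area comparison, since $\|V\|\le 4\pi^2$ is not strict in the limit) is essential. Second, one must make sense of — and correctly account for multiplicity and genus in — varifold convergence of the embedded surfaces $\tilde M_{p_i,2}$ to the immersed, non-embedded surface $\tau_{1,2}$, in particular describing how $\tilde M_{p_i,2}$ resolves the double curve along the exceptional fiber. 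Both are handled by the same circle of ideas as Theorem \ref{maincor}; the genuinely new features for $L(p_i,2)$ are the non-orientable limit and the cone point of the teardrop $O$.
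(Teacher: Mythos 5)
Your overall scheme is the same as the paper's: pass to the equivariant varifold limit, observe that $\mathbb{Z}_{p_i}^{2}$ densifies in $\mathbb{S}^1_{1,2}$, project to the orbifold $\mathbb{S}^3/\mathbb{S}^1_{1,2}$, classify the limiting stationary varifold using the tangent-cone/Ilmanen Gauss--Bonnet genus accounting of Theorem \ref{findlimit}, and eliminate the doubled Clifford torus (the $(1,2,1)$ configuration) by the positive-Jacobi-field argument, leaving $\tau_{1,2}$. However, your classification step has a genuine gap: the candidates for the limit are not only "multiples of closed geodesics disjoint from the cone point," the equator with multiplicity two, and the Lawson curve through the cone point. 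There is a countable family of $\mathbb{S}^1_{1,2}$-invariant \emph{immersed} minimal tori (the Hsiang--Lawson tori $\tilde A_{1,2,a}$), i.e.\ immersed closed geodesics $A_{1,2,a}$ in the quotient avoiding the cone point, and these are not excluded by your stated reason. Your argument "the preimage would be an orientable torus that cannot absorb the diverging genus consistently with Allard regularity" does not work: limits of embedded surfaces can perfectly well be immersed (the asserted limit $\tau_{1,2}$ is itself immersed and non-embedded), and the genus $p_i+1$ \emph{can} concentrate at finitely many points, so smooth-with-multiplicity convergence away from a finite set is compatible with a torus limit. What actually kills these candidates in the paper is quantitative: the genus-accounting claim (the analogue of \eqref{claim4} with $k=1$) allows at most \emph{one} singular/concentration point away from the poles, while the Appendix (monotonicity of the period function, Theorem \ref{monotonicity}, giving $\tfrac12<P_{1,2,a}/2\pi<1$, hence Proposition \ref{intersecttwice}) shows every $A_{1,2,a}$ has at least two self-intersection points, so none can appear in the support. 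Nothing in your proposal supplies this ingredient, and area alone does not dispose of the $A_{1,2,a}$ either.

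A secondary issue: you invoke compactness for surfaces of "bounded area and bounded index" for the lifts $\tilde M_{p_i,2}\subset\mathbb{S}^3$. The index bound $\le 2$ holds in the quotient $L(p_i,2)$, not for the lifts, whose Morse index is not uniformly bounded; the paper deliberately avoids index-based compactness (see the remark following Theorem \ref{findlimit}) and instead derives the finite bad set and its cardinality from the genus/boundary-curve accounting in the solid tori $H^{-1}(B_r(x))$. Since you also cite that machinery, this is repairable, but as written the "smooth away from a finite set" step is not justified by the tool you name. The final uniqueness-of-limit remark (full-sequence convergence because the symmetry "pins the position") is also glossed: a priori different subsequences could converge to different congruent copies, and one should argue, as the paper implicitly does, that every subsequential limit is the $\mathbb{S}^1_{1,2}$-invariant Lawson configuration.
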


In their announcement, Pitts-Rubinstein \cite{PR} discussed minimal surfaces resembling the surfaces in Theorem \ref{maincor} for large $p$, but they have restrictions on their genera and were claimed to arise from non-free group actions (while these arise from free actions). 

Let $\mathcal{S}_g$ denote the space of embedded minimal surfaces of genus $g$ in $\mathbb{S}^3$ modulo isometry.  It is a long-standing question of Yau \cite{Y} whether $\mathcal{S}_g$ is finite for each $g$. Minimal surfaces in the sphere (unlike in $\mathbb{R}^3$) appear to be quite rigid and likely cannot move in continuous families.    Rigidity for certain examples was proved by Kapouleas-Wiygul \cite{KW}.

Almgren \cite{Al2} proved that $|\mathcal{S}_0|=1$ (the equator) and Brendle \cite{B} proved that $|\mathcal{S}_1| = 1$ (the Clifford torus). In a seminal paper, Lawson \cite{L} obtained the existence of a minimal surface of each genus:

\begin{thm}[Lawson (1970)]
Given any pair $m$ and $k$ of positive integers there exists an embedded minimal surface of genus $mk$ in $\mathbb{S}^3$.
In particular, for each prime number $p$, 
\begin{equation}
|\mathcal{S}_p|\geq 1, 
\end{equation}
and for each non-prime number $q$
\begin{equation}
|\mathcal{S}_q|\geq 2.
\end{equation}
\end{thm}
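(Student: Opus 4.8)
The plan is to reconstruct Lawson's original construction: for each pair $(m,k)$ of positive integers one builds a closed embedded minimal surface $\xi_{m,k}\subset\mathbb{S}^3$ of genus $mk$ by solving a single Plateau problem for a geodesic polygon and then extending the resulting minimal disk by iterated Schwarz reflections until it closes up. Realize $\mathbb{S}^3=\{(z,w)\in\mathbb{C}^2:|z|^2+|w|^2=1\}$ and fix the two orthogonal great circles $C_1=\mathbb{S}^3\cap(\mathbb{C}\times\{0\})$ and $C_2=\mathbb{S}^3\cap(\{0\}\times\mathbb{C})$, which lie at constant distance $\pi/2$. Let $\Gamma=\Gamma_{m,k}$ be the geodesic quadrilateral with two opposite edges being, respectively, an arc of $C_1$ subtending angle $\pi/(k+1)$ and an arc of $C_2$ subtending angle $\pi/(m+1)$, and the other two edges the minimizing geodesics joining their endpoints; each edge of $\Gamma$ lies on a great circle, hence inside a totally geodesic $\mathbb{S}^2$. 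Solving the Plateau problem in the Riemannian manifold $\mathbb{S}^3$ produces a least-area minimal disk $D$ with $\partial D=\Gamma$; since $\Gamma$ is a small extremal polygon that bounds inside a geodesically convex (indeed mean-convex) ball and projects injectively to a convex domain in a totally geodesic $\mathbb{S}^2$, a Rad\'o-type maximum-principle argument shows $D$ is an embedded graph over that domain, and in particular $D$ lies in the interior of a fundamental domain of the group introduced next.

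For each of the four great circles carrying an edge of $\Gamma$, rotation by $\pi$ of $\mathbb{S}^3$ about that circle is an isometry, and by the Schwarz reflection principle for minimal surfaces $D$ extends real-analytically across the corresponding edge as the union of $D$ with its image under that rotation. Let $G=G_{m,k}$ be the group of isometries of $\mathbb{S}^3$ generated by these four $\pi$-rotations. Because the relevant angles are rational multiples of $\pi$, the group $G$ is finite, and iterating the reflection across all edges yields the compact surface $\xi_{m,k}=\bigcup_{h\in G}h(D)$; this is a smooth, closed, embedded minimal surface, since distinct translates $h(D)$ meet only along shared edges or vertices (each $h(D)$ sitting in its own fundamental domain) and the reflection principle provides the smoothness across them. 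Endowing $\xi_{m,k}$ with the CW structure whose $2$-cells are the translates of $D$, and counting cells with the identifications produced by $G$ along $\Gamma$, gives $\chi(\xi_{m,k})=2-2mk$; as $\xi_{m,k}$ is two-sided in $\mathbb{S}^3$ it is orientable, hence a closed orientable surface of genus $mk$.

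For the ``in particular'' assertions: for any $g\ge 1$ the surface $\xi_{1,g}$ is an embedded minimal surface of genus $g$, so $|\mathcal{S}_p|\ge 1$ for every prime $p$. If $q$ is composite, write $q=ab$ with $2\le a\le b$; then $\xi_{1,q}$ and $\xi_{a,b}$ are both embedded minimal surfaces of genus $q$, and they are non-isometric: analyzing the full isometry group of $\xi_{m,k}$ — which contains $G_{m,k}$ together with the ambient symmetry interchanging the roles of $C_1$ and $C_2$ — one shows $\xi_{m,k}\cong\xi_{m',k'}$ only when $\{m,k\}=\{m',k'\}$, and $\{1,q\}\ne\{a,b\}$. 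Hence $|\mathcal{S}_q|\ge 2$.

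The crux of the argument is embeddedness, at two levels. First, that the Plateau solution $D$ is embedded: in $\mathbb{S}^3$ one must rule out the solution ``wrapping around'' past antipodal directions, which requires a careful choice of $\Gamma_{m,k}$ so that it bounds inside a mean-convex geodesic ball and the graph/maximum-principle argument genuinely applies. Second, that the globally reflected surface $\bigcup_h h(D)$ is embedded — equivalently, that $D$ really does occupy a fundamental domain for $G_{m,k}$ and that the translates tile a neighborhood of $\xi_{m,k}$ in $\mathbb{S}^3$ without overlap; this is where the precise combinatorics of $G_{m,k}$ and of the dihedral angles of $\Gamma_{m,k}$ must be controlled. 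The Euler-characteristic bookkeeping, while elementary, also has to be done with care to land exactly on genus $mk$, and the non-congruence statement in the corollaries requires identifying the full isometry group of each $\xi_{m,k}$, not merely the reflection group used to build it.
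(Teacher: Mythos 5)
Note first that the paper does not prove this statement at all: it is quoted from Lawson \cite{L}, so what you are really reconstructing is Lawson's 1970 construction of the surfaces $\xi_{m,k}$. Your overall architecture — Plateau solution for a geodesic quadrilateral, Schwarz reflection across the edges, finiteness of the generated rotation group, an Euler characteristic count, and then realizing a prime $p$ as $1\cdot p$ and a composite $q=ab$ in two ways — is the right skeleton. But the central geometric datum, the quadrilateral itself, is misidentified, and this breaks the construction as written. In Lawson's construction the four vertices of $\Gamma_{m,k}$ \emph{alternate} between $C_1$ and $C_2$ (two vertices on $C_1$ at angular distance $\pi/(m+1)$, two on $C_2$ at angular distance $\pi/(k+1)$), and all four edges are great-circle arcs of length $\pi/2$ joining a point of $C_1$ to a point of $C_2$; since such arcs meet $C_1$ and $C_2$ orthogonally, the interior angles of $\Gamma_{m,k}$ are $\pi/(k+1)$ at the $C_1$-vertices and $\pi/(m+1)$ at the $C_2$-vertices, and it is exactly these angles that make $2(k+1)$, respectively $2(m+1)$, reflected copies of the disk close up smoothly around each vertex and that produce $\chi = 2(1-mk)$. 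Your polygon instead has two opposite edges lying along $C_1$ and $C_2$ themselves; because every geodesic from $C_1$ to $C_2$ meets both circles at right angles, all four of its interior angles are $\pi/2$ \emph{regardless} of $m$ and $k$, the integers enter only through edge lengths, the reflection group is a different one (it now contains the $\pi$-rotations about $C_1$ and $C_2$), and the assertion that the cell bookkeeping yields $\chi = 2-2mk$ is unsupported. As a warning sign, the explicit minimal surfaces that do contain arcs of both $C_1$ and $C_2$ are of the Hsiang--Lawson $\tau_{n,m}$ type recalled in Section 5.1 of the paper, and those are generically only immersed; there is no reason the surface generated from your polygon is embedded or has genus $mk$.

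Two further steps would need real arguments even with the correct polygon. Embeddedness of the closed surface does not follow from the loose statement that each translate $h(D)$ sits ``in its own fundamental domain''; Lawson proves it by showing the least-area disk is embedded and confined to a specific geodesic cell of a tessellation of $\mathbb{S}^3$ whose interiors the group permutes disjointly, and your Rad\'o-type claim that $D$ is a graph over a convex domain in a totally geodesic $\mathbb{S}^2$ is not justified for this boundary curve. And for $|\mathcal{S}_q|\geq 2$ you must actually show that $\xi_{1,q}$ and $\xi_{a,b}$ are non-congruent: you flag this but give no argument (identifying the symmetry groups, as Lawson does, is one route, but something must be said, especially since $\xi_{m,k}$ and $\xi_{k,m}$ \emph{are} congruent). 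So the proposal captures the intended strategy but the key polygon, the genus count, the embeddedness verification, and the non-congruence step do not go through as written.
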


As their genera tend to infinity,  Lawson's surfaces converge to the union of two or more equatorial two-spheres intersecting along a great circle at equal angles.   

Since Lawson's work, several other infinite families of minimal surfaces have been discovered.  For instance, Kapouleas-Yang \cite{KY} found for each integer $n$ large enough, a  minimal surfaces of genus $n^2+1$ converging as varifolds to $2C$ in the limit that $n\rightarrow\infty$.  There are variations on this theme with the stacking of multiple Clifford tori due to Wiygul \cite{Wi} (and also along rectangular grids).  Choe-Soret \cite{CS} found families of minimal surfaces converging to a union of two orthogonal Clifford tori.  Kapouleas-McGrath \cite{KM} discovered a family resembling the doubling of the equator along parallel lines of latitude. The survey paper \cite{BS} contains a discussion of many of these results.   In each case though, not every genus is represented\footnote{To the author's knowledge, aside from Lawson's surfaces, only Kapouleas-McGrath's doublings of the equatorial two-sphere along a single geodesic and at the north and south pole represent each genus large enough.}, and for the genera that are, there are often only a bounded number of examples.  

On the other hand,  the lift of a genus $2$ minimal surface in $L(p,q)$ to $\mathbb{S}^3$ has genus $p+1$ and the number of distinct diffeomorphism types of lens spaces with fundamental group equal to $\mathbb{Z}_p$ tends to infinity as $p\rightarrow\infty$ (Lemma \ref{burnside}).  Because we can show the lifted surfaces $\tilde{M}_{p,q}$ are distinct (Theorem \ref{distinct}) we obtain definitive growth on the cardinality of $\mathcal{S}_g$:
\begin{thm}[Distinct genus $g$ minimal surfaces]
There holds
\begin{equation}\label{goto}
\lim_{g\rightarrow\infty} |\mathcal{S}_g| = \infty.  
\end{equation}
\end{thm}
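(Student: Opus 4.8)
The plan is to combine the lower bound on the cardinality of diffeomorphism types of lens spaces with fundamental group $\mathbb{Z}_p$ (Lemma \ref{burnside}, which says this number tends to infinity with $p$) together with the existence and distinctness of the lifted surfaces $\tilde{M}_{p,q}$. First I would fix $p$ large and range over all $q$ with $\gcd(p,q)=1$ and $q\notin\{1,p-1\}$. By Theorem \ref{mininlens}, each such $L(p,q)$ contains a genus $2$ minimal surface $M_{p,q}$ with index at most $2$ and area between $2\pi^2/p$ and $4\pi^2/p$; its lift $\tilde{M}_{p,q}$ to $\mathbb{S}^3$ is then an embedded minimal surface whose genus I would compute via the Riemann--Hurwitz formula for the $p$-fold cover $\mathbb{S}^3\to L(p,q)$: since the covering is free, $\chi(\tilde{M}_{p,q}) = p\,\chi(M_{p,q}) = p(2-2\cdot 2) = -2p$, so $\tilde{M}_{p,q}$ has genus $p+1$. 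Thus $\mathcal{S}_{p+1}\neq\emptyset$ for every large $p$, which already gives infinitely many nonempty $\mathcal{S}_g$, but the point is to control the \emph{size}.

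The crux is showing $|\mathcal{S}_{p+1}|$ is large, and here I would invoke Theorem \ref{distinct}: for $p$ large, if $L(p,q_1)$ and $L(p,q_2)$ are not isometric then $\tilde{M}_{p,q_1}$ and $\tilde{M}_{p,q_2}$ are not isometric as surfaces in $\mathbb{S}^3$. Now the isometry classes of round lens spaces $L(p,q)$ biject with diffeomorphism classes (Moser's theorem, or simply that the round metrics are rigid), and the count of these is governed by the classical fact that $L(p,q_1)\cong L(p,q_2)$ iff $q_1\equiv \pm q_2^{\pm 1}\pmod p$; the number of equivalence classes of admissible $q$ under this relation grows without bound — this is exactly the content of Lemma \ref{burnside} (a Burnside-type orbit count showing roughly $p/8$ classes for the $\pm$, inversion action). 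Restricting to $q\notin\{1,p-1\}$ removes only the single class of the standard genus-$1$ Heegaard splitting, so the count is still unbounded. Therefore $|\mathcal{S}_{p+1}|$ is at least the number of such classes, which tends to infinity as $p\to\infty$.

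Finally I would package this into the limit statement $\lim_{g\to\infty}|\mathcal{S}_g| = \infty$. Strictly this only produces a subsequence of genera $g = p+1$ (one past each large integer) along which $|\mathcal{S}_g|\to\infty$; to get the honest limit over all $g$ one must either accept the statement as an assertion about a subsequence (which is how the surrounding text reads), or supplement with the Corollary on the surfaces $A_{p,q,n}$ of genus $2^n+1$ and other known families to populate intermediate genera — but since the theorem as displayed follows already from the unbounded subsequence together with the fact that every $\mathcal{S}_g$ is finite or not, I would simply state: given any $N$, choose $p$ large enough (by Lemma \ref{burnside} and Theorem \ref{distinct}) that $L(p,q)$ has more than $N$ isometry classes with $q\notin\{1,p-1\}$; then $|\mathcal{S}_{p+1}| > N$, and since $N$ was arbitrary this exhibits infinitely many $g$ with $|\mathcal{S}_g|$ arbitrarily large, which is the claimed divergence.

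The main obstacle is the distinctness input, Theorem \ref{distinct}: it is not a priori clear that non-isometric lens spaces cannot have minimal surfaces whose lifts happen to coincide in $\mathbb{S}^3$ (a single highly symmetric surface in $\mathbb{S}^3$ could conceivably be invariant under several inequivalent free $\mathbb{Z}_p$-actions). Everything else — the genus computation, the covering-space bookkeeping, and the Burnside count — is routine; the real work, already done in Theorem \ref{distinct}, is ruling out such coincidences for large $p$, presumably by pinning down the geometry of $\tilde{M}_{p,q}$ (via the varifold limits in Theorems \ref{maincor} and \ref{seccor} and the associated symmetry/blowup data) tightly enough to recover $q$ from the surface.
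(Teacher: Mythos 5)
Your proposal is essentially the paper's own argument: the paper deduces the theorem in one line by combining Theorem \ref{arediff} (the body version of Theorem \ref{distinct}) with Lemma \ref{burnside} and the bound $\phi(p)\geq Cp/\log\log p$, the lifted surfaces having genus $p+1$. One correction to your final packaging: there is no subsequence issue at all, since $g=p+1$ ranges over \emph{every} sufficiently large genus and Lemma \ref{burnside} gives at least $\phi(p)/4$ isometry classes for every $p$, so for all large $g$ one gets $|\mathcal{S}_g|\geq \phi(g-1)/4 - O(1)\rightarrow\infty$, which is the honest limit; your closing claim that ``infinitely many $g$ with $|\mathcal{S}_g|$ arbitrarily large'' equals the claimed divergence is not correct as stated, but your own construction already yields the stronger statement, and no recourse to the surfaces $A_{p,q,n}$ is needed. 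A minor point worth recording: your Euler-characteristic computation of the genus of $\tilde{M}_{p,q}$ tacitly assumes the lift is connected, which follows from Frankel's theorem since distinct components would be disjoint minimal surfaces in $\mathbb{S}^3$ (the paper makes the same implicit assumption).
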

In fact, the minimal surfaces giving rise to \eqref{goto} have areas below $4\pi^2$.
\subsection{Remarks}
 We will be considering Morse theory on the space of embedded surfaces of a fixed genus in a three-manifold subject to certain allowed deformations to lower genus surfaces.   An advantage of our techniques is that we do not need to compute the underlying homotopy type of this space.  The topological type of the space of embeddings of a fixed genus is known in many elliptic cases (cf Johnson-McCullough \cite{JM}).  In $\mathbb{S}^3$, for instance,  the computations follow from the statement of the Smale conjecture, proved by Hatcher \cite{H} and proved later using Ricci flow by Bamler-Kleiner \cite{BK}.   

The method is also robust with respect to finite group actions with fixed points.  To the author's knowledge,  the computation of the homotopy-type of equivariant diffeomorphisms of a three-manifold does not follow straightforwardly from Hatcher's work.  

A natural question this work poses is whether in a three-manifold,  there exist even higher parameter families of surfaces of definite genus which are non-trivial.   For instance,  for each $k$ might there be smooth families of genus $g$ surfaces that detect the Almgren-Pitts $k$ width for sufficiently large $g$?
\\
\\
The organization of this paper is as follows.  In Section 2 we introduce the terminology of min-max theory.   In Section 3 we prove the existence result Theorem \ref{flip2}.  In Section 4 we specialize to the case of lens spaces and construct the surfaces $M_{p,q}$. In Section 5 we consider the limits of the lifts $\tilde{M}_{p,q}$,  and also obtain non-existence results in the exceptional lens spaces $L(p,1)$.  In Section 6 we show that the lifts of $M_{p,q}$ to $\mathbb{S}^3$ are distinct up to isometry in non-isometric lens spaces.  In the Appendix we prove a monotonicity property for the period function associated to the Hsiang-Lawson \cite{HL} immersed tori.  

\section{Preliminaries}\label{prelim}

In this section we collect some notation and describe the Min-max existence theorem. 

Let $M$ denote a closed orientable 3-manifold and let $\mathcal{H}^2(\Sigma)$ denote the 2-dimensional Hausdorff measure of a set $\Sigma\subset M$.   

Set $I^n = [0, 1]^n \subset \mathbb{R}^n$.    Let $\{\Sigma_t\}_{t\in I^n}$ be
a family of closed subsets of $M$ and $B\subset\partial I^n$.    We call the family $\{\Sigma_t\}_{t\in I^n}$ an $n$-parameter genus g sweepout
if

\begin{enumerate}
\item $\mathcal{H}^2(\Sigma_t)$ is a continuous function of $t\in I^n$
\item  $\Sigma_t$ converges to $\Sigma_{t_0}$ in the Hausdorff topology as $t\rightarrow t_0$.  
\item For $t_0 \in I^n\setminus B$, $\Sigma_{t_0}$ is a smooth closed surface of genus g and $\Sigma_t$ varies smoothly for $t$ near $t_0$.
\item For $t\in B$,  the set $\Sigma_t$ consists of the union of a 1-complex (possibly empty) together with a smooth surface (possibly empty).
\end{enumerate}
\begin{rmk}
A Heegaard foliation, for instance,  is a sweepout $\{\Sigma_t\}_{t\in I}$ parameterized by $I=[0,1]$ where $B=\{0,1\}$ so that $\Sigma_t$ is a Heegaard surface for each $t\in (0,1)$ and $\Sigma_0$ and $\Sigma_1$ are both 1-complexes in the handlebodies determined by the Heegaard splitting.
\end{rmk}

We say that a family of subsets $\{\Sigma_t\}_{t\in \partial I^n}$ \emph{extends to a sweepout} if there
exists a sweepout $\{\Sigma_t\}_{t\in I^n}$ that restricts to $\{\Sigma_t\}_{t\in \partial I^n}$ at the boundary.   

Beginning with a genus g sweepout $\{\Sigma_t\}_{t\in I^n}$ we need to construct \\comparison sweepouts
which agree with $\{\Sigma_t\}_{t\in I^n}$ on $\partial I^n$.   We call a collection of sweepouts $\Pi$ \emph{saturated} if it satisfies
the following condition: for any map  $\Psi\in C^\infty (I^n\times M, M)$ such that for all $t\in I^n$ we have $\Psi(t,.)\in \mbox{Diff}_0(M)$ and $\Psi(t,.) = id$ if $t\in \partial I^n$, and any sweepout $\{\Lambda_t\}_{t\in I^n}\in\Pi$ we have 
 $\{\Psi(t,\Lambda_t)\}_{t\in I^n}\in \Pi$.    Given a sweepout $\{\Sigma_t\}_{t\in I^n}$, denote by $\Pi := \Pi_{\Sigma_t}$ the smallest saturated collection of sweepouts containing $\{\Sigma_t\}_{t\in I^n}$  We define the \emph{width} of $\Pi$ to be

\begin{equation}
W(\Pi,M) = \inf_{\Lambda_t\in \Pi} \sup_{t\in I^n} \mathcal{H}^2(\Lambda_t).
\end{equation}

A \emph{minimizing sequence} is a sequence of sweepouts $\{\Sigma^i_t\}\in\Pi$ such that
\begin{equation}
\lim_{i\rightarrow\infty} \sup_{t\in I^n} \mathcal{H}^2(\Sigma_t^i) = W(\Pi,M).
\end{equation}

Finally, a \emph{min-max sequence} is a sequence of surfaces $\Sigma^i_{t_i}$,  $t_i\in I^n$ taken from a minimizing
sequence so that \begin{equation}\mathcal{H}^2(\Sigma_{t_i}^i)\rightarrow W(\Pi, M). \end{equation}  

The main point of the Min-Max Theory of Almgren-Pitts (\cite{Al}, \cite{P}) as refined by Simon-Smith (\cite{SS} \cite{DP}) is that if the width is greater than the maximum of the areas of the boundary surfaces, then some min-max sequence converges to a minimal surface in $M$:

\begin{thm} [Multi-parameter Min-Max Theorem]\label{highparamminmax}
 Given a sweepout of genus g surfaces, if
\begin{equation}\label{isbigger}
W(\Pi,M)> \sup_{t\in \partial I^n} \mathcal{H}^2(\Sigma_t),
\end{equation}
then there exists a min-max sequence  $\Sigma_i := \Sigma^i_{t_i}$ such that 
\begin{equation}
\Sigma_i \rightarrow \sum_{i=1}^k n_i \Gamma_i
\end{equation} 
as varifolds where $\Gamma_i$ are smooth, closed, embedded and pairwise disjoint minimal surfaces and $n_i$ are positive integers.  Moreover, after performing finitely many compressions on $\Sigma_i$ and discarding some components, each connected component of $\Sigma_i$ is isotopic to one of the surfaces $\Gamma_i$ or to a double cover of one of the $\Gamma_i$. 

Moreover we have the following genus bounds with multiplicity:
\begin{equation}\label{genusbound}
\sum_{i\in\mathcal{O}} n_i \mbox{genus}(\Gamma_i) + \frac{1}{2}\sum_{i\in\mathcal{N}} n_i (\mbox{genus}(\Gamma_i)-1)\leq g,
\end{equation}
\noindent
where $\mathcal{O}$ denotes the subcollection of $\Gamma_i$ that are orientable and $\mathcal{N}$ denotes the subcollection of $\Gamma_i$ that are non-orientable,  and where $\mbox{genus}(\Gamma_i)$ denotes the genus of $\Gamma_i$
if it is orientable,  and the number of crosscaps that one attaches to a sphere to obtain a homeomorphic surface if $\Gamma_i$ is non-orientable.
Furthermore
\begin{equation}\label{indexbound}
\sum_{i=1}^k\mbox{index}(\Gamma_i)\leq M.
\end{equation}
\end{thm}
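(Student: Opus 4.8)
The plan is to assemble the three assertions of the theorem — existence and regularity of the varifold limit, the genus bound \eqref{genusbound}, and the index bound \eqref{indexbound} — from the Simon–Smith refinement of Almgren–Pitts min-max theory, adapted to an $n$-parameter cubical family. First I would fix the saturated class $\Pi = \Pi_{\Sigma_t}$ and a minimizing sequence $\{\Sigma^i_t\}$, and run the pull-tight procedure: deform each sweepout by $\{\Psi(t,\Sigma^i_t)\}$, where $\Psi$ is generated by a vector field that strictly decreases area outside a neighborhood of the (finitely many, by compactness) stationary integral varifolds of mass $\le W(\Pi,M)$; since $\Psi(t,\cdot)=\mathrm{id}$ on $\partial I^n$ this stays in $\Pi$, and it forces any min-max sequence of the new minimizing sequence to subconverge, as varifolds, to a stationary integral varifold $V$. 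Hypothesis \eqref{isbigger} gives $\|V\|=W(\Pi,M)>\sup_{\partial I^n}\mathcal H^2(\Sigma_t)$, so $V\neq 0$ and no mass escapes to the boundary complex. Next I would establish the almost-minimizing-in-annuli property by the combinatorial argument of Almgren–Pitts in the form of Colding–De Lellis, now on an $n$-dimensional cubical subdivision of $I^n$: if no min-max sequence were almost minimizing in small annuli one could interpolate area-decreasing competitors across the cells and strictly lower $\sup_t\mathcal H^2$, contradicting minimality.

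Second, for regularity and the relation to the min-max surfaces, I would construct replacements for $V$ in small annuli from the almost-minimizing property, compare overlapping replacements, and invoke the maximum principle together with Allard- and Schoen–Simon-type estimates to conclude $V=\sum_{i=1}^k n_i\Gamma_i$ with the $\Gamma_i$ smooth, closed, pairwise disjoint, embedded and minimal (embeddedness being available since $M$ is an orientable $3$-manifold). The structure of the $\Sigma_i$ relative to the $\Gamma_i$ is the Meeks–Simon–Yau / Simon–Smith picture: away from finitely many neck points $\Sigma_i$ converges smoothly and graphically to the $\Gamma_i$, and performing finitely many compressions along these necks yields a surface each component of which is isotopic to some $\Gamma_i$, or — when $\Gamma_i$ is one-sided — to the boundary of a tubular neighborhood of $\Gamma_i$, i.e.\ a connected double cover. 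The genus bound then follows from the integrated Gauss–Bonnet argument of De Lellis–Pellandini and Ketover: the genus of $\Sigma_i$ decomposes into the genus on the graphical sheets over the $\Gamma_i$ plus a nonnegative amount trapped in the necks; an $n_i$-fold graphical sheet over an orientable $\Gamma_i$ contributes $n_i\,\mathrm{genus}(\Gamma_i)$, while a sheet limiting onto a one-sided $\Gamma_i$ is a connected double cover contributing $\tfrac12 n_i(\mathrm{genus}(\Gamma_i)-1)$ by the Riemann–Hurwitz count, and since $\mathrm{genus}(\Sigma_i)=g$ one obtains \eqref{genusbound}.

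Third, for the index bound, I would use the deformation and Lusternik–Schnirelmann argument of Marques–Neves, in the multiplicity-allowing form of Zhou and Marques–Neves: $W(\Pi,M)$ is the $n$-parameter width of the saturated family, so if $\sum_i\mathrm{index}(\Gamma_i)$ exceeded the number of parameters one could use the negative eigenfunctions of the stability operators of the $\Gamma_i$ to construct an $n$-dimensional deformation of a neighborhood of $\sum_i n_i\Gamma_i$ strictly lowering the areas, contradicting attainment of the width; the multiplicities are handled because the min-max surfaces sweep each $\Gamma_i$ an integer number of times, so the deformations act on the varifold with its weights. The main obstacle I anticipate is precisely this last point: propagating the index estimate (and, to a lesser extent, the genus estimate) correctly through the multiplicities $n_i$ and the one-sided components, since the deformation argument is cleanest for two-sided multiplicity-one limits and the general case requires the catenoid-estimate machinery to exclude area-cancelling deformations across the necks. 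The $n$-parameter almost-minimizing step, while conceptually identical to the one-parameter case, is also notation-heavy and must be set up so that the interpolated competitors remain genuine elements of $\Pi$ on all of $\partial I^n$.
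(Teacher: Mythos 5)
The paper does not actually prove Theorem \ref{highparamminmax}: it is quoted as a known result, with existence and regularity coming from Simon--Smith/Colding--De Lellis, the multiplicity-weighted genus bound \eqref{genusbound} from the author's earlier work [K], and the index bound \eqref{indexbound} from Marques--Neves [MN3] (note the bound should read $\le n$, the number of parameters, as you correctly interpret). Your outline --- pull-tight, almost minimizing in annuli via the combinatorial/cubical argument, replacements and Schoen--Simon regularity, the Meeks--Simon--Yau neck-compression picture, the integrated Gauss--Bonnet genus accounting, and the Marques--Neves deformation argument for the index --- is essentially the same route taken by those cited sources, so it is consistent with the proof the paper relies on.
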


The index bound \eqref{indexbound} was obtained by Marques-Neves  (Theorem 1.2 in \cite{MN3}). The genus bound was obtained in \cite{K} (weaker  bounds were obtained by Simon-Smith \cite{SS},  \cite{DP}).

A fundamental question is whether one can obtain multiplicities $n_i$ greater than $1$ in the min-max theory when the metric is generic.   In the Almgren-Pitts setting,  this has recently been resolved by Zhou \cite{Z} when the ambient manifold has dimension $n$ satisfying $3\leq n\le 7$ and Chodosh-Mantoulidis \cite{CM2} when $n=3$.   Both of these works use regularizations of the area functional.  Zhou used the prescribed mean curvature functional and Chodosh-Mantoulidis used the Allen-Cahn equation.    

In the smooth setting, where one works with surfaces of a fixed topological type as in Theorem \ref{highparamminmax},  the following remains open:

\begin{conjecture}[Multiplicity One]\label{mult1}
In the setting of Theorem \ref{highparamminmax}, if the metric $g$ is assumed to be bumpy then $n_i=1$ for each $i$ such that $\Gamma_i$ is two-sided.  More generally, for any metric $g$, any two-sided $\Gamma_i$ occurring with multiplicity $n_i>1$ is stable with a non-trivial Jacobi field.
\end{conjecture}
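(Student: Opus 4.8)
The plan is to argue by contradiction, and the first reduction is to observe that the bumpy case follows from the general one: a bumpy metric admits no minimal surface with a non-trivial Jacobi field, so if every two-sided $\Gamma_i$ occurring with $n_i>1$ were forced to carry a Jacobi field, no such component could appear at all. Thus it suffices to fix an arbitrary metric, assume a two-sided component $\Gamma:=\Gamma_{i_0}$ of the min-max limit $\sum_j n_j\Gamma_j$ occurs with $n:=n_{i_0}\ge 2$, and deduce that $\Gamma$ is stable \emph{and} has a non-trivial Jacobi field. The second step is to localize: by the structure statement of Theorem \ref{highparamminmax} together with the compactness and curvature theory behind it, in a tubular neighborhood $N=\Gamma\times(-\delta,\delta)$ the min-max surfaces $\Sigma_{t_i}^i$ converge to $\Gamma$ with multiplicity $n$ and, away from finitely many points where topology can concentrate (with the concentration controlled by the genus bound \eqref{genusbound}), are the graphs of $n$ ordered functions $u_1^i<\dots<u_n^i\to 0$ over $\Gamma$. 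The hypothesis that $\Gamma$ is two-sided is exactly what rules out the orientation-double-cover alternative in Theorem \ref{highparamminmax} and leaves this picture of $n$ nearly parallel sheets, possibly joined by thin necks.

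Now split into two cases. If $\Gamma$ is \textbf{unstable}, I would invoke the catenoid estimate of Ketover–Marques–Neves: the $n$ sheets near $\Gamma$ may be joined by catenoidal necks and then displaced in an unstable direction at a cost strictly below $n|\Gamma|$ near the crossing parameter, because pinching a neck along a logarithmically scaled catenoidal profile is cheaper than pinching a straight tube by an amount that beats the area gained from the unstable bump. Performing this cut-and-paste family-wise along the sweepout requires only an $n$-parameter version of the surgery, changes no genus since the modifications are local, and stays inside the saturated class $\Pi$; the result is a competitor sweepout of strictly smaller width, contradicting that $\Sigma_{t_i}^i$ is a min-max sequence for $W(\Pi,M)$. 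Hence $\Gamma$ must be stable. If $\Gamma$ is \textbf{strictly stable} (stable with no Jacobi field), I want the same kind of contradiction. Strict stability makes $\Gamma$ locally area-minimizing, so inside $N$ any surface homologous to $\Gamma$ and distinct from it has strictly larger area; combining this with the finite-dimensional Lyapunov–Schmidt reduction of the area functional near the non-degenerate $\Gamma$, one should be able to replace the doubled layer in $\Sigma_{t_i}^i$ by a family that first collapses both sheets onto $\Gamma$ and then passes through a single copy, inserting a round trip on one side whose area is kept below $2|\Gamma|$ by confining it to the strictly-stable region. Again the width drops, a contradiction, so $\Gamma$ cannot be strictly stable and therefore carries a non-trivial Jacobi field.

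The main obstacle is the strictly stable case. Unlike the unstable case there is no unstable direction to exploit and no catenoid gain available — necking near a strictly stable surface is expensive, not cheap — so one must argue more globally that a sweepout crossing a strictly stable surface with multiplicity $\ge 2$ can always be homotoped to a sweepout crossing it once, and, crucially for this paper, that this can be done without raising the genus, since the relevant families come from stabilized Heegaard foliations and any genus jump leaves the class $\Pi$ attached to Theorem \ref{flip2}. Making the round-trip competitor precise near such a barrier, while simultaneously controlling the finitely many points where topology could degenerate, is where the real work lies. An attractive alternative is to sidestep the case division entirely by running the whole min-max with a regularized functional whose critical points simply cannot have multiplicity $\ge 2$ — the prescribed-mean-curvature functional of Zhou or the Allen–Cahn functional as in Chodosh–Mantoulidis — but then the obstacle is transferred to carrying the fixed-genus bound \eqref{genusbound} through the perturbation, which is not presently available in the Simon–Smith category.
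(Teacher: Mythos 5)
The statement you are trying to prove is not a theorem of the paper at all: it is Conjecture \ref{mult1}, and the paper explicitly states, immediately before it, that ``in the smooth setting, where one works with surfaces of a fixed topological type as in Theorem \ref{highparamminmax}, the following remains open.'' So there is no proof in the paper to compare against, and your write-up should not be read as closing a known result but as a strategy sketch for an open problem. Judged as such, it has two genuine gaps. First, in the unstable case, the catenoid estimate of \cite{KMN} is only known to rule out multiplicity when the multiplicity is comparable to the number of parameters of the sweepout (this is exactly how the paper describes it: it is ``useful for ruling out multiplicities for unstable minimal surfaces when the multiplicity is equal to the number of parameters''). Your ``$n$-parameter version of the surgery'' for arbitrary $n\ge 2$ inside a family with a fixed, unrelated number of parameters is precisely what is not available: joining $n$ sheets by necks and pushing off in a single unstable direction only produces a one-parameter gain, and it is not known how to thread such a modification coherently through a higher-parameter family without creating new maxima elsewhere, nor how to keep the modified family in the saturated class with the genus unchanged at every parameter.

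Second, and more seriously, your strictly stable case is not an argument but a restatement of the difficulty. Strict stability makes $\Gamma$ a local minimizer, so any ``round trip on one side'' of $\Gamma$ that replaces two sheets by one must at some parameter pass through a surface of area at least $2|\Gamma|$ minus lower-order terms; there is no mechanism (no unstable direction, no catenoid gain) forcing the width to drop, and indeed the expected behavior is that the width can equal $n|\Gamma|$ in this situation --- that is why the conjecture allows stable components with multiplicity and only asserts they carry a Jacobi field. Deducing the Jacobi field itself would require a quantitative statement (e.g.\ a strict local area excess or a Lyapunov--Schmidt/\L{}ojasiewicz-type argument adapted to varifold convergence of sheets with necks), none of which is supplied. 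Your closing observation is the accurate one: the regularized-functional approaches of Zhou and Chodosh--Mantoulidis resolve multiplicity one in the Almgren--Pitts and Allen--Cahn settings for bumpy metrics, but they do not preserve the fixed-genus (Simon--Smith) structure and hence do not carry the genus bound \eqref{genusbound}, which is exactly what this paper needs; transferring either the multiplicity-one mechanism into the smooth fixed-genus category, or the genus bound into the regularized category, is the open content of Conjecture \ref{mult1}, and your proposal does not supply it.
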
 

One quantitative tool we have in the direction of Conjecture \ref{mult1} is the Catenoid Estimate \cite{KMN} which has been useful for ruling out\\ multiplicities for unstable minimal surfaces when the multiplicity is equal to the number of parameters.  There are also \emph{ad hoc} methods to rule out stable surfaces with multiplicities developed in work with Liokumovich and Song \cite{KLS}.   

%\begin{defn}
%A stationary integral varifold $V=n_1\Gamma_1+... +n_k\Gamma_k$ obtained from Theorem \ref{highparamminmax} is a \emph{min-max minimal surface}.   The \emph{genus} of a min-max minimal surface is taken to be  equal to
%\begin{equation}\label{defgenus}
%\sum_{i\in\mathcal{O}} n_i g(\Gamma_i) + \sum \frac{1}{2}\sum_{i\in\mathcal{N}} n_i g(\Gamma_i)-1), 
%\end{equation}
%and the index of $V$ is taken to be
%\begin{equation}
%\sum_{i=1}^k \mbox{index}(\Gamma_i).
%\end{equation}
%\end{defn}

\section{Proof of Existence Result}
In this section, we prove Theorem \ref{flip2}.   We first need the following lemma:
\begin{lemma}[Controlled degeneration of stabilizations]\label{degen}
Suppose $\{\Sigma_t\}_{t\in [-1,1]}$ is an optimal genus $g$ foliation of $M$ and fix a positive integer $k$.   Then for any $\delta>0$ there exists a two-parameter sweepout $\{\Lambda_{s,t}\}$  with $t\in [-1,1]$ and $s\in [0, \varepsilon]$ so that 
\begin{enumerate}
\item For each $t\in[-1,1]$,  the surface $\Lambda_{0,t}$ is equal to the surface $\Sigma_t$ together with a union of arcs $\mathcal{A}_t$.  
\item For each $s\in (0,\varepsilon)$ and $t\notin\{0, 1\}$,  the surface $\Lambda_{s,t}$ is isotopic to $S_k(\Sigma_0)$ 
\item For each fixed $s\in (0, \varepsilon)$,  the family $\{\Lambda_{s,t}\}_{t\in [-1,1]}$ is a Heegaard sweepout\footnote{With more care one can turn this into a Heegaard foliation.} of $M$.  
\item There holds \begin{equation}\sup_{s\in (0,\varepsilon),  t\in [-1,1]} |\Lambda_{s,t}| \leq |\Sigma_0| +\delta.\end{equation}
\end{enumerate}
\end{lemma}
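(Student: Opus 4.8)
The plan is to build the two-parameter family $\{\Lambda_{s,t}\}$ by performing, fiberwise in $t$, a ``small stabilization'' of the optimal foliation $\{\Sigma_t\}$ and then letting the handles grow as $s$ increases from $0$ to $\varepsilon$. Concretely, since $\{\Sigma_t\}_{t\in(0,1)}$ foliates $M\setminus(\Sigma_0\cup\Sigma_1)$ by property (3) of an optimal foliation, I fix $k$ small disjoint coordinate balls $B_1,\dots,B_k$ in $M$ transverse to the foliation (away from $t\in\{0,1\}$), each of radius comparable to some small $\rho$. Inside each $B_j$ I attach to $\Sigma_t$ an unknotted trivial handle of ``size $s$'': for $s=0$ the handle is degenerate, consisting of a short arc (or pair of arcs) $\mathcal{A}_t$ transverse to $\Sigma_t$, which gives property (1); for $s\in(0,\varepsilon)$ the handle is a genuine thin tube of diameter $\sim s$, so $\Lambda_{s,t}$ is $\Sigma_t$ with $k$ trivial handles attached, hence isotopic to $S_k(\Sigma_0)$, giving property (2). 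One must do this consistently as $t$ ranges over $[-1,1]$, which is possible because the balls $B_j$ sit inside a product region of the foliation, so the arcs and tubes can be chosen to depend smoothly (and continuously up to $t\in\{0,1\}$) on $t$; near $t=\pm1$, where $\Sigma_t$ degenerates to a spine, the handles are absorbed into the $1$-complex, so property (4) of the sweepout definition (the boundary being a $1$-complex plus a surface) is preserved.

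Next I verify property (3), that for each fixed $s\in(0,\varepsilon)$ the family $\{\Lambda_{s,t}\}_{t\in[-1,1]}$ is a Heegaard sweepout. This is essentially automatic: attaching a fixed trivial handle to every leaf of a Heegaard foliation produces a Heegaard sweepout of the same manifold of genus $g+k$, with the two spines obtained from the original spines by attaching the cores/co-cores of the handles. The continuity in $t$ and the Hausdorff convergence at $t=\pm1$ follow from the corresponding properties of $\{\Sigma_t\}$ together with the smooth dependence of the handles on $t$. As noted in the footnote, with more care (interpolating the tube positions along the flow lines of the foliation) one can arrange $\{\Lambda_{s,t}\}_{t\in[-1,1]}$ to actually foliate $M$, but we do not need this.

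The main point — and the main obstacle — is the area estimate (4): $\sup_{s\in(0,\varepsilon),\,t}|\Lambda_{s,t}|\le|\Sigma_0|+\delta$. Here I use property (2) of the optimal foliation, $\mathcal{H}^2(\Sigma_t)\le|\Sigma_0|-Ct^2$, so that away from a fixed small neighborhood of $t=0$ there is a definite area deficit to absorb the cost of the handles; and near $t=0$, where $\Sigma_t$ has area close to $|\Sigma_0|$, I must make the handles cheap. The area added by $k$ thin tubes of diameter $s$ in a region where $\Sigma_t$ is roughly flat is $O(ks^2)$ plus the area of the $k$ removed disks, which is $O(k\rho^2)$; choosing first $\rho$ small (so $k\pi\rho^2<\delta/2$, using that $|\Sigma_t|$ only decreases when we delete disks) and then $\varepsilon=\varepsilon(\rho,k,\delta)$ small enough that the tube area is $<\delta/2$ uniformly in $s\le\varepsilon$ and $t$, we get $|\Lambda_{s,t}|\le|\Sigma_t|+\delta\le|\Sigma_0|+\delta$. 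The one delicate case is the transition $s\to0$ and near $t=\pm1$: one checks that as $s\to0$ the tubes collapse onto the arcs $\mathcal{A}_t$ of zero area, so $|\Lambda_{s,t}|\to|\Sigma_t|=|\Lambda_{0,t}|$, and near the spines the handles have uniformly small area since they live in a shrinking neighborhood. Assembling these local estimates into the stated uniform bound, with careful but routine bookkeeping of the constants, completes the proof.
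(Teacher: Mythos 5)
Your overall strategy is the right one and matches the paper in spirit: attach $k$ trivial tubes whose radius is scaled by the parameter $s$, let them degenerate to arcs at $s=0$ (giving item (1)), and get the area bound (4) simply because thin tubes add arbitrarily little area (note you do not actually need the quadratic deficit $\mathcal{H}^2(\Sigma_t)\le|\Sigma_0|-Ct^2$ for this; the paper just shrinks the tube radius uniformly). However, there is a genuine gap in how you globalize the construction in $t$. You attach the handles inside $k$ \emph{fixed} coordinate balls $B_1,\dots,B_k\subset M$. Since the leaves $\{\Sigma_t\}$ are pairwise disjoint and sweep out all of $M$, a fixed small ball meets $\Sigma_t$ only for $t$ in a proper subinterval of $[-1,1]$; for all other $t$ there is simply no place inside $B_j$ to attach a handle to $\Sigma_t$, so $\Lambda_{s,t}$ fails to be a genus $g+k$ surface and items (2) and (3) break down. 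The appeal to ``the balls sit inside a product region of the foliation'' only covers the range of $t$ for which the leaf actually passes through the balls, and the claim that near $t=\pm1$ ``the handles are absorbed into the $1$-complex'' is unfounded for handles sitting in balls at a fixed distance from the spines.

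The fix is to let the attaching data travel with the leaves: for each $t\in(-1,1)$ choose $k$ unknotted arcs $\alpha^t_1,\dots,\alpha^t_k$ with endpoints on $\Sigma_t$ and interiors on one side of $\Sigma_t$, depending smoothly on $t$, and take $\Lambda_{s,t}$ to be $\Sigma_t$ with the boundary of an $\eta$-tube about these arcs attached (and the two disks removed), where the radius is $\eta=s\,f(t)$ for a function $f(t)>0$ with $f(t)\to0$ as $t\to\pm1$. The $t$-dependent radius is not a cosmetic point: it is what keeps the tubes embedded as the leaves collapse onto the spines, makes $\Lambda_{s,t}$ converge in the Hausdorff sense to a $1$-complex at $t=\pm1$ (so that each $\{\Lambda_{s,t}\}_{t\in[-1,1]}$ is a genuine Heegaard sweepout, item (3)), and, after shrinking the range of $s$, gives the uniform area bound (4). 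With that modification your argument becomes essentially the paper's proof.
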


\begin{proof}
For each $-1<t<1$ consider $k$ unknotted arcs $\{\alpha^t_1,...,\alpha^t_k\}$ with interiors contained in $\cup_{\tau>t} \Sigma_t$ and endpoints in $\Sigma_t$.   Choose the arcs to depend smoothly on $t$ for each $-1<t<1$.   For any $\eta>0$ let $\Sigma_{t,\eta}$ denote the surface obtained from $\Sigma_t$ by adding in the boundary of the $\eta$-tubular neighborhood about $\alpha_t$ and removing the two small disks that this neighborhood makes in its intersection with $\Sigma_t$.   There exists a smooth function $f(t)>0$ so that if $\eta<f(t)$ then the surface $\Sigma_{t,\eta}$ is a piecewise smooth embedded surface of genus $g+k$ and isotopic to $S_k(\Sigma_0)$.   Moreover,  $f(t)\rightarrow 0$ as $t\rightarrow\pm 1$.   For any $s_0<1$,  consider the two-parameter family $\Lambda_{t,s} = \Sigma_{t,  sf(t)}$ (parameterized by $t\in [-1,1]$ and $s\in [0,s_0]$).   Shrinking $s_0$ enough,  and smoothing out the family, gives the desired two-parameter sweepout.

\end{proof}

It follow immediately from Lemma \ref{degen} that
\begin{cor}\label{onlyless}
If $\Sigma$ is a Heegaard surface then for each $k>0$ there holds
\begin{equation}
\omega(M, S_k(\Sigma))\leq\omega(M,  \Sigma).
\end{equation}
\end{cor}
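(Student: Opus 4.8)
The plan is to deduce Corollary \ref{onlyless} directly from Lemma \ref{degen} by exhibiting, for each $\delta>0$, a genuine Heegaard sweepout of $M$ by surfaces isotopic to $S_k(\Sigma)$ whose maximal area is at most $|\Sigma_0| + \delta$, and then recognizing that $|\Sigma_0| = \omega(M,\Sigma)$ up to an arbitrarily small error by optimizing over the optimal foliation. Since $\delta>0$ is arbitrary, taking the infimum defining $\omega(M, S_k(\Sigma))$ against this one-parameter subfamily forces $\omega(M, S_k(\Sigma)) \le \omega(M,\Sigma)$.

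More precisely, first I would fix an optimal genus $g$ foliation $\{\Sigma_t\}_{t\in[-1,1]}$ realizing (or nearly realizing) $\omega(M,\Sigma) = |\Sigma_0|$; strictly speaking one should start from a sweepout in $\Pi_\Sigma$ whose maximal area is within $\delta/2$ of $\omega(M,\Sigma)$, but the statement of Lemma \ref{degen} is phrased for an optimal foliation, so I would simply invoke it with the foliation in hand. Apply Lemma \ref{degen} with the given $k$ and with $\delta$ replaced by $\delta/2$ (say), producing the two-parameter family $\{\Lambda_{s,t}\}$ with $t\in[-1,1]$, $s\in[0,\varepsilon]$. Now fix any single value $s_\ast\in(0,\varepsilon)$. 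By item (3) of Lemma \ref{degen}, the one-parameter family $\{\Lambda_{s_\ast,t}\}_{t\in[-1,1]}$ is a Heegaard sweepout of $M$; by item (2), for $t\notin\{0,1\}$ its slices are isotopic to $S_k(\Sigma_0) = S_k(\Sigma)$, and the degenerate slices at $t=\pm 1$ (and the interpolation near $t=0$) are 1-complexes together with smooth surfaces, so after a mild reparametrization this is an admissible competitor for the width $\omega(M, S_k(\Sigma))$ — i.e. it lies in $\Pi_{S_k(\Sigma)}$. Its supremal area is bounded by item (4): $\sup_t |\Lambda_{s_\ast,t}| \le |\Sigma_0| + \delta/2$.

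Then I would conclude: $\omega(M, S_k(\Sigma)) \le \sup_{t}|\Lambda_{s_\ast,t}| \le |\Sigma_0| + \delta/2 = \omega(M,\Sigma) + \delta/2 < \omega(M,\Sigma) + \delta$. Since $\delta>0$ was arbitrary, $\omega(M, S_k(\Sigma)) \le \omega(M,\Sigma)$, which is the claim. One small point worth stating carefully is why $\{\Lambda_{s_\ast,t}\}_{t\in[-1,1]}$ genuinely belongs to the saturated class $\Pi_{S_k(\Sigma)}$: its interior slices are Heegaard surfaces isotopic to $S_k(\Sigma)$ degenerating to the spines of the two handlebodies, which is exactly the description of the sweepouts comprising $\Pi_{S_k(\Sigma)}$ given in Section \ref{prelim}; alternatively one cites the footnote to item (3) to upgrade the sweepout to a foliation.

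The only mild obstacle is bookkeeping rather than mathematics: ensuring that the boundary behavior of $\{\Lambda_{s_\ast, t}\}_t$ at $t=\pm1$ (and the slight degeneration forced near $t=0$ in the construction, where $\Lambda_{0,t} = \Sigma_t \cup \mathcal{A}_t$ includes extra arcs) matches the admissibility requirements for a sweepout computing $\omega(M, S_k(\Sigma))$, so that the competitor is legitimate; this is handled by the smoothing in the last sentence of the proof of Lemma \ref{degen} together with the observation that adding a fixed 1-complex to a slice changes neither the $\mathcal{H}^2$-measure nor membership in the relevant class. No curvature or genericity hypotheses are needed, and the argument is entirely soft once Lemma \ref{degen} is in place.
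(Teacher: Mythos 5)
Your proposal is correct and follows essentially the same route as the paper, which simply states that the corollary follows immediately from Lemma \ref{degen}: you fix a slice $s_\ast\in(0,\varepsilon)$ of the two-parameter family, observe it is an admissible genus-$(g+k)$ Heegaard sweepout with maximal area at most $|\Sigma_0|+\delta$, and let $\delta\to 0$. Your remark about starting from a near-optimal sweepout when $\Sigma$ is a general Heegaard surface (rather than literally an optimal foliation) is a reasonable piece of bookkeeping that the paper leaves implicit.
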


We also need the following (see also Lemma (1.4iii) in \cite{CGK}):

\begin{lemma}[Joining Heegaard foliations (Lemma 1.6 in \cite{I})]\label{fillin}
Let $S$ and $L$ be isotopic Heegaard surfaces in $M$ and let $\{T_t\}_{t\in[-1,1]}$ be an isotopy between $S$ and $L$.  Let $\{S_t\}_{t\in[-1,1]}$ and $\{L_t\}_{t\in[-1,1]}$ be Heegaard sweepouts such that $S_{0} = S$ and $L_0 = L$.   Then there exists a 2-parameter sweep-out $\{\Sigma_{u,t}\}_{t\in [-1,1], u\in [0,1]}$ such that $\Sigma_{0,t} = S_t$,  $\Sigma_{1,t} = L_t$ and $\Sigma_{s,0} = T_s$ for any $t\in [-1,1]$ and $u\in [-1,1]$.  
\end{lemma}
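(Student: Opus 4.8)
The plan is to obtain the square from two moves: transporting the sweepout $\{S_t\}_t$ along the given isotopy $T$ by an ambient isotopy, and then absorbing the leftover discrepancy into the path-connectedness of the space of Heegaard sweepouts that share a fixed central slice. Reparametrize the isotopy so that it is indexed by $u\in[0,1]$ with $T_0=S$ and $T_1=L$ (the indexing $[-1,1]$ in the statement is a misprint). By the isotopy extension theorem, extend $\{T_u\}_{u\in[0,1]}$ to an ambient isotopy $\phi\colon[0,1]\times M\to M$ with $\phi_0=\mathrm{id}$, every $\phi_u\in\mathrm{Diff}_0(M)$, and $\phi_u(S)=T_u$. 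Since $\phi_u$ is isotopic to the identity it carries the two handlebodies of the splitting determined by $S$ onto those of the splitting determined by $T_u$; in particular $\{\phi_1^{-1}(L_t)\}_{t\in[-1,1]}$ is a Heegaard sweepout of the \emph{same} splitting as $\{S_t\}_t$, and it is centered at $S$, since its $t=0$ slice is $\phi_1^{-1}(L)=S=S_0$.

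The key step is to join $\{S_t\}_t$ and $\{\phi_1^{-1}(L_t)\}_t$ by a $2$-parameter genus $g$ sweepout $\{H_{u,t}\}_{u\in[0,1],\,t\in[-1,1]}$ (in the sense of Section~\ref{prelim}, with $B=\{\,|t|=1\,\}$) with $H_{0,t}=S_t$, $H_{1,t}=\phi_1^{-1}(L_t)$, and $H_{u,0}=S$ for all $u$. On the compact slab $\{-1+\varepsilon\le t\le 1-\varepsilon\}$ this is straightforward: both endpoint families consist of smooth families of Heegaard surfaces isotopic to $S$, so one may lift $t\mapsto S_t$ and $t\mapsto\phi_1^{-1}(L_t)$ to smooth paths $a_t,b_t$ in $\mathrm{Diff}_0(M)$ with $a_t(S)=S_t$ and $b_t(S)=\phi_1^{-1}(L_t)$, choose a path $c_{u,t}$ in $\mathrm{Diff}_0(M)$ from $a_t$ to $b_t$ continuous in $(u,t)$, and set $H_{u,t}=c_{u,t}(S)$, so that every intermediate slice is automatically an embedded genus $g$ surface. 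The real content lies near $t=\pm1$, where the two families collapse onto (a priori different) spines of the respective handlebodies: here one uses the classical fact that any two spines of a handlebody are joined by a family of embedded $1$-complexes (edge slides together with isotopies), and, exploiting that $B$ permits $1$-complexes and that regular neighborhoods of a spine are unique, interpolates $H_{u,t}=\partial N_{\rho(u,t)}(G_u)$ near the ends, with $\rho(u,t)\to 0$ as $t\to\pm1$ and with a further routine interpolation among genus $g$ surfaces gluing this to the slab construction at $t=\pm(1-\varepsilon)$.

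Finally, set $\Sigma_{u,t}:=\phi_u(H_{u,t})$. Then $\Sigma_{0,t}=S_t$, $\Sigma_{1,t}=\phi_1(\phi_1^{-1}(L_t))=L_t$, and $\Sigma_{u,0}=\phi_u(S)=T_u$; and because $(u,x)\mapsto\phi_u(x)$ is smooth with each $\phi_u$ a diffeomorphism, continuity of area, Hausdorff continuity, the genus $g$ interior slices, and the $1$-complex-plus-surface degeneration on $B$ are all inherited from $\{H_{u,t}\}$. The step I expect to be the main obstacle is precisely this degeneration near $t=\pm1$: the two sweepouts being joined may approach their spines at different rates and collapse onto spines that are not ambient isotopic in their handlebodies, so the interpolating family must be set up to degenerate to $1$-complexes in a controlled, area-continuous fashion; everything else — the slab interpolation and the transport by $\phi$ — is formal.
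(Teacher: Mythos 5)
The paper does not prove this lemma at all --- it is quoted from the literature (Lemma 1.6 of \cite{I}, cf.\ Lemma 1.4(iii) of \cite{CGK}) --- so the comparison can only be with the substance of your argument, and there is a genuine gap in it. Your reduction via the isotopy extension theorem is fine: setting $\Sigma_{u,t}=\phi_u(H_{u,t})$ correctly reduces the problem to joining $\{S_t\}$ to $\{\phi_1^{-1}(L_t)\}$ through a two-parameter family with $H_{u,0}=S$ for all $u$ (note you must take the slab homotopy $c_{u,t}$ rel $t=0$, with $a_0=b_0=\mathrm{id}$, to guarantee $c_{u,0}(S)=S$; this is easily arranged since the $t$-interval retracts to $\{0\}$, but you do not say it). The problem is your treatment of the degenerate edges $t\to\pm 1$. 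You propose to connect the limiting $1$-complexes by the classical fact that any two spines of a \emph{fixed} handlebody are related by isotopies and edge slides, and then interpolate by $\partial N_{\rho}(G_u)$. This tacitly assumes that, at the $t\to +1$ end, $\{S_t\}$ and $\{\phi_1^{-1}(L_t)\}$ collapse onto spines of the \emph{same} handlebody of the splitting along $S$. Nothing in the hypotheses guarantees this, and in the only situation where the lemma is actually invoked in this paper --- the flip in the proof of Theorem \ref{flip2}, where $L$ is $S$ with reversed orientation and $T$ is the flip isotopy --- it is false: $\phi_1$ is isotopic to the identity but interchanges the two sides $H^+$ and $H^-$ of $S$, so as $t\to+1$ the family $S_t$ collapses to a spine of $H^+$ while $\phi_1^{-1}(L_t)$ collapses to a spine of $H^-$.

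Consequently the interpolation at the ends cannot stay inside one handlebody: for intermediate $u$ the degenerate slices must be $1$-complexes that migrate across $M$ (in particular crossing $S$, hence spines of neither $H^+$ nor $H^-$), while every nearby slice remains an embedded genus $g$ surface, area and Hausdorff continuity hold up to the corner, and the whole square still glues to your slab construction at $t=\pm(1-\varepsilon)$. That construction --- producing sweepouts whose collapsing ends move from one handlebody to the other while the central slice is pinned at $S$ --- is precisely the content of the cited lemma, and it is the step your sketch replaces with a same-handlebody spine-sliding fact that does not apply. Path-connectedness of $\mathrm{Diff}_0(M)$ handles the compact slab but gives no control near $B$, and the ``routine interpolation'' you invoke to glue slab to ends is exactly where the difficulty reappears. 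So the overall architecture (ambient transport plus a joining argument) is reasonable and presumably parallels the cited source's first step, but the core of the proof is missing rather than merely sketched.
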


%\begin{lemma}[Joining Heegaard foliations]\label{fillin}
 %Given isotopic Heegaard splittings $(H_0,H_1)$ and $(H_0', H_1')$ with Heegaard foliations $\{\Sigma_t\}_{t\in [-1,1]}$ and $\{\Sigma'_t\}_{t\in [-1,1]}$ and Heegaard surfaces $\Sigma_0$ and $\Sigma'_0$ there exists a path of Heegaard foliations $\{\Sigma_{s,t}\}_{s\in [0,1], t\in [-,1]}$ with $\Sigma_{0,t}=\Sigma_t$ and  $\Sigma_{1,t}=\Sigma'_t$,  varying smoothly away from the spines.
%\end{lemma}
Finally we need the following Lusternik-Schnirelman type result (cf.  Section 6 in \cite{MN2}).     In the following,  for any integral varifold $\mathcal{V}$, let $T_\varepsilon(\mathcal{V})$ denote the $\varepsilon$-tubular neighborhood about $\mathcal{V}$ in the $\mathcal{F}$-metric.  
\begin{lemma}\label{lus}
Let $M$ be a compact orientable Riemannian $3$-manifold.   Suppose $\{\Lambda_t\}_{t\in [0,1]}$ is a genus $n$  sweepout where 
\begin{enumerate}
\item  The smooth component $\Lambda'_0$ of $\Lambda_0$ and the smooth component $\Lambda'_1$ of $\Lambda_1$ are genus $g\leq n$ surfaces. 
\item  For each $t\in(0,1)$,  $\Lambda_t$ is isotopic to $S_{n-g}(\Lambda'_1)$ and $S_{n-g}(\Lambda'_0)$.  
\item $\Lambda'_0=\Lambda'_1$ as sets but with opposite orientation.  
\end{enumerate}
Let $\mathcal{V} = \{\Phi_1, \Phi_2, ... ,\Phi_m\}$ be a finite set of closed (possible disonnected) embedded surfaces in $M$.   Then when $\varepsilon$ is sufficiently small,  for some $t_1\in (0,1)$,  it holds that $\Lambda_{t_1}\notin T_\varepsilon(\mathcal{V})$.  
\end{lemma}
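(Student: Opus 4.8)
The strategy is to fix $\varepsilon>0$ small (several smallness conditions will be imposed along the way), to assume for contradiction that $\Lambda_t\in T_\varepsilon(\mathcal V)$ for every $t\in[0,1]$, and to show that this is incompatible with the ``flip'' recorded in hypothesis (3). The first step is a reduction to a single ball. Since $\mathcal V=\{\Phi_1,\dots,\Phi_m\}$ is a finite set of pairwise distinct surfaces, $\delta_0:=\tfrac{1}{2}\min_{i\neq j}\mathcal F(|\Phi_i|,|\Phi_j|)>0$, so for $\varepsilon<\delta_0$ the neighborhoods $T_\varepsilon(\Phi_1),\dots,T_\varepsilon(\Phi_m)$ are pairwise disjoint. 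Moreover, by the lower bound on the mass of a non-separating mod $2$ cycle (a ``$2$-systole'' estimate), any $\Phi_j$ that is non-separating is $\mathcal F$-bounded away from every $\Lambda_t$ — each $\Lambda_t$ being separating, since it is a Heegaard surface for $t\in(0,1)$ and agrees mod $2$ with $\Lambda'_0$ for $t\in\{0,1\}$ — so after shrinking $\varepsilon$ we may discard these and assume $\partial\Omega_{\Phi_j}=\Phi_j$ mod $2$ for each $j$. Finally, $t\mapsto\Lambda_t$ is continuous into the space $\mathcal Z_2(M;\mathbb Z_2)$ of mod $2$ cycles with the $\mathcal F$-metric — immediate from the continuity of $\mathcal H^2(\Lambda_t)$ and the Hausdorff continuity built into the definition of a sweepout — and $[0,1]$ is connected, so the trapped family lies in a single piece: $\Lambda_t\in T_\varepsilon(\Phi_{j_0})$ for all $t$, for one fixed $j_0$.

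The heart of the argument is to contradict this. Gluing $t=0$ to $t=1$ (legitimate, since both $\Lambda_0$ and $\Lambda_1$ represent $[\Lambda'_0]$ mod $2$) the family becomes a loop $\gamma$ in $\mathcal Z_2(M;\mathbb Z_2)$, and I claim $\gamma$ detects the generator $\bar\lambda\in H^1(\mathcal Z_2(M;\mathbb Z_2);\mathbb Z_2)$. Indeed, for $t\in(0,1)$ one may choose continuously one of the two handlebodies $W_t$ bounded by $\Lambda_t$, and this extends across the mildly degenerate ends to a continuous path of finite-perimeter regions $\{W_t\}_{t\in[0,1]}$ with $\partial W_t=\Lambda_t$ mod $2$; hypothesis (3) — that $\Lambda'_0$ and $\Lambda'_1$ are the same surface with the two handlebodies interchanged — says exactly that $W_1=M\setminus W_0$ mod $2$, so the lift of $\gamma$ through the double cover $\Omega\mapsto\partial\Omega$ of the relevant component of $\mathcal Z_2(M;\mathbb Z_2)$ by regions is a path joining the two sheets, which is precisely the statement $\gamma^*\bar\lambda\neq0$. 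On the other hand, a standard fact (cf. Section 6 of \cite{MN2}) is that for $\varepsilon$ small the inclusion of $T_\varepsilon(\Phi_{j_0})$ into $\mathcal Z_2(M;\mathbb Z_2)$ kills $\bar\lambda$ — morally because within such a ball only a volume $O(\varepsilon)<\mathrm{vol}(M)$ can be enclosed, so every lift of a loop in the ball closes up. Since $\gamma([0,1])\subset T_\varepsilon(\Phi_{j_0})$, this forces $\gamma^*\bar\lambda=0$, a contradiction. (Alternatively, avoiding cohomology: for $\varepsilon$ small the mod $2$ filling $R_t$ of $\Lambda_t+\Phi_{j_0}$ of least volume is unique and depends continuously on $t$, so $\widetilde W_t:=\Omega_{\Phi_{j_0}}\,\triangle\,R_t$ is another continuous region-lift of $\{\Lambda_t\}$; it agrees with $\{W_t\}$ up to complementing all sets, hence $W_0\,\triangle\,W_1=\widetilde W_0\,\triangle\,\widetilde W_1=R_0\,\triangle\,R_1$ has volume $<2C\varepsilon<\mathrm{vol}(M)$, contradicting $W_0\,\triangle\,W_1=M$ from hypothesis (3).)

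This produces $\Lambda_{t_1}\notin T_\varepsilon(\mathcal V)$ for some $t_1\in[0,1]$; to place $t_1$ in the open interval one uses that $\Lambda_0,\Lambda_1$ equal $\Lambda'_0$ mod $2$: if $\Lambda'_0\notin\mathcal V$ then $\Lambda_0\notin T_\varepsilon(\mathcal V)$ already and, by $\mathcal F$-continuity, $\Lambda_t\notin T_{\varepsilon/2}(\mathcal V)$ for $t$ near $0$, while if $\Lambda'_0\in\mathcal V$ then $\Lambda_0,\Lambda_1\in T_\varepsilon(\mathcal V)$ and the escape point found above is automatically interior.

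I expect the middle paragraph to be the main obstacle — specifically the claim that $\gamma$ detects $\bar\lambda$, which requires being careful that the continuous choice of handlebody extends across the mildly degenerate ends $t=0,1$ and, above all, that hypothesis (3) is faithfully translated into the relation $W_1=M\setminus W_0$ mod $2$ (and, for the parenthetical alternative, that ``$\mathcal F$-close to a fixed separating cycle'' really yields a small-volume filling with the stated uniqueness, up to the usual flat-norm error terms). The remaining ingredients — the disjointness reduction, the $2$-systole remark, the continuity statements, and the $\varepsilon$-triviality of small balls (which is in \cite{MN2}) — are routine.
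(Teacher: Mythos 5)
Your proposal is essentially correct, but it takes a genuinely different and much heavier route than the paper. The paper's proof is elementary: choose a point $p$ and radius $r$ so that $B_p(r)$ is disjoint from every $\Phi_j$ and from $\Lambda_0$; since by hypothesis (3) the family interchanges the two handlebodies bounded by $\Lambda'_0$, the volume of $B_p(r)$ lying on one side goes from full to empty, so some slice $\Lambda_{t_0}$ bisects the ball and therefore, by the isoperimetric inequality, carries a definite amount $\eta(M,p,r)$ of area inside $B_p(r)$; on the other hand, taking $\varepsilon_i\rightarrow 0$ and trapped families, varifold closeness to $\mathcal{V}$ forces the area of the chosen slices outside a fixed tubular neighborhood of $\mathcal{V}$ (in particular inside $B_p(r)$) to tend to zero, a contradiction. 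Note that this uses only the varifold closeness of \emph{individual} slices supplied by the contradiction hypothesis; it never needs continuity of the whole family in the $\mathcal{F}$-metric, nor any cycle-space topology. Your route (the loop in $\mathcal{Z}_2(M;\mathbb{Z}_2)$ detecting $\bar\lambda$, triviality of $\bar\lambda$ on small balls, or the parenthetical symmetric-difference-of-volumes version, which is the cleanest form of your idea) exploits the same mechanism --- a continuous family of regions flipping from $W_0$ to $M\setminus W_0$ cannot stay near a fixed finite set of cycles --- but requires scaffolding that your write-up treats too lightly. Two points in particular: (i) $T_\varepsilon(\mathcal{V})$ is defined with the varifold $\mathcal{F}$-metric, so converting membership in it into flat-norm smallness (needed for your ``$2$-systole'' reduction, the small-volume fillings, and the $\bar\lambda$-triviality of small balls) requires the standard parity/constancy argument that a mod $2$ cycle varifold-close to a multiplicity-one embedded surface is flat-close to it; you flag this but it is a real step. (ii) The claim that $\mathcal{F}$-continuity of $t\mapsto\Lambda_t$ is ``immediate'' from area continuity plus Hausdorff continuity is not a valid implication in general; it holds on $(0,1)$ because the family is smooth there, and at the degenerate ends one should instead argue via $L^1$-continuity of the enclosed regions, which is also what justifies extending the region lift across $t=0,1$ and is the same continuity the paper implicitly uses for its bisection step. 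With those repairs your argument goes through, and it buys a more conceptual (Almgren--Marques--Neves style) statement; the paper's ball-bisection argument reaches the same conclusion with none of this machinery.
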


\begin{proof}
Choose a point $p\in M$ and $r>0$ so the ball $B_p(r)$ of radius $r$ about $p$ is disjoint from the support of any surface in $\mathcal{V}$ as well as $\Lambda_0$.   For any such sweepout $\{\Lambda_t\}_{t\in [0,1]}$ interchanging the handlebodies bounded by $\Lambda_0$ it holds that for some $t_0$,  the ball $B_p(r)$ has half of its volume contained in one component of each handlebody determined by $\Lambda_{t_0}$ and half in the other.  Thus by the isoperimetric inequality in $M$,  we obtain \begin{equation}\mathcal{H}^2(\Lambda_{t_0}\cap B_p(r))> \eta(M,  p,r)>0.\end{equation}

Suppose the lemma were false.  Then we have a sequence of $\varepsilon_i\rightarrow 0$ and paths of surfaces $\{\Lambda_t^i\}_{t\in [-1,1]}$ with the property that $\Lambda^i_t\in T_{\varepsilon_i}(\mathcal{V})$ for all $t\in [0,1]$.   For each $i$,  let $t_i\in[-1,1]$ be chosen according to the previous paragraph so that \begin{equation}\label{yum}\mathcal{H}^2(\Lambda_{t_i}^i\cap B_p(r))>\eta(M,  p,r)>0.\end{equation}
 Choose $\delta>0$ sufficiently small so that the (metric) tubular neighborhood
\begin{equation}
N_\delta(\mathcal{V}) :=\{x\in M\; |\; dist_M(x,\Phi_j)\leq\delta \mbox{ for some } j= 1,...,m\}
\end{equation}
is disjoint from $B_p(r)$.   it follows from the varifold convergence of $\Lambda_{t_i}^i$ to $\mathcal{V}$ that 
\begin{equation}\label{endthis}
\mathcal{H}^2((M\setminus N_\delta(\mathcal{V}))\cap \Lambda_{t_i})\rightarrow 0 \mbox{ as } i\rightarrow\infty.
\end{equation}
Since $B_p(r)\subset M\setminus N_\delta(\mathcal{V})$ it follows for large $i$, that \eqref{endthis} contradicts \eqref{yum}.
 \end{proof}

Let us now prove Theorem \ref{flip2}. 
\begin{proof}
Let $\{\Sigma_t\}_{t\in [-1,1]}$ be a optimal genus $g$ Heegaard sweepout of $M$.   Consider $S_{n-g}(\Sigma_0)$ and the corresponding width $\omega(M, S_{n-g}(\Sigma_0))$.  If \begin{equation}\omega(M, S_{n-g}(\Sigma_0))<\omega(M, \Sigma_0),\end{equation}  then applying the Min-Max theorem we obtain a minimal surface satisfying the conditions of case (1).   Thus we assume without loss of generality \begin{equation}\omega(M, S_{n-g}(\Sigma_0))\geq \omega(M, \Sigma_0). \end{equation}
By Corollary \ref{onlyless} we obtain
 \begin{equation}\label{theyareequal}\omega(M, S_{n-g}(\Sigma_0))=  \omega(M, \Sigma_0). \end{equation}
\noindent
If $n=Flip(M,\Sigma_0)<2g$,  we can form a two-parameter sweepout $\{\Gamma_{s,t}\}_{(s\in[0,1], t\in[-1,1]}$ so that for all $t\in [-1,1]$ we have
\begin{equation}
\Gamma_{0,t} = \Sigma_t \mbox{ and } \Gamma_{1,t} = \Sigma_{-t}, 
\end{equation}
and also
\begin{equation}
\Gamma_{1,t} = \Gamma_{-1,t} \mbox{ is a one-complex for all } t.
\end{equation}
Moreover, the genus of $\Gamma_{t,s}$ is equal to $Flip(M, \Sigma)$ for $(t,s)\in (0,1)\times (-1,1)$ and for each $0<s<1$,  $\{\Gamma_{s,t}\}_{t\in [-1,1]}$ is a genus $Flip(M, \Sigma)$ Heegaard sweepout of $M$.   

To accomplish this,  first invoke Lemma \ref{degen} to obtain a sweepout $\{\Gamma_{s,t}\}_{s\in [0,\epsilon], t\in[-1,1]}$ which for $s>0$ and $t\notin\{-1,1\}$ consists of genus $Flip(M,\Sigma)$ surfaces,  and which agrees (up to one dimensional set) with $\{\Sigma_t\}_{t\in [-1,1]}$ when $s=0$.  Then define for $s\in [1-\epsilon,  1]$ the surface $\Gamma_{s,t} :=-\Gamma_{1-s,t}$ (i.e., with the the opposite orientation as $\Gamma_{1-s,t}$).  This gives the desired family $\{\Gamma_{s,t}\}$ for $s\in [0,\epsilon]$ and $s\in [1-\epsilon,  1]$.   By Lemma \ref{fillin},  since there exists an isotopy from $\Gamma_{\epsilon, 0}$ to the same surface but with the opposite orientation because it is flippable,  we can fill in the family $\{\Gamma_{s,t}\}$ for $s\in [\epsilon, 1-\epsilon]$,  completing the construction of the desired two-parameter family.  

Let us now handle the case $n=2g$.   Let $T$ denote the solid closed triangle in $[-1,1]\times [-1,1]$ with boundary $\partial T$ consisting of $B=[-1,1]\times\{-1\} $,  $R=\{1\}\times [-1,1]$ and the diagonal 
\begin{equation}
D=\{(x,x)\; |\; x\in [-1,1]\}\subset [-1,1]\times[-1,1].
\end{equation}

First fix a path $L(t)_{t\in [-1,1]}$ in $M$ so that $L(t)\in\Sigma_t$  for each $t$ (for instance,  by moving normally to the Heegard foliation).  Then define the singular surfaces for $(s,t)\in [-1,1]\times [-1,1]$:
\begin{equation}
\Gamma''_{s,t} = \Sigma_s \cup \Sigma_t \cup \{L(\lambda)\; | \; \min{(s,t)}\leq\lambda\leq \max{(s,t)} \}.
\end{equation}

Note that as a varifold $\Gamma''_{t,t}$ is equal to $\Sigma_t$ with multiplicity $2$.  By the Catenoid Estimate (\cite{KMN}) we can deform $\Gamma''_{s,t}$ to obtain a family $\Gamma'_{s,t}$ parameterized by $T$,  with areas strictly less than $2|\Sigma_0|$ so that for $(t,t)\in D$,  $\Gamma'_{t,t}$ consists of a one-complex.   Moreover (up to a one-dimensional complex) we have the following equalities (up to a one-dimensonal set) on the other two boundary faces $B$ and $R$ of the solid triangle $T$:  

\begin{enumerate}
\item $\Gamma'_{s,-1} = \Sigma_s\mbox{ for all } s \mbox{ (coresponding to the ``bottom" face $B$)},$
\item $\Gamma'_{1,t} = \Sigma_{t}\mbox{ for all } t \mbox{ (corresponding to the ``right" face $R$)}.$
\end{enumerate}

We will reparameterize the triangle $T\setminus\{(1,-1)\}\subset I^2$ by new coordinates $a\in [-1,1]$ and $b\in [0,1]$.  The parameter $a$ will denote a choice of line joining a given point to $(1,-1)$, and the parameter $b$ denotes the location on this line.   This amounts to a real algebraic blowup at the point $(1,-1)$ in the parameter space.   The key point is that for each fixed choice of line $a$, by varying $b$,  we obtain a non-trivial genus $2g$ sweepout of $M$.  Let us give the details.

To that end, on $T\setminus (1,-1)$ set
\begin{equation}
b(s,t) := s- t -1, 
\end{equation} 
and
\begin{equation}
a(s,t) := \tanh(-\frac{t+1}{s-1}).
\end{equation}
Note that the quantity $\frac{t+1}{s-1}$ is the slope of the line joining $(s,t)$ to $(1,-1)$, which varies between $-\infty$ and $0$ on $T\setminus\{[1,-1]\}$, and we have used the $\tanh$ function in the definition of $a(s,t)$ simply to rescale the range of this variable. The parameter $b$ specifies which line parallel to the diagonal of the square $[-1,1\times[-1,1]$ the point $(s,t)$ lies on.  One can see easily $0\leq a(s,t)\leq 1$ and $-1\leq b(t,s)< 1$.
The inverse mappings map $[0,1]\times [-1,1)$ to $T\setminus (1,-1)$ and are given by:
\begin{equation}
t(a,b)= \frac{-1-b\tanh^{-1}(a)}{\tanh^{-1}(a)+1}, 
\end{equation}
and
\begin{equation}
s (a,b) = \frac{b+\tanh^{-1}(a)}{\tanh^{-1}(a)+1}.
\end{equation}

Let us then consider the two-parameter family $\Gamma_{a,b}$ (parameterized by $[0,1]\times [-1,1)$) given by 
\begin{equation}
\Gamma_{a,b} =  \Gamma'_{t(a,b),s(a,b)}.  
\end{equation}

\noindent
%Observe that for fixed $-1<b<1$ , as $a\rightarrow 1$ we have $(t(a,b), s(a,b))\rightarrow (1,b)$.   

Notice that for each $a$,  as $b\rightarrow 1$, we have $(s(a,b), t(a,b))\rightarrow (1,-1)$.  Thus the family $\{\Gamma_{a,b}\}$ extends continuously to the top boundary of the rectangle in $(a,b)$-space, and thus to all of $[0,1]\times [-1,1]$.  

In both cases $n=2g$ or $n=Flip(M, \Sigma)$,  the two-parameter family $\{\Gamma_{a,b}\}_{a\in [0,1], b\in [-1,1]}$ satisfies the following properties:

\begin{enumerate}
\item  For each fixed $a\in (0,1)$, the one-parameter family $\{\Gamma_{a,b}\}_{b\in [-1,1]}$ is a genus $n$ Heegaard sweepout of $M$.  
\item The one-parameter families $\{\Gamma_{0,b}\}_{b\in [-1,1]}$ and $\{\Gamma_{1,b}\}_{b\in [-1,1]}$ are optimal genus $g$ Heegaard foliations together with a union of smooth curves (in fact, $\Gamma_{0,b} = \Sigma_b$ and $\Gamma_{1,b} = \Sigma_{-b}$ up to a one-dimensional set)
\item The orientation of $\Sigma_{0,0}$ is opposite to that of $\Sigma_{1,0}$.
\item $\sup_{a,b}|\Gamma_{a,b}| < 2|\Sigma_0|$ if $n=2g$ 
\end{enumerate}

Let $\lambda$ denote the width of the saturation $\Pi$ of this two-parameter family:
\begin{equation}
\lambda = \inf_{\{\Phi_{t,s}\}\in\Pi}\sup_{t,s} |\Phi_{t,s}|
\end{equation}

  If
\begin{equation}
\lambda> |\Sigma_0| = \omega(M,\Sigma)
\end{equation}
then the Min-max Theorem \ref{highparamminmax} applies to give a min-max minimal surface $\Gamma =\sum n_i\Gamma_i$ distinct from $\Sigma_0$.   Indeed,  if $n<2g$,  the min-max minimal surface cannot be an integer multiple of $\Sigma_0$ by the genus bounds with multiplicity \eqref{genusbound}.    If $n=2g$,  the area bound (item (4)) implies that $\Gamma$ is not an integer multiple of $\Sigma_0$.    In either case,  we have a component of $\Gamma$ distinct from $\Sigma_0$,  and thus we fall into case (3) of the theorem.  

Finally,  suppose
\begin{equation}
\lambda = |\Sigma_0|.
\end{equation}
The Min-max Theorem \ref{highparamminmax} cannot be applied in this case as \eqref{isbigger} fails (that is,  the width of the two-parameter family is not larger than the supremum of areas of its boundary values).   Let us show then that case (2) must hold.

First let us take a sequence of sweepouts $\{\Gamma^i_{a,b}\}\in\Pi$ with 
\begin{equation}\label{tight}
\max_{a,b} |\Gamma^i_{a,b}| < |\Sigma_0| + \delta_i, 
\end{equation}
where $\delta_i\rightarrow 0$.  

Let $\mathcal{S}$ denote the set of stationary integral varifolds in $M$ with mass equal to $\omega(M,\Sigma_0)$ whose genus is less than or equal to $n$ and whose support consists of pairwise disjoint embedded minimal surfaces.    For each $i>0$ and $\varepsilon>0$ let \begin{equation}\mathcal{S}^i_\varepsilon := \{(a,b)\in[0,1]\times[-1,1] \;  | \;\mathcal{F}(\Gamma^i_{a,b}.\mathcal{S})< \varepsilon\}\end{equation}  Note that for each $\varepsilon>0$ and positive integer $i$ we have $(-1,0), (1,0)\in \mathcal{S}^i_\varepsilon$.

First we claim that for each $\varepsilon>0$,  there exists an integer $I(\varepsilon)$ large enough so that if $i>I(\varepsilon)$ then $\mathcal{S}^i_\varepsilon$ contains a continuous path $(a^i_\varepsilon(\eta), b^i_\varepsilon(\eta))_{\eta\in [0,1]}\subset [0,1]\times[-1,1]$ beginning on the left side of the recangle $[0,1]\times [-1,1]$ and ending on the right side of the rectangle.   Suppose not.   Then there exists $\varepsilon_0$ so that the claim fails.    Since the claim fails,  it follows that we can find a path $(c^i_\varepsilon(\tau) ,d^i_\varepsilon(\tau))_{\tau\in [-1,1]}$ such that $(c^i_\varepsilon(0),d^i_\varepsilon(0))$ is on the bottom face of the square,  and $(c^i_\varepsilon(1),d^i_\varepsilon(1))$ is on the top face of the square so that
\begin{equation}\label{far}
\mathcal{F}(\Phi^i_{c^i_\varepsilon(\tau),d^i_\varepsilon(\tau)},\mathcal{S}^i_\varepsilon) \geq \varepsilon_0 \mbox{ for all } \tau,
\end{equation}
for some subsequence of $i$ (not relabelled).

By item (1),  we have that for each $i$,  the family $\{\Phi^i_{c^i(\tau),d^i(\tau)}\}_{\tau\in [-1,1]}$ is a genus $g$ Heegaard sweepout of $M$.  Because of \eqref{tight},  the one-parameter family $\{\Phi^i_{c^i(\tau),d^i(\tau)}\}_{\tau\in [-1,1]}$ is furthermore a minimizing sequence for genus $n$ Heegaard splittings.   On the other hand,  by \eqref{far},  it cannot be almost minimizing in annuli.   Thus by Pitts combinatorial deformation (cf.  \cite{CD}),  we obtain
\begin{equation}
\omega(M,  S_{n-g}(\Sigma_0))< \omega(M,  \Sigma_0), 
\end{equation}
a contradiction to \eqref{theyareequal}.   Thus the claim is established.  

For each $\delta>0$ there exists $\varepsilon(\delta)>0$ so that the paths $(a^i_{\varepsilon(\delta)}(\eta), b^i_{\varepsilon(\delta)}(\eta))$ joining the left side of the rectangle $[0,1]\times[-1,1]$ to the right, concatenated with the paths connecting $(a^i_{\varepsilon(\delta)}(0),b^i_{\varepsilon(\delta)}(0))$ to $(-1,0)$ and $(a^i_{\varepsilon(\delta)}(1),b^i_{\varepsilon(\delta)}(1))$ to $(1,0)$ on the left and right side, respectively,  is contained in $\mathcal{S}_\delta^i$.   This follows easily by contradiction because there is a unique point on the left side of the square (as well as on right) whose corresponding surface is minimal and has area $\omega(M,\Sigma_0)$.   Let us denote these concatenated paths by $(\tilde{a}^i_{\varepsilon(\delta)}(\eta), \tilde{b}^i_{\varepsilon(\delta)}(\eta))$.  

Let us now show that there are infinitely many embedded min-max minimal surfaces of area $|\omega(M,\Sigma_0)|$ in $M$.   Suppose toward a contradiction that there are only finitely many elements in $\mathcal{S}$.  Choosing $\delta$ small enough,  the paths $(\tilde{a}^i_{\varepsilon(\delta)}(\eta), \tilde{b}^i_{\varepsilon(\delta)}(\eta))$ (whose corresponding surfaces are contained in a $\delta$-neighborhood about $\mathcal{S}$) give a path joining $\Sigma_0$ to itself but with opposite orientation.  This violates Lemma \ref{lus}.

Let us assume now that $M$ has positive Ricci curvature.  Since $\mathcal{S}$ is compact \cite{CS2},  it follows that there exists $\delta_0$ so that whenever $\Lambda_1, \Lambda_2\in \mathcal{S}$ satisfy \begin{equation} \label{isgraph} \mathcal{F}(\Lambda_1, \Lambda_2) < \delta_0, \end{equation} then $\Lambda_2$ is a $C^\infty$ graph over $\Lambda_1$ and in particular $\Lambda_2$ is isotopic to $\Lambda_1$ through a normal exponential graphs over $\Lambda_1$.  

Fix $\delta< \frac{\delta_0}{3}$ and its corresponding $\varepsilon(\delta)$. Choose a partition $0=\tau_1<\tau_2<...< \tau_k = 1$ so that for each $j =1, 2, ..., k-1$ there exists for $i>I(\varepsilon)$ a path $(\tilde{a}^i_\varepsilon,\tilde{b}^i_\varepsilon)$ joining $(0,0)$ to $(1,0)$ contained in $\mathcal{S}^i_\delta$ and so that
\begin{equation}\label{areclose}
\mathcal{F}(\Phi^i_{\tilde{a}^i(\tau_j),\tilde{b}^i(\tau_j)}, \Phi^i_{\tilde{a}_\varepsilon^i(\tau_{j+1}),\tilde{b}_\varepsilon^i(\tau_{j+1})}) < \delta
\end{equation}

Each $\Phi^i_{\tilde{a}^i(\tau_j),\tilde{b}^i(\tau_j)}$ is by construction contained within $\mathcal{F}$-distance $\delta$ of some minimal surface $\Lambda^i_j(\delta)$ in $\mathcal{S}$.  By the triangle inequality, the consecutive minimal surfaces $\Lambda^i_j(\delta)$ and $\Lambda^i_{j+1}(\delta)$ are themselves within $\delta_0$ in the $\mathcal{F}$-metric and thus by \eqref{isgraph} can be expressed as normal graphs,  one over the next.   

For each $\delta>0$,  we have produced an ordered list of genus $g$ minimal surfaces $\{\Lambda^i_1(\delta),...,\Lambda^i_k(\delta)\}$ each within $\delta$ of the neighboring one in the $\mathcal{F}$-metric -- beginning at $\Sigma_0$ and ending at $\Sigma_0$ (but with the opposite orientation) and so that the entire one-parameter sweepout $\{\Phi^i_{\tilde{a}^i(\tau),\tilde{b}^i(\tau)}\}_{\tau\in [-1,1]}$ is contained in a $\delta$-neighborhood of these surfaces.  If there are only finitely many minimal surfaces represented among them,  this is impossible for $\delta$ small enough by Lemma \ref{lus}.

Since consecutive minimal surfaces in the list $\{\Lambda^i_1(\delta),...,\Lambda^i_k(\delta)\}$ are graphs over their neighbors, and the first and last have genus equal to $g$,  by induction we can find a smooth family of surfaces interpolating between them.  Thus we obtain that $\Sigma_0$ is flippable and $n=g$.  '
\end{proof}

\section{Minimal surfaces in lens spaces}
In round lens spaces,  we can understand exactly what new minimal-surfaces are obtained from Theorem \ref{flip2}.   
Let
\begin{equation}
\mathbb{S}^3 = \{(z,w)\in \mathbb{C}^2 \; | \; |z|^2+|w|^2=1\}.
\end{equation}
For each $p\geq 1$ and $q\geq 1$ with $q<p$ and $q$ relatively prime to $p$ we consider the cyclic $\mathbb{Z}_p$ action on $\mathbb{S}^3$ with generator $\xi_{p,q}$ \begin{equation}
\xi_{p,q} (z,w) = (e^{2\pi i /p}z,  e^{2\pi i q/p}w).
\end{equation}
We denote $L(p,q) = \mathbb{S}^3 / \mathbb{Z}_p$. 
Note that $L(p,q)$ is isometric to $L(p,r)$ when $r+q = p$ or when $qr = \pm 1\mbox{ mod } p$.  To distinguish different $q$, we will sometimes refer to this action by $\mathbb{Z}_p^q$.  

Via stereographic projection, we can consider the Hopf map \begin{equation}H:\mathbb{S}^3\rightarrow\mathbb{S}^2\end{equation}
given by 
\begin{equation}
H(z,w) = z/w\in \mathbb{C}\cup\{\infty\}. 
\end{equation}
The \emph{Hopf fibers} are the pre-images of points in $\mathbb{S}^2$ under $H$.   The $\mathbb{Z}_p$ action on $\mathbb{S}^3$ gives rise to an induced action on the Hopf fibers.  In other words,  (denoting by $[y]$ all elements in a given Hopf fiber containing $y\in\mathbb{S}^3$)  the cyclic action
\begin{equation}
\tilde{\xi}_{p,q}: \mathbb{S}^2\rightarrow \mathbb{S}^2
\end{equation}
given by the generator 
\begin{equation}
\tilde{\xi}_{p,q} [y] := [\xi_{p,q} y]
\end{equation}
is well-defined.  

The generator $\tilde{\xi}_{p,q}$ rotates points in $\mathbb{S}^2$ by angle $2\pi(q-1)/p$ about the $z$-axis and thus for $q>1$ the group generated by $\tilde{\xi}_{p,q}$ is the cyclic group $\mathbb{Z}_{k_1}$, where $k_1:=p/\mbox{gcd}(q-1,p)$.    When $q>1$, the north and south poles are fixed points of the action and thus $\mathbb{S}^2/\mathbb{Z}_{k_1}$ is an orbifold with two singular points.   This exhibits the lens space $L(p,q)$ as a Seifert fibration
\begin{equation}\label{quotient}
S: L(p,q) \rightarrow \mathbb{S}^2/\mathbb{Z}_{k_1} = \mathbb{S}^2(k_1,k_1),
\end{equation}
 where $ \mathbb{S}^2(k_1,k_1)$ denotes the orbifold with singular order $k_1$ points at the north and south poles of $\mathbb{S}^2$.  The equator in $\mathbb{S}^2(k_1,k_1)$ lifts via $S$ to a Clifford torus in $L(p,q)$.  
\subsection{Classification of minimal tori and Klein bottles} 
An oriented geodesic in $\mathbb{S}^3$ corresponds to the intersection of $\mathbb{S}^3$ with an oriented two-plane.   Therefore the space of oriented geodesics in $\mathbb{S}^3$ is homeomorphic to $\tilde{G}_2(\mathbb{R}^4)$,  the double cover of the Grassmanian $G_2(\mathbb{R}^4)$.  It is known that $\tilde{G}_2(\mathbb{R}^4)$,  is homeomorphic to $\mathbb{S}^2\times\mathbb{S}^2$.  In fact,  it will be useful to have an explicit homeomorphism.  The key for the classification is to understand how the generator $\xi_{p,q}$ acts on $\tilde{G}_2(\mathbb{R}^4)$.

For this purpose,  we may identify $\mathbb{S}^3$ with the group of unit quaternions:
\begin{equation}
\mathbb{S}^3 := \{a+bi+cj+dk,\; |\;  |a|^2+|b|^2+|c|^2+|d|^2=1\}, 
\end{equation}
and may write any quaternion as $z_0+z_1 j$, where $z_0, z_1\in\mathbb{C}$.   The inverse is given by $\overline{z}_0-z_1 j$.  We denote by $\mathbb{S}^2$ those unit quaternions with zero real value.    
There is a two-to-one map
\begin{equation}
\rho:\mathbb{S}^3\times\mathbb{S}^3\rightarrow \mbox{Isom}_+(\mathbb{S}^3)= SO(4), 
\end{equation}
given by 
\begin{equation}
\rho(x,y)(z)= xyz^{-1}.
\end{equation}
The group $\mathbb{S}^3\times\mathbb{S}^3$ acts on $\tilde{G}_2(\mathbb{R}^4)$ transitively.   Indeed, given any plane $\langle a,b\rangle\in\tilde{G}_2(\mathbb{R}^4)$ (with $a,b$ orthogonal vectors in $\mathbb{S}^3\subset\mathbb{R}^4$) we have
\begin{equation}
(q_1,q_2). \langle a,b\rangle := \langle\rho(q_1,q_2)a,  \rho(q_1,q_2)b\rangle.  
\end{equation}

The stabilizer of this action at the plane $\langle 1, i\rangle$ is
\begin{equation}
\mbox{Stab}(\langle 1,i\rangle) = \{(e^{i\theta},e^{i\phi}) \; |\; \theta,\phi \in [0,2\pi]\} = \mathbb{S}^1\times\mathbb{S}^1, 
\end{equation} 
and
\begin{equation}
\tilde{G}_2(\mathbb{R}^4)= (\mathbb{S}^3\times\mathbb{S}^3) / \mbox{Stab}(\langle 1,i\rangle)
\end{equation}
Let us define the explicit map:
\begin{equation}\label{iso}
P: \tilde{G}_2(\mathbb{R}^4)\rightarrow\mathbb{S}^2\times\mathbb{S}^2, 
\end{equation}
given by 
\begin{equation}
P((q_1,  q_2)\langle 1,i\rangle) = (q_1 i q_1^{-1},  q_2 i q_2^{-1}).
\end{equation}
The target $\mathbb{S}^2$ in \eqref{iso} denotes the unit quaternions with zero real part.

The map $P$ is well-defined and a homeomorphism (Theorem 2.7 in \cite{T}).  The points $(a,b), (-a,-b)\in\mathbb{S}^2\times\mathbb{S}^2$ correspond to the same geodesic but with opposite orientation.  The points of the form $\{\pm i\}\times p$ for $p\in\mathbb{S}^2$ correspond to the geodesics making up the fibers of the Hopf fibration $H$.   
 
For $a\in\mathbb{S}^2$,  and $B\subset \mathbb{S}^2$ let us denote 

\begin{equation}
\iota(a,B):=\{x\in\mathbb{S}^3\; |\; x\in P^{-1}(a,b)\cap\mathbb{S}^3\mbox{ for some } b\in B\}.
\end{equation}

In the same way we can define $\iota(A, b)$ for $b\in\mathbb{S}^2$ and $A\subset\mathbb{S}^2$.   If $E$ is a great circle of $\mathbb{S}^2$ and $p\in\mathbb{S}^2$ then $\iota(p,E)$ is a Clifford torus.   In light of the equivalence under antipodal reflection ($\iota(p,  E) = \iota(-p,E)$ and $\iota(p,  -E) = \iota(p,E)$) it follows that the space of Clifford tori is homeomorphic to $\mathbb{RP}^2\times\mathbb{RP}^2$.  We parameterize this space by a choice of point in the first factor, and a choice of great circle in the second factor.  Note that the set $\iota(E,p)$ is also a Clifford torus,  and in fact has the same support as $\iota(p,E)$. \footnote{This occurs because the Clifford tori are \emph{doubly-ruled} - there are two distinct mutually orthogonal families of geodesics that sweep each one out.}

We have the following:

\begin{lemma}[Section 5.1 in \cite{T})]

The isometry $\xi_{p,q}\in SO(4)$ generating $\mathbb{Z}_p^q$ corresponds to the element $$\rho(e^{\pi i (q+1)/p}, e^{\pi i (q-1)})$$ which acts on the space of oriented geodesics,  $\mathbb{S}^2\times\mathbb{S}^2$,  by rotating the first factor by angle $2\pi (q+1)/p$ about the $z$-axis and by rotating the second factor by angle $2\pi (q-1)/p$ about the $z$-axis.  
\end{lemma}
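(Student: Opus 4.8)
The plan is to compute explicitly how the generator $\xi_{p,q}\in SO(4)$ of the $\mathbb{Z}_p^q$ action acts on the double Grassmannian $\tilde G_2(\mathbb{R}^4)$ via the homeomorphism $P$ of \eqref{iso}, and then transport that computation through $P$ to the product $\mathbb{S}^2\times\mathbb{S}^2$. The first step is to realize $\xi_{p,q}$ as an element $\rho(x,y)$ of $\mathrm{Isom}_+(\mathbb{S}^3)=SO(4)$ in the quaternion model. Under the identification $\mathbb{S}^3=\{z_0+z_1 j\}$ with $z_0,z_1\in\mathbb{C}$, left multiplication by $e^{i\alpha}$ sends $z_0+z_1 j$ to $e^{i\alpha}z_0 + e^{i\alpha}z_1 j$ and right multiplication by $e^{-i\beta}$ sends it to $e^{i\beta}z_0 + e^{-i\beta}z_1 j$ (using $j e^{-i\beta} = e^{i\beta} j$). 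Hence $\rho(e^{i\alpha},e^{i\beta})(z_0+z_1 j) = e^{i(\alpha-\beta)}z_0 + e^{i(\alpha+\beta)}z_1 j$. Comparing with $\xi_{p,q}(z,w)=(e^{2\pi i/p}z, e^{2\pi i q/p}w)$ — writing $z=z_0$, $w=z_1$ — we need $\alpha-\beta = 2\pi/p$ and $\alpha+\beta = 2\pi q/p$, giving $\alpha = \pi(q+1)/p$ and $\beta = \pi(q-1)/p$. Therefore $\xi_{p,q}$ lifts to $\rho(e^{\pi i(q+1)/p}, e^{\pi i(q-1)/p})$, which is the claimed element (there is a harmless typo-level ambiguity of $2\pi$ in the exponents because $\rho$ is two-to-one).

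The second step is to push this through $P$. By definition $P((q_1,q_2)\langle 1,i\rangle) = (q_1 i q_1^{-1}, q_2 i q_2^{-1})$, and the action of $(r_1,r_2)\in\mathbb{S}^3\times\mathbb{S}^3$ on $\tilde G_2(\mathbb{R}^4)$ sends $(q_1,q_2)\langle 1,i\rangle$ to $(r_1 q_1, r_2 q_2)\langle 1,i\rangle$, hence to $(r_1 q_1 i q_1^{-1} r_1^{-1}, r_2 q_2 i q_2^{-1} r_2^{-1})$. Thus on each $\mathbb{S}^2$ factor the action is conjugation $v\mapsto r_\bullet v r_\bullet^{-1}$ on imaginary unit quaternions. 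It is classical that conjugation by $e^{i\theta/2} = \cos(\theta/2) + i\sin(\theta/2)$ is rotation by angle $\theta$ about the $i$-axis (the "$z$-axis" of $\mathbb{S}^2$). Applying this with $r_1 = e^{\pi i(q+1)/p} = e^{i\theta_1/2}$ where $\theta_1 = 2\pi(q+1)/p$, and $r_2 = e^{\pi i(q-1)/p} = e^{i\theta_2/2}$ where $\theta_2 = 2\pi(q-1)/p$, we obtain exactly that $\xi_{p,q}$ rotates the first $\mathbb{S}^2$ factor by $2\pi(q+1)/p$ about the $z$-axis and the second by $2\pi(q-1)/p$ about the $z$-axis. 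Finally, one records that $P$ intertwines the $SO(4)$ action on geodesics with this product-of-rotations action, so the statement about "the space of oriented geodesics" is literally the statement just proved; the only remaining bookkeeping is to note that the lift of $\xi_{p,q}$ to $\mathbb{S}^3\times\mathbb{S}^3$ is well-defined up to the deck transformation $(r_1,r_2)\mapsto(-r_1,-r_2)$, which acts trivially by conjugation, so the induced action on $\mathbb{S}^2\times\mathbb{S}^2$ is unambiguous.

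The routine verifications are the two multiplication rules in the quaternion model and the standard fact relating conjugation to rotation; I would state these and cite Theorem 2.7 / Section 5.1 of \cite{T} for the fact that $P$ is an equivariant homeomorphism rather than reprove it. The only genuine point requiring care — and the step I expect to be the main obstacle — is matching conventions: one must fix once and for all whether $\rho(x,y)(z) = xyz^{-1}$ or $xzy^{-1}$ (the excerpt uses $xyz^{-1}$, which makes $y$ act on the left by $y$ and forces the $z^{-1}$ to interact with the $\mathbb{C}$-splitting through $j\bar\zeta = \zeta j$), and track the resulting signs in $\alpha\pm\beta$ so that the two rotation angles come out as $2\pi(q+1)/p$ and $2\pi(q-1)/p$ in the correct factors and not swapped or negated. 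Once the conventions are pinned down the computation is a short and forced calculation, so I would present it compactly: exhibit the lift, compute $P$ of the translated plane, and invoke the conjugation-equals-rotation dictionary.
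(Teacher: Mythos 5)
Your proposal is correct, and it actually supplies more than the paper does: the paper gives no argument for this lemma, citing Section 5.1 of \cite{T} instead, so your direct quaternionic computation is exactly the standard proof that the citation stands in for. Your identification of the two convention issues is on target: the paper's displayed formula $\rho(x,y)(z)=xyz^{-1}$ must be read as $\rho(x,y)(z)=xzy^{-1}$ (as written it would be orientation-reversing), and with that convention your bookkeeping $\alpha-\beta=2\pi/p$, $\alpha+\beta=2\pi q/p$, hence $\alpha=\pi(q+1)/p$, $\beta=\pi(q-1)/p$, together with the fact that conjugation by $e^{i\theta/2}$ rotates the imaginary quaternions by $\theta$ about the $i$-axis, yields precisely the stated rotations by $2\pi(q+1)/p$ and $2\pi(q-1)/p$ (the exponent $e^{\pi i(q-1)}$ in the paper's statement is missing a $/p$). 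One internal slip to fix before writing this up: your intermediate line for right multiplication is stated with the two phases swapped; the correct computation is $(z_0+z_1j)e^{-i\beta}=e^{-i\beta}z_0+e^{i\beta}z_1 j$, since $z_1 j e^{-i\beta}=z_1 e^{i\beta} j$. Your combined formula $\rho(e^{i\alpha},e^{i\beta})(z_0+z_1j)=e^{i(\alpha-\beta)}z_0+e^{i(\alpha+\beta)}z_1j$ is nevertheless the correct one, so the slip does not propagate; the equivariance of $P$ and the remark that the sign ambiguity $(r_1,r_2)\mapsto(-r_1,-r_2)$ acts trivially by conjugation are also handled correctly.
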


Let $k_1 = p/\mbox{gcd}(p,q-1)$ and  $k_2 = p/\mbox{gcd}(p,q+1)$.  Thus the group $\langle\xi_{p,q}\rangle =\mathbb{Z}^{q}_p$ induces cyclic action on $\mathbb{S}^2\times\mathbb{S}^2$ with order the least common multiple of $q_1$ and $q_2$. This cyclic action also extends to an action on the space of Clifford tori $\mathbb{RP}^2\times\mathbb{RP}^2$.  

Using these facts, we obtain the following classification of tori and Klein bottles in lens spaces.   Parts (5) and (6) require the proof of the Willmore conjecture \cite{MN} and resolution of the Lawson conjecture (\cite{B}).

\begin{prop}[Classification of minimal tori and Klein bottles]\label{lens}
Let $L(p,q)$ denote the lens space endowed with the round metric and $p\geq 2$.  Then the following are true:
\begin{enumerate}
\item Each $L(p,q)$ for $q\notin\{1, p-1\}$ admits exactly one Clifford torus.
\item If $p>2$ then $L(p,1)$ and $L(p,p-1)$ admit a family of Clifford tori parameterized by $\mathbb{RP}^2$.
\item $L(2,1)=\mathbb{RP}^3$ admits a family of Clifford tori parameterized by $\mathbb{RP}^2\times\mathbb{RP}^2$.   
\item  $L(p,q)$ admits an embedded Klein bottle if and only if $p=4m$ and $q=2m\pm 1$ for $m\geq 1$.    If $m>1$ then $L(4m,  2m\pm 1)$ admits an $\mathbb{S}^1$-family of minimal Klein bottles.  If $m=1$,  then $L(4,1)$ admits an $\mathbb{S}^1\times\mathbb{RP}^2$-family of minimal Klein bottles.  
\item  Any embedded minimal torus or Klein bottle in $L(p,q)$ is the projection of a Clifford torus and has area equal to $2\pi^2/p$.
\item   The least area embedded minimal surface in the lens space $L(p,q)$ for $p\neq 2$ is the projection of the Clifford torus with area $2\pi^2/p$.    In $L(2,1) = \mathbb{RP}^3$ the least area embedded minimal surface is an embedded projective plane with area $2\pi$. 
\end{enumerate}
\end{prop}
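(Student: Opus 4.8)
The plan is to pass to the universal cover $\pi\colon\mathbb{S}^3\to L(p,q)$, which is a Riemannian covering, and reduce everything to the classification theorems for minimal surfaces in the round $\mathbb{S}^3$.

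\emph{Step 1: reduction to the Clifford torus.} Given an embedded minimal torus or Klein bottle $\Sigma\subset L(p,q)$, I would look at its preimage $\tilde\Sigma=\pi^{-1}(\Sigma)$, a closed embedded minimal surface in $\mathbb{S}^3$. Since $\mathbb{S}^3$ has positive Ricci curvature, Frankel's theorem forces any two disjoint closed minimal surfaces to meet, so $\tilde\Sigma$ must be connected; being a closed embedded orientable surface in $\mathbb{S}^3$ with $\chi(\tilde\Sigma)=p\,\chi(\Sigma)=0$ it is a torus, and by Brendle's resolution of the Lawson conjecture \cite{B} it is a Clifford torus. Hence $\Sigma=\tilde\Sigma/\mathbb{Z}_p^q$ and, as the Clifford torus has area $2\pi^2$, $|\Sigma|=2\pi^2/p$; this proves (5). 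Conversely, a $\mathbb{Z}_p^q$-invariant Clifford torus $T$ descends (the action being free) to an embedded minimal surface of area $2\pi^2/p$ and Euler characteristic zero, which is a torus if $\xi_{p,q}$ preserves each of the two solid tori bounded by $T$ and a Klein bottle (a one-sided embedded surface with $\chi=0$) if it interchanges them. So (1)--(4) reduce to two tasks: (i) list the $\mathbb{Z}_p^q$-invariant Clifford tori, and (ii) decide, for each, whether $\xi_{p,q}$ swaps the complementary solid tori.

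\emph{Step 2: the equivariant bookkeeping.} For (i) I would use the homeomorphism $P$ of the space of Clifford tori with $\mathbb{RP}^2\times\mathbb{RP}^2$ (a point and a great circle) together with the preceding Lemma, by which $\xi_{p,q}$ acts by rotating the first factor through $2\pi(q+1)/p$ and the second through $2\pi(q-1)/p$ about the $z$-axis. A rotation of $\mathbb{RP}^2$ through an angle $\theta$ has fixed set a single point if $\theta\not\equiv 0,\pi\pmod{2\pi}$, a point together with a projective line if $\theta\equiv\pi$, and all of $\mathbb{RP}^2$ if $\theta\equiv 0$. Intersecting the two fixed sets, and using that $\gcd(p,q)=1$, one gets: a single invariant Clifford torus when $q\notin\{1,p-1\}$ and $L(p,q)$ is not one of $L(4m,2m\pm1)$; an $\mathbb{RP}^2$-family when $q\in\{1,p-1\}$ and $p>2$; all of $\mathbb{RP}^2\times\mathbb{RP}^2$ when $p=2$; and, in the exceptional spaces $L(4m,2m\pm1)$, the single one together with a projective line's worth of extra invariant Clifford tori (which for $m=1$ enlarges to $\mathbb{S}^1\times\mathbb{RP}^2$ because of the extra symmetry of $L(4,1)$). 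For (ii), I would use that the two complementary solid tori of a Clifford torus are tubular neighborhoods of a linked pair of core geodesics $\gamma_1,\gamma_2$, and that $\xi_{p,q}$ preserves each solid torus iff it fixes both $\gamma_i$ and swaps them iff it interchanges $\gamma_1\leftrightarrow\gamma_2$. Reading this off along the fixed sets from (i): the distinguished invariant Clifford torus $\iota(\mathrm{pole},\mathrm{equator})$ always has both cores fixed (hence is genuinely a torus in the quotient), while along the extra projective line in $L(4m,2m\pm1)$ the two cores get interchanged, yielding the advertised $\mathbb{S}^1$- (resp.\ $\mathbb{S}^1\times\mathbb{RP}^2$-) family of minimal Klein bottles. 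Together with the classical fact (Bredon--Wood) that $L(p,q)$ contains an embedded Klein bottle only for $p=4m$, $q=2m\pm1$, this gives (4); and since the Klein-bottle lens spaces are exactly those producing extra invariant Clifford tori, assertions (1) (``exactly one \emph{torus}'') and (2)--(3) follow from (i) and (ii).

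\emph{Step 3: least area.} For (6) I would lift a least-area embedded minimal surface $\Sigma\subset L(p,q)$ to $\tilde\Sigma\subset\mathbb{S}^3$, a closed minimal surface of area $p|\Sigma|$. By Almgren \cite{Al2} the only minimal sphere in $\mathbb{S}^3$ is the great sphere (area $4\pi$), and by the Willmore conjecture \cite{MN} any closed minimal surface of positive genus has area at least $2\pi^2$; in particular every closed minimal surface in $\mathbb{S}^3$ has area at least $4\pi$. Hence if $\tilde\Sigma$ is disconnected its area is at least $8\pi>2\pi^2$; if it is a sphere it is a great sphere, which is $\mathbb{Z}_p^q$-invariant only when $p=2$ (for $p>2$, $\xi_{p,q}\in SO(4)$ has no invariant hyperplane); and if it is connected of positive genus its area is at least $2\pi^2$. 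Therefore for $p\neq2$ the minimum is $2\pi^2/p$, attained by the projected Clifford torus, while in $L(2,1)=\mathbb{RP}^3$ the great sphere descends to an embedded $\mathbb{RP}^2$ of area $2\pi<\pi^2$, which is the minimum.

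I expect the main obstacle to be Step 2(ii): correctly identifying the global topology of the free $\mathbb{Z}_p$-action on each invariant Clifford torus --- equivalently, pinning down exactly when $\xi_{p,q}$ interchanges the two core geodesics of a complementary pair of solid tori --- and matching this with the arithmetic condition $p=4m$, $q=2m\pm1$. The rest is routine: the fixed-set computation on $\mathbb{RP}^2\times\mathbb{RP}^2$, and assembling the quoted rigidity theorems (Frankel, Brendle, Marques--Neves, Almgren) for the area statements.
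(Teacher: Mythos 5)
Your proposal is correct and follows essentially the same route as the paper: Frankel plus Brendle reduce item (5) (and hence (1)--(4)) to classifying $\mathbb{Z}_p^q$-invariant Clifford tori via the induced rotations by $2\pi(q\pm 1)/p$ on $\mathbb{RP}^2\times\mathbb{RP}^2$, and Almgren, the Willmore theorem and Frankel give item (6), just as in the text. The step you flag in 2(ii) goes through exactly as you expect -- the generator sends the core geodesic $(x,i)$ of $\iota(x,G)$ to $(-x,i)$, which is $(x,-i)$ unoriented, so the two cores (hence the complementary solid tori) are swapped along the extra families and preserved for $\iota(\pm i,G)$ -- and your appeal to Bredon--Wood is a sensible way to get the purely topological ``only if'' in (4), which the paper's own argument literally establishes only for \emph{minimal} Klein bottles.
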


\begin{rmk}
Our computation of the space of \emph{minimal} tori coincides with the computation of the homotopy type of all genus $1$ unknotted surfaces in lens spaces due to Johnson-McCullough (Theorem 4 in \cite{JM}).    
\end{rmk}

\begin{rmk}\label{geomnew} Items (1) and (2), and (5) and (6) together with Theorem \ref{flip2} give a new geometric proof of Bonahon-Otal's result (Theorem \ref{notflippable}).  Indeed,  in $L(p,1)$,  the explicit $\mathbb{RP}^2$ family of Clifford tori exhibits the flippability of the genus $1$ splitting,  and if the other lens spaces had flippable genus $1$ splittings,  they would have to admit either infinitely many tori with area equal to $2\pi^2/p$, or a second index $1$ or $2$ minimal torus with area greater than $2\pi^2/p$ (which they do not by (5) and (6)).
\end{rmk} 

\begin{proof}
Let us consider which Clifford tori in $\mathbb{S}^3$ are invariant under the action of the group $\mathbb{Z}_p^q$. Such Clifford tori descend to minimal tori or Klein bottles in $L(p,q)$.   Recall that the element $\xi_{p,,q}$ acts on the space of geodesics $\mathbb{S}^2\times\mathbb{S}^2$ by rotating the first factor by angle $2\pi(q+1)/p$ and the second factor by angle $2\pi(q-1)/p$.  

Let us first consider the case where one of $2\pi(q-1)/p$ or $2\pi(q+1)/p$ is equal to zero modulo $2\pi$.  This happens if and only if $q=1$ or $q=p-1$.  Since $L(p,1)$ and $L(p,p-1)$ are isometric,  without loss of generality let us consider the case $q = 1$.  Then if $\iota(x, E)$ is an invariant Clifford torus,  since $2\pi(q-1)/p=0$ there is no constraint on the second factor $E$ and the point $x$ must be invariant under rotations by any multiple of angle $4\pi/p$ on the first factor.    If $p=2$,  this gives rise to no constraint in the first factor.   Thus the space of Clifford tori in $\mathbb{RP}^3$ is equal to $\mathbb{RP}^2\times\mathbb{RP}^2$.  This gives item (3). If $p=3$,  this implies the first factor $x$ is invariant under rotations by multiples of $2\pi/3$ and thus $x=\pm i$. Thus there is an $\mathbb{RP}^2$ family of such Clifford tori, $\iota(\pm i, E)$ for any equator $E$ in the lens space $L(3,1)$.   If $p=4$ this gives rise to the first factor $x$ being invariant under rotations by $\pi$.  There are two such types of invariant tori.  First we can have $\iota(\pm i,  E)$ for any equator $E$.  Secondly, we can have $\iota(\cos(\theta) j + \sin(\theta)k, E)$ for any $\theta\in[0,2\pi]$.   The first family gives an $\mathbb{RP}^2$ family of tori in $L(4,1)$,  and the second family gives a family of Klein bottles parameterized by $\mathbb{S}^1\times\mathbb{RP}^2$.  This gives the claim about $L(4,1)$ in item (4). For $p\geq 5$,  we get that $E$ can be any equator and $x$ must be invariant under a group of rotations of order at least $3$.  Thus there is only possibility for the first factor,  $x=\pm i$.  It follows that in this case the space of such Clifford tori in $L(p,1)$ is homeomorphic to $\mathbb{RP}^2$. This gives item (2).

Let us consider the case where neither $2\pi(q-1)/p$ or $2\pi(q+1)/p$ is equal to $0$ and also neither is equal to $\pi$ (modulo $2\pi$).  Then any invariant torus $\iota(x,E)$ has both $x$ and $E$ invariant under rotations of order at least $3$.  It follows that an invariant torus must be of the form $\iota(\pm i, G)$ where $G$ denotes the equator contained in the $jk$-plane.   Thus there is precisely one Clifford torus in such lens spaces and this gives rise to examples in case (1) in the Proposition.

Finally let us consider the case where neither $2\pi(q-1)/p$ or $2\pi(q+1)/p$ is equal to $0$ but (without loss of generality), $2\pi(q+1)/p =\pi$ (modulo $2\pi$).  This can happen only when $p=4k$ and $q=2k-1$.  We have already handled the case when $p=4$, so let us assume henceforth $k>1$. Any invariant torus $\iota(x,E)$ has the property that $x$ is invariant under rotations of order $2$  and $E$ is invariant under rotations of order greater than $2$.  There are again two types of invariant tori.  The first is given by $\iota(\pm i, G)$ where $G$ is the equator in the $jk$-plane.  This gives a single minimal torus, completing the proof of item (1).  The second family is of the form $K_x =\iota(x, G)$ where $x$ is an arbitrary point in the $jk$-plane.  This gives an $\mathbb{S}^1$ family of surfaces.  Let $L\subset\mathbb{S}^2$ be a circle parallel to $G$.  The group action $\xi_{p,q}$ maps $\iota(x,L)$ to $\iota(-x,L)$ which in turn is equal to $\iota(x,-L)$.   Since the group element $\xi_{p,q}$ flips the cmc tori parallel to the torus $K_x$, it follows that $K_x$ descends to a one-sided surface in $L(p,q)$, which is a Klein bottle.  This completes the proof of item (4).  

For (5), any minimal torus or Klein bottle in a lens space lifts to a connected minimal torus in $\mathbb{S}^3$ by \cite{F}. By \cite{B}, this lifted torus is a Clifford torus.

For  (6), note that any minimal surface $\Sigma\subset L(p,q)$ with area less than $2\pi^2/p$ lifts to a minimal surface $\tilde{\Sigma}$ with area less than $2\pi^2$ in $\mathbb{S}^3$.  By the resolution of the Willmore conjecture \cite{MN}, $\tilde{\Sigma}$ must be an equator,  which by Frankel's theorem \cite{F} implies that $\Sigma$ is homeomorphic to $\mathbb{RP}^2$ and $\tilde{\Sigma}$ is a double cover.  Thus the area of $\Sigma$ is $2\pi$.  Thus $p=2$ and $L(2,1)$ is diffeomorphic to $\mathbb{RP}^3$.    

\end{proof}

\subsection{Genus $2$ minimal surfaces in lens spaces}
We now prove Theorem \ref{lens}.  First we recall the following topological fact
\begin{prop}[Bonahon-Otal]\label{notf}
The lens space $L(p,q)$ is flippable if and only if $q\in\{1,p-1\}$.  
\end{prop}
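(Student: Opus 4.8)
The plan is to prove Proposition \ref{notf} as a corollary of the geometric machinery developed above, realizing the program sketched in Remark \ref{geomnew}. First I would handle the easy direction. If $q \in \{1, p-1\}$, then by Proposition \ref{lens}(2)--(3) the lens space $L(p,q)$ admits a positive-dimensional (indeed $\mathbb{RP}^2$ or larger) family of Clifford tori. Picking a suitable loop in this family based at a fixed Clifford torus $\Sigma$ gives an ambient isotopy of $L(p,q)$ carrying $\Sigma$ to itself with reversed orientation (the $\mathbb{RP}^2$ of points in the first factor contains, e.g., the path from $i$ to $-i$ through $j$, and $\iota(i,E) = \iota(-i,E)$ as sets but the identification reverses the coorientation). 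Hence the genus $1$ Heegaard splitting is flippable. I should be slightly careful that the isotopy can be taken to be an ambient isotopy of the whole manifold, not merely a motion of the surface, but this follows from the isotopy extension theorem since the family of Clifford tori is a smooth family of embeddings.

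For the converse — the substantive direction — I would argue by contradiction: suppose $q \notin \{1, p-1\}$ but the genus $1$ splitting $\Sigma_0$ of $L(p,q)$ is flippable. The round Clifford torus descending from $\mathbb{S}^3$ is an index $1$ minimal surface realizing $\omega(L(p,q), \Sigma_0) = 2\pi^2/p$ (by Proposition \ref{lens}(6) there is nothing of smaller area, so item (1) of Theorem \ref{flip2} is excluded and $\Sigma_0$ is genuinely width-realizing), and the Heegaard foliation by the flat constant-mean-curvature tori parallel to it is an optimal foliation by genus $1$ surfaces. Apply Theorem \ref{flip2} with $g = 1$ and $n = Flip(L(p,q), \Sigma_0)$; by the flippability hypothesis, $n = g = 1$. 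I would then run through the trichotomy of Theorem \ref{flip2}. Case (1) is ruled out as above. Case (2) produces infinitely many minimal surfaces of genus $\leq 1$ and area exactly $2\pi^2/p$, i.e., infinitely many distinct minimal tori or Klein bottles of that area — but Proposition \ref{lens}(1), (4) says that for $q \notin \{1,p-1\}$ there is exactly one minimal torus and no embedded Klein bottle (the Klein bottle case requires $p = 4m$, $q = 2m \pm 1$, which lies in $\{1, p-1\}$ only when $m = 1$, and even then the $m > 1$ / $m = 1$ dichotomy needs a quick check — when $m=1$, $L(4,1)$ has $q = 1 \in \{1,3\}$, so it is excluded from our hypothesis anyway). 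Case (3) produces a min-max minimal surface $\Gamma$ with $\Gamma \neq n\Sigma_0$, index $\leq 2$, genus $\leq n = 1$, and mass $||\Gamma|| > |\Sigma_0| = 2\pi^2/p$; since $n = g = 1$ we do not get the upper area bound, but we do get a genus $\leq 1$ minimal surface (a torus or Klein bottle) of area strictly larger than $2\pi^2/p$, contradicting Proposition \ref{lens}(5), which says every minimal torus or Klein bottle in $L(p,q)$ has area exactly $2\pi^2/p$. In every case we reach a contradiction, so $\Sigma_0$ is not flippable.

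The main obstacle, and the place I would spend the most care, is verifying that the hypotheses of Theorem \ref{flip2} are genuinely met — specifically that the round lens space carries an \emph{optimal} genus $1$ foliation in the precise technical sense (minimal slice at $t = 0$, quadratic area decay $\mathcal{H}^2(\Sigma_t) \leq \mathcal{H}^2(\Sigma_0) - Ct^2$, and the leaves foliating the complement of the two degenerate slices). This is where the explicit geometry of $L(p,q)$ enters: the parallel flat tori at distance $t$ from the Clifford torus have area $2\pi^2 \cos(2t)/p$ or similar, which decays quadratically, and they sweep out the manifold degenerating to the two core Hopf circles; one should also confirm $\Sigma_0$ is index $1$, which follows from the index $1$ status of the Clifford torus in $\mathbb{S}^3$ and the fact that the quotient only kills symmetric Jacobi fields. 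A secondary point requiring attention is the appeal to Proposition \ref{lens}(5)--(6), which as noted relies on Brendle's resolution of the Lawson conjecture and Marques--Neves' resolution of the Willmore conjecture; this is why, as the authors remark, the new proof of Bonahon--Otal is ``geometric'' but quite heavy, and I would flag that the combinatorial topology proof of Bonahon--Otal is of course elementary by comparison — the point here is purely that the minimal-surface machinery reproves it.
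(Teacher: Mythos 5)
The paper never proves Proposition \ref{notf} itself: it is imported from Bonahon--Otal and used as an input, and only Remark \ref{geomnew} sketches the geometric reproof that you are fleshing out. Your overall strategy follows that remark and is not circular (neither Theorem \ref{flip2} nor Proposition \ref{lens} uses Bonahon--Otal), but as written the hard direction has a genuine gap coming from Klein bottles. By Proposition \ref{lens}(4), $L(4m,2m\pm 1)$ admits an $\mathbb{S}^1$-family of embedded minimal Klein bottles, and for $m\geq 2$ these spaces have $q=2m\pm 1\notin\{1,p-1\}$, so they lie squarely inside your hypothesis; your parenthetical gets this backwards (only $m=1$ is excluded), and each such Klein bottle has area exactly $2\pi^2/p$ by \ref{lens}(5). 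Consequently your dismissal of case (2) fails for these spaces: infinitely many min-max surfaces of genus at most $1$ and area $2\pi^2/p$ is not by itself a contradiction there. The repair is available in the paper: since the round lens space has positive Ricci curvature, the final clause of Theorem \ref{flip2} says the case (2) surfaces are \emph{connected of genus} $g=1$, i.e.\ tori, and then uniqueness of the minimal torus (Proposition \ref{lens}(1) and (5)) gives the contradiction. Case (3) is worse: $||\Gamma||>|\Sigma_0|$ bounds the \emph{mass} of the varifold $\sum n_i\Gamma_i$, not the area of its support, so your ``torus or Klein bottle of area strictly larger than $2\pi^2/p$'' conflates the two. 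A Klein bottle with multiplicity $2$ is allowed by the genus bound \eqref{genusbound} with $n=1$, has mass exactly $4\pi^2/p$, is not a multiple of $\Sigma_0$, and is not excluded by any upper area bound because $n=Flip=1\neq 2g$; it is consistent with Proposition \ref{lens}, so for $L(4m,2m\pm1)$, $m\geq 2$, your contradiction does not close without a further argument (e.g.\ ruling out the multiplicity-two one-sided limit by a catenoid-estimate type argument, or treating these lens spaces separately). A smaller omission of the same kind: genus $\leq 1$ permits genus-zero components, and you should say why minimal spheres and projective planes cannot occur for $p>2$ (lift to $\mathbb{S}^3$ and apply Frankel to the disjoint lifts).

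There is also a detail-level error in the easy direction. For $p\geq 3$ the invariant Clifford tori in $L(p,1)$ are $\iota(\pm i,E)$ with the \emph{first} factor pinned at $\pm i$ and the $\mathbb{RP}^2$ parameterizing the equator $E$ in the second factor (see the proof of Proposition \ref{lens}(2)); your path from $i$ to $-i$ through $j$ moves the first factor, so the intermediate tori $\iota(a_t,E)$ are not $\mathbb{Z}_p^1$-invariant and the loop does not descend to the lens space (that mechanism only works in $\mathbb{RP}^3$, which is excluded since there $q=1=p-1$). The flip should instead be realized inside the invariant family: rotate $E$ by angle $\pi$ about an axis lying in its plane, so every intermediate torus $\iota(i,E_t)$ is invariant (the action is trivial on the second factor when $q=1$), and at time one the two complementary disks of $E$, hence the two handlebodies, are exchanged; isotopy extension then produces the ambient flip. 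With these repairs, and the verification of the optimal foliation that you rightly flag (and which the paper also asserts without detail), your argument does realize the program of Remark \ref{geomnew}.
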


Let us show
\begin{thm}[Genus $2$ minimal surfaces in lens space]
Any round lens space $L(p,q)$ with $q\notin\{1,  p-1\}$ admits a genus $2$ minimal surface $M_{p,q}$ with area satisfying \begin{equation}\label{boundsforsurface}
\frac{2\pi^2}{p} < |M_{p,q}| < \frac{4\pi^2}{p}.
\end{equation}
\end{thm}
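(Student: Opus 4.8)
The plan is to produce the surface $M_{p,q}$ as the second minimal surface furnished by Theorem \ref{flip2}, applied to the lens space $L(p,q)$ equipped with the round metric and with $\Sigma_0$ the projected Clifford torus.  First I would set up the optimal foliation: the round lens space $L(p,q)$ fibers over the orbifold $\mathbb{S}^2(k_1,k_1)$, the equator lifts to a Clifford torus, and sweeping the equator from north pole to south pole by parallel circles produces a genus $1$ Heegaard foliation $\{\Sigma_t\}_{t\in[-1,1]}$ whose area is maximized at $\Sigma_0$, the Clifford torus of area $2\pi^2/p$.  One checks this is an optimal foliation in the sense of the definition: $\Sigma_0$ is an index $1$ minimal surface realizing $\omega(M,\Sigma_0)=2\pi^2/p$ (the genus $1$ width; positivity and the identification of the Clifford torus as the min-max surface of index $1$ come from \cite{KMN} together with item (6) of Proposition \ref{lens}, which says nothing of smaller area exists), the quadratic area decay holds because the parallel tori have strictly decreasing area away from the equator, and the interior leaves foliate the complement.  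Since $L(p,q)$ has positive Ricci curvature, \cite{KMN} applies to guarantee $\Sigma_0$ is the index $1$ min-max surface.

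Next I would invoke Bonahon-Otal (Proposition \ref{notf} / Theorem \ref{notflippable}): since $q\notin\{1,p-1\}$, the genus $1$ splitting of $L(p,q)$ is \emph{not} flippable, so $Flip(M,\Sigma_0)\geq 2$; combined with Proposition \ref{maxx} (here $g=1$ so $Flip\leq 2g=2$) we get $Flip(M,\Sigma_0)=2$, and the two choices $n=Flip(M,\Sigma_0)$ and $n=2g$ in Theorem \ref{flip2} coincide, both equal to $2$.  Now apply Theorem \ref{flip2} with $n=2$.  Case (1) says $\omega(M,S_1(\Sigma_0))<\omega(M,\Sigma_0)=2\pi^2/p$; but by Corollary \ref{onlyless} and the fact that there is no embedded minimal surface of area less than $2\pi^2/p$ in $L(p,q)$ (item (6) of Proposition \ref{lens}), a min-max surface of area $\omega(M,S_1(\Sigma_0))<2\pi^2/p$ cannot exist — so case (1) is ruled out.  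Case (2) would produce infinitely many connected genus $1$ minimal surfaces of area $2\pi^2/p$ and would force $Flip(M,\Sigma_0)=g=1$, contradicting Bonahon-Otal; alternatively, by item (1) of Proposition \ref{lens}, $L(p,q)$ with $q\notin\{1,p-1\}$ has exactly one Clifford torus, hence not infinitely many minimal tori — so case (2) is ruled out.  Therefore case (3) holds: there is a min-max minimal surface $\Gamma = \sum n_i\Gamma_i$ with $\Gamma\neq n\Sigma_0$, index at most $2$, genus at most $2$, mass $>|\Sigma_0|=2\pi^2/p$, and (since $n=2g$) mass $<2|\Sigma_0|=4\pi^2/p$.

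It then remains to upgrade $\Gamma$ to an \emph{embedded genus $2$} surface $M_{p,q}$ with the stated area bounds, which is the step I expect to be the main obstacle.  The mass bounds $2\pi^2/p<|\Gamma|<4\pi^2/p$ are exactly \eqref{boundsforsurface}, so the content is topological: I must rule out every alternative for the support of $\Gamma$.  Since $2\pi^2/p\leq|\Gamma_i|$ for each component and $\sum n_i|\Gamma_i|<4\pi^2/p$, there can be only one component, with multiplicity one, i.e.\ $\Gamma = \Gamma_1$ is a single embedded connected minimal surface of area in $(2\pi^2/p,4\pi^2/p)$.  By item (5) of Proposition \ref{lens}, $\Gamma_1$ is neither a torus nor a Klein bottle (those all have area exactly $2\pi^2/p$), and it is not $\mathbb{RP}^2$ or $\mathbb{S}^2$ (excluded in lens spaces with $p\neq 2$, or by area/index considerations).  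The genus bound \eqref{genusbound} with $g=n=2$ forces $\mathrm{genus}(\Gamma_1)\leq 2$ if orientable, and in the non-orientable case $\frac12 n_1(\mathrm{genus}(\Gamma_1)-1)\leq 2$; one checks the surviving possibilities — genus $2$ orientable, or a small-genus non-orientable surface — and eliminates the non-orientable case either by the classification (a non-orientable minimal surface would lift to a connected minimal surface in $\mathbb{S}^3$ whose area is doubled, and one pushes the area/Willmore bounds), leaving genus exactly $2$.  (The genus cannot drop to $1$ since that surface would be a torus of area $2\pi^2/p$, contradicting $|\Gamma|>2\pi^2/p$; it cannot be $0$ as above.)  Thus $M_{p,q}:=\Gamma_1$ is an embedded genus $2$ minimal surface with index at most $2$ satisfying \eqref{boundsforsurface}, completing the proof.
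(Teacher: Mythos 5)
Your proposal is correct and follows essentially the same route as the paper's proof: build the optimal genus $1$ foliation from the unique Clifford torus, use Bonahon--Otal's non-flippability and Proposition \ref{lens}(6) to rule out cases (1) and (2) of Theorem \ref{flip2}, land in case (3) with $n=2=2g=Flip(M,\Sigma_0)$, and then combine the strict area bounds $2\pi^2/p<\|\Gamma\|<4\pi^2/p$ with the genus bound \eqref{genusbound} and the classification in Proposition \ref{lens} to force a single multiplicity-one component of genus $2$. The one imprecision is your parenthetical that a non-orientable component ``lifts to a connected minimal surface in $\mathbb{S}^3$ whose area is doubled'' (the lift under the $p$-fold cover multiplies area by $p$ and need not be connected), but the paper disposes of this step the same way you essentially do --- via Proposition \ref{lens}(4)--(5), since the only non-orientable minimal surfaces entering the classification are Klein bottles of area exactly $2\pi^2/p$, which the area bounds exclude.
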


%\begin{rmk}
%It would be natural to expect that $M_{p,q}$ have the lowest area in $L(p,q)$ above that of the Clifford torus when $q\notin\{1, p-1\}$ and also realize the $2$-width in the volume spectrum introduced by Marques-Neves \cite{} in such manifolds.  
%\end{rmk}

\begin{proof}

The manifold $L(p,q)$ has an optimal foliation determined by its unique Clifford torus.   Since $L(p,q)$ is not flippable by Proposition \ref{notf},  and admits no minimal surfaces of area less than that of the Clifford torus (Proposition \ref{lens}(6)), we conclude that item (3) holds in Theorem \ref{flip2}.  Thus we obtain a minimal surface $M_{p,q}$ with area  \begin{equation}\label{areabounds}\frac{2\pi^2}{p}<|M_{p,q}|< \frac{4\pi^2}{p}.\end{equation}  From the genus bounds and classification and Proposition \ref{lens} a min-max process consisting of genus $2$ surfaces in a lens space can result in a genus $2$ minimal surface with multiplicity $1$,  a Klein bottle with multiplicity $2$ (only in the lens spaces $L(4k,  2k\pm 1)$),  or Clifford torus with multiplicity $1$.  The latter two are excluded by the area bounds \eqref{boundsforsurface}.  Thus the genus of $M_{p,q}$ is $2$.   \end{proof}

On the other hand, we have the following converse for the exceptional lens spaces $L(p,1)$ that we prove in the next section.  

\begin{prop}[Non-existence of genus 2 minimal surfaces]
For $p$ large enough, the lens space $L(p,1)$ does \emph{not} admit a genus $2$ minimal surface with area less than $4\pi^2/p$ (twice the area of the Clifford torus).   
\end{prop}

For the exceptional lens spaces $L(p,1)$ and $L(4p,2p- 1)$, we observe finally that we can obtain sharp existence for more minimal objects when the metric is not assumed to be round:
 
\begin{thm}
Let $L(p,1)$ be endowed with a metric $g$ of positive Ricci curvature.    For each $p\neq 2$,  $L(p,1)$ admits at least \emph{three} minimal tori.  
\end{thm}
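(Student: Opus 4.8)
The plan is to combine the structure of Theorem \ref{flip2} with the topological picture of Clifford tori in $L(p,1)$ from Proposition \ref{lens}. Recall from Proposition \ref{lens}(2) that for $p\geq 5$ the lens space $L(p,1)$ (and $L(p,p-1)$) admits an $\mathbb{RP}^2$-family of Clifford tori, each of area $2\pi^2/p$, and the genus $1$ splitting is flippable. Under a positive Ricci metric $g$, by the work of \cite{KMN} each of these can be perturbed/isotoped so that the min-max value $\omega(M,\Sigma_0)$ is realized by an index $1$ minimal torus, and we get an optimal foliation; moreover the flippability means we are in the regime where $Flip(M,\Sigma_0)=g=1$. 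First I would run Theorem \ref{flip2} with $n=Flip(M,\Sigma_0)=1$: since the metric is positive Ricci but not assumed bumpy, case (2) is now available and in fact is the expected outcome when the ambient symmetry produces a positive-dimensional family of minimal tori. The point is that case (2) in the positive-Ricci setting forces $n=g=Flip(M,\Sigma_0)$ and yields \emph{infinitely many} connected genus $g=1$ min-max minimal tori of area $\omega(M,\Sigma_0)$; in particular at least two, and together with a separate argument producing a third of \emph{larger} area we are done. Alternatively, if cases (1) or (3) occur one obtains directly a second minimal surface distinct from $\Sigma_0$ of controlled index and genus.

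The cleanest route to \emph{three} minimal tori, rather than simply ``infinitely many or two plus one,'' is the following. Step one: show that under any positive Ricci metric on $L(p,1)$, $p\neq 2$, there exist at least two minimal tori of least area among minimal tori — this should follow from a Lusternik--Schnirelmann/cat argument applied to the $\mathbb{RP}^2$-family of sweepouts, exactly as in the proof of Lemma \ref{lus} and the last paragraph of the proof of Theorem \ref{flip2}: the nontriviality of the $\mathbb{RP}^2$ parameter space of Heegaard foliations forces the min-max critical set to be large, and if it were a single torus one would violate the cup-length of $\mathbb{RP}^2$. Concretely one builds, from the $\mathbb{RP}^2$-family of Clifford tori, an $\mathbb{RP}^2$-sweepout $\{\Sigma_x\}_{x\in\mathbb{RP}^2}$ in which each $\Sigma_x$ is a genus $1$ Heegaard surface, pulls it tight, and invokes that the two generators of $H^*(\mathbb{RP}^2;\mathbb{Z}_2)$ detect two distinct minimal tori (cf. the volume-spectrum arguments of \cite{MN2}, Section 6). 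Step two: apply Theorem \ref{flip2} with $n=2g=2$ to obtain a min-max minimal surface $\Gamma$ with $|\Sigma_0|<\|\Gamma\|<2|\Sigma_0|$, $\Gamma\neq n\Sigma_0$, of genus $\leq 2$ and index $\leq 2$. Because $\|\Gamma\|>|\Sigma_0|$, this $\Gamma$ is strictly distinct from the two least-area minimal tori found in step one. So either $\Gamma$ contains a torus component (a third minimal torus) or one argues, as in the proof of Theorem \ref{mininlens}, that the only alternatives consistent with the genus bound $\eqref{genusbound}$, the index bound, and the area constraint are a genus $2$ surface, a Clifford torus with multiplicity, or — in $L(4k,2k\pm1)$, not relevant here since $q=1$ — a Klein bottle; and the strict area inequalities $|\Sigma_0|<\|\Gamma\|<2|\Sigma_0|$ combined with Proposition \ref{lens}(6) (least area is the torus $2\pi^2/p$) rule out everything but a genuinely new minimal surface. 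To upgrade ``new minimal surface'' to ``minimal torus'' I would use that in positive Ricci curvature the relevant min-max value in case (2)/(3) is realized with multiplicity one by connected surfaces, so combined with the genus bound the new component is either a torus or a higher-genus surface; if it is higher genus we still have at least three minimal surfaces and would then note the theorem as stated asks for three minimal \emph{tori}, so one instead leans on the fact (expected here, and provable via the $\mathbb{RP}^2$-family as in step one applied to the \emph{stabilized, flipped} family) that the infinitely-many case (2) already delivers infinitely many genus $1$ tori, hence certainly three.

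The main obstacle I anticipate is the bookkeeping in step one to guarantee \emph{two} distinct least-area minimal tori (as opposed to one) purely from the $\mathbb{RP}^2$-family, without accidentally counting the same torus twice: one must verify that the Lusternik--Schnirelmann width associated to the subordinate cohomology class $\lambda\smile\lambda\in H^2(\mathbb{RP}^2;\mathbb{Z}_2)$ is strictly smaller, or that the critical set for the class $\lambda$ cannot be a single point — this requires either a strict upper-bound construction (a genuinely $\mathbb{RP}^2$-parameter family of genus-$1$ sweepouts with sup area $<$ something) or an appeal to the structure of the space of minimal tori in this metric. A clean way around it: instead of two least-area tori, use Theorem \ref{flip2} applied to \emph{two different} Clifford tori $\Sigma_1,\Sigma_2$ in the $\mathbb{RP}^2$-family that are non-isotopic through minimal surfaces only in a weak sense — but they are all isotopic, so this does not immediately help, which is precisely why the LS-argument over $\mathbb{RP}^2$ seems unavoidable. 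I would therefore spend the bulk of the writeup making the LS/cup-length count rigorous, citing \cite{MN2} Section 6 and the argument structure already present in the proof of Theorem \ref{flip2} and Lemma \ref{lus}, and treat the production of the third (larger-area) torus via the $n=2g$ case of Theorem \ref{flip2} as comparatively routine given the classification in Proposition \ref{lens}.
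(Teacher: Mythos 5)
Your Step two is where the argument breaks. Applying Theorem \ref{flip2} with $n=2g=2$ produces a min-max surface $\Gamma$ of genus \emph{at most} $2$, and nothing in the genus bound \eqref{genusbound}, the index bound, or the area window $|\Sigma_0|<\|\Gamma\|<2|\Sigma_0|$ forces $\Gamma$ to be a torus: a connected genus $2$ surface is perfectly consistent with all of these (this is exactly the outcome exploited in Theorem \ref{mininlens} for round $L(p,q)$). Your fallback --- ``if it is higher genus, lean on case (2) delivering infinitely many genus $1$ tori'' --- is not available: cases (1), (2), (3) of Theorem \ref{flip2} are alternatives, case (2) requires the equality of widths and fails for bumpy metrics, and when case (3) holds with a genus $2$ surface you have produced three minimal \emph{surfaces} but only two minimal \emph{tori}. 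Since the theorem asserts three tori for an arbitrary positive Ricci metric, this is a genuine gap, not bookkeeping. Your Step one is also heavier than needed: you aim for two ``least-area'' tori via a cup-length argument and flag the distinctness of the two LS levels as the main difficulty, but you never exploit the full cohomology of $\mathbb{RP}^2$.

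The paper's route stays entirely inside genus $1$ sweepouts, which is what makes the torus conclusion automatic. Positive Ricci curvature and Theorem 1.1 of \cite{KMN} give an index $1$ minimal torus $\Gamma_1$ from the one-parameter Heegaard width. Because the space of unknotted (Heegaard) tori in $L(p,1)$ retracts onto $\mathbb{RP}^2$ (Johnson--McCullough), the two nontrivial $\mathbb{Z}_2$-cohomology classes produce nontrivial two- and three-parameter families of genus $1$ surfaces, hence two further min-max limits $n_2\Gamma_2$ and $n_3\Gamma_3$; the genus bound with multiplicity \eqref{genusbound}, applied to genus $1$ sweepouts, forces $\Gamma_2,\Gamma_3$ to be tori with $n_2=n_3=1$. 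If any two of the three areas coincide (in particular if two of the tori coincide), Lusternik--Schnirelmann theory (Section 5--6 of \cite{MN2}) yields infinitely many minimal tori, so in every case one gets at least three. In short: the third torus should come from the three-parameter family attached to the class $\lambda\smile\lambda$ on $\mathbb{RP}^2$, not from the stabilized $n=2g$ flip construction, which cannot control the genus down to $1$.
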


\begin{proof}
By Theorem 1.1 in \cite{KMN} we obtain an index $1$ minimal torus $\Gamma_1$. Since the space of tori retracts to $\mathbb{RP}^2$ (cf.  \cite{JM}) we obtain two and three parameter family of tori (though the areas in this family are not controlled) and corresponding connected min-max limits $n_2\Gamma_2$ and $n_3\Gamma_3$.  By the genus bounds with multiplicity \eqref{genusbound},   the minimal surfaces $\Gamma_2$ and $\Gamma_3$ are tori and $n_2=n_3=1$.   If any two of the tori $\{\Gamma_1, \Gamma_2, \Gamma_3\}$ have equal areas (in particular if they coincide),  we obtain infinitely many minimal tori by Lusternick-Schnirelman theory (cf. Section 5 in \cite{MN2}).
\end{proof}

Similarly in the other exceptional lens spaces (as was  considered for minimal $\mathbb{RP}^2$ in $\mathbb{RP}^3$ in Theorem 1.7 in \cite{HK}):
\begin{thm}
Let $L(4p,2p\pm 1)$ be endowed with a metric $g$ of positive Ricci curvature.    For each $p> 1$,  $L(4p,2p\pm 1)$ admits at least \emph{two} minimal embedded Klein bottles.    
\end{thm}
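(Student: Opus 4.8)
The plan is to mimic the argument just given for $L(p,1)$, replacing the $\mathbb{RP}^2$ family of Clifford tori by the $\mathbb{S}^1$ family of minimal Klein bottles supplied by Proposition~\ref{lens}(4). Recall that for $m = 2p \geq 2$ (so $p>1$), the lens space $L(4p, 2p\pm 1)$ admits an embedded Klein bottle, and in the round metric these Klein bottles form an $\mathbb{S}^1$-family of minimal surfaces. The first step is to observe that this persists homotopically: the space of (unknotted) embedded Klein bottles in $L(4p,2p\pm1)$ retracts onto a circle (this is the topological input, available from Johnson--McCullough \cite{JM} in the same way the $\mathbb{RP}^2$ statement was used for tori). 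A nontrivial loop in this space of embedded Klein bottles, swept out appropriately, furnishes a genuine one-parameter min-max family detecting a nontrivial class in $\pi_1$ of the relevant space of surfaces.

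The second step is the min-max construction. For a metric $g$ of positive Ricci curvature on $L(4p,2p\pm1)$, Theorem~1.1 in \cite{KMN} (applied to the genus-$2$ Heegaard surface, equivalently to the orientable double cover of a Klein bottle, which is the relevant Heegaard object here) produces an index $1$ min-max minimal surface $\Gamma_1$; since in this setting there is no stable minimal surface, one runs the one-parameter min-max over the nontrivial loop in the space of Klein bottles to obtain a second min-max varifold $n_2 \Gamma_2$. Here I would invoke the genus bound with multiplicity \eqref{genusbound}: a min-max sequence of Klein bottles (a non-orientable genus-$2$ surface, i.e.\ two crosscaps) can only converge to a Klein bottle with multiplicity $1$, or to objects of strictly smaller total genus with bounded multiplicity; combined with the absence of stable surfaces and Conjecture-type multiplicity exclusions already available in positive Ricci via the Catenoid Estimate \cite{KMN}, one forces $\Gamma_2$ to be an embedded Klein bottle with $n_2 = 1$. (As in the $L(p,1)$ proof, one can alternatively simply push through the argument at the level of the orientable double covers and then descend.)

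The third step is the dichotomy that gives \emph{two distinct} Klein bottles. If $|\Gamma_1| \neq |\Gamma_2|$ we are done: they are distinct minimal embedded Klein bottles. If instead $|\Gamma_1| = |\Gamma_2|$ — in particular if $\Gamma_1 = \Gamma_2$ as sets — then the standard Lusternik--Schnirelman argument (Section~5 in \cite{MN2}, exactly as invoked in the $L(p,1)$ theorem and as used in the proof of Theorem~\ref{flip2}) applies: a critical value attained along a nontrivial one-parameter family that is ``not detected'' by finitely many surfaces leads, via the width being achieved on a connected set of minimal surfaces of the same area, to infinitely many minimal Klein bottles. In either branch we obtain at least two, so the statement follows.

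The main obstacle I anticipate is ruling out degeneration of the min-max Klein bottle to lower-complexity minimal surfaces — concretely, showing $\Gamma_2$ is genuinely a Klein bottle and occurs with multiplicity one, rather than, say, twice a projective plane or a single Clifford torus. In the round case this was settled by the area pinching $2\pi^2/p < |M_{p,q}| < 4\pi^2/p$ together with the classification in Proposition~\ref{lens}; for a general positive-Ricci metric one no longer has the round classification, so the argument must lean entirely on the genus-with-multiplicity bound \eqref{genusbound}, the non-existence of stable minimal surfaces under positive Ricci, and the multiplicity-one control from the Catenoid Estimate when the multiplicity equals the number of parameters (here one parameter). This is the same package that makes the $L(p,1)$ theorem work, so I expect it to go through, but it is the step requiring care. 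Everything else — the topological input on the space of embedded Klein bottles, and the Lusternik--Schnirelman alternative — is routine given the tools already assembled in the paper.
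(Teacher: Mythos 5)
There is a genuine gap, and it is in your first surface. Theorem~1.1 of \cite{KMN} produces an index~$1$ minimal surface isotopic to a Heegaard surface, which is orientable; in a lens space the Heegaard surface is a torus (note also that the orientation double cover of a Klein bottle is a torus, not a genus~$2$ surface, so your identification of the ``relevant Heegaard object'' is off). So your $\Gamma_1$ is not a Klein bottle, and since the statement asks for two embedded minimal \emph{Klein bottles}, your construction delivers at most one (the min-max surface $\Gamma_2$). The dichotomy in your third step then also breaks down: comparing $|\Gamma_1|$ with $|\Gamma_2|$ when $\Gamma_1$ is a torus gives nothing, and the Lusternik--Schnirelman alternative needs both critical values to be attained by Klein bottles arising from the same sweepout structure. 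The paper's fix is different and simpler: the first Klein bottle is obtained by \emph{minimizing} area in the isotopy class of the embedded Klein bottle via Meeks--Simon--Yau \cite{MSY}. This is legitimate despite positive Ricci curvature because the Klein bottle is one-sided (exactly as for the minimal $\mathbb{RP}^2$ in $\mathbb{RP}^3$); positive Ricci only rules out two-sided stable minimal surfaces. That gives a minimal Klein bottle of area $\omega_0$, and then the comparison $\omega_0$ versus the min-max value $\omega_1$, with the Lusternik--Schnirelman argument in the case $\omega_0=\omega_1$, yields two (or infinitely many) minimal Klein bottles.

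Your second step is essentially the paper's: sweep out $L(4p,2p\pm1)$ by Klein bottles (Proposition~\ref{lens}(4) supplies this; no Johnson--McCullough homotopy computation is needed, only the existence of the sweepout), run one-parameter min-max, and control the outcome by the genus bound with multiplicity \eqref{genusbound} together with the catenoid estimate of \cite{KMN}: the multiplicity is odd by the genus bound and less than $3$ by the catenoid estimate, hence equal to $1$, and the limit is an embedded Klein bottle. So your worry about degeneration is handled by exactly the package you name; the missing idea is the minimization step that produces the \emph{first} Klein bottle.
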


\begin{proof}
Beginning with an embedded Klein bottle,  we can first minimize area using \cite{MSY} to obtain a minimal embedded Klein bottle with area $\omega_0$.   By item (4) in Proposition \ref{lens} we obtain that $L(4p,2p\pm 1)$ can be swept out by Klein bottles and we consider the corresponding one-parameter min-max problem for Klein bottles.   By the genus bounds \eqref{genusbound} we obtain a Klein bottle of area $\omega_1$ with some integer multiplicity.  The multiplicity must be odd by the genus bounds and less than $3$ by the catenoid estimate \cite{KMN} (as employed in Theorem 1.7 in \cite{HK}).  Again if $\omega_0=\omega_1$ we obtain infinitely many embedded minimal Klein bottles.
\end{proof}
\noindent

\section{Limits of minimal surfaces}
Let $\tilde{M}_{p,q}$ denote the lift of $M_{p,q}$ to $\mathbb{S}^3$.  In this section, we classify the possible limiting varifolds obtained from sequences $\tilde{M}_{p_i,q_i}$ with $p_i\rightarrow\infty$.   We will do this in stages, and first consider

\begin{thm}[Doubling of Clifford torus]\label{findlimit}
Fix $k\geq 2$. Suppose $p_i\rightarrow\infty$ with $p_i+1$ relatively prime to $k$.  Then 
\begin{enumerate}
\item If $k>2$ then in the sense of varifolds
\begin{equation}
\lim_{i\rightarrow\infty} \tilde{M}_{kp_i,p_i+1} = 2C
\end{equation}
where the convergence is smooth and graphical away from $k$ equally spaced parallel closed geodesics on $C$.  
\item If $k=2$,  the surfaces $\tilde{M}_{2p_i,p_i+1}$ converge to either $2C$ smoothly away from two equally spaced closed geodesics or else to the union of two distinct Clifford tori.  
\end{enumerate}
\end{thm}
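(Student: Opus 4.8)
The plan is to realize $\tilde M_{kp_i,p_i+1}$ as the lift of the min-max surface $M_{kp_i,p_i+1}$ produced by Theorem~\ref{flip2} applied to $L(kp_i,p_i+1)$, exploit equivariance under the $\mathbb{Z}_{kp_i}$-action to pass to a limiting stationary integral varifold, and then classify that limit using the area bound $|\tilde M_{kp_i,p_i+1}|<4\pi^2$ together with a Gauss--Bonnet/genus-concentration argument. First I would record the equivariance: $\tilde M_{p,q}$ is by construction invariant under $\mathbb{Z}_p^q$, and when $p=kp_i$, $q=p_i+1$, the induced action on the space of oriented geodesics $\mathbb{S}^2\times\mathbb{S}^2$ (Lemma after \eqref{iso}) rotates the two $\mathbb{S}^2$ factors by $2\pi(q\pm1)/p$; with $q+1=p_i+2$ and $q-1=p_i$ this gives $k_1=p/\gcd(p,q-1)$ and $k_2=p/\gcd(p,q+1)$, and one checks that as $i\to\infty$ the relevant subgroup fixing the Clifford torus $C=\iota(\pm i,G)$ acts by rotations whose order tends to infinity in exactly one factor (the Hopf-fiber factor), so the quotient converges to a circle action; when $k>2$ the other factor has a residual $\mathbb{Z}_k$ leaving $k$ equally spaced geodesics invariant, while when $k=2$ the residual symmetry is only $\mathbb{Z}_2$. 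This is what pins down ``$k$ equally spaced parallel closed geodesics.''

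Next I would extract the varifold limit. By the area bounds \eqref{areabounds}, $|M_{kp_i,p_i+1}|<4\pi^2/(kp_i)\to0$, so after lifting, $|\tilde M_{kp_i,p_i+1}|$ is uniformly bounded above by $4\pi^2$; by compactness of integral varifolds with bounded mass and the fact that each $\tilde M_{kp_i,p_i+1}$ is minimal, a subsequence converges to a stationary integral varifold $V$ in $\mathbb{S}^3$. The crucial structural input is that $V$ is invariant under the limiting circle action, hence (away from the fixed-point set of that action) $V$ is a union of orbits, i.e. a surface of revolution about the Hopf circles; reducing modulo the symmetry, $V$ descends to a stationary integral $1$-varifold $W$ on the quotient surface $\mathbb{S}^2/\mathbb{Z}_k$ (an orbifold with two cone points when $k>2$), and $W$ has mass controlled by $4\pi^2$ suitably normalized. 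Stationary integral $1$-varifolds are geodesic networks, and the area bound forces $W$ to be supported on at most a bounded number of great circles with small total length; the only configuration consistent with the topological data (genus $2$ downstairs, unknotted, doubling type) is that $V=2C$, or — and this is the alternative that can only survive for $k=2$ — $W$ is a pair of distinct geodesics giving two distinct Clifford tori upstairs. To separate these two cases I would use that for $k>2$ the $\mathbb{Z}_k$-symmetry with $k\ge3$ on the second factor rules out a pair of tori (two distinct invariant Clifford tori cannot be permuted by a $\mathbb{Z}_k$ action with $k\ge3$ while keeping total area $<4\pi^2$), whereas for $k=2$ nothing excludes a transposed pair.

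The main obstacle will be the genus-concentration step: ruling out that multiplicity or genus ``leaks'' into the singular circles in a way that produces a limit other than $2C$ (for instance, a higher-multiplicity Clifford torus, or a nonreduced varifold). For this I would invoke Ilmanen's integrated Gauss--Bonnet argument (\cite{I}, as referenced in the introduction) to bound the genus that can degenerate into each singular geodesic: the genus of $\tilde M_{p,q}$ is $p+1\to\infty$, but it is ``spread out'' along the $\mathbb{Z}_p$-orbit, so per fundamental domain only a bounded amount of genus is present, and Gauss--Bonnet on the quotient forces the singular set to be exactly $k$ circles carrying the prescribed local model (a catenoid-type or Scherk-type neck, consistent with a doubling). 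The multiplicity-two claim then follows because the min-max surface near each regular point of $C$ is a graph over $C$ on both sides, matching the Simon--Smith/\cite{K} structure theorem (``each component is isotopic to $\Gamma_i$ or a double cover''); since $C$ is two-sided and the approximating surfaces separate, the only possibility consistent with the lower area bound $|M_{p,q}|>2\pi^2/p$ is genuine multiplicity $2$ rather than a single copy. Finally, the smooth graphical convergence away from the singular circles is standard elliptic regularity once multiplicity one (locally, on each sheet) is known, via Allard's theorem and Schauder estimates.
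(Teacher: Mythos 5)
Your outline follows the paper's general strategy (lift, use the growing symmetry to get a Hopf-invariant limit, project to a stationary integral $1$-varifold on the quotient with mass at most $4\pi$, use Ilmanen's Gauss--Bonnet bound to control where genus concentrates, and Allard plus elliptic estimates for smooth graphical convergence away from the bad set), but the classification of the limiting geodesic network --- which is the actual content of the theorem --- is asserted rather than proved, and the one concrete reason you give for the $k>2$/$k=2$ dichotomy is wrong. Two distinct Clifford tori have total area exactly $4\pi^2$, and the configuration of two orthogonal great circles through the poles of $\mathbb{S}^2$ \emph{is} invariant under the $\mathbb{Z}_4$ rotation, so for $k=4$ neither the mass bound (the limit mass is only known to be $\le 4\pi^2$) nor symmetry excludes it; the paper rules it out by a homological argument: near the pole the quotient surface would have a single boundary curve of class $4[\alpha]\neq 0$ in $H_1$ of the solid torus $T_N$, which cannot bound an orientable surface, contradicting orientability of $M_{p,q}$. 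Likewise for $k=3$ the configuration of three half-circles meeting at the poles has mass $3\pi\le 4\pi$ and is $\mathbb{Z}_3$-invariant; it is excluded only because such a varifold is not a limit of closed embedded orientable surfaces. Your proposal also never rules out triple junctions or higher-valence junctions away from the poles; the paper does this either via the density bound $\theta\le 2$ from monotonicity applied to the cone, or via the orientability of the approximating surfaces, and then pins down that the tangent cone at each interior singular point is a union of two multiplicity-one lines.

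The genus-concentration step is also essentially missing. The paper's mechanism is not ``bounded genus per fundamental domain'' but the reverse: using multiplicativity of the Euler characteristic under the $\mathbb{Z}_{p_i}$-cover of a small solid torus around a point of non-smooth convergence, together with a parity count of the homologically nontrivial boundary curves ($W_1$ even, $\tilde W_1=W_1$) and a case analysis on $\mathrm{genus}(S(x,i))\in\{0,1,2\}$, one shows that each such point forces genus at least $p_i-1$ in the lift; since the total genus is $kp_i+1$, the bad set is exactly one $\mathbb{Z}_k$-orbit of $k$ points, and moreover $W_1(x,i)=4$, which is what identifies the local model (two transverse sheets) and ultimately yields $V_k=2G$. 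Without this quantitative count you cannot conclude that the singular set consists of exactly $k$ circles, nor exclude, say, a multiplicity-two geodesic segment structure. Finally, the multiplicity-two claim should come from Allard's theorem (multiplicity one would force smooth convergence to a torus, impossible since the genus $kp_i+1$ diverges) rather than from the strict lower area bound in \eqref{areabounds}, since strict inequalities need not survive the varifold limit.
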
 

\begin{rmk}
From the gluing heuristics of Kapouleas \cite{Ka} we believe that the latter option occurs when $k=2$.  Such minimal surfaces were constructed by Choe-Soret \cite{CS}.
\end{rmk}

\begin{proof}

%Let us now show that $\tilde{M}_{kp_i,p_i+1}$ converge as varifolds to $2C$ in the limit where $i\rightarrow\infty$, where the convergence is smooth away from $k$ equally spaced  parallel closed geodesics on $C$.    

Choose $p_i\rightarrow\infty$ so that $p_i+1$ is relatively prime to $k$.    Then $L(kp_i, p_i+1)$ is a smooth manifold.  Up to taking subsequences,  $\tilde{M}_i:= \tilde{M}_{kp_i,p_i+1}$ converges to a stationary integral varifold $\tilde{V}_k$.   The support of the varifold $\tilde{V}_k$ is a union of Hopf fibers.   By the area bounds \eqref{areabounds},  the mass of $\tilde{V}_k$ is at most $4\pi^2$.   

Let $V_k$ denote the projection of $\tilde{V}_k$ to $\mathbb{S}^2$ under the Hopf map $H$.  The varifold $V_k$ is a stationary integral varifold with 
\begin{equation}\label{massbound}
||V_k||\leq  4\pi.
\end{equation} 

By \eqref{quotient} $V_k$ is also invariant under $\mathbb{Z}_k$ (the group of rotations of $\mathbb{S}^2$ about the $z$-axis by integer multiples of $2\pi/k$).    Let $G$ denote the great circle of $\mathbb{S}^2$ contained in the $xy$-plane.  We will show the following:

\begin{equation}\label{mainclaim}
V_k \mbox{ is the equator } G \mbox{ counted with multiplicity } 2 \mbox{ when } k\geq 3.
\end{equation}
\\
The density $\theta(V_k,x)$ at any point in the support $V_k$ is at most $2$.  Indeed, by considering the cone over $V_k$ in $\mathbb{R}^3$, applying the monotonicity formula and the mass bound \eqref{massbound}, we obtain for $x\in\mbox{supp}(V_k)$:
\begin{equation}\label{easier}
\theta(V_k,x)\leq \frac{||V_k||}{2\pi}\leq 2.
\end{equation}

By \eqref{easier} the tangent cone of the varifold $V_k$ at a singular point is a triple junction (with multiplicity $1$) or union of two lines.  But triple junctions with multiplicity $1$ are impossible (as orientable closed surfaces cannot have such a limit) and therefore the tangent cone at singular points of $V_k$ consists of two lines.  Thus the support of $V_k$ is a union of immersions, and consists of two great circles (by the mass bound \eqref{massbound}).  For $k\geq 5$ and $k=3$ the only $\mathbb{Z}_k$-invariant such configuration is $V_k=2G$ (an additional argument is needed for $k=4$).

In the following, we will give a rather more elaborate argument (independent of the density bound \eqref{easier}) to prove Claim \ref{mainclaim} because we need to apply the argument to classifying stationary integral varifolds on orbifold two-manifolds where the above argument \eqref{easier} does not apply (Theorem \ref{other}).  

To that end, suppose the claim \eqref{mainclaim} were false.  If the support of $V_k$ were smooth,  it must be equal to $G$ since $G$ is the only closed geodesic invariant under the group of rotations of order greater or equal to $3$ about the $z$-axis.  If $V_k$ is equal to $G$ with some multiplicity $n$, by Allard's theorem \cite{All} (as the genus of $\tilde{M}_i$ is equal to $kp_i+1$) it follows that $n\neq 1$.  By the mass bound of $4\pi$ we get that $n\leq 2$ so that $V_k = 2G$.   Thus to prove \eqref{mainclaim} it remains to rule out that $V_k$ is a non-smooth stationary integral varifold with some non-empty singular set $\mathcal{S}\subset\mathbb{S}^2$.

Let us partition the singular set \begin{equation}\mathcal{S}=\mathcal{S}_0\cup\mathcal{S}_1\end{equation} where $\mathcal{S}_0:= \mathcal{S}\cap\{N,S\}$ (where $N$ and $S$ denote the north and south pole of $\mathbb{S}^2$, respectively) and $\mathcal{S}_1=\mathcal{S}\setminus\mathcal{S}_0$.  Note by the $\mathbb{Z}_k$-equivariance,  the cardinality of $\mathcal{S}_1$ is a multiple of $k$. 

For any open set $\mathcal{O}\subset\mathbb{S}^2$ with $\mathcal{O}\cap\mbox{supp}(V_k)\neq \emptyset$, let us say that \emph{$\tilde{M}_i\rightarrow V_k$ smoothly in $H^{-1}(\mathcal{O})$} if for any compact $K\subset \mathcal{O}$, the surface $\tilde{M}_i\cap H^{-1}(K)$ can be written as a union of exponential graphs over $H^{-1}(V_k\cap K)$ that each converges smoothly to $H^{-1}(V_k\cap K)$ as $i\rightarrow\infty$.  Let us say \emph{$\tilde{M}_i\rightarrow V_k$ non-smoothly in $H^{-1}(\mathcal{O})$} if for some compact $K\subset\mathcal{O}$ the previous statement fails.  

Let us define the extended singular set:

\begin{equation}
\mathcal{D} := \mathcal{S}\; \cup \; \{y\in\mbox{reg}(V_k)\footnote{$\mbox{reg}(V_k)$ denotes the supoort of the regular part of the stationary integral varifold $V_k$.}\;|\;\mbox{for all } r>0,\;  \tilde{M}_i \rightarrow \tilde{V}_k \mbox{ non-smoothly in } H^{-1}(B_r(y)) \}
\end{equation}
\noindent
Note that $\mathcal{S}\subset\mathcal{D}$ but $\mathcal{D}$ might be a strictly larger set \footnote{If there is convergence to $\tilde{V}_k$ with multiplicity greater than $1$,  for instance,  the set $\mathcal{D}$ includes the projection to $\mathbb{S}^2$ of the locations along which necks are collapsing.}.

First we show the following: 

\begin{equation}\label{claim4}
\mbox{If } \mathcal{D}\setminus\mathcal{S}_0\neq\emptyset\mbox{, then } |\mathcal{D}|=|\mathcal{S}_1| =k.
\end{equation}

Toward proving \eqref{claim4} first fix $x\in\mathcal{D}\setminus\mathcal{S}_0$ and consider a closed ball $\overline{B}_x(r)$ about $x$ with radius $r$ so small so that $V_k\cap \overline{B}_r(x)$ consists of geodesic segments emanating from $x$ (possibly some with multiplicities) which meet $\partial B_x(r)$.  Consider the solid torus in $\mathbb{S}^3$\begin{equation}\tilde{T}_x := H^{-1}(\overline{B}_r(x))\end{equation} and its boundary \begin{equation}\partial \tilde{T}_x := H^{-1}(\partial B_r(x))\end{equation} as well as the lifted surface with boundary \begin{equation}\tilde{S}'(x,i) := \tilde{M}_{i}\cap \tilde{T}_x \end{equation} where by perhaps slightly perturbing $r$ we have for each $i$: \begin{equation}\partial \tilde{S}'(x, i) := \tilde{M}_{i}\cap \partial\tilde{T}_x. \end{equation}   

By the monotonicity formula in $\mathbb{S}^3$,  there exists $\varepsilon_0>0$ so that any component of a minimal surface (with boundary) contained in $\tilde{T}_x$ that also intersects $H^{-1}(B_{r/2}(x))$ has area at least $\varepsilon_0 r^2$.   Denote by $\tilde{S}(x,i)$ the union of those components of $\tilde{S}'(x,i)$ with area at least $\varepsilon_0 r^2$.  By the monotonicity formula and area bounds,  there are only finitely many elements in $\tilde{S}(x,i)$ and we may pass to a subsequence so that the number of elements is constant.  Note that in the sense of varifolds 
\begin{equation}
\lim_{i\rightarrow\infty} \tilde{S}(x,i) = \tilde{V}_k\mres H^{-1}(B_{r/2}(x)).
\end{equation}

%By the smooth convergence of $\tilde{M}_{kp_i,  p_i+1}$ to $V_k$ away from $\mathcal{S}$ it follows that $\partial S(x, i)$ consists of $W'$ circles, where $W'$ denotes the sum of the multiplicities of the geodesic segments emeneating from $x$.    Note that $W'$ must be a positive even integer.  

The subgroup $\mathbb{Z}_{p_i}=\langle \xi^k_{kp_i, p_i+1}\rangle $ of $\mathbb{Z}_{kp_i}$ generated by $\xi^k_{kp_i, p_i+1}$ acts freely on the manifold with boundary $\tilde{T}_x$ and the quotient space $T_x=\tilde{T}_x/\mathbb{Z}_{p_i}$ is a solid torus diffeomorphic to a subset of $L(kp_i,p_i+1)$.  Set $S(x,i):= \tilde{S}(x,i)/\mathbb{Z}_{p_i}$ and $\partial S(x,i) := \partial \tilde{S}(x,i)/\mathbb{Z}_{p_i}$.  

Let us denote by $W(x,i)$ the number of components of $\partial S(x,i)$,  by $W_1(x,i)$ the number among these that are homologically non-trivial in $T_x$ and by $W_0(x,i)$ the number that are homologically trivial.  Similarly, let $\tilde{W}(x,i)$, $\tilde{W}_1(x,i)$ and $\tilde{W}_0(x,i)$ denote (respectively) the total number of components of $\partial \tilde{S}(x,i)$,  homologically non-trivial components, and homologically trivial components in $\tilde{T}_x$.

Note that $p_i W_0(x,i) =\tilde{W}_0(x,i)$.   We claim that for large $i$, \begin{equation}\label{gt}\tilde{W}_1(x,i)=W_1(x,i).\end{equation}
 
To see \eqref{gt}, let us denote by $(n,m)$ the isotopy class of an embedded curve on $\partial T_x$ that goes around the torus $m$ times meridianally and $n$ times longitudinally.   By definition any $(n,m)$ curve counted in the sum $W_1(x,i)$ has $n\neq 0$ but for large $i$ such a curve also has $m=0$.   Otherwise lifting the curve to $\tilde{T}_x$ it would cross every Hopf fiber on $\partial \tilde{T}_x$ at least $p_i$ times,  and thus force $V_k$ to contain a small circle,  which is clearly not stationary.  Thus $m=0$.  But the only embedded curves $(n,0)$ on $\partial T_x$ occur when $n=\pm 1$.   Thus all curves counted in the sum $W_1(x,i)$ are $(\pm 1, 0)$ curves, and lift to connected curves (isotopic to a Hopf fiber on $\partial\tilde{T}_x$). This gives that $\tilde{W}_1(x,i)=W_1(x,i)$.

Furthermore,  the integer $W_1(x,i)$ is even.  Indeed, observe that the curves contributing to the sum $W_1(x,i)$ are parallel copies of the same curve $(\pm1, 0)$ which add up to the trivial element in  $H_1(T_x; \mathbb{Z})$ (since the curves counted in $W_1(x,i)$, together with trivial curves bound an orientable surface).   In order for the sum of these  curves to be equal to zero in $H_1(T_x; \mathbb{Z})$,  the number $W_1(x,i)$ must be even.  

%For large $p$, any homologically non-trivial curve goes around the boundary torus one time longitudinally and zero times meridianally (otherwise, for large $p$,  it would project to a small circle in $\mathbb{S}^2$ and $V_k$ would contain in its support a circle of radius $r$,  which is clearly not stationary).

For large $i$, any connected component of $S(x,i)$ lifts to a connected component of $\tilde{S}(x,i)$ (otherwise the component lifts to $p_i$ elements in $S(x,i)$, which violates the uniform bound on the cardinality of $S(x,i)$ for large $i$).  Then by the multiplicativity of the Euler characteristic under covering maps,  it follows that the genus of $\tilde{S}(x,i)$ is given by 
\begin{align}\label{lower}
\mbox{genus}(\tilde{S}(x,i))&=  |S(x,i)|(1-p_i)+p_i\mbox{genus}(S(x,i)) +\frac{p_iW(x,i)}{2}-\frac{\tilde{W}(x,i)}{2} \\
&  =|S(x,i)|(1-p_i)+p_i\mbox{genus}(S(x,i)) +(p_i-1) \frac{W_1(x,i)}{2}. \\
& =(p_i-1) (\frac{W_1(x,i)}{2}-|S(x,i)|) + p_i\mbox{genus}(S(x,i))\label{eee} \\&
\geq -1 + p_i\label{ee}\mbox{     for large $i$}.
\end{align}
The second equality follows from the fact that $W_0(x,i)=p_i \tilde{W}_0(x,i)$ and $W_1(x,i)=\tilde{W}_1(x,i)$.  

%The term $(p_i-1) (W_1(x,i)/2-n(x,i))$ is always greater than or equal to zero as each component counted by $n(x,i)$ has at least two boundary curves counted among the $W_1(x,i)$.   

Let us now justify the inequality \eqref{ee}.  There are three cases,  depending on whether $\mbox{genus}(S(x,i))$ is equal to $0$, $1$ or $2$.  Since $T_x$ is homeomorphic to a subset of the lens space, and $M_{i}$ has genus $2$, these are the only possible cases.

First let us assume $\mbox{genus}(S(x,i))=0$.  Then we claim that for $i$ large enough: \begin{equation}\label{comeon} W_1(x,i)/2 - |S(x,i)| \geq 1.\end{equation}  Otherwise $W_1(x,i)/2 - |S(x,i)| = 0 $ and by \eqref{eee} the genus of $\tilde{S}(x,i)$ would be uniformly bounded for large $i$.   Since the areas of $\tilde{S}(x,i)$ are uniformly bounded independent of $i$,  Ilmanen's integrated Gauss-Bonnet argument (\cite{I},\cite{K}) gives that
\begin{equation}\label{second}
\int_{\tilde{S}(x,i)\cap H^{-1}(B_{r/2}(x))}|A|^2 < \Lambda, 
\end{equation}
for some $\Lambda$ independent of $i$. But \eqref{second} implies that the convergence of $\tilde{S}(x,i)$ to $\tilde{V}_k$ is smooth in $H^{-1}(B_{r/2}(x))$ away from finitely many points in the interior of $H^{-1}(B_{r/2}(x))$.  As the lifted surfaces $\tilde{S}(x,i)$ are becoming more and more periodic as $i$ increases, this implies the convergence of $\tilde{S}(x,i)$ to $\tilde{V}_k$
is smooth and graphical on compact subsets of $H^{-1}(B_{r/2}(x)))$.   But this violates the assumption that $x\in\mathcal{D}$.  Thus we obtain \eqref{comeon} and the bound $\mbox{genus}(\tilde{S}(x,i))\geq p_i-1$ in \eqref{ee}.

If $\mbox{genus}(S(x,i))=1$, then because $(p_i-1) (W_1(x,i)/2-|S(x,i)|)$ is an integer, we obtain that \begin{equation}\label{b}W_1(x,i)/2-|S(x,i)|\geq 0, \end{equation} and thus $\mbox{genus}(\tilde{S}(x,i))\geq p_i$ (otherwise, the same argument as in the case $\mbox{genus}(S(x,i))=0$ gives a contradiction to the fact that $x\in\mathcal{D}$). Finally, for the same reason if $\mbox{genus}(S(x,i))=2$, then we obtain from \eqref{eee} that for large $i$
\begin{equation}\label{c}W_1(x,i)/2-|S(x,,i)|\geq -1.
\end{equation}
Thus $\mbox{genus}(\tilde{S}(x,i))\geq p_i+1$. in this case as well.  This completes the proof of inequality \eqref{ee}.

Let $\mathcal{D}(x)$ denote those elements of $\mathcal{D}$ in the orbit of $x$ under the group $\mathbb{Z}_k$ of rotations by integral multiples of $2\pi/k$ about the $z$-axis.  Let \begin{equation}\tilde{R}(i) = \tilde{M}_{i}\cap (\mathbb{S}^3\setminus\bigcup_{y\in\mathcal{D}(x)} \tilde{T}_y).\end{equation}
Note that 
\begin{equation}
kp_i +1  =  \mbox{genus}(\tilde{M}_{i}) \geq\mbox{genus}(\tilde{R}(i))+ \sum_{y\in\mathcal{D}(x)} \mbox{genus}(\tilde{S}(y,i)). 
\end{equation}
Thus we obtain from \eqref{ee}
\begin{equation}
kp_i +1 \geq kp_i+\mbox{genus}(\tilde{R}(i)) - k, 
\end{equation}
which gives
\begin{equation}\label{totalbounded}
\mbox{genus}(\tilde{R}(i)) < k+1.
\end{equation}
By \eqref{totalbounded} and Ilmanen's integrated Gauss-Bonnet argument we again obtain that for any compact set composed of Hopf fibers $\tilde{K}\subset \mathbb{S}^3\setminus \cup_{y\in\mathcal{D}(y)}T_y$ we have 
\begin{equation}
\int_{\tilde{K}\cap\tilde{M}_i} |A|^2 \leq\Lambda, 
\end{equation}
for some $\Lambda$ independent of $i$. Because $\tilde{M}_i$ are becoming more and more periodic, this implies that $\tilde{M}_i$ converge smoothly to $\tilde{V}_k$ in $\tilde{K}$.  Setting $K:=H(\tilde{K})\subset\mathbb{S}^2$ we thus obtain
\begin{equation}
K\cap\mathcal{D} = \emptyset.
\end{equation}
\noindent
By repeating the above argument with a sequence of $r_i$ tending to zero, we obtain \begin{equation}\mathcal{D}\setminus\mathcal{D}(x) =\emptyset.\end{equation}  
\noindent
This establishes claim \eqref{claim4}.   Note that we also must have \emph{equalities} in each of the three cases \eqref{comeon}, \eqref{b} and \eqref{c}.   Otherwise,  by \eqref{eee} the surfaces $\tilde{S}(x,i)$ have too much genus.  

Let us continue the proof of \eqref{mainclaim}.  Suppose $x\in\mathcal{D}\setminus\mathcal{S}_0$.  Then by \eqref{claim4} we get $x\in\mathcal{S}_1$.  Since \eqref{claim4} implies that $\mathcal{D}$ contains only the iterates $\mathcal{D}(x)$ it now follows \emph{a posteriori} that the convergence
\begin{equation}
    \tilde{M}_i\rightarrow \tilde{V}_k \mbox { in }H^{-1}(B_{2r}(x)\setminus B_{r/2}(x))
\end{equation}
is smooth.  Thus $W_1(x,i)\geq 4$ as it is an even integer greater than $2$ since $x\in\mathcal{S}_1$.  Also, $W_0(x,i)= 0$ for each $i$.

We next claim:
\\
\\
 \emph{If $x\in\mathcal{S}_1$ then $W_1(x,i) = 4$ for large $i$,  and the tangent cone of the stationary integral varifold $V_k$ at $x$ consists of a union of two distinct multiplicity $1$ lines.}
\\

To prove the claim we consider the three cases $\mbox{genus}(S(x,i))=0$,\\ $\mbox{genus}(S(x,i))=1$, and $\mbox{genus}(S(x,i))=2$.  If $\mbox{genus}(S(x,i))=0$ then we must have $W_1(x,i)=4$ and $|S(x,i)|=1$ for large $i$. To see this, note that by the genus bound and \eqref{eee} we obtain,
\begin{equation}\label{endplease}
\frac{W_1(x,i)}{2}-|S(x,i)| = 1.
\end{equation}
Assume $W_1(x,i)>4$.  Then \eqref{endplease} implies by the pigeonhole principle that at least one component $\tilde{C}(x,i)\in \tilde{S}(x,i)$ has exactly two boundary components among the curves counted in $W_1(x,i)$.  The component $\tilde{C}(x,i)$ converges to a stationary integral varifold $\tilde{B}_k$ supported on $\tilde{V}_k$.  The convergence of this component $\tilde{C}(x,i)$ must be smooth (though perhaps with multiplicity) in the solid torus $H^{-1}(B_{r/4}(x))$ since it has zero genus by applying \eqref{eee} to the component $\tilde{C}(x,i)$ (in place of $S(x,i)$),  and repeating the foregoing argument.   By this smooth convergence,  the other components in $\tilde{S}(x,i)\setminus\tilde{C}(x,i)$ lie on one side of $\tilde{C}(x,i)$ in $H^{-1}(B_{r/4}(x))$ for large $i$.  Therefore the tangent cone to $V_k$ at $x$ is contained in a  half-space and is a line with some multiplicity.  This contradicts the fact that $x\in\mathcal{S}_1$.   Thus we have established $W_1(x,i) = 4$ if $\mbox{genus}(S(x,i))=0$.

If $\mbox{genus}(S(x,i))=1$, then we must have 
\begin{equation}
\frac{W_1(x,i)}{2}-|S(x,i)|= 0.
\end{equation}
Thus each element in $S(x,i)$ has exactly two boundary curves counted by $W_1(x,i)$.  Since $W_1(x,i)\geq 2$, one of these connected components has genus $1$, and the other(s) have genus $0$.  Applying \eqref{eee} to the lift of this genus $0$ component, we get this lifted component of $\tilde{S}(x,i)$ also has bounded genus and so converges smoothly to $\tilde{V}_k$ in $H^{-1}(B_{r/4}(x))$.  As in the previous case, this implies that the tangent cone of $V_k$ at $x$ consists of a line with multiplicity which contradicts the fact that $x\in\mathcal{S}_1$.  Thus the case $\mbox{genus}(S(x,i))=1$ is impossible.  

The case $\mbox{genus}(S(x,i))=2$ is impossible as well.  As remarked above, we must have 
\begin{equation}
\frac{W_1(x,i)}{2}-|S(x,i)|= -1, 
\end{equation}
which is impossible since each component of $S(x,i)$ has at least two boundary components counted in $W_1(x,i)$ (recalling that $W_0(x,i) = 0$).  

Thus we obtain $W_1(x,i)=4$ and $|S(x,i)| = 1$ and $\mbox{genus}(S(x,i)) = 0$ for large $i$ as claimed. \footnote{This is exactly the situation one obtains with a fundamental domain of the singly-periodic Scherk surface in $\mathbb{R}^3$ as they degenerate to two planes,  which is what the degeneration at $H^{-1}(x)$ is resembling.}

Since $W_1(x,i)=4$ for large $i$,  it follows that the sum of the multiplicities of geodesic segments of $V_k$ meeting at $x$ is equal to $4$,  and thus the tangent cone at any point in $\mathcal{S}_1$ is a union of two distinct lines or one line with multiplicity $2$.   It cannot be a multiplicity $2$ line because $x$ is a singular point of the varifold $V_k$ (i.e.,  since $x\in\mathcal{S}_1\subset\mathcal{S}$).   Thus the tangent cone is a union of two distinct lines at $x$ as claimed.

Any stationary integral varifold with singular points consisting of such tangent cones is a union of immersed closed geodesics.  Therefore if $\mathcal{S}_1$ is non-empty, the varifold $V_k$ consists of a union of great circles on the $2$-sphere,  each with multiplicity $1$ and furthermore (by \eqref{claim4}) $\mathcal{S}_0$ is empty. If $x\in\mathcal{S}_1$ is in the northern or southern hemisphere of $\mathbb{S}^2$ and $k\geq 3$, then $\mathcal{S}_1$ consists of the $k$ iterates of the point $x$.  Since a great circle joining two such points in the northern hemisphere can intersect none of the other iterates, and at least two great circles must meet at every iterate for the iterate to be a singular point of $V_k$, such a configuration includes in its support at least $k$ great circles.  But for $k\geq 3$, this has mass at least $6\pi$, which is larger than $4\pi$ and thus impossible (moreover, such a configuration would  contain necessarily more than $k$ singular points, in violation of \eqref{claim4}).  Thus $x$ cannot be in the northern or southern hemisphere if $k\geq 3$.  

%For $k=2$, the point $x\mathcal{S}_1$ also cannot be in the northern hemisphere. There is precisely one great circle joining the two iterates, and no other geodesic can be in the support as it would force more points in $\mathcal{S}_1$.  

If $x$ is contained on the equator $G$, then for $k\geq 3$, the only possible configuration is when $G$ it attained with multiplicity $1$ which violates the fact that $x$ was assumed to be in $\mathcal{S}_1$.   Since all cases lead to contradictions,  we obtain $\mathcal{S}_1=\emptyset$.

%If $k=2$, then the only possible configuration is $G$ together with a closed geodesic containing the north pole and the south pole.  Thus this case is ruled out.

We are left to rule out the case where $\mathcal{S}_1$ is empty and $\mathcal{S}_0$ contains either one or two points. Certainly there is no stationary integral varifold on $\mathbb{S}^2$ with one singular point, and so we must rule out that the singular set of $V_k$ consists of $N$ and $S$, the north and south poles of $\mathbb{S}^2$.   Since $V_k$ contains no other singular points it consists of a number of half circles joining the north pole to the south pole (counted with multiplicities). The least mass such a stationary varifold consists of $k$ half-circles with total length equal to $k\pi$.   We obtain from the area bound, 
\begin{equation}
k\pi\leq ||V_k|| \leq 4\pi, 
\end{equation}
so that $k\leq 4$.   

If $k=3$, $V_k$ consists of three equally spaced half circles.  But such a configuration is not the limit of any closed embedded orientable surfaces in the sphere.  For $k=4$,  we must rule out that $V_k$ consists of two perpendicular closed geodesics that intersect at $N$ and $S$.  As before, let $\tilde{T}_N$ denote the lift under $H$ of a small ball $B_r(N)$ about $N$.  Note that $T_N := \tilde{T}_N/\mathbb{Z}_{4p_i}$ is homeomorphic to a subset of $L(4p_i, p_i+ 1)$ and if we set as before \begin{equation}\tilde{S}(N,i)=\tilde{M}_{i}\cap T'_N\end{equation} then we obtain that $\partial S(N,i):=\partial \tilde{S}(N,i)/\mathbb{Z}_{4p_i}$ contains a \emph{single} non-trivial closed $(4,1)$ curve.  This is due to the Seifert fibered structure of the lens space as specified in \eqref{quotient} and because the convergence of $\tilde{M}_i$ to the union of Clifford tori making up $\tilde{V}_k$ is smooth away from $N$ and $S$ by claim \eqref{claim4} as $\mathcal{D}\setminus\mathcal{S}_0=\emptyset$.

The homology group $H_1(T_N; \mathbb{Z})$ of the solid torus $T_N$ is isomorphic to $\mathbb{Z}$ with generator $[\alpha]$, a $(1,0)$ curve.   The curve $\partial S(N,i)$ is equal to $4[\alpha]\neq[0]\in H_1(T_N;\mathbb{Z})$.  Thus there is no orientable surface contained in $T_N$ with boundary equal to $\partial S(N,i)$.  This is a contradiction since $\tilde{M}_{i}$ is orientable.   (In fact $\partial S(N,i)$ is trivial in $H_1(T_N;\mathbb{Z}_2)=\mathbb{Z}_2$ and there is a non-orientable surface with boundary $\partial S(N,i)$).   
Thus we have ruled out the case $|\mathcal{S}_0|=2$ and \eqref{mainclaim} is established. 

It remains to show that $\tilde{V}_{k}\rightarrow 2C$ smoothly away from $k$ parallel great circles.  If $\mathcal{D}$ is empty, then the convergence of $M_i$ to $2C$ would be smooth, which is impossible.  By claim \eqref{claim4}, $\mathcal{D}$ must contain exactly $k$ points obtained by iterating $x$ under the group of rotations $\mathbb{Z}_k$.  This implies that  \begin{equation}
\lim_{i\rightarrow\infty} \tilde{M}_{i} = 2C
\end{equation}
and that the convergence is smooth away from $k$ parallel equally spaced closed geodesics.  This completes the proof when $k\geq 3$.

Finally we remark on the case $k=2$.  The proof is the same up the point that we classified stationary varifolds $V_k$ in the case that $\mathcal{S}_0$ is non-empty.   Here we might obtain two great circles intersecting at $N$ and $S$.  The contradiction we reached in the case $k=4$ ruling out such a configuration (by studying the surfaces in $S(N,i)$) does not apply, as one obtains that $\partial S(N,i)$ consists of two $(2,1)$ curves, which may in fact bound an orientable surface in $S(N,i)$.  

%that consist of a union of great circles in the case where $\mathcal{S}_1$ is non-empty.  If $x\in\mathcal{S}_1$ is contained on the equator $G$,  its iterate under $\mathbb{Z}_2$ is $-x$ also contained in $G$.   The varifold $V_k$ may be equal to the equator $G$ together with a great circle containing the north and south pole and containing the points $x$ and $-x$. 
\end{proof}

%First we claim that $V_k$ has at most $k$ singular points $\mathbb{S}$.  Toward that end,  assume first that the north pole $\mathcal{N}$ is contained in $\mathcal{S}$.  By the $Z_k$ symmetry,  there are $k$ equally spaced geodesic segments emenating from $\mathcal{N}$ contained in $V_k$.   Parameterizing each such segment by arclength,  let $t$ be the first time a singular point of $V_k$ is hit.   If $t<\pi$,  then by the $\mathbb{Z}_k$ symmetry there are at least $k+1$ singular points of $V_k$,  violating the claim.  Thus $t=\pi$ and $V_k$ contains $k$-equally spaced half circles joining the north pole to the south pole which contribute area $k\pi$. By the area bound of $4\pi$,  this situation can only occur when $k=3, 4$.  

\begin{rmk}
An alternative approach to Theorem \ref{findlimit} is to use the fact that  $\mbox{index}(M_i)\leq 2$ to argue that the cardinality of $\mathcal{S}_1$ is at most $2k$.  Since one still has then to reduce this bound to $k$ and thereby consider the accumulation of genus, we have chosen not to argue this way.   
\end{rmk}

\subsection{Lawson immersed tori}\label{lawsonsection}
To understand the remaining limits of $\tilde{M}_{p_i,q_i}$ as $p_i\rightarrow\infty$ we first need to recall the minimal immersed tori and Klein bottles $\tau_{n,m}\subset\mathbb{S}^3$ obtained by Lawson (Theorem 3 in \cite{L}, see also Hsiang-Lawson \cite{HL} and Penskoi \cite{P}).  

For positive and relatively prime $n$ and $m$ integers consider the immersion \begin{equation}\Phi_{n,m}:\mathbb{R}^2\rightarrow\mathbb{S}^3\subset\mathbb{R}^4
\end{equation}
given by 
\begin{equation}
    \Phi_{n,m}(x,y) = (\cos(nx)\cos(y),\sin(nx) \cos(y),\cos(mx)\sin(y),\sin(mx)\sin(y))
    \end{equation}
Restricting $\Phi_{n,m}$ to a fundamental domain gives an immersed minimal torus when neither $n$ nor $m$ is even, and a minimal Klein bottle when one of them is even.  While the embedding is manifestly invariant under the lattice generated by $(0,2\pi)$ and $(2\pi,0)$ it is in fact generated by a smaller lattice whose fundamental domain has half the area. Note also that $\tau_{n,m}$ is isometric to $\tau_{m,n}$ and so we will assume $m>n$.  The surface $\tau_{1,1}$ is the Clifford torus $C$.

The induced metric is on $\tau_{n,m}$
\begin{equation}
ds^2= (n^2 \cos^2(y) + m^2\sin^2(y))dx^2 + dy^2
\end{equation}
with volume element
\begin{equation}
dvol = \sqrt{n^2 \cos^2(y) + m^2\sin^2(y)}dxdy.
\end{equation}
If both $n$ and $m$ are greater than $1$, we obtain
\begin{equation}\label{4bigger}
Area(\tau_{n,m})\geq 4\pi^2 = 2|C|.
\end{equation}
If $m\geq 1$ and $n=1$, on the other hand, we may express the area in terms of elliptic integrals:
\begin{equation}\label{elliptic}
Area(\tau_{1,m}) = \pi\int_0^{2\pi}\sqrt{1+ (m^2-1)\sin^2(y)}dy.
\end{equation}
The set of numbers $\{\tau_{1,m}\}_{m=1}^\infty$ is a monotone increasing sequence.  In fact 
\begin{equation}\label{2less} Area(\tau_{1,2})\approx 30.44 < 4\pi^2 \approx 39.48\end{equation}  but 
\begin{equation} Area(\tau_{1,3})\approx 41.99  > 4\pi^2.\end{equation}
Therefore for $m>2$, it holds that
\begin{equation}\label{3bigger}
Area(\tau_{1,m}) > 4\pi^2 = 2|C|.  
\end{equation}
We will show in fact that for certain sequences $\{p_i\}_{i=1}^\infty$ and $\{q_i\}_{i=1}^\infty$ the surfaces $\tilde{M}_{p_i,q_i}$ converge as varifolds to $\tau_{1,2}$, but by the area bounds \eqref{3bigger} \eqref{4bigger} the surfaces can never converge to any other Lawson surface $\tau_{n,m}$.  

For each $(n,m)$ there is an $\mathbb{S}^1$-action on $\mathbb{S}^3$ given by 
\begin{equation}
(z,w)\rightarrow (e^{in\theta}z, e^{im\theta}w).
\end{equation}
Let us refer to this as the $(n,m)$-action (or $\mathbb{S}^1_{n,m}$).  When $m=n=1$ the action is the usual Hopf action with orbit space equal to $\mathbb{S}^2$.  If $(n,m)\neq (1,1)$  the action is not free.  The isotopy group at the geodesic $\{z=0\}$ is $\mathbb{Z}_m$ and the isotropy group at $\{w=0\}$ is $\mathbb{Z}_n$.  If either $n=1$ or $m=1$ then only one of these geodesics has non-trivial isotropy group.  The quotient $\mathbb{S}^3/\mathbb{S}^1_{n,m}$ is a two-sphere with two (or one) orbifold points at the north and south pole.   The metric on the orbifold is invariant with respect to rotations about the $z$-axis. 

The Lawson $\tau_{n,m}$ surfaces are invariant under the $(n,m)$-action and are the only surfaces whose projection to $\mathbb{S}^3/\mathbb{S}^1_{n,m}$ contain the north or south pole in their supports.  

There is also a countable family of immersed tori invariant under the $(n,m)$-action which we denote $\{\tilde{A}_{n,m,a}\}_{a\in I}$. The surface $\tilde{A}_{n,m,a}$ is the lift of an immersed geodesic $A_{n,m,a}$ in the orbifold $\mathbb{S}^3/\mathbb{S}^1_{n,m}$.  This family is discussed further in the Appendix.  In particular,  we use the fact that all closed geodesics $\{A_{n,m,a}\}_{a\in I}$ have at least two points of self-intersection (Proposition \ref{intersecttwice}).

\subsection{Remaining limits}

Let us now consider limit of $\tilde{M}_{p,q}$ in the remaining cases. 
 
For each $0\leq r\leq \pi/2$ set 
\begin{equation}
C_r= \{(z,w)\in\mathbb{S}^3\; |\; |z|^2= \sin^2(r)\}
\end{equation}
so that
\begin{equation}
C_{\pi/4}=C.
\end{equation}
The family $\{C_r\}_{r\in [0,\pi/2]}$ is a cmc foliation of $\mathbb{S}^3$ away from two geodesics.   Note that each group $\mathbb{Z}^{q}_p$ preserves the parallel surfaces $C_t$ as sets.   We may cover each $C_r$ by
\begin{equation}
\phi_r:\mathbb{R}^2\rightarrow C_r\subset \mathbb{S}^3\subset\mathbb{R}^4,
\end{equation}
given by
\begin{equation}\label{coordinates}
\phi_r(x,y) = (\sin(r)e^{ix}, \cos(r)e^{iy}).
\end{equation}
Note that if we restrict $\phi_r$ to the fundamental domain $[0,2\pi]\times [0,2\pi]$, then $\phi_r$ gives a parameterization of $C_r$.   

Let $\{L(p_i, q_i)\}$ be a sequence of lens spaces with $p_i\rightarrow\infty$. For each $(x,y)\in C_r$, let $Orbit_i(x)$ denote the orbit of $(x,y)$ under the group $\mathbb{Z}_{p_i}^{q_i}$.  In other words

\begin{equation}
Orbit_i(x,y) = \phi_r(\{(x,y)+2\pi k(\frac{1}{p},\frac{q}{p})\in \mathbb{R}^2  \; | \; k=0,1,..,p_i-1\})
\end{equation}

An $(n,m)$-curve in the torus $C_r$ is the image of any line of slope $m/n$ in $\mathbb{R}^2$ to $C_r$ under $\phi_r$ (i.e. an $(n,m)$ torus knot).  

 Viana proved the following dichotomy (Lemma 3.3 in \cite{V}).  After passing to a subsequence one of the following holds:
\begin{enumerate}\label{possibility}
\item $Orbit_i(x)$ is getting dense in $C_r$ (i.e. for each $y\in C_r$ and $\varepsilon>0$, taking $i$ sufficiently large one can find a point in $Orbit_i(x)$ within $\varepsilon$ of $y$).
\item There exists integers $n,m$ and $k$ so that $Orbit_i(x)$ is contained in the union of $k$ parallel $(n,m)$-curves in $C_r$. 
\end{enumerate}
Let us refer to a sequence of lens spaces $\{L(p_i,q_i)\}_{i=1}^\infty$ as \emph{class 1} if they fall in (1) and \emph{class 2} with integers $(n,m,k)$ if they fall in (2). 

Give any positive integers  $k$, $n$ and $m$ with $n\leq m$ \footnote{Without loss of generality we assume $n\leq m$} we can find sequences $\{p_i\}_{i=1}^\infty$ and $\{q_i\}_{i=1}^\infty$ so that the lens spaces $\{L(p_i,q_i)\}$ fall  in class $2$ with positive integers $(n,m,k)$.    

To that end,  given integers $(n,m,k)$ let us write $m= nd+r$ with $0<r<n$ and $d\geq 1$,  and set $D=(d+1)$ and $B=k(n-r)$.   Then

%Suppose first $n$ and $k$ are not both equal to $1$.  If we write $m= nd+r$ with $0<r<n$ and $d\geq 1$,  let $D=(d+1)$ and $B=k(n-r)$.  Then the sequence of lens space $L(knp_i+B, p_i+D)$ gives rise to case (2) with integers $(n,m,k)$ (provided of course we choose $p_i\rightarrow\infty$ so that $p_i+D$ is relatively prime to $knp_i+B$).  If,  on the other hand $n=k=1$, then the sequence of lens spaces $L(p_i,  m)$ gives rise to class $2$ with integers $(1,m,1)$ (again as long as the sequence of $p_i$ is chosen to be relatively prime to $m$). 

\begin{lemma}[Realizing Configurations]
In each of the following cases,  for suitable sequences $p_i\rightarrow\infty$ we have:
\begin{enumerate}
\item If $n$ and $k$ are not both equal to $1$,  the lens spaces $L(knp_i+ B, p_i+D)$ are in class $2$ with integers $(n,m,k)$.
\item If $n=k=1$, then the lens spaces $L(p_i,m)$ are in class $2$ with integers $(1,m,1)$.    
\end{enumerate}
\end{lemma}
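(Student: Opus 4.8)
The plan is to convert the orbit-covering condition in Viana's dichotomy into a single divisibility statement about the pair $(p,q)$, and then verify that statement for the two stated families by a short computation.

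First I would fix coordinates. Identifying $C_r$ with $\mathbb{R}^2/(2\pi\mathbb{Z})^2$ through $\phi_r$, the generator $\xi_{p,q}$ acts on $C_r$ as the translation $(x,y)\mapsto(x,y)+\tfrac{2\pi}{p}(1,q)$, so $Orbit_i(x,y)$ is the set $\{(x,y)+\tfrac{2\pi j}{p_i}(1,q_i):0\le j<p_i\}$ taken modulo the lattice. Assuming, as needed, that $\gcd(n,m)=1$ (with the convention $r=0$, $d=m/n$ when $n\mid m$), choose $a,b\in\mathbb{Z}$ with $nb-ma=1$ and introduce the coordinate $\tau(x,y)=-mx+ny$; this is well defined modulo $2\pi$, is constant exactly along each $(n,m)$-torus knot, and exhibits the slope-$m/n$ foliation of $C_r$ as a circle bundle $\tau\colon C_r\to\mathbb{R}/2\pi\mathbb{Z}$. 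The union of $k$ equally spaced parallel $(n,m)$-knots is then exactly $\tau^{-1}\!\big(\tau_0+\tfrac{2\pi}{k}\mathbb{Z}\big)$. Because $\tau$ of the $j$-th orbit point equals $\tau_0+\tfrac{2\pi j}{p_i}(nq_i-m)$, the orbit is contained in precisely $p_i/\gcd(nq_i-m,\,p_i)$ parallel $(n,m)$-curves, and these are automatically equally spaced; hence it suffices to choose $(p_i,q_i)$ so that $\gcd(nq_i-m,\,p_i)=p_i/k$ (in particular $p_i\mid k(nq_i-m)$), together with $\gcd(p_i,q_i)=1$ and $q_i<p_i$.

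The heart of the argument is the elementary identity behind the stated formulas. In case (1), with $P_i=knp_i+B$, $Q_i=p_i+D$, $B=k(n-r)$, $D=d+1$, $m=nd+r$, one has $nD-m=n-r$, hence $nQ_i-m=np_i+(n-r)$ and
\[
k(nQ_i-m)=knp_i+k(n-r)=knp_i+B=P_i.
\]
Since $P_i=k(np_i+n-r)$ is divisible by $k$, the number $nQ_i-m=P_i/k$ is a divisor of $P_i$, so $\gcd(nQ_i-m,P_i)=P_i/k$ and $Orbit_i$ occupies exactly $k$ parallel $(n,m)$-curves. In case (2), $n=k=1$ and $Q_i=m$, so $nQ_i-m=0$ and the whole orbit lies on the single $(1,m)$-knot through $(x,y)$. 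In both cases $Orbit_i$ sits in a finite union of slope-$m/n$ curves, hence in a set of two-dimensional measure zero in $C_r$, so it cannot be equidistributing; thus alternative (2) of Viana's dichotomy applies, with the value $k$ just computed. To confirm that the triple is genuinely $(n,m,k)$ rather than the orbit being captured by fewer parallel curves of some other slope, note that a slope-$m'/n'$ curve meets a fixed slope-$m/n$ curve in at most $|nm'-mn'|$ points, while $Orbit_i$ carries $P_i/k\to\infty$ points on each of its $k$ curves; so for $i$ large no fewer than $k$ parallel curves, of any slope, can contain $Orbit_i$. This last rigidity point is the one I would treat most carefully; everything else reduces to the identity $k(nQ_i-m)=P_i$ and bookkeeping.

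Finally I would arrange the remaining constraints along a subsequence $p_i\to\infty$. The inequality $Q_i<P_i$ holds for all large $p_i$, since $kn\ge2$ in case (1) --- the excluded case $k=n=1$ being precisely where it would fail --- and since $m<p_i$ in case (2). For the manifold condition, in case (1) one computes $P_i-knQ_i=-km$, so $\gcd(P_i,Q_i)=\gcd(km,\,p_i+D)$, which equals $1$ once $p_i$ is taken in one of the (nonempty, and infinitely many) residue classes modulo $km$ making $p_i+D$ coprime to $km$; in case (2), $\gcd(P_i,Q_i)=\gcd(p_i,m)=1$ whenever $p_i$ is coprime to $m$. Each of these requirements either excludes finitely many residues or holds for all large $p_i$, so their common solution set contains an infinite progression, and any $p_i\to\infty$ inside it yields the desired sequence of lens spaces.
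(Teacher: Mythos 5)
Your proof is correct, and it rests on exactly the same arithmetic identity as the paper's: $k(nQ_i-m)=knp_i+k(n-r)=P_i$, i.e.\ $knQ_i\equiv km \bmod P_i$, which is the computation the paper carries out in \eqref{inn}--\eqref{fin}. Where you differ is in how you convert that identity into ``exactly $k$ equally spaced parallel $(n,m)$-curves.'' The paper argues geometrically: it tracks the first $kn$ iterates of the origin, observes that their $y$-coordinates are approximately $2\pi\{0,\tfrac{1}{kn},\dots,\tfrac{kn-1}{kn}\}$ for large $p_i$, notes that $\xi^{kn}$ translates along lines of slope $m/n$, and uses the fact that an $(n,m)$-curve meets the $y$-axis $n$ times to distribute the $kn$ starting points onto $k$ parallel knots. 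You instead introduce the invariant linear form $\tau=ny-mx$ (well defined mod $2\pi$ since $\gcd(n,m)=1$), under which the orbit's $\tau$-values form a coset of the cyclic subgroup generated by $\tfrac{2\pi(nQ_i-m)}{P_i}$, of order $P_i/\gcd(nQ_i-m,P_i)=k$; this makes the count exact and yields the equal spacing for free, avoiding the paper's approximate bookkeeping. Your treatment of the smoothness condition is also essentially the paper's ($P_i-knQ_i=-km$ is the paper's $E=-knD+B$, and both of you then place $p_i+D$ in residue classes coprime to $km$), and your explicit convention $r=0$, $d=m/n$ when $n=1$ patches a case the paper's normalization $0<r<n$ leaves implicit. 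The closing rigidity remark (no fewer parallel curves of any fixed slope can contain the orbit for large $i$) is not needed for the statement as defined, but it is correct and harmless. No gaps.
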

\begin{proof}
Item (2) is immediate so we prove (1).   The generator $\xi:=\xi_{knp_i+B, p_i+D}$ of the cyclic group corresponding to $L(knp_i+ B, p_i+D)$ maps (puling back to $\mathbb{R}^2$ under $\phi_r$):
\begin{equation}
\xi(0,0)=2\pi(\frac{1}{knp_i+B}, \frac{p_i+D}{knp_i+B}).
\end{equation}
Applying $\xi^{kn}$ we get
\begin{equation}\label{oyvey}
\xi^{kn} (0,0)=2\pi(\frac{kn}{knp_i+B}, \frac{kn(p_i+D)}{knp_i+B}).
\end{equation}
But using the definition of $D$ and $B$ we obtain 
\begin{align}\label{inn}
kn(p_i + D) &= knp_i+ knd + kn \\&= knp_i + km-kr+ kn\\&=knp_i +B + km.\label{fin}
\end{align}
Plugging \eqref{fin} in to \eqref{oyvey} we obtain (since we take only the fractional part)
\begin{equation}
\xi^{kn}(0,0) = 2\pi(\frac{kn}{knp_i+B}, \frac{km}{knp_i+B}).
\end{equation}
Thus the element $\xi^{nk}$ moves each point along a line of slope $m/n$.   

On the other hand,  the elements $\xi^0, \xi, \xi^2,...,\xi^{nk-1}$ move the origin $(0,0)$ to $nk$ points with increasing second coordinate less than $2\pi$.  One can see this because
\begin{equation}
\xi^{kn-1}(0,0) =2\pi(\frac{kn-1}{knp_i+B}, \frac{knp+B+km -p_i - d}{knp_i+B}) 
\end{equation}
and 
\begin{equation}
knp+B+km -p_i - d < knp_i+B \mbox{ for large } p_i.  
\end{equation}

For large $p_i$, the points $\{\xi^0, \xi, \xi^2,...,\xi^{nk-1}\}$ have (equally-spaced) $y$-coordinate approximately $2\pi\{0,  \frac{1}{kn}, \frac{2}{kn},...,\frac{kn-1}{kn}\}$.   Recall that a $(n,m)$ curve intersects the $y$-axis $n$ times.   Since the iterates of these points under $\xi^{kn}$ move along $(n,m)$ curves,  we obtain that all points lie on $k$ equally spaced parallel $(n,m)$ curves.  

Finally let us show that we can find an appropriate sequence $p_i\rightarrow \infty$.  Writing $s_i = p_i+D$ we want $s_i$ to be relatively prime to $As_i + E$, where $A=kn$ and $E=-knD+B$.   Then choose $s_i$ so that it has no common factors in its prime decomposition with $E$.
\end{proof}

For instance,  the lens spaces $L(2p+1, p+2)$ (for suitable $p\rightarrow\infty$) are in class $2$ with integers $m=3$, $n=2$, and $k=1$.   The lens spaces $L(5p,2p+2)$ fall into class $2$ with integers $k=5$, $m=2$ and $n=1$.  But the generic sequence of lens spaces are in class 1.

Now let us classify the remaining limits of various $\tilde{M}_{p_i,q_i}$:

\begin{thm}[Doubling of Clifford torus in general]\label{other}
Let $\{(p_i,q_i)\}_{i=1}^\infty$ be a sequence of pairs of positive integers with $1<q_i<p_i-1$, $\gcd(p_i,q_i)=1$, and where $p_i\rightarrow\infty$. Then after passing to a subsequence, we have one of the two possibilities for the sequence of lens spaces $L(p_i,q_i)$ and corresponding minimal surfaces $\tilde{M}_{p_i,q_i}$:
\begin{enumerate}
    \item The sequence of lens spaces is in class 1 and in the sense of varifolds  \begin{equation}
\lim_{i\rightarrow\infty} \tilde{M}_{p_i,q_i} =2C, 
\end{equation}
where the convergence is smooth at no point on $C$.
\item The sequence of lens spaces is in class 2 with $m=2$, and $k=1$ and in the sense of varifolds
\begin{equation}\label{odd}
\lim_{i\rightarrow\infty} \tilde{M}_{p_i,q_i}= \tau_{1,2}, 
\end{equation}
where $\tau_{1,2}$ is Lawson's \emph{immersed} Klein bottle.
\item The sequence of lens spaces is in class 2 with $m=1$ and $k=2$ and converge either to $2C$ smoothly away from two parallel $(1,1)$ curves (i.e.  great circles) or else to the union of two distinct Clifford tori.   
\item The sequence of lens spaces is in class 2 and not in any previous case and in the sense of varifolds
 \begin{equation}
\lim_{i\rightarrow\infty} \tilde{M}_{p_i,q_i} =2C,
\end{equation}
where the convergence is smooth away from $k$ equally spaced, parallel $(n,m)$-curves on $C$.
\end{enumerate}
\end{thm}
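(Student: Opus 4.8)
The plan is to run, for each of Viana's two classes, the strategy used in the proof of Theorem \ref{findlimit}, with the Hopf map $H$ replaced by the quotient map $S_{n,m}\colon \mathbb{S}^3\to O_{n,m}:=\mathbb{S}^3/\mathbb{S}^1_{n,m}$ of the relevant $(n,m)$-action, where $O_{n,m}$ is a round two-sphere with one cone point of order $m$ (if $n=1$) or two cone points of orders $n,m$ (otherwise). First I would pass to a subsequence so that $\tilde M_i:=\tilde M_{p_i,q_i}$ converges as varifolds to a stationary integral varifold $\tilde V$ with $\|\tilde V\|\le 4\pi^2$ (the lift of the area bound \eqref{areabounds}), recalling $\mathrm{genus}(\tilde M_i)=p_i+1\to\infty$. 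In the class $1$ case the $\mathbb{Z}_{p_i}^{q_i}$-orbits equidistribute in every torus $C_r$, so $\tilde V$ is invariant under the full two-torus acting on each $C_r$; hence $\tilde V=\sum_j n_j C_{r_j}$, and stationarity forces each $C_{r_j}$ to be minimal, i.e. $r_j=\pi/4$, so $\tilde V=n\,C$. Allard's theorem together with $\mathrm{genus}(\tilde M_i)\to\infty$ rules out $n=1$, and $|C|=2\pi^2$ rules out $n\ge 3$, so $\tilde V=2C$; and if the convergence were smooth at one point of $C$ it would, by equidistribution, be smooth on all of $C$, making $\tilde M_i$ eventually a double graph over $C$ of bounded genus, a contradiction. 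This is case (1).

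\textbf{Class 2: reduction to the orbifold and the genus-accumulation argument.} Suppose now the sequence is class $2$ with parameters $(n,m,k)$. If $(n,m)=(1,1)$ the $\mathbb{Z}_{p_i}^{q_i}$-action is subordinate to the Hopf action and we are exactly in the situation of Theorem \ref{findlimit}: for $k\ge 3$ we land in the conclusion of case (4), and for $k=2$ in the conclusion of case (3) (the value $k=1$ being unrealizable when $1<q_i<p_i-1$, since it forces $q_i\equiv 1$). So assume $(n,m)\ne(1,1)$. By Viana's dichotomy the $\mathbb{Z}_{p_i}^{q_i}$-orbit closures are unions of $\mathbb{S}^1_{n,m}$-orbits, whence $\tilde V$ is $\mathbb{S}^1_{n,m}$-invariant; set $V:=(S_{n,m})_\ast\tilde V$, a stationary integral $1$-varifold (a geodesic net) on $O_{n,m}$ whose lift has area $\le 4\pi^2$, and which is invariant under the residual cyclic rotation group of $O_{n,m}$ induced by $\mathbb{Z}_{p_i}^{q_i}$, of some order $k'$ computed as in \eqref{quotient}. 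The core of the argument is then to reproduce, with $S_{n,m}$ in place of $H$, the extended-singular-set analysis of Theorem \ref{findlimit}: with $\mathcal D:=\mathrm{sing}(V)\cup\{x:\ \tilde M_i\to\tilde V\text{ non-smoothly over every }B_r(x)\}$, one shows by Ilmanen's integrated Gauss–Bonnet argument (\cite{I},\cite{K}), exactly as in the proof of claim \eqref{claim4}, that the budget $\mathrm{genus}(\tilde M_i)=p_i+1$ forces $|\mathcal D|\le k'$ and that at every point of $\mathcal D$ which is not a cone point the tangent cone of $V$ is a union of two distinct multiplicity-$1$ lines (so $V$ is locally a union of immersed geodesics there and the genus concentrates along $\mathcal D$).

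\textbf{Classification of $V$ and conclusion.} Closed geodesics of $O_{n,m}$ come in three kinds: the equator $E_0$, whose lift is $C$; geodesics through a cone point, whose lifts are the Lawson surfaces $\tau_{n,m}$ — of area $>4\pi^2$ unless $(n,m)=(1,2)$, by \eqref{4bigger}, \eqref{3bigger} and \eqref{2less}; and the Hsiang–Lawson closed geodesics $A_{n,m,a}$, each with at least two self-intersections by Proposition \ref{intersecttwice}. If $\mathcal D$ contains a non-cone point, the tangent-cone structure and the area bound force $V$ to be a finite union of immersed closed geodesics of multiplicity $1$; an $A_{n,m,a}$-component, or two components crossing transversally, would contribute at least two singular points per residual-rotation orbit, exceeding $|\mathcal D|\le k'$, a $\tau_{n,m}$-component with $(n,m)\ne(1,2)$ is excluded by area, and a second Clifford-torus component is excluded because $C$ is the only $\mathbb{S}^1_{n,m}$-invariant Clifford torus when $(n,m)\ne(1,1)$; hence $V=2E_0$, the points of $\mathcal D$ lie on $E_0$ at the $k$ locations where necks of $\tilde M_i$ collapse, and lifting gives $\tilde M_i\to 2C$ smoothly away from the $k$ equally spaced parallel $(n,m)$-torus knots covering those points — case (4). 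If instead $\mathcal D$ is contained in the cone points, then since a stationary net has no isolated non-orbifold singularity, $\mathcal D$ is a single cone point, $k=1$, and $V$ is a geodesic through it; the area bounds then force $(n,m)=(1,2)$ and $V$ to be the geodesic whose lift is Lawson's immersed Klein bottle $\tau_{1,2}$ (which indeed satisfies $2\pi^2<|\tau_{1,2}|<4\pi^2$ by \eqref{2less}), giving case (2). With the $(n,m)=(1,1)$ cases this exhausts the list.

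\textbf{Main obstacle.} The delicate step is the orbifold version of the genus-accumulation argument near the cone points of $O_{n,m}$: one must run the solid-torus and covering-space bookkeeping of Theorem \ref{findlimit} for balls $B_r(x)$ with $x$ near — or equal to — a cone point, where the fibers of $S_{n,m}$ degenerate and the $\mathbb{Z}_{p_i}^{q_i}$-action on $S_{n,m}^{-1}(B_r(x))$ need no longer be free, redo the homological classification of the boundary curves of $\tilde M_i\cap S_{n,m}^{-1}(B_r(x))$ (the analogue of the $(\pm 1,0)$-curve argument), and pin down the precise relation between $p_i$, the order of the subgroup of $\mathbb{Z}_{p_i}^{q_i}$ acting freely on a regular solid torus, and the residual rotation order $k'$, so that the lower bound $\mathrm{genus}(\tilde M_i)\gtrsim |\mathcal D|\cdot(\text{free order})$ actually beats the budget $p_i+1$.
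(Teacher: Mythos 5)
Your overall architecture is the paper's: pass to a subsequence, use Viana's dichotomy, quotient by the $(n,m)$-action, transfer the extended-singular-set and genus-accumulation analysis of Theorem \ref{findlimit} (claim \eqref{claim4} and the tangent-cone classification) to the orbifold $\mathbb{S}^3/\mathbb{S}^1_{n,m}$, and then classify the resulting geodesic net using the mass bound from \eqref{areabounds} together with Propositions \ref{intersecttwice} and \ref{noteq} and the areas of the Lawson surfaces. Your class 1 argument (full $T^2$-invariance of the limit, then Allard plus the area bound to force multiplicity $2$) is a correct variant of the paper's connectedness argument, and your honest flagging of the orbifold bookkeeping near cone points matches a step the paper itself only asserts carries over.

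However, there is a genuine gap precisely where item (2) of the theorem lives, namely class 2 with $(n,m,k)=(1,2,1)$ (the lens spaces $L(p_i,2)$). Your dichotomy leaves two outcomes open in that case: the non-cone-point branch, where $V=2G$ (in your notation $2E_0$), which upstairs is $2C$ with curvature blowing up along a single $(1,2)$-torus knot, and the cone-point branch giving $\rho_{1,2}$, i.e. $\tau_{1,2}$. The theorem asserts the limit \emph{is} $\tau_{1,2}$, so the $2C$ alternative must be excluded in this one case, and none of the tools you invoke can do it: $2G$ with a single neck point is $\mathbb{Z}_1$-equivariant, has admissible mass, and is consistent with the singular-point count. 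The paper rules it out by a separate argument: smooth multiplicity-two convergence away from the single $(1,2)$ curve produces (as in the appendix of \cite{CM}) a positive solution $J$ of the Jacobi equation $(\Delta_C+4)J=0$ on $C\setminus L$ depending only on $\xi=y-2x$; the reduced ODE gives $J=A\sin(\xi\sqrt{2/5})+B\cos(\xi\sqrt{2/5})$, and since $\sqrt{2/5}>1/2$ any such function must vanish somewhere on $(0,2\pi)$, contradicting positivity. Without this (or an equivalent) exclusion your argument only proves a weaker trichotomy in which the $m=2$, $k=1$ case may also converge to $2C$, and moreover it would misfile that possibility under case (4), which by the statement excludes $m=2$, $k=1$.
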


%\begin{rmk}
%The only case we do not settle is when $m=1$ and $k=2$.  Kapouleas' glueing methods suggests that the limit is not twice the Clifford torus, and thus must be the union of two orthogonal Clifford tori, resembling surfaces constructed by Choe-Soret.  Indeed, if one tries to find a piecewise continuous Jacobi function on the Clifford torus, continuous but non-smooth over the two geodesics where necks are placed, one finds that such a function must be zero along the two geodesics. The gluing heuristics of Kapouleas suggest that one cannot ``fit in" necks when the profile function vanishes.
%\end{rmk}

The case when the limit of the lens spaces is of class $2$ with $m=1$ and $k\geq 2$ was handled in Theorem \ref{findlimit} (corresponding to item (3) and part of (4) in Theorem \ref{other}).
\begin{proof}
Let us first assume that the lens spaces $L(p_i,q_i)$, up to a subsequence, fall into class $1$.  In other words, the orbit of any element $x$ on any cmc torus $C_r$ is getting dense as $i\rightarrow\infty$.  Since $\tilde{M}_{p_i,q_i}$ is connected, for each $i$  there exists an interval $A_i = (b_i,c_i)\subset (0,\pi/2)$ so that $\tilde{M}_{p_i,q_i}\subset \cup_{t\in A_i}C_t$ and moreover $\tilde{M}_{p_i,q_i}$ intersects every $C_t$ for $t\in A_i$. If $A_i$ does not shrink to $\pi/4$ as $i\rightarrow\infty$, then the limit of $\tilde{M}_{p_i,q_i}$ must contain every cmc torus $C_t$ for an interval of $t$ values,  which contradicts the area bound of $4\pi^2$.  Thus $b_i\rightarrow\pi/4$ and $c_i\rightarrow\pi/4$  and the sequence $\tilde{M}_{p_i,q_i}$ converges to the Clifford torus with some multiplicity. The multiplicity must be equal to $2$ by the area bounds \eqref{boundsforsurface} and Allard's theorem \cite{All}.  This completes the proof of (1).

Let us now consider a sequence of lens spaces of class $2$ with integers $(n,m,k)$.  The case $n=m=1$ was handled in Theorem \ref{findlimit},  so we assume without loss of generality that $m\geq 2$.   After passing to a subsequence, the corresponding minimal surfaces $\tilde{M}_{p_i,q_i}$ converge to a stationary integral varifold $\tilde{V}_{n,m,k}$ invariant under the $(n,m)$-action on $\mathbb{S}^3$.   This is because the iterates under $\mathbb{Z}_{p_i}^{q_i}$ of any point in the support of $\tilde{V}_{n,m,k}$ are contained on $k$ parallel $(n,m)$ torus knots, which become denser as $p_i\rightarrow\infty$.  After projecting  we obtain a stationary integral varifold $V_{n,m,k}$ in the corresponding orbifold two-sphere $\mathbb{S}^3/\mathbb{S}^1_{n,m}$ that is also invariant under $\mathbb{Z}_k$ rotations about the $z$-axis.  

As in the proof of Theorem \ref{findlimit},  let $\mathcal{S}$ denote the singular set of $V_{n,m,k}$,  and $\mathcal{S}_0=\mathcal{S}\cap\{N,S\}$ (where $N$ and $S$ denote the north and south pole, respectively, of $\mathbb{S}^3/\mathbb{S}^1_{n,m}$) and $\mathcal{S}_1=\mathcal{S}\setminus\mathcal{S}_0$.  As in Theorem \ref{findlimit}, let $G$ denote the equator in $\mathbb{S}^3/\mathbb{S}^1_{n,m}$.  We have the following possibilities:

\begin{enumerate}
\item $\mathcal{S}_0=\emptyset$ and $\mbox{supp}(V_{n,m,k})$ consists of a union of immersed geodesics with multiplicity $1$ or a single embedded geodesic with multiplicity at most $2$.  There are $k$ intersection points among the immersed geodesics if the support of $V_{n,m,k}$ is not smooth.   Furthermore,  the configuration is $\mathbb{Z}_k$-invariant.
\item $\mathcal{S}_0=\{N,S\}$ and $\mbox{supp}(V_{n,m,k})$ consists of $k$ equally spaced projections $\rho_{n,m}$ of Lawson surfaces $\tau_{n,m}$ and $\mathcal{S}_1=\emptyset$.  
\end{enumerate}

% (away from the poles) is non-empty,  then it contains at most $k$ points, and the varifold consists of a union of closed geodesics each with multiplicity one, or else a single embedded geodesic with multiplicity $2$.  The only embedded geodesics in the orbifold $\mathbb{S}^3/\mathbb{S}^1_{n,m}$ are the projection $\rho_{n,m}$ of the Lawson torus $\tau_{n,m}$ to $\mathbb{S}^3/\mathbb{S}^1_{n,m}$ and the equator $G$ in $\mathbb{S}^3/\mathbb{S}^1_{n,m}$ (which lifts to a Clifford torus in $\mathbb{S}^3$).  If the singular set of $V_{n,m,k}$ for $k\geq 2$ on the other hand includes the north or south pole,  then it must contain at least $\rho_{n,m}$ together with some rotation of $\rho_{n,m}$ about the $z$-axis, which has too much area and so cannot occur.   

Assume first that $k\geq 3$.  By Proposition \ref{noteq} in the Appendix,  none of the curves $\{A_{n,m,a}\}_{a\in I}$ is $\mathbb{Z}_k$-equivariant.  Thus in case (1),  if $A_{n,m,a}$ is in the support of $V_{n,m,k}$ its $k$ iterates under $\mathbb{Z}_k$ must all be in the support, which gives rise to too much mass.  If $G$ is in the support then it must occur with multiplicity two by Allard's theorem \cite{All}.  Case (2) is impossible as $\rho_{n,m}$ is not $\mathbb{Z}_k$-invariant and two or more copies of $\rho_{n,m}$ give too much mass.  

%Additionally,  the curve $\rho_{n,m}$ is not $\mathbb{Z}_k$-invariant for $k\geq 3$.   Thus if $k\geq 3$,  if $A_{n,m,a}$ or $\rho_{n,m}$ is contained in the support of $V_{n,m,k}$ then at least another iterate under $\mathbb{Z}_k$ of $A_{n,m,a}$ or of $\rho_{n,m}$ would also be in the support of $V_{n,m,k}$.  But this would give rise to a total mass for $\tilde{V}_{n,m,k}$ greater than $4\pi^2$,  in violation of the area bound.   It follows that $A_{n,m,a}$ and $\rho_{n,m}$ are not in the support of $V_{n,m,k}$ and the support of $V_{n,m,k}$ can only contain the equator $G$ in this case.  The multiplicity must be at least two by Allard's theorem \cite{Al},  and exactly two by the area bounds.  

If $k=2$,  for case (2),  the curve $\rho_{n,m}$ is $\mathbb{Z}_2$-equivariant but $\rho_{n,m}$ with multiplicity $1$ is not the limit of $\mathbb{Z}_2$-invariant surfaces.  In case (1),  from the considerations of the previous paragraph, the only possibility is that $V_{n,m,2} = 2G$.  

If $k=1$ then in case (1),  by Proposition \ref{intersecttwice}, each member of the countable family $\{A_{n,m,a}\}_{a\in I}$ has at least two points of self-intersection and thus cannot be in the support of $V_{n,m,1}$.   Thus in case (1) only $2G$ is possible.  In case (2),  when $m>2$,  by \eqref{3bigger} the area of $\tau_{n,m}$ is greater than $4\pi^2$ and so $\rho_{n,m}$ cannot be in the support of $V_{n,m,k}$.   Thus for $m>2$ case 2 does not occur and we indeed obtain $V_{n,m,2} = 2G$.  If $m=2$, however, then by \eqref{2less} we obtain \begin{equation}2\pi^2 < ||\tau_{1,2}||<4\pi^2\end{equation} and so we could have $V_{1,2,1} = \rho_{1,2}$ instead of $V_{1,2,1}= 2G$.  

One can likely exhibit an explicit two-parameter family for which $\tau_{1,2}$ is the optimal surface but it could be cumbersome to estimate the areas for the entire family.   Instead, we will show directly that $2G$ cannot arise as a limiting varifold by showing that the corresponding minimal surfaces $\tilde{M}_{p_i,2}$ cannot resemble, for large $p_i$, a doubling of a Clifford torus where the curvature is blowing up along a single $(1,2)$ curve on the torus $C$.

Suppose the sequence of minimal surfaces $\tilde{M}_{p_i,2}$ converges to $2C$, with non-smooth convergence over a single $(1,2)$ curve $y=2x$ in $C$. Let us denote this curve by $L$.

Fix a decreasing sequence $\epsilon_j\rightarrow 0$ and let $T_{\epsilon_j}(C)$ denote the open $\epsilon_j$-tubular neighborhood about $L$ in $C$. Let us define the tubular neighborhood in $\mathbb{S}^3$ (where $n$ is a choice of unit normal on $C$):
\begin{equation}
T_{\epsilon_j}(\mathbb{S}^3)=\{\exp_p(tn) \; |\; p\in T_{\epsilon_j}(C), t\in [-\pi/8,\pi/8]\}.
\end{equation}

Then $\tilde{M}_{p_i,2}\setminus T_{\epsilon_j}(\mathbb{S}^3)$ consists of two connected disks $C^i_1$ and $C^i_2$, which by Allard's theorem \cite{All}, for large $i$  each can be written as graphs $u^i_1(z)$, $u^i_2(z)$ over $C\setminus T_{\epsilon_j}(C)$.  Moreover, $u^i_2(z)<u^i_1(z)$ and $u^i_1(z), u^i_2(z)\rightarrow 0$ smoothly as $i\rightarrow\infty$ on $C\setminus T_{\epsilon_j}(C)$.  Let $p\in C\setminus T_{\epsilon_1}(C)$. Then for $i$ large enough, denote 

\begin{equation}
w_{ij}(z) = \frac{(u^i_1(z)-u^i_2(z))}{(u^i_1(p)-u^i_2(p))}.
\end{equation}

For each $j$, since $w_{ij}>0$ (as in the Appendix of \cite{CM}) by standard elliptic estimates, after passing to a subsequence we obtain from the sequence $\{w_{ij}\}_{i=1}^\infty$
a limiting function $w_j$ on the interior of $C\setminus T_{\epsilon_j}(C)$ so that \begin{equation}\label{iszero} w_j(p) =1.\end{equation}  The function $w_j$ is constant on all curves parallel to $L$ on $C$ and satisfies the Jacobi equation on its domain.  Furthermore, $w_j>0$ on the interior of $C\setminus T_{\epsilon_j}(C)$ by the Harnack inequality.

The functions $\{w_j\}_{j=1}^{\infty}$ are functions of one variable satisfying a homogeneous second order ODE.  Thus they must be among a two-dimensional family of solutions satisfying \eqref{iszero}.  

We may again pass to a subsequence of the family $\{w_j\}_{j=1}^\infty$ to obtain a smooth function $J$ on $C\setminus L$ that is not identically zero in light of \eqref{iszero}.\footnote{It would be natural to expect that $J$ extends to $C$ over $L$ as a continuous, piecewise smooth function.}  Pulling $J$ back to $\mathbb{R}^2$ via $\phi_{\pi/4}$ the function $J$ depends only on $y-2x$, and solves the Jacobi equation on the complement of the parallel lines $\{L_k\}_{k=-\infty}^{\infty}$, where
\begin{equation}
L_k = \{(x,y)\in\mathbb{R}^2\; |\; y - 2x = 2\pi k\}.
\end{equation}  
Moreover, $J$ restricted to $\mathbb{R}^2\setminus\bigcup_k L_k$ positive (though it might of course vanish on the $L_k$).\\

The Jacobi equation on the Clifford torus is given by \begin{equation}
L_C J(x,y)= 0
\end{equation} where, in the coordinates introduced in \eqref{coordinates}, we have \begin{equation}L_C=\Delta_C+4=2(\partial^2_x+\partial_y^2)+ 4=2\Delta_{\mathbb{R}^2} + 4.\end{equation}
\noindent
Thus on the domain of $J$ \begin{equation}\label{isdd}(\Delta_{\mathbb{R}^2}+2)J(x,y) = 0.\end{equation} 
\noindent
Since $J(x,y)$ only depends on $\xi = y-2x$ ($0\leq\xi<2\pi$) we obtain \begin{equation}J(x,y)= g(\xi)\end{equation} where $g$ is given by
\begin{equation}
g(\xi)= A\sin(\xi\sqrt{\frac{2}{5}}) + B\cos(\xi\sqrt{\frac{2}{5}}),
\end{equation}
for a suitable choice of $A$ and $B$. On the interval $\xi\in(0,2\pi)$, however, any such $g$ has at least one zero because
\begin{equation}
\sqrt{\frac{2}{5}} > \frac{1}{2}.
\end{equation}
Thus the purported $J$ cannot exist.   This completes the proof of item (2) that we obtain $\tau_{1,2}$ as a limiting varifold for the minimal surfaces $M_{p_i,2}$.  This completes the proof of Theorem \ref{other}.
\end{proof}

\begin{rmk}
If one applies the monotonicity formula to the cone over $V_{n,m,k}$ in $\mathbb{R}^4$ as in \eqref{easier} we obtain for any $x\in\mbox{supp}(V_{n,m,k})$ (where $\omega_3$ denotes the volume of the unit ball in $\mathbb{R}^3$)
\begin{equation}
\theta(\tilde{V}_{n,m,k},x)\leq \frac{||\tilde{V}_{n,m,k}||}{3\omega_3}=  \frac{||\tilde{V}_{n,m,k}||}{4\pi}\leq\pi.
\end{equation}
Thus the density of $V_{n,m,k}$ is at most three at any point, and one could alternatively try to classify such stationary integral varifolds to deduce that $V_k=2G$.   Unlike as in \eqref{easier} this does not seem to give sharp information.  
\end{rmk}

Finally we have,
\begin{prop}[Non-existence of genus 2 minimal surfaces]
For $p$ large enough, the lens space $L(p,1)$ does \emph{not} admit a genus $2$ minimal surface with area less than $4\pi^2/p$ (twice the area of the Clifford torus).   
\end{prop}

\begin{proof} 
Assume the contrary.  Then we have $p\rightarrow\infty$ and $\Sigma_p$ with genus $2$ and area less than $4\pi^2/p$.  Lifting to $\mathbb{S}^3$ we obtain minimal surfaces  $\tilde{\Sigma}_p$ of genus $g+1$ and area at most $4\pi^2$.  Consider a limiting stationary varifold $\tilde{\Sigma}_\infty$,  which is a union of Hopf fibers and its projection $V$ to $\mathbb{S}^2$ under the Hopf fibration.   The varifold $V$ has mass at most $4\pi$, and by the analysis of Theorem \ref{findlimit} it can have at most one singular point.   But there is no stationary varifold in $\mathbb{S}^2$ with one singular point, and thus $V$ must be an equator counted with multiplicity $2$.  As in Theorem \ref{findlimit}, the convergence of $\Sigma_p$ to the Clifford torus $C$ with multiplicity $2$ is smooth away from a single closed geodesic $V$ on $C$.   

Let us now show that such minimal surfaces do not exist for large $p$.   We give two proofs of this fact.

\emph{Method 1 (Nodal Domain):} Ros \cite{R} proved that any great sphere in $\mathbb{S}^3$ divides an embedded minimal surface into two connected components.  Choose a great sphere $S$ intersecting $C$ in a geodesic $V'$ parallel and close to $V$ on $C$.   Since $C\setminus V'$ is connected,  it follows that $S\cap C$ consists of other curve(s) $A$ disjoint from $V'$ (and $V$,  provided $V$ and $V'$ are chosen close enough).   Thus $C\setminus (V'\cup A)$ consists of at least one component $C'$ disjoint from $V$.   Since the convergence of $\Sigma_p$ to $2C$ is smooth away from $V$, it follows that $\Sigma_p\setminus (S\cap\Sigma_p)$ would have at least three components,  contradicting Ros' theorem. 

\emph{Method 2 (Impossible Jacobi field):}  
The argument resembles the proof of item (2) in Theorem \ref{other}.  Since the convergence of $\Sigma_p$ to $C$ is smooth with multiplicity $2$ away from $V$ we may consider the difference of the heights of the two graphical sheets comprising $\Sigma_p$ away from $V$. In this way we obtain a limiting function $J$ on $C\setminus V$ (c.f. the Appendix in Colding-Minicozzi \cite{CM}).  The function $J$ is smooth on its domain, and satisfies the Jacobi equation
\begin{equation}(\Delta_C+4)J = 0
\end{equation}
on $C\setminus V$.  Furthermore, by the Harnack inequality it follows that $J>0$ on $C\setminus V$. Finally, $J$ is constant on geodesics parallel to $V$.  

Pulling $J(x,y)$ back to $\mathbb{R}^2$ via $\phi_{\pi/4}$ we obtain a positive function defined on $\mathbb{R}^2\setminus \cup_k L_k$, where $L_k$ denotes the line $\{(x,y)\; |\;  y-x = 2\pi k\}\subset\mathbb{R}^2$.  Moreover, $J(x,y)$ satisfies the equation
\begin{equation}
2\Delta_{\mathbb{R}^2} J + 4 = 0 
\end{equation}
and depends on only on $y-x$. It is easy to see that 
\begin{equation}
J(x,y) = A\cos(y-x) + B\sin(y-x)
\end{equation}
for suitable $A$ and $B$.  Clearly any such function must be equal to zero along some line in $\mathbb{R}^2\setminus \cup_k L_k$.  This gives a contradiction and therefore the surfaces $\Sigma_p$ do not exist.
\end{proof} 

More generally, one might expect that $L(p,1)$ admits no genus $2$ minimal surface without any area assumption (or no index $2$ minimal surfaces).   The first three Almgren-Pitts widths introduced by Marques-Neves \cite{MN2} $\omega_1,\omega_2, \omega_3$  are realized by Clifford tori (since according to Proposition \ref{lens} there is an $\mathbb{RP}^2$ family of such minimal surfaces). The fourth width $\omega_4$ is likely realized by one of the Choe-Soret \cite{CS} surfaces.

\begin{rmk}
The first non-trivial eigenvalue for the Laplacian for a surface resembling $\Sigma_p$ converges to $1$ as $p\rightarrow\infty$.  Yau \cite{Y} has conjectured that $2$ is the lowest non-trivial eigenvalue for the Laplacian on embedded minimal surfaces in $\mathbb{S}^3$.
\end{rmk} 

\section{Distinct minimal surfaces in $\mathbb{S}^3$}

We first recall the following fact (Corollary 2.13 in \cite{T}):
\begin{lemma}\label{dothey}
The geodesics in $\mathbb{S}^3$ corresponding to $(a,b)\in\mathbb{S}^2\times\mathbb{S}^2$ and $(a',b')\in\mathbb{S}^2\times\mathbb{S}^2$ intersect if and only if $dist_{\mathbb{S}^2}(a,a') = dist_{\mathbb{S}^2}(b,b')$. 
\end{lemma}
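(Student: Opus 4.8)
The approach is to use the transitive action of $\mathbb{S}^3\times\mathbb{S}^3$ on $\tilde{G}_2(\mathbb{R}^4)$ to put the pair of geodesics into a two‑parameter normal form, and then to decide intersection by a direct computation. The first observation is that both sides of the asserted equivalence are preserved when a common ambient isometry is applied to the two geodesics: this is clear for the intersection condition, and for the distance condition it holds because $(q_1,q_2)\in\mathbb{S}^3\times\mathbb{S}^3$ acts on the $\mathbb{S}^2\times\mathbb{S}^2$ coordinates of a geodesic by $(x,y)\mapsto(q_1xq_1^{-1},q_2yq_2^{-1})$, and $x\mapsto q_ixq_i^{-1}$ is a rotation of the unit sphere in the imaginary quaternions, hence an isometry of $\mathbb{S}^2$. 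By transitivity we may therefore assume the first geodesic is $\langle 1,i\rangle$, whose coordinates are $P(\langle 1,i\rangle)=(i,i)$.

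I would then exploit the stabilizer $\mathrm{Stab}(\langle 1,i\rangle)=\{(e^{i\theta},e^{i\phi})\}$, which acts on the coordinates $(a',b')$ of the second geodesic by $(a',b')\mapsto(e^{i\theta}a'e^{-i\theta},e^{i\phi}b'e^{-i\phi})$, i.e.\ by two independent rotations of the $\mathbb{S}^2$ factors about the $i$‑axis. Hence we may assume $a'=\cos\alpha\, i+\sin\alpha\, j$ and $b'=\cos\beta\, i+\sin\beta\, j$ with $\alpha,\beta\in[0,\pi]$; then $dist_{\mathbb{S}^2}(a,a')=\alpha$ and $dist_{\mathbb{S}^2}(b,b')=\beta$, so the right‑hand condition becomes simply $\alpha=\beta$. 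Now the second geodesic can be written out: taking $q_1=\cos(\alpha/2)+\sin(\alpha/2)k$ and $q_2=\cos(\beta/2)+\sin(\beta/2)k$ one checks $q_1iq_1^{-1}=a'$ and $q_2iq_2^{-1}=b'$, so the second geodesic is the image of $\langle 1,i\rangle$ under $(q_1,q_2)$, namely the plane spanned by $q_1q_2^{-1}$ and $q_1iq_2^{-1}$, and a short quaternion computation gives
\begin{equation}
q_1q_2^{-1}=\cos\tfrac{\alpha-\beta}{2}+\sin\tfrac{\alpha-\beta}{2}\,k,\qquad q_1iq_2^{-1}=\cos\tfrac{\alpha+\beta}{2}\,i+\sin\tfrac{\alpha+\beta}{2}\,j.
\end{equation}

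Finally I would intersect the two $2$‑planes directly. Writing $\delta=\alpha-\beta$ and $\sigma=(\alpha+\beta)/2$, a general vector of the second plane has $(1,i,j,k)$‑coordinates $(s\cos(\delta/2),\,t\cos\sigma,\,t\sin\sigma,\,s\sin(\delta/2))$, and it lies in $\langle 1,i\rangle$ precisely when its $j$‑ and $k$‑components vanish, i.e.\ $t\sin\sigma=0$ and $s\sin(\delta/2)=0$. If $\alpha\neq\beta$ then $\sin(\delta/2)\neq0$, and also $\sigma\in(0,\pi)$ (the endpoints would force $\alpha=\beta$), so $\sin\sigma\neq0$; thus $s=t=0$, the planes meet only at the origin, and the geodesics are disjoint. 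If $\alpha=\beta$ then $\delta=0$, the $k$‑component vanishes automatically, and the nonzero vector $s\cdot 1$ lies in both planes, so the geodesics meet. This is the claimed equivalence. The only places requiring care are the two reduction steps — in particular verifying that a common isometry acts on the $\mathbb{S}^2\times\mathbb{S}^2$ coordinates by the stated conjugations, so that the distance condition is genuinely reduced to $\alpha=\beta$ — together with the quaternion bookkeeping; once the normal form is in place the argument is purely mechanical. One could instead argue via the isoclinic decomposition $SO(4)=\{L_pR_q\}$: the smaller principal angle between the planes $\Pi$ and $R\Pi$ works out to $\tfrac12\big|dist_{\mathbb{S}^2}(a,a')-dist_{\mathbb{S}^2}(b,b')\big|$, and two great circles meet iff their smaller principal angle is zero.
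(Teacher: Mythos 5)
Your proof is correct, and it is worth noting that the paper itself gives no proof of this lemma at all: it is recalled as Corollary 2.13 of \cite{T}. What you have done is supply a self-contained argument, and you do it inside exactly the framework the paper already sets up in Section 4 (the quaternionic model, the action of $\mathbb{S}^3\times\mathbb{S}^3$ on oriented planes, the coordinate map $P$, and the stabilizer of $\langle 1,i\rangle$). The reduction is sound: a common isometry changes the coordinates of both geodesics by $(x,y)\mapsto(q_1xq_1^{-1},q_2yq_2^{-1})$, hence preserves both the intersection condition and the two distances, and the stabilizer torus rotates $a'$ and $b'$ independently about the $i$-axis, so the normal form $a'=\cos\alpha\,i+\sin\alpha\,j$, $b'=\cos\beta\,i+\sin\beta\,j$ with $\alpha,\beta\in[0,\pi]$ is attainable and the distance condition becomes $\alpha=\beta$. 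The quaternion identities $q_1iq_1^{-1}=a'$, $q_1q_2^{-1}=\cos\tfrac{\alpha-\beta}{2}+\sin\tfrac{\alpha-\beta}{2}k$ and $q_1iq_2^{-1}=\cos\tfrac{\alpha+\beta}{2}\,i+\sin\tfrac{\alpha+\beta}{2}\,j$ check out, and the final case analysis (using $\alpha,\beta\in[0,\pi]$ to get $\sin\tfrac{\alpha-\beta}{2}\neq0$ and $\sin\tfrac{\alpha+\beta}{2}\neq0$ when $\alpha\neq\beta$) is complete in both directions. Relative to the paper, your route buys self-containedness at the cost of a page of quaternion bookkeeping; the closing isoclinic-angle remark is a correct sanity check but not needed.

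Two small points to tighten. First, when you say ``so the second geodesic is the image of $\langle 1,i\rangle$ under $(q_1,q_2)$'' you are implicitly using injectivity of $P$ (part of the quoted Theorem 2.7 in \cite{T}). To stay fully self-contained, argue instead that by transitivity the second geodesic is $(p_1,p_2)\langle 1,i\rangle$ for some $(p_1,p_2)$ with $p_1ip_1^{-1}=a'$, $p_2ip_2^{-1}=b'$; then $q_1^{-1}p_1$ and $q_2^{-1}p_2$ fix $i$ under conjugation, hence lie in $\{e^{i\theta}\}$, so $(p_1,p_2)$ and $(q_1,q_2)$ differ by an element of the stabilizer and span the same plane. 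Second, you read the action as $\rho(x,y)z=xzy^{-1}$, whereas the paper's displayed formula is $\rho(x,y)(z)=xyz^{-1}$; the paper's formula as written is not even a homomorphism onto $SO(4)$, so your reading is the intended one (it is the convention consistent with $P$ and the stated stabilizer), but it is worth saying so explicitly since all your computations depend on it.
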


For any geodesic $(a,b)\in\mathbb{S}^2\times\mathbb{S}^2$, there is a cmc Heegaard foliation of $\mathbb{S}^3$ of tori $\{F_{a,b}(t)\}_{t=0}^{\pi/2}$, where $F_{a,b}(t)$ denotes the cmc torus consisting of all points a distance $t$ from the geodesic $(a,b)$.  To specify the family,  suppose $b\in\mathbb{S}^2$.  Let $\{Y^b_t\}_{t=0}^{\pi/2}$ denote the family of round circles with $Y^b_0 = b$  and $Y^b_{\pi/2}= -b$.  Then in the notation of Section 4:
\begin{equation}\label{elf}
F_{a,b}(t) = \iota(a,Y^b_t).
\end{equation}
\noindent
Simillary,  we have the cmc torus given by 
\begin{equation}\label{elf2}
F'_{a,b}(t) = \iota(Y^a_t,  b).  
\end{equation}
\noindent
In fact,  as sets
\begin{equation}\label{equalyes}
F_{a,b}(t)=F'_{a,b}(t), 
\end{equation}
though the two surfaces are exhibited in \eqref{elf}, \eqref{elf2} as a union of different families of geodesics.

We can then show the following about how such tori intersect:
\begin{lemma}[Points of tangency between cmc tori]\label{isdisjoint}
Fix a geodesic $(a,b)\in\mathbb{S}^2\times\mathbb{S}^2$ and $\rho'\in (0,\pi/4]$.  There exists a neighborhood $\mathcal{N}\subset\mathbb{S}^2\times\mathbb{S}^2$ about $(a,b)$ so that for any geodesic $(a',b')\neq (a,b)$ contained in $\mathcal{N}$, and $\pi/4\geq \rho\geq\rho'$ if we denote by $t_0$ the supremum of all $t$ so that $F_{a',b'}(t)\subset F_{a,b}(\rho)$. Then $t_0$ is attained at some point in $(0,\rho)$ and
\begin{equation}
F_{a',b'}(t_0)\cap F_{a,b}(\rho)
\end{equation}
consists of 
\begin{enumerate}
    \item one closed geodesic in the case that $a=a'$ and $b\neq b'$ and also in the case that $a\neq a'$ and $b=b'$.
    \item two antipodal points if $a\neq a'$ and $b\neq b'$.
\end{enumerate}

\end{lemma}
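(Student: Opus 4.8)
The plan is to reinterpret $t_0$ as the minimum, over the torus $T:=F_{a,b}(\rho)$, of the distance to the great circle associated to $(a',b')$, and $F_{a',b'}(t_0)\cap F_{a,b}(\rho)$ as the set where that minimum is attained; the dichotomy then falls out of the two rulings of $T$ together with the distance formula for great circles underlying Lemma~\ref{dothey}. Throughout, $\gamma$ and $\gamma'$ denote the great circles of $(a,b)$ and $(a',b')$, and for $c\in\mathbb{S}^2$, $\gamma_{c,y}$ the great circle of $(c,y)$. \emph{Step 1 (reduction to a minimization).} Put $u:=\mathrm{dist}_{\mathbb{S}^3}(\cdot,\gamma')$. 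On $\mathbb{S}^3\setminus(\gamma'\cup(\gamma')^{*})$, where $(\gamma')^{*}$ is the dual great circle, $u$ is smooth with $|\nabla u|\equiv1$, so its only critical sets are $\gamma'$ (where $u=0$) and $(\gamma')^{*}$ (where $u=\tfrac{\pi}{2}$ is maximal). For $\mathcal{N}$ small (depending only on $(a,b)$ and on $\rho'$) and every $\rho\in[\rho',\pi/4]$, the circle $\gamma'$ lies in the interior of the solid torus $B:=\{\,\mathrm{dist}_{\mathbb{S}^3}(\cdot,\gamma)\le\rho\,\}$ and $(\gamma')^{*}$ in the interior of $\mathbb{S}^3\setminus B$; hence $u$ is smooth and strictly positive on $T=\partial B$ and has no interior local minimum on $R:=\mathbb{S}^3\setminus\mathrm{int}\,B$. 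Consequently $\min_{R}u=\min_{T}u$, and a standard argument with the nested family $\{u=t\}$ identifies $t_0$ with $\min_{T}u$, the supremum being attained, with $F_{a',b'}(t_0)\cap F_{a,b}(\rho)=\{q\in T:u(q)=t_0\}=\mathrm{argmin}_{T}\,u$. In particular $t_0>0$ since $u>0$ on the compact set $T$; that $t_0<\rho$ will come out of Step~3.

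\emph{Step 2 (rulings and the great-circle distance formula).} By \eqref{elf}, $T=\iota(a,Y^b_\rho)=\bigcup_{y\in Y^b_\rho}\gamma_{a,y}$, a ruling of $T$ by great circles all having first $\mathbb{S}^2$-coordinate $a$ (and, by \eqref{elf2}--\eqref{equalyes}, also a ruling $\bigcup_{x\in Y^a_\rho}\gamma_{x,b}$ by great circles with second coordinate $b$). From the explicit homeomorphism $P$ of Section~4 (consistent with Lemma~\ref{dothey}) one has $\mathrm{dist}_{\mathbb{S}^3}(\gamma_{a_1,b_1},\gamma_{a_2,b_2})=\tfrac12\bigl|d_{\mathbb{S}^2}(a_1,a_2)-d_{\mathbb{S}^2}(b_1,b_2)\bigr|$; that if $a_1=a_2$ then $\gamma_{a_1,b_1}$ and $\gamma_{a_2,b_2}$ are Clifford parallel, so $u$ is \emph{constant} along $\gamma_{a_1,b_1}$; and that if $a_1\ne a_2$ then $\mathrm{dist}(\cdot,\gamma_{a_2,b_2})$ restricted to $\gamma_{a_1,b_1}$ is a \emph{non-constant} function of arclength $\psi$ of period $\pi$ (because the antipodal map is an isometry fixing every great circle, so $u(q)=u(-q)$), attaining its minimum $\tfrac12|d_{\mathbb{S}^2}(a_1,a_2)-d_{\mathbb{S}^2}(b_1,b_2)|$ at exactly two antipodal points. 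Write $\alpha:=d_{\mathbb{S}^2}(a,a')$ and $\beta:=d_{\mathbb{S}^2}(b,b')$; since $(a',b')\in\mathcal{N}\setminus\{(a,b)\}$ we have $(\alpha,\beta)\ne(0,0)$ with $\alpha,\beta$ as small as we wish. Because $Y^b_0=\{b\}$ and $Y^b_{\pi/2}=\{-b\}$, applying the distance formula to $\gamma$ and its ruling circles shows $Y^b_\rho$ is the round circle in $\mathbb{S}^2$ at distance $2\rho$ from $b$; hence, for $\mathcal{N}$ small, $\min_{y\in Y^b_\rho}d_{\mathbb{S}^2}(y,b')=2\rho-\beta>0$, attained at a \emph{unique} $y_{*}$ (the nearest point of $b'\ne\pm b$ on a round circle).

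\emph{Step 3 (conclusion, and the main obstacle).} For each ruling circle $\gamma_{a,y}$, $\min_{\gamma_{a,y}}u=\tfrac12\bigl|\alpha-d_{\mathbb{S}^2}(y,b')\bigr|=\tfrac12\bigl(d_{\mathbb{S}^2}(y,b')-\alpha\bigr)$, using $\alpha<2\rho-\beta\le d_{\mathbb{S}^2}(y,b')$ for $\mathcal{N}$ small; this is uniquely minimized over $Y^b_\rho$ at $y=y_{*}$, with value $t_0=\rho-\tfrac12(\alpha+\beta)\in(0,\rho)$. Thus $\mathrm{argmin}_{T}u$ lies on the single ruling circle $\gamma_{a,y_{*}}$. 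If $a=a'$ (so $\alpha=0$, $\beta>0$), then $u\equiv t_0$ along $\gamma_{a,y_{*}}$ and the tangency set is the whole closed geodesic $\gamma_{a,y_{*}}$ --- case~(1); the subcase $a\ne a'$, $b=b'$ is identical after replacing $\iota(a,Y^b_\rho)$ by the ruling $\iota(Y^a_\rho,b)$ of the same torus. If $a\ne a'$ and $b\ne b'$, then $u$ is non-constant along $\gamma_{a,y_{*}}$ (as $\alpha>0$ and $d_{\mathbb{S}^2}(y_{*},b')=2\rho-\beta>0$) and attains $t_0$ at exactly two antipodal points, giving case~(2). The main obstacle is securing the inputs of Step~2: the precise great-circle distance formula in $\mathbb{S}^2\times\mathbb{S}^2$-coordinates (vanishing exactly when the coordinates coincide is Lemma~\ref{dothey}, but the exact value and the Clifford-parallel characterization must be read off from $P$), and especially the claim that along a ruling circle not Clifford parallel to $\gamma'$ the minimum of $u$ is attained at \emph{exactly} two points; the latter reduces to the elementary identity $\cos^2 u(\psi)=\cos^2\theta_1\cos^2\psi+\cos^2\theta_2\sin^2\psi$ along the circle, with $\theta_1<\theta_2$ the two stationary distances to $\gamma'$, which has a single minimum per period $\pi$. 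The remaining ingredients --- the reduction $t_0=\min_T u$ and the nearest-point-on-a-circle estimates fixing $\mathcal{N}$ --- are routine.
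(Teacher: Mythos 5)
Your proof is correct, but it runs along a genuinely different track from the paper's. The paper works synthetically with Lemma~\ref{dothey}: it projects the shrinking family $F_{a',b'}(t)$ to the second $\mathbb{S}^2$ factor and detects the first touching time as the moment the circle $Y^{b'}_{t}$ comes within distance $\mathrm{dist}_{\mathbb{S}^2}(a,a')$ of $Y^b_\rho$; the case analysis is then geometric --- tangency of two round circles in $\mathbb{S}^2$ lifting to a Hopf fiber when $a=a'$, a unique pair of intersecting ruling geodesics (hence two antipodal points) when the circles are skew, and, in the delicate case $a\neq a'$, $b=b'$, an explicit common ruling geodesic $(a'',b)$ produced via the double ruling \eqref{equalyes}. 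You instead recast $t_0$ as $\min_T u$ for the distance function $u=\mathrm{dist}(\cdot,\gamma')$, identify the tangency set with $\mathrm{argmin}_T u$ by a level-set argument, and then compute using the quantitative distance formula $\mathrm{dist}(\gamma_{a_1,b_1},\gamma_{a_2,b_2})=\tfrac12|d(a_1,a_2)-d(b_1,b_2)|$ (the principal-angle identity) together with Clifford parallelism of each ruling. What your route buys is a cleaner, more uniform treatment: the awkward case $a\neq a'$, $b=b'$ is dispatched simply by switching rulings, the value $t_0=\rho-\tfrac12(\alpha+\beta)\in(0,\rho)$ comes out explicitly, and ``exactly two antipodal points'' follows from the single-minimum-per-period form of $\cos^2u(\psi)$ rather than from counting intersections of geodesics. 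What the paper's route buys is that it needs only the qualitative intersection criterion of Lemma~\ref{dothey}, with no appeal to the exact distance formula or to the regularity of $u$ away from $\gamma'$ and its dual circle.

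One slip to fix: in Step~2 you assert that $a_1\neq a_2$ alone forces $\mathrm{dist}(\cdot,\gamma_{a_2,b_2})$ to be non-constant along $\gamma_{a_1,b_1}$. That is false when $b_1=b_2$: Clifford parallelism comes in two families, and circles sharing \emph{either} invariant are equidistant (consistent with $\theta_1=\theta_2$ in your own identity). The correct criterion is ``not Clifford parallel,'' i.e.\ $a_1\neq a_2$ \emph{and} $b_1\neq b_2$, which is how you actually use it in Step~3 (you check both $\alpha>0$ and $d(y_*,b')>0$, and you handle $b=b'$ via the other ruling), so the argument is unaffected; just restate the Step~2 claim accordingly.
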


\begin{proof}
Let $\mathcal{N}$ denote the subset of $\mathbb{S}^2\times\mathbb{S}^2$ corresponding to all great circles of $\mathbb{S}^3$ contained in the interior of the mean convex solid torus bounded by $F_{a,b}(\rho')$.

Let us assume without loss of generality that $a$ is the north pole and $b$ is the south pole.  If $a=a'$, then both cmc tori are a union of Hopf fibers, and correspond to lifts to $\mathbb{S}^3$ of (distinct) round circles on $\mathbb{S}^2$. Clearly, any tangency between two such circles (not consisting of the same circle) occurs at a single point in $\mathbb{S}^2$, which lifts via the Hopf fibration to a closed geodesic.   This gives the first case of (1).  

For item (2),  suppose $a\neq a'$ and $b\neq b'$.   Projecting the family $F_{a',b'}(t)$ to the second factor in $\mathbb{S}^2\times\mathbb{S}^2$ we obtain the foliation $Y^{b'}_t$ of round circle beginning at $b'$ and ending at $-b'$.  By Lemma \ref{dothey} this family will first hit $F_{a,b}(\rho)$ exactly at the time $t_0$ when  $Y^b_{t_0}$ contains a point a distance $dist_{\mathbb{S}^2}(a,a')$ away from the circle $Y^b_\rho$ on $\mathbb{S}^2$.   This occurs when $t_0=\rho-\mbox{dist}_{\mathbb{S}^2}(a,a')-\mbox{dist}_{\mathbb{S}^2}(b,b')$ and for that $t_0$ only one geodesic on $F_{a',b'}(t_0)$ intersects one geodesic on $F_{a,b}(\rho)$ (and all other geodesics are disjoint, since the circles $Y^b_\rho$ and $Y^{b'}_{t_0}$ are skew).  Since each cmc torus is foliated by geodesics, and any two geodesics that intersect do so in two points,  we obtain (2).  

Finally let us consider the second case in (1), where $a\neq a'$ and $b=b'$.   The first $t$ for which $F_{a',b'}(t)$ hits $F_{a,b}(\rho)$ happens at the same $t_0$ as in (2),  but since $b=b'$,  when it occurs,  $Y^{b'}_{t_0}$ and  $Y^{b}_{\rho}$ are parallel and not skew circles,  and so for each $p\in Y^{b}_{\rho}$ there exists a $p'\in Y^{b'}_{t_0}$ so that the geodesics $(a, p)$ and $(a',p')$ intersect twice.   The point $p'$ is obtained from $p$ by moving orthogonally downward from the circle $Y^{b}_{\rho}$ until hitting $Y^{b}_{t_0}$.    We claim that the intersection $F_{a',b'}(t_0)\cap F_{a,b}(\rho)$ consists of exactly one closed geodesic on the cmc torus $F_{a,b}(\rho)$.  To see this,  consider the point $a''$ in $\mathbb{S}^2$ obtained by moving along the equator in $\mathbb{S}^2$ containing $a$ and $a'$  from $a$, in the direction toward and past $a'$, a total distance $\rho$.    By \eqref{equalyes} the geodesic $(a'',b)$ is contained on both cmc tori.  Since $(a,p)$ and $(a',p')$ intersect twice for each $p\in Y^b_\rho$,  and the geodesic $(a'',b)$ already achieves this,  the intersection between the two cmc tori consists precisely of $(a'',b)$.
%By Lemma \ref{dothey},  the geodesic $\{a'',  b\}$ (which is contained on the cmc torus $F_{a,b}(\rho)$)  intersects each geodesic of the form $\{a,g\}$ on the Clifford torus $F_{a,b}(\rho)$ and also each geodesic contained on $F_{a',b}(t_0)$.    Thus $\{a'',b\}$ is a $(-1,1)$-curve on $C$ and comprises the entire intersection between the two cmc tori.  

\end{proof}

Recall that $L(p,q)$ is isometric to $L(p,q')$ if and only if either \begin{equation}\label{aredifferent2} q+q'= 0\mod p \end{equation} or else \begin{equation}\label{aredifferent} qq' =\pm 1\mod p.\end{equation}  

Let us show 
\begin{thm}\label{arediff}
Suppose $p$ is sufficiently large.  Let $\{q_1,...,q_m\}$ be the set of all integers so that
\begin{equation}\{L(p,q_1),...,L(p,q_m)\}\end{equation} are pairwise non-isometric lens spaces (and $q_i\notin\{1,p-1\}$ for each $i$).   Then if $q_i\neq q_j$ it holds
\begin{equation}
\tilde{M}_{p,q_i}\neq\tilde{M}_{p,q_j}\mbox{ (up to isometry of }\mathbb{S}^3).
\end{equation}
\end{thm}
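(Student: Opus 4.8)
The plan is to prove that an isometry $\psi$ of $\mathbb{S}^3$ with $\psi(\tilde M_{p,q_i})=\tilde M_{p,q_j}$ must conjugate the deck group $\mathbb{Z}_p^{q_i}$ onto $\mathbb{Z}_p^{q_j}$ inside $O(4)$; $\psi$ then descends to an isometry $L(p,q_i)\to L(p,q_j)$, contradicting the assumption that these lens spaces are non-isometric. Write $\mbox{Stab}(\Sigma)\subset O(4)$ for the group of isometries of $\mathbb{S}^3$ fixing a surface $\Sigma$ setwise; this is finite for $\Sigma=\tilde M_{p,q}$ because $\tilde M_{p,q}$ is not totally geodesic and, by Theorem \ref{other}, is not invariant under any one-parameter group of isometries. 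Since $\tilde M_{p,q}$ is the lift of $M_{p,q}\subset L(p,q)=\mathbb{S}^3/\mathbb{Z}_p^q$, it is $\mathbb{Z}_p^q$-invariant, so $\mathbb{Z}_p^q\subset\mbox{Stab}(\tilde M_{p,q})$, and $\psi\,\mbox{Stab}(\tilde M_{p,q_i})\,\psi^{-1}=\mbox{Stab}(\tilde M_{p,q_j})$. Hence $\mbox{Stab}(\tilde M_{p,q_j})$ contains both $\mathbb{Z}_p^{q_j}$ and $\psi\mathbb{Z}_p^{q_i}\psi^{-1}$, each cyclic of order $p$ and acting freely on $\mathbb{S}^3$. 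It therefore suffices to prove: for $p$ large and $q\notin\{1,p-1\}$, the group $\mbox{Stab}(\tilde M_{p,q})$ contains exactly one subgroup of order $p$ acting freely on $\mathbb{S}^3$, namely $\mathbb{Z}_p^q$.

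The crucial step is \emph{localization}: for $p$ large, every element of $\mbox{Stab}(\tilde M_{p,q})$ preserves the Clifford torus $C$, equivalently stabilizes the pair of core geodesics $\{z=0\},\{w=0\}$ and hence lies in $\mbox{Stab}(C)$. By construction $M_{p,q}$ is trapped between two cmc tori of the Clifford foliation, so $\tilde M_{p,q}$ is trapped between cmc tori $F_{a_0,b_0}(r_-),F_{a_0,b_0}(r_+)$ of the lifted foliation with $r_\pm\to\pi/4$ as $p\to\infty$ (this also follows from the varifold convergence $\tilde M_{p,q}\to 2C$, or $\to\tau_{1,2}$, of Theorems \ref{findlimit} and \ref{other}). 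If $\psi\in\mbox{Stab}(\tilde M_{p,q})$ then $\tilde M_{p,q}$ is simultaneously trapped between $\psi(F_{a_0,b_0}(r_\pm))$, cmc tori surrounding the axis pair $\psi(a_0,b_0)$. By Lemmas \ref{dothey} and \ref{isdisjoint}, cmc tori from two distinct Clifford foliations touch along at most a closed geodesic or a pair of antipodal points, so the intersection of these two thin shells collapses, as $p\to\infty$, onto a set of Hausdorff dimension at most one --- too small to contain a surface of area at least $2\pi^2$. Thus $\psi(a_0,b_0)$ equals $(a_0,b_0)$ up to the evident involutions, i.e.\ $\psi(C)=C$. (If the varifold limit is $\tau_{1,2}$, or a union of two Clifford tori as in Theorem \ref{other}, the same reasoning forces $\psi$ to preserve the relevant axis, respectively the two-element set of tori.) Consequently $\mbox{Stab}(\tilde M_{p,q})\subset\mbox{Stab}(C)$, whose identity component is the rotation torus $T=S^1\times S^1$ and which has $T$ as a subgroup of bounded ($2$-power) index; for $p$ odd and large, any subgroup of order $p$ therein lies inside $T$.

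Now suppose $H:=\psi\mathbb{Z}_p^{q_i}\psi^{-1}=\mathbb{Z}_p^{r}\subset T$ with $r\not\equiv q_j\pmod p$. Then $G_0:=\langle\mathbb{Z}_p^{q_j},\mathbb{Z}_p^{r}\rangle\subset T$ contains the rotations $(z,w)\mapsto(\zeta^{r-q_j}z,w)$ and $(z,w)\mapsto(z,\zeta^{q_j-r}w)$ with $\zeta=e^{2\pi i/p}$, hence the full $\ell$-torsion subgroup $\mathbb{Z}_\ell\times\mathbb{Z}_\ell\subset T$, where $\ell:=p/\gcd(r-q_j,p)\ge2$; in particular $|G_0|=p\ell$. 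For $p$ large, $\tilde M_{p,q_j}$ is disjoint from $\{z=0\}\cup\{w=0\}$, the fixed-point set of every non-trivial element of $T$ possessing a fixed point, so $G_0$ acts freely on $\tilde M_{p,q_j}$; and since every element of $T$ preserves each cmc torus $C_t$, and hence the transverse orientation of the two sheets of the near-doubled surface $\tilde M_{p,q_j}$, the group $G_0$ preserves the orientation of $\tilde M_{p,q_j}$. Hence $\tilde M_{p,q_j}/G_0$ is a closed orientable surface, so its Euler characteristic $\chi(\tilde M_{p,q_j})/|G_0|=-2p/(p\ell)=-2/\ell$ must be an even integer --- impossible for $\ell\ge2$. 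This contradiction gives $H=\mathbb{Z}_p^{q_j}$, hence $\psi\mathbb{Z}_p^{q_i}\psi^{-1}=\mathbb{Z}_p^{q_j}$, and the theorem follows.

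The main obstacle is the localization step: upgrading the soft fact that subsequential limits of stabilizer elements preserve $C$ to the exact inclusion $\mbox{Stab}(\tilde M_{p,q})\subset\mbox{Stab}(C)$, uniformly for all large $p$. This rests on a quantitative form of Lemma \ref{isdisjoint}, to the effect that the shells between two nearby cmc tori of \emph{distinct} Clifford foliations have intersection of uniformly small extent transverse to $C$, so that a surface meeting the area lower bound $|\tilde M_{p,q}|>2\pi^2$ cannot lie in both shells unless the two foliations coincide. A secondary, more routine point is the case of even $p$, where $\mbox{Stab}(C)$ has a finite part (generated by the handlebody swap $(z,w)\mapsto(w,z)$ and a reflection) that can contribute to the $2$-primary part of a subgroup of order $p$; these components are ruled out separately --- the swap flips the Clifford foliation and hence cannot preserve $\tilde M_{p,q}$ for $p$ large --- after which the Euler-characteristic argument above applies verbatim.
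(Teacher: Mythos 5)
Your overall strategy runs parallel to the paper's: first force the relevant isometry (equivalently the conjugated deck group) to preserve the Clifford torus $C$, then derive a contradiction from a finite group of rotations acting freely and orientation-preservingly on the surface via an Euler characteristic count. Your endgame (order-$p\ell$ subgroup of the torus $T$, $\chi(\tilde M_{p,q_j})/|G_0|=-2/\ell$ not an even integer) is essentially the paper's computation, done upstairs on the genus $p+1$ lift rather than on the genus $2$ quotient. But the crucial localization step, which you yourself flag as the main obstacle, has a genuine gap that a ``quantitative form of Lemma \ref{isdisjoint}'' cannot close. Your argument traps $\tilde M_{p,q}$ in two thin shells, one around $C$ and one around $\psi(C)$, and appeals to the small intersection of shells around \emph{distinct} Clifford tori. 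This only works when the distance between $\psi(C)$ and $C$ is large compared to the shell thickness; if $\psi(C)\neq C$ but lies within the shell thickness (which is allowed, since $\psi=\psi_p$ varies with $p$ and you only know $\tilde M_{p,q}\to 2C$), the two shells essentially coincide and no area or containment constraint is violated, so you cannot conclude $\mbox{Stab}(\tilde M_{p,q})\subset\mbox{Stab}(C)$ exactly. The paper closes precisely this regime not by a metric refinement of Lemma \ref{isdisjoint} but by using the group action: it takes the lowest level $\rho(p,q_1)$ of the cmc foliation met by the surface, translates so that a tangency point between the two foliations (two antipodal points or one geodesic, by Lemma \ref{isdisjoint}) lies on the surface, and then uses that for $q>1$ and $p$ large the $\mathbb{Z}_p^q$-orbit of a point is contained neither in a single geodesic nor in two points, to produce orbit points strictly below level $\rho(p,q_1)$ --- contradicting minimality. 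Some use of the invariance under the conjugated cyclic group is unavoidable here; your proposal never brings it to bear in the localization.

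A secondary but real problem is your treatment of the component group of $\mbox{Stab}(C)$. You reduce to subgroups of order $p$ inside the rotation torus $T$ only for $p$ odd, and for even $p$ you assert that the swap $\tau(z,w)=(w,z)$ ``cannot preserve $\tilde M_{p,q}$ for $p$ large.'' That claim is unjustified (and the paper does not make it): $\tau$ composed with a translation preserves $C$, maps the cmc foliation to itself, and nothing in the convergence $\tilde M_{p,q}\to 2C$ rules out such a symmetry of the min-max surface. The paper instead allows the ambient isometry to be any of the eight elements of the dihedral group $D_8$ generated by $\tau$ and $c$ (up to translation), and in each case explicitly combines a power of $\xi_{p,q_1}$ with the conjugated generator $I\xi_{p,q_2}I^{-1}$ to manufacture a nontrivial rotation in the second factor alone, using $q_1\pm q_2\not\equiv 0$ and $q_1q_2\not\equiv\pm1 \pmod p$; only then does the free-action Euler characteristic argument apply. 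Your even-$p$ case would need an analogous case analysis rather than the claimed non-existence of swap symmetries.
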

\begin{proof}
For $p$ sufficiently large, any surface $\tilde{M}_{p,q}$ (except $\tilde{M}_{p,2}$ and possibly a lens space of the form $\tilde{M}_{4k,2k\pm 1}$, where $k = p/4$) satisfies
\begin{equation}\label{damn}\tilde{M}_{p,q}\subset\{(z,w)\in\mathbb{S}^3\;|\; \frac{1}{8}\leq |z|^2\leq\frac{3}{8}\}.\end{equation}

 Indeed, suppose this is false.  Then we obtain a sequence of $L(p_i,q_i)$ and minimal surfaces $\tilde{M}_{p_i,q_i}$ that fail to satisfy \eqref{damn}. But since $\tilde{M}_{p_i,q_i}\rightarrow 2C$ by Theorem \ref{other} and Theorem \ref{findlimit} and the fact that varifold convergence implies Hausdorff convergence, this is a contradiction.   
 
 In the same way, we can choose $p$ large enough so that $\tilde{M}_{p,2}$ and $\tilde{M}_{4k,2k\pm 1}$ are isometric to none of the other minimal surfaces $\tilde{M}_{p,q}$ for any other $q$ nor are they isometric to each other.  Let us assume $p$ is so large so that both of these statements are true.  We thus remove $\tilde{M}_{p,2}$ and $\tilde{M}_{4k,2k\pm 1}$ from the lens spaces under consideration, and restrict to showing the others are distinct up to isometry. 

Let us also choose $p$ so large so that for each $q>1$, the $p$ iterates of $x\in\mathbb{S}^3\setminus (\{z=0\}\cup\{w=0\})$ under $\mathbb{Z}_p^q$ are not contained in a single geodesic.  This is possible by the dichotomy for lens spaces (\ref{possibility}) and the fact that $q>1$.\footnote{In $L(p,1)$ the orbit does move points along a single geodesic.} 

Finally we choose $p$ large enough so that any isometry $I\in O(4)$ with $I(\tilde{M}_{p,r})= \tilde{M}_{p,q}$ for some $q$ and $r$ has the property that the geodesic $I(\{z=0\})$ is contained in either $\{|z|^2\leq \frac{1}{8}\}$ or $\{|z|^2\geq \frac{3}{8}\}$.  Otherwise, we obtain sequences of lens spaces $L(p_i, q_i)$ and $L(p_i, r_i)$ where $I(\tilde{M}_{p,r_i})= \tilde{M}_{p,q_i}$ but $I(C)$ is a definite distance from $C$ in the $\mathcal{F}$-metric.  Since $\tilde{M}_{p,r_i}$ and $\tilde{M}_{p,q_i}$ both converge to $2C$ as $i\rightarrow\infty$, this is impossible.

Suppose the theorem were false.  Then without loss of generality suppose $q_1$ and $q_2$ satisfy $\tilde{M}_{p,q_1}=I(\tilde{M}_{p,q_2})$ for some isometry $I$ of $\mathbb{S}^3$ and $q_2<q_1\leq \lfloor p/2\rfloor$. 
We first claim that the isometry $I$ preserves the Clifford torus $C$ setwise.   Assume this is not the case. Then the image of $\{z=0\}$ under the isometry $I$ is a different geodesic, $Z_I=(a,b)\in\mathbb{S}^2\times\mathbb{S}^2$ with corresponding Clifford torus $C_I=F_{a,b}(\pi/4)$. By the choice of $p$, we can assume $Z_I$ is contained in a small enough neighborhood about the geodesic $\{z=0\}$ so that Lemma \ref{isdisjoint} applies with $\rho' = \pi/8$.  Let $\rho(p,q_1)>0$ be the minimal $z$ coordinate obtained on $\tilde{M}_{p,q_1}$ and let $y$ be a point on $F_{i,i}(\rho(p,q_1))=\{z=\rho(p,q_1)\}$ where it is attained.  By the choice of $p$, we have $\rho(p,q_1)>0$. Consider the cmc torus $L$ centered around $Z_I$ that is contained in $\{z\leq\rho(p,q_1)\}$ and tangent to $\{z=\rho(p,q_1)\}$ at some point $x$ (perhaps others to).  Apply a translation $t=(e^{i\theta_1}, e^{i\theta_2 })$ to $\mathbb{S}^3$ that takes $\tilde{M}_{p,q_1}$ to the surface $t\tilde{M}_{p,q_1}$,  where $t$ is chosen so that $ty = x$.

The surface $t\tilde{M}_{p,q_1}$ is invariant under the cyclic group $G_1 = \mathbb{Z}_p^{q_1}$ (since translations commute with elements of $G_1$).  It is also invariant under the conjugate group $J^{-1}G_2J$, where $J=  I^{-1}t^{-1}$.  One can see this because
\begin{align}
J^{-1}G_2J (t\tilde{M}_{p,q_1}) &= tIG_2I^{-1}t^{-1} (tI(\tilde{M}_{p,q_2}))\\ & = tIG_2(\tilde{M}_{p,q_2})\\&=tI(\tilde{M}_{p,q_2})\\&=t\tilde{M}_{p,q_1}.
\end{align}

The isometry $J$ takes the Heegaard foliation $\{F_{i,i}(t)\}_{t\in [0,\pi/4]}$ determined by cmc tori relative to $\{z=0\}$ to the Heegaard foliation of cmc tori relative to $Z_I$ given by $\{F_{a,b}(t)\}_{t\in [0,\pi/4]}$.  By construction, the cmc torus $L$ is contained in the solid torus $\{z\leq\rho(p,q_1)\}$ and there is a point of tangency between $L$ and $\{z=\rho(p,q_1)\}$ at $x$.  

By Lemma \ref{arediff} the tangency set between the cmc torus $L$ and $\{z=\rho(p,q_1)\}$ consists of two points, or a great circle. By the choice of $p$, the iterates of $x$ under the group $J^{-1}G_2 J$ are not contained on a single geodesic, nor do they consist of two points if $p>2$.  Therefore the $p$ iterates of $x$ under the cyclic group $J^{-1}G_2 J$ are not contained on $\{z=\rho(p,q_1)\}$ and among them are points with $z$-coordinate strictly less than $\rho(p,q_1)$, contradicting the minimality of the choice of $\rho(p,q_1)$.   This establishes the claim that the group $J^{-1}G_2J$ and also $I^{-1}G_2 I$ preserve $C$ setwise. 

On the other hand, we can enumerate all isometries $I\in O(4)$ that fix a Clifford torus.  Consider the following two involutive isometries of $\mathbb{S}^3$ that preserve the Clifford torus $C$:
\begin{equation}
\tau(z,w) = (w,z), 
\end{equation}
and the conjugation map
\begin{equation}
c(z,w) = (\overline{z},w). 
\end{equation}

There are eight isometries in $O(4)$ generated by $\{\tau,c\}$ that preserve $C$.  These make up the dihedral group $D_8$ (with $\tau c$ and its inverse having order $4$ and corresponding to a rotation by $\pi/4$).  Let $e$ denote the identity element of $D_8$: 
\begin{equation}\label{list}
D_8 = \{e, c, \tau, c\tau, \tau c,  c\tau c, \tau c\tau, c\tau c\tau \}.
\end{equation}

The isometry group of $C$ (as a manifold unto itself) is the semi-direct product of $D_8$ with the group of translations and thus any isometry of $C$ can be written as an element of $D_8$ followed by a translation.  Moreover, the action of an isometry $I\in O(4)$ on $\mathbb{S}^3$ is determined uniquely by its action on $C$.  To see this, suppose there are two distinct isometries, $I_1, I_2 \in O(4)$ that have the same action on $C$.  Then $I_2\circ I^{-1}_1$ fixes $C$ pointwise.  But the fixed point set of an isometry is totally geodesic, which the Clifford torus is not.  This implies that the fixed point set of $I_2\circ I^{-1}_1$ must be the entire manifold and thus $I_1=I_2$.  Since every isometry of $C$ can be exhibited by the restriction of an isometry in $O(4)$ it follows that the list \eqref{list} of isometries in $O(4)$ preserving $C$ is exhaustive up to composition with a translation\footnote{ Note that the antipodal map also preserves $C$ but is given by translation $(e^{\pi i}, e^{\pi i})$.}.

Therefore, since translations commute with each other, we can assume without loss of generality that the isometry $I$ is contained in the list \eqref{list}.  The surface $\tilde{M}_{p,q_1}$ is invariant under both $G_1$ and $I G_2 I^{-1}$.  

We next will show that for each possible choice of $I\in D_8$ the surface $M_{p,q_1}$ is invariant under a non-trivial finite group $\mathcal{O}$ of isometries that acts freely on the support of $M_{p,q_1}$.  Moreover, the group $\mathcal{O}$ acts by orientation-preserving isometries so that $M_{p,q_1}/\mathcal{O}$ is itself orientable.  On the other hand, a genus $2$ surface cannot be invariant under such a group since 
\begin{equation}
\chi(M_{p,q_1}) = -2 = |\mathcal{O}|\chi(M_{p,q_1}/\mathcal{O}), 
\end{equation}
or
\begin{equation}
\mbox{genus}(M_{p,q_1}/\mathcal{O}) = 1+\frac{1}{|\mathcal{O}|},
\end{equation}
which implies that the genus of $M_{p,q_1}/\mathcal{O}$ could not be an integer.  Thus this will complete the proof.

Let us first assume $I =e$.  Since $L(p,q_1)$ and $L(p,q_2)$ are not diffeomorphic, we can assume without loss of generality $q_1-q_2\leq \lfloor p/2\rfloor$. It follows that we have \begin{equation}\label{istrivialno}q_1-q_2\neq 0\mod p.\end{equation}

Since the minimal surface $\tilde{M}_{p,q_1}=\tilde{M}_{p,q_2}$ is invariant under both $\mathbb{Z}_p^{q_1}$ and $\mathbb{Z}_p^{q_2}$,  we get
\begin{equation}
\xi^{p-1}_{p,q_2}.\xi_{p,q_1}(\tilde{M}_{p,q_1})= \tilde{M}_{p,q_1}, 
\end{equation}

Fixing an $x=(a,b)$ we get
\begin{align}
\xi^{p-1}_{p,q_2}.\xi_{p,q_1}(a,b) &= (e^{2\pi i(p-1+1)/p}a,e^{2\pi i((p-1)q_2+q_1)/p}b)\\ &=(a,e^{(q_1-q_2)/p}b). 
\end{align}
The point $(a,b)$ is rotated only in the second factor and by \eqref{istrivialno} this rotation is non-trivial.  The subgroup $\mathcal{O}\subset O(4)$ generated by $\mathbb{Z}_p^{q_1}$ and $\mathbb{Z}_p^{q_2}$ that rotates only the second factor preserves a fundamental domain for both $L(p,q_1)$ and $L(p,q_2)$.  The action of $\mathcal{O}$ is not free and fixes the circle $\{w=0\}$.  Because $\tilde{M}_{p,q_1}$ is disjoint from the circle $\{w=0\}$, it follows that $\mathcal{O}$ acts freely on $\tilde{M}_{p,q_1}$ .

%Assume first the surfaces $M_{p,q_1}=M_{p,q_2}$ are both disjoint from the singular set of the group action, $\{z=0\}$.  Then the group $\mathcal{O}$ acts \emph{freely} on the support of $M_{p,q_1}=M_{p,q_2}$. 

Secondly assume $I=\tau$.  Observe that 
\begin{equation}
\tau\xi_{p,q_2}\tau^{-1}(a,b) = (ae^{2\pi i q_2/p}, be^{2\pi i/p}).  
\end{equation}

Then also applying a power of the generator of $G_1$:
\begin{equation}
\xi_{p,q_1}^{(p-q_2)}\tau\eta_{p,q_2}\tau^{-1}(a,b) = (a, b e^{2\pi i(1+q_1(p-q_2))/p}).
\end{equation}

But \begin{equation}1+q_1(p-q_2)=1-q_2 q_1 \neq 0\mod p\end{equation} because by assumption $L(p,q_1)$ and $L(p,q_2)$ are not isometric. \eqref{aredifferent}. 

Thirdly assume $I=c$.  Then 

\begin{equation}
c\xi_{p,q_2} c^{-1}(a,b) = (ae^{-2\pi i/p}, be^{2\pi i q_2/p}), 
\end{equation}

Applying $\xi_{p,q_1}$ to the result we get
\begin{equation}
\xi_{p,q_1} c\xi_{p,q_1} c^{-1}(a,b) = (a, be^{2\pi i (q_2+q_1)/p}). 
\end{equation}
\noindent
Because $q_1+q_2\neq 0\mod p$ we complete this case as well.    

Fourthly, assume $I = \tau c$.  Then we obtain
\begin{equation}
(\tau c)\xi_{p,q_2} (\tau c)^{-1} (a,b) = (ae^{2\pi iq_2 /p}, be^{-2\pi i/p}), 
\end{equation}
\noindent
Applying then the appropriate power of $\xi_{p,q_1}$ we get 
\begin{equation}
\xi_{p,q_1}^{p-q_2} (\tau c)\xi_{p,q_2} (\tau c)^{-1} (a,b) = (a, be^{2\pi i(q_1(p-q_2)-1)/p}).
\end{equation}
But as in the second case \begin{equation} q_1(p-q_1)-1 = -q_1q_2-1\neq 0 \mod p\end{equation} by \eqref{aredifferent}.  Thus we have a non-trivial group of rotations in this case as well preserving and acting freely on $M_{p,q_1}$.

The remaining four cases are similar, and we merely list the appropriate group elements to be applied which give a rotation of the second factor. 

For $I = c\tau$, we consider the rotation:
\begin{equation}
\xi_{p,q_1}^{q_2} (c\tau)\xi_{p,q_2}(\tau c)  (a,b) = (a, be^{2\pi i(q_1 q_2 +1)/p})\neq (a,b).
\end{equation}

For $I = c\tau c$, we consider the rotation:
\begin{equation}
\xi_{p,q_1}^{q_2} (c\tau c)\xi_{p,q_2} (c\tau c)^{-1} (a,b) = (a, be^{2\pi i(q_1 q_2 -1)/p})\neq (a,b).
\end{equation}

For $I = \tau c \tau $, we consider the rotation:
\begin{equation}
\xi_{p,q_1}^{p-1} (\tau c\tau)\xi_{p,q_2} (\tau c \tau)^{-1} (a,b) = (a, be^{-2\pi i(q_1+q_2)/p})\neq (a,b).
\end{equation}

Finally for $I = c\tau c \tau $, we consider the element:
\begin{equation}
\xi_{p,q_1} (c\tau c\tau )\xi_{p,q_2} (\tau c\tau c) (a,b) = (a, be^{2\pi i(q_1 -q_2 )/p})\neq (a,b).
\end{equation}

This completes the proof.

%Suppose instead $M_{p,q_1}=M_{p,q_2}$ intersect the singular set $\{z=0\}$ of the group action of $\mathcal{O}$.  Then they must intersect $\{z=0\}$ orthogonally (otherwise, the tangent cone at a point of intersection would consist of at least three distinct planes).  Furthermore, they must intersect in an even number $2T$ of points because $\{z=0\}$ has zero intersection number with $M_{p,q_1}$ (indeed, $M_{p,q_1}$ is isotopic to a Clifford torus with small handles attached, manifestly disjoint from $\{z=0\}$). By the Riemann-Hurwitz formula, we obtain
%\begin{equation}\label{impossibleg}
%2 = |\mathcal{O}|(2g-2) + (|\mathcal{O}|-1)(2T), 
%\end{equation}
%where $g$ is an \emph{integer} denoting the genus of the quotient surface.  But since $|\mathcal{O}|\geq 3$, it is easy to see there no solution to \eqref{impossibleg} for which $g$ is an integer.  Thus this case is impossible as well.

\end{proof}
 The number of distinct lens spaces with fundamental group $\mathbb{Z}_p$ tends to infinity as $p$ tends to infinity:
\begin{lemma}\label{burnside}
The number of lens space (up to isometry) with fundamental group equal to $\mathbb{Z}_p$ is at least $\frac{\phi(p)}{4}$,  where $\phi(p)$ denotes the Euler totient function. \footnote{Recall that the Euler totient function $\phi(p)$ is equal to the number of positive integers less than and relatively prime to $p$. }    

\end{lemma}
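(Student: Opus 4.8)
The plan is to count isometry classes of round lens spaces with fundamental group $\mathbb{Z}_p$ by a simple orbit-counting argument (hence the label). First I would recall that the lens spaces in question are exactly the $L(p,q)$ with $1\le q\le p-1$ and $\gcd(p,q)=1$, so that before identifying isometric ones there are $\phi(p)$ of them, indexed by the units $q\in(\mathbb{Z}/p\mathbb{Z})^\times$. Next I would invoke the isometric classification of lens spaces recalled in Section~6: $L(p,q)$ and $L(p,q')$ are isometric if and only if $q'\equiv\pm q\pmod p$ or $qq'\equiv\pm 1\pmod p$; equivalently, writing $[q]$ for the class of $q$ in $(\mathbb{Z}/p\mathbb{Z})^\times$, the two spaces are isometric precisely when $[q']$ lies in the set $\{[q],-[q],[q]^{-1},-[q]^{-1}\}$.

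The key observation is that this rule, $q'\sim q\iff [q']\in\{[q],-[q],[q]^{-1},-[q]^{-1}\}$, is exactly the orbit relation for the action on $(\mathbb{Z}/p\mathbb{Z})^\times$ of the group $\Gamma$ generated by the two maps $\sigma:[q]\mapsto -[q]$ and $\iota:[q]\mapsto [q]^{-1}$. Both $\sigma$ and $\iota$ are involutions, and they commute because $(-[q])^{-1}=-([q]^{-1})$; hence $\Gamma$ is a quotient of $(\mathbb{Z}/2\mathbb{Z})^2$, and in particular $|\Gamma|\le 4$. Consequently every $\Gamma$-orbit in $(\mathbb{Z}/p\mathbb{Z})^\times$ has cardinality at most $4$. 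Since these orbits are in bijection with the isometry classes of lens spaces having fundamental group $\mathbb{Z}_p$, and they partition a set of size $\phi(p)$, the number of such isometry classes is at least $\phi(p)/4$.

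There is no substantial obstacle here: the content is entirely the classification theorem, which is quoted in the paper, together with the elementary fact that orbits of a group of order at most $4$ have size at most $4$. The only point requiring a word of care is that one uses the \emph{necessity} direction of the classification (isometric $\Rightarrow$ $q'\equiv\pm q^{\pm 1}$): this is what guarantees that distinct $\Gamma$-orbits genuinely correspond to non-isometric lens spaces, so that $\phi(p)/4$ is a valid \emph{lower} bound rather than an upper bound. If one wished a sharper statement, Burnside's lemma would give the exact number of orbits, but for the stated inequality the crude bound on orbit sizes suffices.
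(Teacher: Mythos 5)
Your proposal is correct and follows essentially the same route as the paper: both identify the isometry classes with the orbits of the $(\mathbb{Z}/2\mathbb{Z})^2$-action on the units of $\mathbb{Z}_p$ generated by $q\mapsto -q$ and $q\mapsto q^{-1}$, using the classification $L(p,q)\cong L(p,q') \iff q'\equiv \pm q^{\pm1} \pmod p$. The only cosmetic difference is that you bound each orbit size by $4$ directly, whereas the paper phrases the same count via Burnside's lemma (keeping only the identity term); the conclusions are identical.
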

\begin{proof}
Let  $G_p$ denote the group of order $\phi(p)$ consisting of units in $\mathbb{Z}_p$,  i.e.,  positive integers less than $p$ that are relatively prime to $p$.   Let $\mathbb{Z}_2\times\mathbb{Z}_2$ denote the group acting on $G_p$ in the first factor by additive inverse,  and the second by multiplicative inverse.   Note that additive inverse preserves the set of units.   By \eqref{aredifferent} and \eqref{aredifferent2} the number of distinct lens spaces up to isometry is equal to the number of orbits under this action.    If $\mathbb{Z}_2\times\mathbb{Z}_2$ acted freely on $G_p$,  there would be $\phi(p)/4$ orbits.  If the action is not free,  the number of orbits can only increase.   Indeed  by Burnside's lemma,  the number of orbits of $\mathbb{Z}_2\times\mathbb{Z}_2$ acting on $G_p$ is given by 
\begin{equation}
N(p)= \frac{1}{4}\sum_{g\in \mathbb{Z}_2\times\mathbb{Z}_2} |G_p^g|, 
\end{equation}
where $|G_p^g|$ denotes the number of elements in $G_p$ fixed by the group element $g$.   For the identity element $e$, 
\begin{equation}
|G_p^e| = \phi(p).  
\end{equation}
Thus we obtain 
\begin{equation}
N(p)\geq \phi(p)/4.
\end{equation}
\end{proof}

Since $\phi(p)\geq C \frac{p}{\log\log(p)}$ for $p$ large enough,  by putting together Theorem \ref{arediff} with Lemma \ref{burnside} we obtain finally:

\begin{thm}
There holds
\begin{equation}
\lim_{g\rightarrow\infty} |\mathcal{S}_g|=\infty.  
\end{equation}
\end{thm}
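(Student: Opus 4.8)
First I would recall what has been established. By Theorem \ref{mininlens} (Genus $2$ minimal surfaces in lens spaces), every round lens space $L(p,q)$ with $q\notin\{1,p-1\}$ contains an embedded genus $2$ minimal surface $M_{p,q}$ whose area satisfies $\tfrac{2\pi^2}{p}<|M_{p,q}|<\tfrac{4\pi^2}{p}$. Lifting $M_{p,q}$ to the universal cover $\mathbb{S}^3$ via the $p$-fold cover $\mathbb{S}^3\to L(p,q)$, and using the multiplicativity of the Euler characteristic, the lift $\tilde M_{p,q}$ is a closed embedded minimal surface in $\mathbb{S}^3$ with $\chi(\tilde M_{p,q})=p\,\chi(M_{p,q})=-2p$, hence of genus $p+1$, and with area $p\,|M_{p,q}|<4\pi^2$. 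Thus $\tilde M_{p,q}\in\mathcal{S}_{p+1}$ for every admissible $q$.

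Second, I would invoke the distinctness result. Theorem \ref{arediff} asserts that for $p$ sufficiently large, if $L(p,q_i)$ and $L(p,q_j)$ are non-isometric lens spaces (with $q_i,q_j\notin\{1,p-1\}$), then $\tilde M_{p,q_i}$ and $\tilde M_{p,q_j}$ are not isometric as subsets of $\mathbb{S}^3$. Consequently, for such $p$,
\begin{equation}
|\mathcal{S}_{p+1}|\;\geq\;\#\{\text{non-isometric }L(p,q)\text{ with }q\notin\{1,p-1\}\}.
\end{equation}
Now Lemma \ref{burnside} shows the number of lens spaces with fundamental group $\mathbb{Z}_p$ is at least $\phi(p)/4$; discarding the (at most boundedly many, in fact one) isometry class corresponding to $q\in\{1,p-1\}$ changes nothing asymptotically, so $|\mathcal{S}_{p+1}|\geq \phi(p)/4 - O(1)$.

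Third, I would conclude with the growth of the totient function: since $\phi(p)\geq C\,p/\log\log p$ for large $p$ (a standard elementary estimate, e.g. via Mertens' theorem), we get $|\mathcal{S}_{p+1}|\to\infty$ as $p\to\infty$ along the integers. Finally, to upgrade this from the subsequence $g=p+1$ to all $g\to\infty$, observe that $|\mathcal{S}_g|$ need not be monotone, so I would argue as follows: it suffices to show that for any $N$ there is $g_0$ with $|\mathcal{S}_g|\geq N$ for all $g\geq g_0$. But Lawson's construction already gives, for every sufficiently large $g$, at least one surface of genus $g$ in $\mathbb{S}^3$ of area below $4\pi^2$ (and one gets more for composite $g$); combined with the fact that each genus $p+1$ achieves at least $\phi(p)/4$, and — more carefully — by noting the proof actually gives surfaces of \emph{every} genus large enough via the iterated double-cover corollary and Kapouleas–McGrath type families referenced in the introduction. \textbf{The cleanest route}, which I would actually take, is simply to quote the paper's own remark that the surfaces realizing \eqref{goto} have area below $4\pi^2$ and that one has at least $\phi(p)/4$ of them in genus $p+1$, then phrase the statement $\lim_{g\to\infty}|\mathcal{S}_g|=\infty$ as the assertion that $\liminf_{g\to\infty}|\mathcal{S}_g|=\infty$; this follows once we know $|\mathcal{S}_g|\geq N$ holds for a set of $g$ of density one together with a uniform lower bound $|\mathcal{S}_g|\geq 1$ everywhere — but to be safe I would invoke that for each $g$ sufficiently large one of the lens-space lifts or a Lawson/doubling surface of that exact genus exists, giving a genuinely uniform bound. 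The one genuinely content-bearing input is Theorem \ref{arediff}; everything else here is bookkeeping and the totient estimate.

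\textbf{The main obstacle} I anticipate is the passage from the subsequence $g=p+1$ to all large $g$: one must be sure that the count $|\mathcal{S}_g|$ does not collapse for genera not of the form $p+1$. This is handled by observing that for composite $g-1$ one still has many lens spaces, and that Lawson's and Kapouleas–McGrath's families supply surfaces in essentially every genus; since we only need $|\mathcal{S}_g|\to\infty$ and not a precise rate, combining "density-one lower bound via $\phi$" with "pointwise lower bound via explicit families" suffices. If one is content with $\limsup$, even this subtlety evaporates — but the statement as written is $\lim$, so I would spell out the uniformization step.
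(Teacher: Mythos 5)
Your first two steps plus the totient estimate are exactly the paper's proof: lift the genus $2$ surfaces $M_{p,q}$ of Theorem \ref{mininlens} to (connected, by Frankel) genus $p+1$ minimal surfaces $\tilde M_{p,q}\subset\mathbb{S}^3$, use Theorem \ref{arediff} to conclude that non-isometric lens spaces $L(p,q)$ with $q\notin\{1,p-1\}$ give non-isometric lifts, count the isometry classes with Lemma \ref{burnside}, and let $\phi(p)\geq Cp/\log\log p$ finish the job. Up to that point the argument is complete and is the same route the paper takes.

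Your final ``uniformization'' step, however, addresses a problem that does not exist, and the way you propose to handle it is not valid reasoning. There is no subsequence: $p$ ranges over \emph{all} sufficiently large integers, so the genera $p+1$ exhaust all sufficiently large $g$; for any large $g$ you simply set $p=g-1$ and obtain $|\mathcal{S}_g|\geq \phi(g-1)/4-O(1)$, and since the bound $\phi(n)\geq Cn/\log\log n$ holds for \emph{every} sufficiently large integer $n$ (it is a pointwise minimal-order estimate, not a density-one statement), this already gives $\lim_{g\to\infty}|\mathcal{S}_g|=\infty$. Had there really been a gap, your patches would not close it: a bound $|\mathcal{S}_g|\geq N$ on a set of density one together with $|\mathcal{S}_g|\geq 1$ everywhere yields only $\liminf_g|\mathcal{S}_g|\geq 1$, and the Lawson or Kapouleas--McGrath families contribute only a bounded number of examples per genus, so quoting them cannot force the count to infinity. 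Deleting that last paragraph leaves a correct proof identical in substance to the paper's.
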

\noindent

\section{Appendix: Hsiang-Lawson tori}
For any $a\in (0,\pi/2)$ and positive integers $n< m$ with $\gcd(n,m) = 1$ define the \emph{period}
\begin{equation}\label{period}
P_{n,m,a} = \frac{2\sin{a}}{m} \int_a^{\pi-a} \sqrt{\frac{n^2\cos^2{(x/2)}+ m^2\sin^2{(x/2)}}{\sin^2{x}-\sin^2{a}}} \frac{dx}{\sin{x}}.
\end{equation}
Following Hsiang-Lawson \cite{HL}, for any $a$ such that $P_{n,m,a}$ a rational multiple of $\pi$,  we obtain an immersed torus in $\mathbb{S}^3$ unvariant under the $(n,m)$-action on $\mathbb{S}^3$ described in Section \ref{lawsonsection}. 

Period functions were extensively studied for the Otsuki action (\cite{O}, \cite{O2}) governing rotationally symmetric tori in $\mathbb{S}^3$, and were used by Andrews and Li \cite{AL} to complete the classification of constant mean curvature embedded tori in $\mathbb{S}^3$.  To the author's knowledge,  the period function for the $(n,m)$-actions has not been studied.  We need the following monotonicity and limiting property (cf. Proposition 13 in \cite{AL}):

\begin{thm}[Mononoticity of Period] \label{monotonicity} 
For each pair of positive integers $(n,m)$ with $n< m$ and $\gcd(n,m)=1$ we have:
\begin{enumerate} 
\item $P_{n,m,a}$ is strictly increasing in $a$ for $a\in (0,\pi/2)$.
\item There holds
\begin{equation}\label{limitasyougo}
\lim_{\alpha\rightarrow\pi/2} P_{n,m,a} = 2\pi \sqrt{\frac{n^2+m^2}{2m^2}}.
\end{equation}
\end{enumerate}
Thus we obtain
\begin{equation}
 P_{n,m,a}<  2\pi \sqrt{\frac{n^2+m^2}{2m^2}}<2\pi \mbox{  for all  } a\in(0,\pi/2).
\end{equation}
\end{thm}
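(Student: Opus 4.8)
The plan is to reduce $P_{n,m,a}$, via a short chain of trigonometric substitutions, to an integral over a \emph{fixed} interval in which the parameter $a$ enters the integrand only through one explicitly monotone quantity; both assertions then become immediate. The substitutions serve two purposes at once: they remove the (integrable) endpoint singularities at $x=a$ and $x=\pi-a$ — thereby also showing the integral is finite — and, via a Landen-type substitution, they absorb both the prefactor $\sin a$ and the $1-\cos^2a\cos^2\psi$ denominator coming from the parallel-surface geometry.

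Concretely, first write $n^2\cos^2(x/2)+m^2\sin^2(x/2)=\tfrac12(A-D\cos x)$ with $A=n^2+m^2$ and $D=m^2-n^2$, so $0<D<A$ since $n\ge 1$. Put $c=\cos a\in(0,1)$ and substitute $\cos x=c\cos\psi$, $\psi\in(0,\pi)$; using $\sin^2x-\sin^2a=c^2\sin^2\psi$ and $\sin^2x=1-c^2\cos^2\psi$ one gets
\[
P_{n,m,a}=\frac{\sqrt2\,\sin a}{m}\int_0^\pi\frac{\sqrt{A-Dc\cos\psi}}{1-c^2\cos^2\psi}\,d\psi .
\]
Next substitute $\cot\Psi=\sin a\cot\psi=\sqrt{1-c^2}\cot\psi$, $\Psi\in(0,\pi)$ (the standard substitution for integrands carrying a factor $1-c^2\cos^2\psi$ in the denominator): a direct computation gives $\tfrac{d\psi}{1-c^2\cos^2\psi}=\tfrac{d\Psi}{\sin a}$ and $\cos\psi=\cos\Psi/\sqrt{1-c^2\sin^2\Psi}$, hence
\[
P_{n,m,a}=\frac{\sqrt2}{m}\int_0^\pi\sqrt{\,A-\frac{Dc\cos\Psi}{\sqrt{1-c^2\sin^2\Psi}}\,}\;d\Psi
=\frac{\sqrt2}{m}\int_0^{\pi/2}\Bigl[\sqrt{A-\beta}+\sqrt{A+\beta}\,\Bigr]\,d\Psi ,
\]
where in the last step I fold $\Psi\mapsto\pi-\Psi$ and set $\beta=\beta(a,\Psi)=\dfrac{D\cos a\,\cos\Psi}{\sqrt{1-\cos^2a\,\sin^2\Psi}}\ge 0$. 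One checks at once that $\beta<D$ for $a\in(0,\pi/2)$ (the required inequality $c^2\cos^2\Psi\le 1-c^2\sin^2\Psi$ is just $c^2\le 1$), so the integrand is a smooth, well-defined function of $(a,\Psi)$ on $(0,\pi/2)\times[0,\pi/2]$.

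Both statements now drop out. For (1): the function $h(\beta):=\sqrt{A-\beta}+\sqrt{A+\beta}$ satisfies $h(\beta)^2=2A+2\sqrt{A^2-\beta^2}$, so $h$ is strictly decreasing on $[0,A)$; and for each fixed $\Psi\in(0,\pi/2)$ one has $\beta(a,\Psi)^2=D^2\cos^2\Psi\cdot\dfrac{\cos^2a}{1-\cos^2a\,\sin^2\Psi}$, and since $t\mapsto t/(1-t\sin^2\Psi)$ has positive derivative while $t=\cos^2a$ is decreasing in $a$, the quantity $\beta(a,\Psi)$ is strictly decreasing in $a$. Hence $h(\beta(a,\Psi))$ is strictly increasing in $a$ for every interior $\Psi$, and integrating gives that $P_{n,m,a}$ is strictly increasing on $(0,\pi/2)$. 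For (2): as $a\uparrow\pi/2$ we have $\beta(a,\Psi)\downarrow 0$, so $h(\beta(a,\Psi))\uparrow h(0)=2\sqrt{A}$ pointwise; by monotone (or dominated, as $h\le 2\sqrt A$) convergence,
\[
\lim_{a\to\pi/2}P_{n,m,a}=\frac{\sqrt2}{m}\int_0^{\pi/2}2\sqrt{A}\,d\Psi=\frac{\pi\sqrt{2A}}{m}=2\pi\sqrt{\frac{n^2+m^2}{2m^2}}.
\]
Finally, combining (1) and (2) yields $P_{n,m,a}<2\pi\sqrt{(n^2+m^2)/(2m^2)}$ for all $a\in(0,\pi/2)$, and this bound is $<2\pi$ precisely because $n<m$ (equivalently $n^2+m^2<2m^2$).

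The only genuine content is identifying the substitutions; once the $a$-dependent endpoints and the extra factors are packaged into $\beta$, there is essentially no analysis left beyond the elementary monotonicity of $t\mapsto t/(1-t\sin^2\Psi)$ and of $h$, plus a convergence theorem for the limit. The step most likely to require care in the write-up is verifying the Jacobian of the Landen substitution $\cot\Psi=\sin a\cot\psi$ and confirming it is an orientation-preserving bijection of $(0,\pi)$ onto itself, so that no boundary terms or sign errors creep in; the algebraic reductions of $n^2\cos^2(x/2)+m^2\sin^2(x/2)$ and of $\sin^2x-\sin^2a$ to the closed forms above, together with the identity $1-\cos^2a\cos^2\psi=\sin^2a/(1-\cos^2a\sin^2\Psi)$ used to simplify $\cos\psi$, are the routine computations behind the displayed formula.
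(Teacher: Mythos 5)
Your proof is correct, and it takes a genuinely different route from the paper. The paper observes that the integrand in \eqref{period} blows up at the $a$-dependent endpoints and that its $a$-derivative is not integrable, and so, after the substitution $y=\cos x$, it extends the integrand to a complex function with two branch cuts, rewrites $P_{n,m,a}$ as a contour integral, deforms the contour (picking up $a$-independent residues at $z=\pm1$), differentiates under the integral sign on the far contour, and finally collapses onto the second branch cut to exhibit $P'_{n,m,a}$ as a manifestly positive integral; the limit \eqref{limitasyougo} is computed separately from an explicit antiderivative. You instead stay entirely real: the substitution $\cos x=\cos a\cos\psi$ followed by the Landen-type change $\cot\Psi=\sin a\cot\psi$ (whose Jacobian identity $d\psi/(1-\cos^2a\cos^2\psi)=d\Psi/\sin a$ and the formula $\cos\psi=\cos\Psi/\sqrt{1-\cos^2a\sin^2\Psi}$ I checked and which are correct) puts $P_{n,m,a}$ in the form $\tfrac{\sqrt2}{m}\int_0^{\pi/2}\bigl(\sqrt{A-\beta}+\sqrt{A+\beta}\bigr)d\Psi$ over a fixed interval, with all $a$-dependence in the single quantity $\beta$, which is strictly decreasing in $a$; since $\sqrt{A-\beta}+\sqrt{A+\beta}$ is strictly decreasing in $\beta$ (its square is $2A+2\sqrt{A^2-\beta^2}$), monotonicity is pointwise, and the limit follows from dominated convergence as $\beta\to0$, reproducing $2\pi\sqrt{(n^2+m^2)/(2m^2)}$ exactly. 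The folding $\Psi\mapsto\pi-\Psi$ is essential here (neither half of the integral is monotone the right way on its own), and you do it correctly. What each approach buys: the paper's contour method yields an explicit closed-form integral for $P'_{n,m,a}$, potentially useful for finer information about the period; your method is more elementary and self-contained, avoids differentiation under an integral with non-integrable derivative altogether, and gives both the monotonicity and the limiting value from one fixed-domain representation.
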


The integrand in \eqref{period} diverges at its boundary points and furthermore the derivative of the integrand in $a$ is not integrable.  This makes differentiating \eqref{period} with respect to $a$ delicate.  Otsuki \cite{O2} used complex analysis to express similar integrals as contour integrals which are easier to differentiate.

\begin{proof}
Let us first show \eqref{limitasyougo}.    We have
\begin{equation}\label{tops}
\lim_{x\rightarrow\pi/2} \sqrt{n^2\cos^2{(x/2)}+ m^2\sin^2{(x/2)}} = \sqrt{\frac{n^2+m^2}{2}}.
\end{equation}
On the other hand
\begin{equation}\label{bottom}
\int \frac{1}{\sin{x}}\frac{dx}{\sqrt{\sin^2{x}- \sin^2{a}}}  = -\tan^{-1}(\frac{\sqrt{2}\cos{x}\sin{a}}{1-2\sin^2{\alpha}-\cos{2x}}).
\end{equation}
Combining \eqref{tops} and \eqref{bottom} we obtain (2).   

To prove (1), we first change variables by $y=\cos(x)$ in \eqref{period} to obtain
\begin{equation}
P_{n,m,a}= \frac{2\sin{a}}{m} \int_{-\cos{a}}^{\cos{a}} \frac{1}{1-y^2}\frac{\sqrt{A-By}}{\sqrt{\cos^2{a}-y^2}}dy, 
\end{equation}
where $A= \frac{n^2+m^2}{2}$ and $B=\frac{m^2-n^2}{2}$.

Consider the function:
\begin{equation}
f(z,a) = \frac{2}{m}\frac{\sin{a}}{1-z^2}\frac{\sqrt{A-Bz}}{\sqrt{\cos^2{a}-z^2}}.
\end{equation}
A single-valued branch for $f(z,a)$ may be chosen away from the branch cut 
\begin{equation}
B_1=\{z=x+iy\in\mathbb{C}\; |\; -\cos{a}\leq x\leq -\cos{a}\}
\end{equation} 
as well as the branch cut $B_2$ given by
\begin{equation}
B_2=\{z=x+iy\in\mathbb{C}\; |\; x\geq \frac{A}{B}\}.
\end{equation} 
\noindent
Note that $\frac{A}{B}= \frac{n^2+m^2}{m^2-n^2}>1$ so that the branch cuts $B_1$ and $B_2$ are disjoint.  Let $\gamma_1$ be a closed curve oriented clockwise enclosing $B_1$ but not enclosing either $B_2$ or $\pm 1$.  

If we consider the limit of $f(z,a)$ as $z$ approaches $B_1$ from above, we get the negative of the value we get from approaching from below.  Thus by Cauchy's theorem we obtain
\begin{equation}\label{firstintegral}
P_{n,m,a}= \frac{1}{2} \int_{\gamma_1} f(z,a) dz.  
\end{equation}

Let $H_{h}$ denote the $h$-tubular neighborhood in $\mathbb{C}$ about the half-line $\{z\geq\frac{A}{B}\}$ (on the real line).  Let $D_R$ denote the solid disk of radius $R$ about the origin. Then let $W_{R,h}=D_R\setminus H_h$ and set $C_{R,h} = \partial W_{R,h}$.  Orient the curve $C_{R,h}$ clockwise.

We may change the curve of integration in \eqref{firstintegral} to $C_{R,h}$ at the expense of picking up residues of $f(z,a)$ at $z=\pm 1$.  The residues, however, are constants independent of $a$:
\begin{equation}\label{finalperiod}
P_{n,m,a} = \frac{-\pi n}{m} + \pi+ \frac{1}{2}\int_{C_{R,h}} f(z,a) dz. 
\end{equation}
We may then differentiate \eqref{finalperiod} with respect to $a$ and obtain
\begin{equation}\label{final}
P'_{n,m,a}= \frac{1}{2}\int_{C_{R,h}} \frac{\partial f(z,a)}{\partial a} dz.
\end{equation}

Thus we obtain:

\begin{equation}\label{alongreal}
P'_{n,m,a} = \frac{\cos{a}}{m}\int_{C_{R,h}}  \frac{\sqrt{A-Bz}}{(\cos^2{a}-z^2)^{3/2}}dz
\end{equation}
Finally we may take $R\rightarrow\infty$ and $h\rightarrow 0$ so that (as one may check) the integral in \eqref{alongreal} reduces to two integrals along the branch cut $B_2$.  Both of these integrals have the same numerical value (even though they have opposite orientations and are taken along the same contour, they add instead of cancel because of the jump corresponding to multiplying by $-1$ across the branch cut).  

Thus we obtain (pulling out an $i$ from the numerator of \eqref{alongreal} and $-i$ from the denominator):

\begin{equation}\label{derivative}
P'_{n,m,a} = \frac{2\cos{a}}{m}\int_{A/B}^\infty\frac{\sqrt{Bx-A}}{(x^2-\cos^2{a})^{3/2}}dx
\end{equation}
Note that the integral \eqref{derivative} is convergent since the integrand is of order $x^{-5/2}$ near infinity.  

Since $P'_{n,m,a}>0$ from the form of \eqref{derivative} for any $a\in(0,\pi/2)$, this completes the proof of the theorem.  
\end{proof}

Following Hsiang-Lawson (page 32 in \cite{HL}), whenever $\frac{P_{n,m,a}}{2\pi}$ is a rational number, one obtains an immersed minimal torus in $\mathbb{S}^3$ that is invariant under the $(n,m)$-action on $\mathbb{S}^3$.   

More specifically, for any $a\in (0,\pi/2)$ there exists a graph \begin{equation}\phi_a(\theta):\mathbb{R}\rightarrow\mathbb{R}\end{equation} with period $P_{n,m,a}$.  When $\frac{P_{n,m,a}}{2\pi}$ is a rational number $r/s$,  the graph $\phi_a(\theta)$ closes up on the interval $[0, 2 \pi t]$ for some (smallest) natural number $t$ depending on $r$ and $s$ and projects modulo $2\pi$ to give an \emph{immersed} curve $A_{n,m,a}$ parameterized by a circle $[0,2\pi]$.  This circle lifted to $\mathbb{S}^3$ corresponds to the immersed torus $\tilde{A}_{n,m,a}$ in $\mathbb{S}^3$. 

In fact, 

\begin{equation}\label{periodbounds}
\frac{1}{2}<\frac{P_{n,m,a}}{2\pi} < \sqrt{\frac{n^2+m^2}{2m^2}}<1.
\end{equation}

Theorem \ref{monotonicity} gives the upper bound in \eqref{periodbounds}.  The lower bound is immediate because by the discussion above, if it failed, one would obtain an embedded torus in $\mathbb{S}^3$ distinct from the Clifford torus.  This violates \cite{B}.  We obtain:

\begin{prop}\label{intersecttwice}
Each curve ${A}_{n,m,a}$ has at least two points of self-intersection.  
\end{prop}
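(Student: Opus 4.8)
The plan is to read off the self-intersections of $A_{n,m,a}$ from its winding number inside the orbit-space orbifold, and then to upgrade the count from one to two using the reflectional symmetries of the curve.

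Recall that $\mathbb{S}^3/\mathbb{S}^1_{n,m}$ is a $2$-sphere of revolution with at most two orbifold points, at the poles $N$ and $S$, that its metric does not depend on the longitude, and that $A_{n,m,a}$ is the closed geodesic whose profile graph $\phi_a$ (a function of the longitude $\theta$) has period $P_{n,m,a}$ and takes values in a range of latitudes bounded away from $N$ and $S$. In particular $A_{n,m,a}$ lies in $\mathbb{S}^3/\mathbb{S}^1_{n,m}\setminus\{N,S\}$, which is an open annulus. Write $P_{n,m,a}/2\pi=r/s$ in lowest terms; as recalled in Section~\ref{lawsonsection}, $\phi_a$ closes up precisely when $\theta$ advances by $2\pi r$, so as a closed curve in this annulus $A_{n,m,a}$ has winding number $r$ about the core. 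By \eqref{periodbounds} we have $1/2<r/s<1$, which forces $r\ge 2$ (if $r=1$ then $1/s\in(1/2,1)$ has no integer solution) and hence $s\ge 3$. Since a simple closed curve in an annulus has winding number in $\{-1,0,1\}$, the curve $A_{n,m,a}$ is not embedded: it has at least one point of self-intersection. (If $A_{n,m,a}$ is not primitive it has infinitely many self-intersections, so we may assume it is primitive, hence its self-intersection set is finite.)

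For the second self-intersection I would exploit symmetry. The Clairaut reduction of the geodesic equation on a surface of revolution shows that $g:=\phi_a$ satisfies an autonomous first-order ODE $(g')^2=\Phi(g)$, so $g$ is symmetric about each of its critical points. Being non-constant and $P_{n,m,a}$-periodic, $g$ has, over one period of the closed curve (i.e.\ $\theta$ ranging over an interval of length $2\pi r$, which contains $s$ periods of $g$), exactly $s$ interior maxima, at longitudes $\psi_0,\psi_0+P_{n,m,a},\dots,\psi_0+(s-1)P_{n,m,a}$, pairwise distinct modulo $2\pi$. For such a longitude $\psi_j$ let $\sigma_j$ be the reflection of $\mathbb{S}^3/\mathbb{S}^1_{n,m}$ across the meridian great circle through $N,S$ at longitude $\psi_j$ (an isometry, since the metric is independent of $\psi$); the symmetry of $g$ about $\psi_j$ gives $\sigma_j(A_{n,m,a})=A_{n,m,a}$. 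Because $1/2<P_{n,m,a}/2\pi<1$ we have $P_{n,m,a}\not\equiv 0,\pi\pmod{2\pi}$, so $\sigma_0\ne\sigma_1$, and $\mathrm{Fix}(\sigma_0)\cap\mathrm{Fix}(\sigma_1)=\{N,S\}$ since two distinct great circles through the poles meet only there. Now if $A_{n,m,a}$ had a unique self-intersection point $P$, then, since each $\sigma_j$ permutes the finite self-intersection set, $\sigma_0(P)=\sigma_1(P)=P$, so $P\in\mathrm{Fix}(\sigma_0)\cap\mathrm{Fix}(\sigma_1)=\{N,S\}$ --- impossible, as $A_{n,m,a}$ avoids the poles. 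Hence $A_{n,m,a}$ has at least two points of self-intersection.

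Most steps are routine: the Clairaut reduction and symmetry of $g$ about its extrema, the elementary winding-number fact, and the arithmetic $1/2<r/s<1\Rightarrow r\ge 2$. The one point meriting care is the bookkeeping already implicit in Section~\ref{lawsonsection}: that the numerator $r$ of $P_{n,m,a}/2\pi$ in lowest terms is both the number of $2\pi$-periods in $\theta$ before $\phi_a$ closes up and the winding number of $A_{n,m,a}$ about the core of the punctured sphere. The inputs in \eqref{periodbounds} --- Theorem~\ref{monotonicity} for the upper bound and Brendle's resolution of the Lawson conjecture \cite{B} for the lower bound --- are exactly what yield $r\ge 2$ and thus drive the whole argument.
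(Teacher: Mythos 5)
Your proof is correct, but it takes a genuinely different route from the paper's. The paper gets both intersection points in one stroke: since $\tfrac12<P_{n,m,a}/2\pi<1$, the closed curve passes over a common longitude window of length $P_{n,m,a}$ on two consecutive windings, and these two sheets are distinct translates of the same $P_{n,m,a}$-periodic profile $\phi_a$; the difference of two such translates is periodic with zero mean, hence changes sign and vanishes at least twice per period, giving two crossings directly. You instead split the count: the first self-intersection comes from a homological obstruction (writing $P_{n,m,a}/2\pi=r/s$ in lowest terms, the bounds \eqref{periodbounds} force the winding number $r\ge 2$ in the annulus $\mathbb{S}^3/\mathbb{S}^1_{n,m}\setminus\{N,S\}$, so the curve cannot be embedded), and the second from the Clairaut reflection symmetries: $A_{n,m,a}$ is preserved by the reflections across the meridians through its latitude extrema, two consecutive such reflections are distinct because $\pi<P_{n,m,a}<2\pi$, and a unique self-intersection point would be fixed by both, hence lie at a pole, which the curve avoids. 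Both arguments hinge on exactly the inputs behind \eqref{periodbounds} (Theorem \ref{monotonicity} and \cite{B}); yours additionally uses that $A_{n,m,a}$ is a geodesic of a rotationally invariant metric, which buys a symmetry argument that is conceptually pleasant and would generalize to any reflection-symmetric oscillating geodesic, while the paper's argument is shorter and needs nothing beyond periodicity of the profile. Two small points to tighten: the symmetry of $\phi_a$ about its critical points is cleanest via uniqueness of geodesics (the meridian reflection fixes the extremal point and reverses the tangent there), since the first-order equation $(g')^2=\Phi(g)$ alone has the usual square-root uniqueness caveat at critical points; and the non-primitive case you set aside does not actually occur, because the curve is defined by closing up at the smallest admissible longitude advance.
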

\begin{proof} 
When $\frac{P_{n,m,a}}{2\pi}$ is a rational number $r/s$,  the graph $\phi_a(\theta)$ closes up on the interval $[0, 2 \pi t]$ for some (smallest) natural number $t$ depending on $r$ and $s$ and projects modulo $2\pi$ to give an immersed curve parameterized by a circle.  By the first inequality in \eqref{periodbounds} we have $c\geq 2$.  

Consider the graph $\phi_a(\theta)$ restricted to the intervals $[0,P_{n,m,a}]$ and $[2\pi, 2\pi + P_{n,m,a}]$.   These intervals correspond to the same interval when projected to the circle $[0,2\pi]$ because $P_{n,m,a}< 1$ by the last inequality in \eqref{periodbounds}.  But two translations of a $P_{n,m,a}$-periodic curve have to intersect at least twice on any interval of length $P_{n,m,a}$.  Thus there are at least two self-intersection points for any curve $A_{n,m,a}$.
\end{proof}

We also have \begin{prop}\label{noteq}
No curve $A_{n,m,a}$ is $\mathbb{Z}_k$-invariant for $k\geq 2$.
\end{prop}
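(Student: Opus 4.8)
\textbf{Proof plan for Proposition \ref{noteq}.}
The plan is to argue by contradiction, passing the putative $\mathbb{Z}_k$--invariant curve down to the quotient orbifold and contradicting the period bounds of Theorem \ref{monotonicity}. So suppose $A:=A_{n,m,a}$ is invariant (as a set) under the order--$k$ rotation $R$ about the $z$--axis of the orbit space $O:=\mathbb{S}^3/\mathbb{S}^1_{n,m}$, for some $k\ge 2$. Since $a\in(0,\pi/2)$, the curve $A$ oscillates between the two latitude circles at heights $a$ and $\pi-a$ and in particular avoids both orbifold points $N,S$ (the images of $\{z=0\}$ and $\{w=0\}$); hence $R$ acts freely on $A\cong S^1$, and $\bar A:=A/\mathbb{Z}_k$ is a smooth closed geodesic in the quotient orbifold $O':=O/\mathbb{Z}_k$ -- a two--sphere with two cone points, carrying the metric descended from $O$, which is locally isometric to $O$ away from the cone points and which $\bar A$ avoids.

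First I would do the bookkeeping with the profile. In cylindrical coordinates $(\vartheta,\cdot)$ on $O\setminus\{N,S\}$, with $\vartheta\in\mathbb{R}/2\pi\mathbb{Z}$ the longitude, the curve $A$ is the image of $\theta\mapsto(\theta\bmod 2\pi,\ \phi_a(\theta))$, where $\phi_a$ has minimal period $P:=P_{n,m,a}$; writing $P/2\pi=p/q$ in lowest terms (rationality is exactly the hypothesis under which $A_{n,m,a}$ is defined), $A$ closes after $q$ latitude oscillations and $p$ windings in $\vartheta$, and by \eqref{periodbounds} together with $p<q$ one has $p\ge 2$ as in Proposition \ref{intersecttwice}. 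The rotation $R$ shifts $\vartheta$ by $2\pi/k$ while fixing the latitude, so invariance of $A$ says the finite set of latitudes of $A$ over $\bar\vartheta+2\pi/k$ equals that over $\bar\vartheta$ for all $\bar\vartheta$; since each such latitude is a value $\phi_a(\bar\vartheta+2\pi j)$ and $\phi_a$ is real--analytic, a standard analytic--continuation/pigeonhole argument (as in the proof of Proposition \ref{intersecttwice}) upgrades this to $\phi_a(\,\cdot\,+2\pi/k)\equiv\phi_a(\,\cdot\,+2\pi j_0)$ for a fixed integer $j_0$, hence $2\pi/k\equiv 2\pi j_0\pmod P$, and comparing with $P=2\pi p/q$ forces $k\mid q$.

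Now for the quotient geodesic $\bar A$: since $O'$ is locally isometric to $O$, the ODE governing one oscillation between the latitude circles at heights $a$ and $\pi-a$ is unchanged, so one oscillation of $\bar A$ advances the longitude coordinate by $P_{n,m,a}$ in the $\vartheta$--coordinate; but the longitude circle of $O'$ has circumference $2\pi/k$. Hence the period ratio of $\bar A$, normalized by the circumference of $O'$, equals $kP_{n,m,a}/2\pi=kp/q$, which by \eqref{periodbounds} exceeds $k/2\ge 1$. The final step -- and the part I expect to be the main obstacle -- is to convert this ``super--unit'' period ratio into a contradiction: concretely, one should show that a closed geodesic of oscillating type in an orbifold of this form cannot have period ratio $\ge 1$, by arguing as for the lower bound in \eqref{periodbounds} that the corresponding $\mathbb{S}^1$--invariant surface in $\mathbb{S}^3$ (which is again $\tilde A_{n,m,a}$, or a suitable cover) would then have to be embedded, contradicting Brendle's theorem \cite{B}; making the orbifold/covering bookkeeping here precise, so that the embeddedness obstruction genuinely applies to $O'$ rather than only to $O$, is the delicate point. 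Granting this, $k\mid q$ together with $kp/q>1$ is impossible, so no $A_{n,m,a}$ is $\mathbb{Z}_k$--invariant.
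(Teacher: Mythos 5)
Your final step is a genuine gap, and it is the entire content of the argument. You reduce the proposition to the claim that a closed oscillating geodesic in the quotient orbifold $(\mathbb{S}^3/\mathbb{S}^1_{n,m})/\mathbb{Z}_k$ cannot have longitudinal period exceeding the circumference $2\pi/k$, and you propose to extract this from a Brendle-type embeddedness obstruction, acknowledging it as the delicate point. That obstruction is not available in this regime: the mechanism behind the lower bound in \eqref{periodbounds} produces an \emph{embedded} torus only when the period is $2\pi/(\mbox{integer})$, so that the geodesic is a graph over the longitude circle; a geodesic whose period exceeds the circumference merely wraps several times per oscillation, its preimage in $\mathbb{S}^3$ is the same immersed torus $\tilde{A}_{n,m,a}$, and Brendle's theorem gives no contradiction. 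In fact your own bookkeeping shows that no contradiction can be reached along these lines: the condition you derive, $2\pi/k\in P_{n,m,a}\mathbb{Z}+2\pi\mathbb{Z}$, i.e.\ $k\mid q$, is not only necessary but sufficient for invariance. Indeed, if $k\mid q$ then (using $\gcd(p,q)=1$) one can write $2\pi/k=uP_{n,m,a}+2\pi v$ with $u,v\in\mathbb{Z}$; the rotation by $2\pi/k$ sends the point of $A_{n,m,a}$ over longitude $\theta$ at height $\phi_a(\theta)$ to the point over $\theta+2\pi/k$ at height $\phi_a(\theta)=\phi_a(\theta+uP_{n,m,a})$, which again lies on $A_{n,m,a}$ with the matching tangent, so the whole curve is carried to itself. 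Since Theorem \ref{monotonicity} makes $a\mapsto P_{n,m,a}$ continuous and strictly increasing, the realized values of $P_{n,m,a}/2\pi$ fill a nondegenerate subinterval of $(1/2,1)$, which contains rationals $p/q$ with $k\mid q$ for every $k\geq 2$; for such $a$ the curve really is $\mathbb{Z}_k$-invariant, so the missing lemma is not just unproven but cannot hold, and your plan cannot be completed.

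For comparison, the paper's proof is one line: invariance is asserted to force the profile $\phi_a$ to be periodic with period at most $2\pi/k\le\pi$, contradicting $P_{n,m,a}>\pi$ from \eqref{periodbounds}. That inference presumes the rotation acts on the parameter by a shift of exactly $2\pi/k$; but because $A_{n,m,a}$ winds $p\ge 2$ times in longitude, the shift is only determined modulo $2\pi$, which is precisely what your computation keeps track of and why you obtain only the weaker conclusion $k\mid q$. So your proposal does not recover the paper's argument, and as a proof of the stated proposition it is incomplete at its decisive step; but the discrepancy it exposes is real and worth raising with the author, since Proposition \ref{noteq} is invoked in the proof of Theorem \ref{other} to guarantee that the $k$ rotated copies of any $A_{n,m,a}$ in the support of the limiting varifold are distinct.
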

\begin{proof}
If $\{A_{n,m,a}\}_{a\in I}$ were $\mathbb{Z}_k$-invariant it would arise from the graph of a periodic function $\phi_a(\theta)$  with period $P_{n,m,a}\leq \frac{2\pi}{k}\leq \frac{2\pi}{2} $.  But by the lower bound in \eqref{periodbounds}, in fact $\frac{P_{n,m,a}}{2\pi} >\frac{1}{2}$.  
\end{proof}
\printbibliography

\end{document}